\newcommand{\ignore}[1]{}
\newtheorem{definition}{Definition}
\newtheorem{theorem}{Theorem}
\newtheorem*{theorem*}{Theorem}
\newtheorem{remark}{Remark}
\newtheorem{lemma}{Lemma}
\newtheorem{assumption}{Assumption}
\providecommand{\PfStart}[1]{\newcounter{#1}\refstepcounter{#1}} % initiates counter=1
\providecommand{\PfStep}[2]{ \ifnum\value{#1}=1%\smallskip
\else\medskip\fi{\sc Step }\arabic{#1}\label{#2}\refstepcounter{#1}.}
\definecolor{darkred}{rgb}{0.9,0.1,0.1}
\definecolor{darkblue}{rgb}{0,0,0.7}
\definecolor{darkgreen}{rgb}{0,0.5,0}
\definecolor{violet}{rgb}{0.78,0.26,0.95}
\definecolor{gray}{rgb}{0.4,0.4,0.4}
\newenvironment{keyword}
{
\begin{center}
\begin{minipage}{.9\textwidth}\small\textbf{Keywords}:\noindent
}
{
\end{minipage}
\end{center}
}
\newenvironment{msc}
{
\begin{center}
\begin{minipage}{.9\textwidth}\small\textbf{MSC 2020}:\noindent
}
{
\end{minipage}
\end{center}
}
\title[Multi-index Based Solutions to $\Phi^4$]
{Multi-index Based Solution Theory to the $\Phi^4$ Equation in the Full Subcritical Regime}
\author[L.~Broux]{Lucas Broux}
\author[F.~Otto]{Felix Otto}
\author[R.~Steele]{Rhys Steele}
\begin{document}

\begin{abstract}
We obtain (small-parameter) well-posedness for the (space-time periodic) $\Phi^4$ equation in the full subcritical regime in the context of regularity structures based on multi-indices.
As opposed to Hairer's more extrinsic tree-based setting, 
due to the intrinsic description encoded by multi-indices, it is not possible to obtain a solution theory via 
the standard
fixed-point argument.
Instead, we develop a more 
intrinsic approach for existence using
a variant of the continuity method from classical PDE theory 
based on 
a priori estimates for a new `robust' formulation of the equation.
This formulation also allows us to obtain uniqueness of solutions and continuity of the solution map in the model norm even at the limit of vanishing regularisation scale.
Since our proof relies on the structure of the nonlinearity in only a mild way, 
we expect the same ideas to be sufficient to treat a more general class of equations.

\end{abstract}

\maketitle
\begin{keyword} 
Singular SPDE, 
Regularity Structures. 
\end{keyword}

\begin{msc} 
60H17, % Singular stochastic partial differential equations 
60L30. % Regularity structures
\end{msc}

\tableofcontents

\section{Introduction} 
In this work, we are interested in the 
equation 
\begin{align}\label{t0}
    Lu = P(\lambda u^3 + \xi)
\end{align}
with space-time periodic boundary conditions, 
where $L = \partial_0 - \sum_{i=1}^d \partial_i^2$
is the heat operator, 
$\lambda \in \mathbb{R}$ is a given parameter 
and $P$ denotes the projection onto 
distributions of vanishing space-time average.
In the majority of the work, we view $\xi$ as a fixed realisation of a singular noise
of parabolic H\"older regularity $\alpha - 2$, with 
	\begin{align} \label{t53}
		\alpha \in (-1, 0) ,
	\end{align}
which corresponds to the singular but subcritical regime (in the sense of Hairer \cite{Hai14}) for\footnote{in practice, a condition of integrable variance (also called Hurst condition) also has to be added for the stochastic estimates to hold, and here takes the form $3\alpha+(d+2)/2>0$} \eqref{t0}.
This equation is reminiscent of the parabolic quantisation equation 
for the $\Phi^4$ model in (Euclidean) Quantum Field Theory, where $\lambda$ is sometimes called the coupling constant. The main difference to the parabolic quantisation equation is that we consider here space-time periodic boundary conditions. This is done mainly for technical simplification to separate out challenges involving renormalisation from those involving incorporating initial data.

\medskip
As is noted many times in the literature
in the case of the parabolic quantisation equation,
\eqref{t0} is not well-posed in the traditional PDE sense.
Indeed, one expects $u$ to behave on small scales in the same way as the solution of the linear problem
i.e.\ with $\lambda = 0$.
In turn, $u^3$ is not expected to have better regularity than $u$, and 
by Schauder theory, this implies that 
the parabolic H\"older regularity of $u$ cannot be expected to be better than $\alpha$.
This implies that the cubic part of the equation is not canonically defined on a large enough space 
for classical solution theories to be applicable.

\medskip
By now, it is widely understood that singular SPDEs of this type require a renormalisation, which formally amounts to inserting an infinite counterterm required to cancel the small-scale divergences. That is, in some sense, one considers instead the PDE
\begin{align}\label{t-1}
    Lu = P( \lambda u^3  + \text{`$\infty u$'} + \xi).
\end{align}
Of course, it is necessary to give a suitable meaning to such a formal expression. A common approach in the literature is to consider a 
sequence of smooth functions
$\xi_\rho$ that converges to $\xi$ as $\rho \downarrow 0$ and exhibit a corresponding sequence of counterterms $h_\rho$ such that the solutions to the equations 
\begin{align}\label{t1}
    Lu_\rho = P(\lambda u_\rho^3 + h_{\lambda \, \rho} u + \xi_\rho)
\end{align}
converge in an appropriate sense as $\rho \downarrow 0$ to a limit which is independent of the choice of smoothing mechanism (though the required family of $h_{\lambda \, \rho}$ will typically depend on this choice). 

\medskip
In the past decade, a number of approaches to obtain a solution theory for such singular SPDEs have been developed including an approach based on paracontrolled calculus \cite{GIP15, CC18} and approaches based on the Polchinski flow \cite{Kup16, Duc22}. In the most developed framework for this type of problem, namely Hairer's theory of regularity structures \cite{Hai14}, this statement is proved by reference to an alternative characterisation given by a fixed-point problem for an abstract analogue of the mild form of the PDE posed at the level of objects known as modelled distributions. This reformulation makes sense even when $\rho = 0$ so long as suitable renormalised data known as a model is provided as an input. Constraints on the choice of model then enforce
a connection to the PDE \eqref{t1} with its choice of counterterm in the case where $\rho > 0$.

\medskip
More recently, an alternative and more geometric viewpoint on regularity structures has arisen in works of the second author with a number of collaborators (see e.g. \cite{OSSW, LOTT, BOT}). In this viewpoint, one attempts to informally parameterise the nonlinear and infinite-dimensional `solution manifold' $\mathcal{M}$ of the equation \eqref{t0}. In particular, symmetries of the manifold $\mathcal{M}$ become a central guiding consideration.
That is,
one should aim to find counterterms $h_{\lambda \, \rho}$ such that
the solution manifold $\mathcal{M}_\rho$ for
the regularised and renormalised equation \eqref{t1} 
inherits the symmetries of $\mathcal{M}$ and converges as $\rho \to 0$ to $\mathcal{M}$.

\medskip
As is argued in more detail in \cite{BOT}, one informative symmetry is symmetry under rescaling.
However, this rescaling does not leave the value of $\lambda$ invariant and thus motivates that one should consider all possible $\lambda \in \mathbb{R}$ at once.
In particular, in seeking a parameterisation, it becomes natural to consider the formal solution manifold as being at least partly parameterised by the coupling constant $\lambda$. By considering the case $\lambda = 0$ we see that this parameterisation does not completely describe the solution manifold since in that setting, it is clear that $\mathcal{M}$ is an affine space over the kernel of the operator $L$. Since 
\begin{align} \label{t67}
    \operatorname{ker} L \subset \{\text{space-time analytic functions $p$}\} ,
\end{align}
it becomes convenient to enrich our parameterisation to be in the couple $(\lambda, p)$ .
This leads to the ansatz that the solution may be described as a
formal power series in these variables\footnote{of course, one does not expect this series to actually converge}
of the form
	\begin{align} \label{t56}
		u ( \cdot ) = \sum_{\beta} \Pi_{x \beta} ( \cdot) \, \lambda^{\beta ( \mathfrak{3} )} \prod_{\mathbf{n} \in \mathbb{N}_0^{1+ d}} \bigg( \frac{1}{\mathbf{n} !}\partial^{\mathbf{n}} p ( x ) \bigg)^{\beta ( \mathbf{n} )} ,
	\end{align}
where the sum runs over the multi-indices $\beta$ (i.e.\ the compactly supported $\beta \colon \lbrace \mathfrak{3} \rbrace \sqcup \mathbb{N}_0^{1 + d} \to \mathbb{N}_0$) 
and where we have denoted by $\mathbf{n}! = \prod_{i = 0}^d n_i !$ for $\mathbf{n} = ( n_0, \cdots, n_{d} ) \in \mathbb{N}_0^{1 + d}$.
As is argued in more detail in a very similar setting in \cite{BOT} (see also \cite{LOTT} for the original quasilinear variant of the problem), this ansatz leads to a hierarchy of
linear PDEs
for the coefficients $\Pi_{x \beta}$ of the power series around a given basepoint $x$ indexed by multi-indices $\beta$.
These coefficients, along with their change of basepoint maps, fulfill the axioms required of a model as given in \cite[Definition 2.17]{Hai14}.

\medskip
Nonetheless, this formulation is not sufficient to adopt the fixed-point formulation as given in \cite{Hai14}. The reason is that the model constructed in this way provides a leaner set of renormalised data, capable of describing only the solution manifold itself; rather than a larger ambient linear space in which Hairer's fixed point argument would be formulated. As a result, until this paper, works in this multi-index-based setting (as opposed to Hairer's tree-based setting) have so far constructed the model in different contexts \cite{LOTT, BOT, GvalaniTempelmayr} and have provided a priori estimates for a quasilinear problem \cite{OSSW}.
These a priori estimates are local and invest smallness in the sense of the supremum norm of the solution and as such they do not yield an existence result even via compactness arguments.
In this work, we remedy this situation
and provide a full small-parameter
(i.e.\ $|\lambda| \ll 1$)
well-posedness theory for the suitably interpreted analogue of \eqref{t-1}.
We view our result as
a proof of concept for more general semilinear problems.

\medskip
To achieve this, we build on the overall strategy for a priori estimates laid out in \cite{OSSW}, now incorporating boundary data to obtain global a priori estimates under an assumption of smallness of the coupling constant.
A novelty of our approach is that 
we perform these estimates at the level of a new notion of \emph{robust formulation} (see Definition~\ref{rf03}) 
which makes sense even when $\rho = 0$.
This robust formulation replaces the nonlinear PDE \eqref{t1} with a linear PDE which is coupled to a nonlinear algebraic constraint for corresponding modelled distributions. 
Perhaps surprisingly, this point of view even allows us to obtain uniqueness, by adapting the a priori estimates to the difference of two solutions of the robust formulation. 
Our notion of robust formulation reveals that for this task one should focus on the algebraic constraints since the PDE ingredients are essentially linear.
Furthermore, we provide an existence result based on a
continuity method
in the spirit of classical PDE theory, 
taking as input the above a priori estimates alongside novel a priori estimates for the linearisation of the equation, coming with its own notion of robust formulation.

\medskip
Our approach is reminiscent of Davie's approach to rough paths \cite{Dav08}, see also the recent monograph \cite{CGZ} for a modern introduction.
Indeed, both approaches avoid using a fixed-point argument.
Rather, 
they appeal to a more intrinsic point of view, 
where a priori estimates for solutions,
as opposed to estimates on operations on linear function spaces,
play a crucial role in the context of establishing a solution theory. We mention here also that as this paper was in the final stages of preparation, the preprint \cite{BBH25} obtained solutions to rough differential equations based on multi-indices via the `log-ODE' method. Since this method is specifically adapted to the ODE setting, it is unclear if that approach would yield any results in the setting of singular SPDE.

\medskip
Our main new result, Theorem~\ref{main_result}, 
requires some terminology, in particular the notions of models and modelled distributions,
and is stated in Subsection~\ref{ss:main_results} below.
This result will fill the role of the analytic part of the solution theory, assuming the probabilistic construction of a model as input. 
In combination with a suitable periodic analogue of\footnote{we do not perform this adaptation here, but see Appendix~\ref{s:model} for a brief sketch of the changes needed} \cite{BOT}, the main result of this paper would yield the following consequence. 
\begin{theorem*}
Let (the law of) $\xi$ satisfy a periodic version of the spectral gap inequality stated in \cite{BOT}, 
see \eqref{t100} below.
Let $\psi$ be a Schwartz function with $\int \psi = 1$, $\psi_{\rho}$ be the corresponding family of (parabolic) mollifiers, 
and $\xi_{\rho} \coloneqq \xi * \psi_{\rho}$ the corresponding mollified version of the noise.
Then there exists a random variable $\lambda_0>0$ depending only on $\alpha, d$ (along with the torus size and on $\xi$),
such that for all $| \lambda | \leq \lambda_0$ there is a deterministic sequence $h_{\lambda \, \rho}$ of counterterms and a sequence $u_{\rho}$ of solutions to \eqref{t1}, 
such that $u_{\rho}$ converges in probability in the parabolic $\alpha$-H\"older space to a non-trivial distribution $u$.
\end{theorem*}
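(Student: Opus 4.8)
The plan is to obtain this statement by feeding a probabilistically constructed model into the purely analytic Theorem~\ref{main_result}. The first ingredient, which we do not carry out here but which is the content of a periodic adaptation of \cite{BOT} (sketched in Appendix~\ref{s:model}), is the construction from the spectral gap inequality \eqref{t100} of a family of admissible models $\mathsf{M}_\rho$ indexed by $\rho\in[0,1]$, where for $\rho>0$ the model $\mathsf{M}_\rho$ is the (BPHZ-)renormalised lift of the mollified noise $\xi_\rho$ and $\mathsf{M}_0$ is the limiting model, together with the two stochastic facts that $\sup_{\rho\in[0,1]}\|\mathsf{M}_\rho\|$ is almost surely finite with moments of all orders, and that $\mathsf{M}_\rho\to\mathsf{M}_0$ as $\rho\downarrow 0$ in probability with respect to the model norm, the limit being independent of the mollifier $\psi$. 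Intrinsic to this construction is the identification of the renormalisation: since each $\mathsf{M}_\rho$ is obtained from the canonical lift of $\xi_\rho$ by acting with a deterministic element of the renormalisation group, and since in the regime \eqref{t53} the only divergent contributions compatible with the symmetries of the model are proportional to the abstract analogue of $u$, this renormalisation translates at the level of the PDE into a deterministic scalar counterterm, which we record as the coefficient $h_{\lambda\,\rho}$ appearing in \eqref{t1} (a polynomial in $\lambda$ whose coefficients carry the divergences).

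With these models in hand we define $\lambda_0$ as follows. Theorem~\ref{main_result} supplies, for each value $M>0$ of the model norm, a threshold $\lambda_*(M)>0$ such that for $|\lambda|\le\lambda_*(M)$ the robust formulation (Definition~\ref{rf03}) built on any model of norm $\le M$ is well posed, with a solution map that is Lipschitz in the model norm, uniformly over $\rho\in[0,1]$. We then set $\lambda_0\coloneqq\lambda_*\big(\sup_{\rho\in[0,1]}\|\mathsf{M}_\rho\|\big)$, which is strictly positive almost surely and depends only on $\alpha$, $d$, the torus size and $\xi$. Fix $|\lambda|\le\lambda_0$. For each $\rho\in[0,1]$ Theorem~\ref{main_result} produces a solution of the robust formulation built on $\mathsf{M}_\rho$; denote by $u_\rho$ its reconstruction, an element of the parabolic $\alpha$-H\"older space. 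For $\rho>0$ one must check that $u_\rho$ genuinely solves the classical renormalised PDE \eqref{t1}: this is a consistency statement between the robust formulation and the PDE for smooth noise, proved by inserting the Taylor jet encoded in the modelled distribution into the nonlinear algebraic constraint, using that for a smooth renormalised model the reconstruction of the abstract cube is the classical cube corrected by precisely the counterterm $h_{\lambda\,\rho}u$, and invoking uniqueness for the (classical, parabolic) PDE \eqref{t1}.

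It remains to pass to the limit $\rho\downarrow 0$ and to argue non-triviality. Convergence of $u_\rho$ to $u\coloneqq u_0$ in the parabolic $\alpha$-H\"older space follows by composing the uniform-in-$\rho$ Lipschitz continuity of the solution map with the convergence $\mathsf{M}_\rho\to\mathsf{M}_0$ in the model norm; since the latter holds in probability and the Lipschitz constant is deterministic on each event $\{\sup_{\rho}\|\mathsf{M}_\rho\|\le M\}$, the former holds in probability as well. The subtlety that $\lambda_0$ is random is handled by working on the event $\{|\lambda|\le\lambda_0\}$, whose probability tends to $1$ as $\lambda\to 0$; independence of $u$ from $\psi$ is inherited from the corresponding property of $\mathsf{M}_0$. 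For non-triviality, observe that at $\lambda=0$ the robust formulation reduces to the linear equation $Lv=P\xi$ together with a trivial algebraic constraint, whose solution $L^{-1}P\xi$ is a non-zero distribution whenever $\xi$ is; by the continuous dependence of the solution on $\lambda$ furnished by the a priori estimates underlying Theorem~\ref{main_result}, $u$ remains a non-zero distribution for $|\lambda|$ sufficiently small.

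I expect the main obstacle, beyond the (here assumed) periodic model construction, to be the consistency step: carefully matching the nonlinear algebraic constraint of the robust formulation against the Taylor jet of $u_\rho$ so as to verify that the reconstruction solves \eqref{t1} exactly, with exactly a scalar counterterm $h_{\lambda\,\rho}u$ and no spurious lower-order terms --- that is, tracking the renormalisation through the robust formulation rather than through a fixed-point map as in the tree-based setting. A secondary technical point is the uniformity in $\rho\in[0,1]$ of all the constants entering Theorem~\ref{main_result}, including the threshold $\lambda_*$, which is precisely what legitimises interchanging the $\rho\to 0$ limit with the solution construction.
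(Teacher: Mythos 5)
Your proposal follows essentially the same route as the paper: the theorem is obtained by feeding the (only sketched, in Appendix~\ref{s:model}) periodic model construction satisfying Assumptions~\ref{ass1} and \ref{ass2} into Theorem~\ref{main_result}, choosing $\lambda_0$ via the deterministic smallness threshold evaluated at the a.s.\ finite model norm, and converting convergence of the models into parabolic $\alpha$-H\"older convergence of $u_\rho$ through the decomposition \eqref{t48} together with the Lipschitz dependence of the solution on the model. The consistency step you single out as the main obstacle is already contained in Theorem~\ref{existence} (see \eqref{t30}): for a smooth model the construction produces a classical solution of \eqref{t1} whose lift solves the robust formulation (Lemma~\ref{l:c}), so identifying $u_\rho$ requires only the uniqueness of the robust-formulation solution from Theorem~\ref{main_result}, not uniqueness for the classical PDE.
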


\medskip

\subsection*{Article Structure}
In Section~\ref{s:ass_res}, we describe, first heuristically then rigorously, our notion of model, whilst also introducing our main notations.
We then state our main result, Theorem~\ref{main_result}.

\medskip

In Section~\ref{s:strat}, we provide a detailed description of our proof strategy.
Theorem~\ref{main_result} is in fact obtained as a consequence of three separate results:
an a priori estimate (Theorem~\ref{a priori})
a statement of continuity in the model topology (Theorem~\ref{uniqueness}) 
and a statement of existence (Theorem~\ref{existence}).
The remaining sections are devoted to the proofs.
Namely, Section~\ref{s:proof_a_priori} proves Theorem~\ref{a priori},
Section~\ref{Con} proves Theorem~\ref{uniqueness}
and Section~\ref{ss:exist} proves Theorem~\ref{existence}.

\section{Assumptions and Main Results} \label{s:ass_res}

\subsection{On the (Deterministic) Model}

\subsubsection{Setting and Notations}
Before stating rigorous definitions in Subsection~\ref{ss:defs} below, 
here we briefly introduce some language and notations which we will use throughout this paper.
We refer the interested reader to \cite{BOT} for a more exhaustive discussion, 
see also \cite{LOT, LOTT, OSSW, GvalaniTempelmayr} for an analogous setting in the case of a quasilinear equation.

\medskip
\textbf{Algebraisation}.
Since the ansatz \eqref{t56} takes the form of a formal power series expansion, it is conveniently algebraised by embedding it into the algebra of formal power series in the abstract variables\footnote{we follow the notation of \cite{BOT} in our use of the dummy letter $\mathfrak{3}$, which comes from the cubic nonlinearity in \eqref{t0}}
$\mathsf{z}_{\mathfrak{3}}$ and $\lbrace \mathsf{z}_{\mathbf{n}}: \mathbf{n} \in \mathbb{N}_0^{1 + d} \rbrace$.
Namely we will denote\footnote{we emphasise that the sum over $\beta$ is infinite and should be interpreted as an element of the (rigorously defined) space $X [[ \mathsf{z}_{\mathfrak{3}}, \mathsf{z}_{\mathbf{n}} ]]$ of formal power series in $\mathsf{z}_{\mathfrak{3}}$ and $(\mathsf{z}_{\mathbf{n}})_{\mathbf{n}}$}
	\begin{align}
		\Pi_x ( \cdot ) \coloneqq \sum_{\beta} \Pi_{x \beta} ( \cdot ) \, \mathsf{z}^{\beta} \in X [[ \mathsf{z}_{\mathfrak{3}}, \mathsf{z}_{\mathbf{n}} ]] , \nonumber
	\end{align}
where
$X = C^{\infty} ( \mathbb{R}^{1+d} )$ denotes the algebra of 
space-time 
(smooth)
functions,
and 
	\begin{align}
		\mathsf{z}^{\beta} \coloneqq \mathsf{z}_{\mathfrak{3}}^{\beta ( \mathfrak{3} )} \prod_{\mathbf{n} \in \mathbb{N}_0^{1 + d}} \mathsf{z}_{\mathbf{n}}^{\beta ( \mathbf{n} )} . \nonumber
	\end{align}

Plugging into the renormalised equation \eqref{t1} suggests that $\Pi_x$ should also satisfy a PDE of the form
	\begin{align}
		L \Pi_x = \Pi_x^- \text{ mod analytic functions} , \qquad \Pi_x^{-} \coloneqq \mathsf{z}_{\mathfrak{3}} \Pi_x^3 + c \Pi_x + \xi \mathsf{1} . \label{t65}
	\end{align}
Here 
$c \coloneqq \sum_{k \geq 1} c_k \mathsf{z}_{\mathfrak{3}}^k$ should be such that $h_{\lambda} = c[\lambda, p] = \sum_{k \geq 1} c_k \lambda^k$,
$\mathsf{1}$ 
is\footnote{contrary to the tree-based literature, here the element $\mathsf{1}$ does \emph{not} correspond to the constant polynomial, which is in fact associated to $\mathsf{z}_{\mathbf{0}}$}
the unit element of the algebra $X [[ \mathsf{z}_{\mathfrak{3}}, \mathsf{z}_{\mathbf{n}} ]]$ with coefficients $\mathsf{1}_{\beta} = 0$ unless $\beta = 0$ in which case the coefficient is given by the unit element of $X$.

\medskip
\textbf{Population conditions.}
As it turns out, only a strict subset of multi-indices $\beta$ contribute to the hierarchy of linear PDEs \eqref{t65}.
More precisely, it is shown in \cite[Section~1.9]{BOT} that \eqref{t65} is compatible with the property that $\Pi_{x \beta}$ and $\Pi_{x \beta}^-$ vanish unless $\beta$ is \emph{populated}, where we say that
\begin{align*}
\begin{array}{l}
\beta\;\mbox{is \emph{purely polynomial (pp)}}
\mbox{ iff }
\beta=\delta_{\bf n}\;\mbox{for some ${\bf n}$},\\
\beta\;\mbox{is \emph{populated}}
\mbox{ iff }
\beta\;\mbox{is pp or of the form $\delta_{\mathfrak{3}}\hspace{-.4ex}+\hspace{-.4ex}\sum_{j=1}^3\delta_{{\bf n}_j}$ or}\;[\beta]\geq 0.
\end{array}
\end{align*}
Here, by $\delta_{\mathfrak{3}}$ and $\delta_{\mathbf{n}}$ we mean the multi-index
$\lbrace \mathfrak{3} \rbrace \sqcup \mathbb{N}_0^{1 + d} \to \mathbb{N}_0$ that associates the value 1 to the indices $\mathfrak{3}$ and $\mathbf{n}$, respectively, and 0 elsewhere.
Furthermore, $[\beta] \coloneqq 2 \beta ( \mathfrak{3} ) - \sum_{\mathbf{n}} \beta ( \mathbf{n} ) \in \mathbb{Z}$ is the homogeneity in the noise.
The name `purely polynomial' comes from the expression $\Pi_{x \delta_{\mathbf{n}}} ( \cdot ) = (\cdot - x)^{\mathbf{n}}$, which is easily seen to be compatible with the expansion \eqref{t56} when $\lambda=0$.

\medskip
\textbf{Homogeneity}.
As motivated in \cite[Section~1.10]{BOT},
the scaling behaviour of $\Pi_{x \beta}$ is 
characterised by an exponent $| \beta | \in \mathbb{R}$, called the homogeneity of $\beta$ and defined by the expression
	\begin{align} \label{t45}
		|\beta| = \alpha + 2 \beta ( \mathfrak{3} ) (1 + \alpha) + \sum_{\mathbf{n}} \beta ( \mathbf{n} ) ( | \mathbf{n} | - \alpha ) .
	\end{align}
Throughout this paper, we will assume for populated $\beta$ that
	\begin{align} \label{t93}
		\text{$| \beta | \in \mathbb{Z}$ implies $\beta$=pp or of the form $\delta_{\mathfrak{3}}\hspace{-.4ex}+\hspace{-.4ex}\sum_{j=1}^3\delta_{{\bf n}_j}$,} 
	\end{align}
which follows e.g.\ by \eqref{t45} if one further assumes\footnote{this is a very mild assumption since one may usually reduce infinitesimally the value of $\alpha$ at no cost}
$\alpha \notin \mathbb{Q}$, and is in line with \cite[Assumption~5.3]{Hai14}.

\medskip
\textbf{Further notations}.
In order to rigorously state the analytic estimate of the model, and throughout the remainder of this paper, 
we will use the following notation:
given
any Schwartz function
$\psi \in \mathcal{S} (\mathbb{R}^{1 + d})$ and $\mu >0$, we will denote by $\psi_{\mu}$ its corresponding parabolic rescaling, that is, 
		\begin{align*}
		\psi_{\mu} ( x ) \coloneqq \mu^{- D} \psi \big( \mu^{-2} x_0, \mu^{-1} x_1 , \cdots , \mu^{-1} x_d \big) , 
	\end{align*}
where $D \coloneqq 2 + d$
is the `effective dimension' coming from the parabolic scaling.
Furthermore, given a Schwartz distribution $F$,
we will denote, 
keeping $\psi$ implicit, 
	\begin{align}
		F_{\mu} ( x )
		\coloneqq \langle F, \psi_{\mu} ( x - \cdot ) \rangle , \label{t87}
	\end{align}
so that\footnote{we note that there is essentially only a notational difference with the perhaps more standard choice of pairing $\langle F, \psi_{\mu} ( \cdot - x ) \rangle$}
$F_{\mu} ( x ) = F * \psi_{\mu} ( x )$ amounts to an averaging of $F$ at scale $\mu$ around $x$.
Finally, distances in $\mathbb{R}^{1 + d}$ will be measured in terms of the
parabolic Carnot--Carath\'eodory metric
	\begin{align*}
		| x | \coloneqq \sqrt[4]{ | x_0 |^2 + \Big( \sum\nolimits_{i=1}^d x_i^2 \Big)^2} ,
		\quad \text{for } x = (x_0, \cdots , x_d) .
	\end{align*}

\subsubsection{Rigorous Definitions} \label{ss:defs}

In the spirit of \cite{OSSW}, the majority of this work will be interested in providing a deterministic analysis for \eqref{t1} under the assumption that a suitable (deterministic) model is given. 

\medskip
We first make precise our definition of a model.
Note that at this stage we do not assume any smoothness, so that expressions such as the second item in \eqref{t65} do not yet make sense and must be added later alongside a suitable smoothness assumption.
Similarly, this definition does not yet reflect the choice of periodic boundary conditions.
This definition stays mostly faithful to Hairer's original definition 
(cf.~ \cite[Definition 2.17, Assumption 5.3]{Hai14}).

\medskip
For conciseness, we express this definition in terms of the distinguished Schwartz function
	\begin{align}
		\psi \coloneqq \mathcal{F}^{-1} \big( q \mapsto \exp ( -| q |^4) \big) , \label{t90}
	\end{align}
where
$\mathcal{F}$ denotes the Fourier transform\footnote{defined by $\mathcal{F} ( \psi ) \colon q \mapsto \int_{\mathbb{R}^{1 + d}} d x \, \psi ( x ) \exp ( - i x \cdot q )$} on $\mathcal{S}(\mathbb{R}^{1+d})$.

\medskip
We now fix once and for all a length $\ell > 0$, which we think of as the torus size.
Since we are interested in \emph{space-time} periodicity, 
throughout this paper
by periodicity we will mean periodicity w.~r.~t.~ the 
parabolic lattice $\ell^2 \mathbb{Z} \times ( \ell \mathbb{Z} )^d$.

\begin{definition}[Model] \label{d:model}
    A \emph{model}
    is a triple $(\Pi, \Pi^-, \Gamma^*)$ where 
$\Pi_x, \Pi_x^- : \mathcal{S}(\mathbb{R}^{1+d}) \to \mathbb{R}[[\mathsf{z}_{\mathfrak{3}}, \mathsf{z}_{\mathbf{n}}]]$ are a family of (linear) maps indexed by $x \in \mathbb{R}^{1+d}$ 
and $\Gamma_{x y}^*\in\mathrm{End}(\mathbb{R}[[\mathsf{z}_{\mathfrak{3}}, \mathsf{z}_{\mathbf{n}}]])$ is a family of endomorphisms indexed by $x, y \in \mathbb{R}^{1 + d}$,
such that
for all $\eta \in \mathbb{R}$
		\begin{align}
            \| \Pi \|_{\eta} & \coloneqq \sup_{\substack{x \in \mathbb{R}^{1 + d} \\ |\beta| < \eta}} 
             \Big( \sup_{\mu < \infty} \mu^{-|\beta|} |\Pi_{x\beta \mu}(x)| + 
         \sup_{\mu < \ell} \mu^{-(|\beta| - 2)} |\Pi_{x\beta \mu}^-(x)| \Big) < \infty , \label{mb01}
          \\ 
          \|\Gamma^*\|_{\eta} & \coloneqq \sup_{\substack{x,y \in \mathbb{R}^{1+d} \\ |\beta|, |\gamma| < \eta}}
        |y-x|^{|\gamma| - |\beta|} |(\Gamma_{xy}^*)_\beta^\gamma| < \infty , \label{mb03}
        \end{align}

where convolution is with respect to the kernel defined in \eqref{t90}.
   This triple should satisfy the (algebraic) conditions\footnote{note that the third condition in \eqref{mb04} along with \eqref{mb05} imply the identity $\Gamma_{x x}^* = \mathrm{id}$}
    \begin{align}
        &\Gamma_{xy}^* \Pi_y = \Pi_x, \qquad \Gamma_{xy}^* \Pi_y^- = \Pi_x^-, \qquad \Gamma_{xy}^* \Gamma_{yz}^* = \Gamma_{xz}^* , 
        \label{mb04}
        \\
        & (\Gamma_{xy}^* - \operatorname{id})_\beta^\gamma = 0 \text{ unless } |\gamma| > |\beta| , \label{mb05}
     \end{align}
We also require that\footnote{note that the first two conditions in \eqref{mb07} imply the identity $\Gamma^* \mathsf{1} = \mathsf{1}$ \label{fn:1}}
     \begin{alignat}{3}
     	 & \Pi_{x \delta_{\mathbf{n}}} = (\cdot - x)^{\mathbf{n}}, \quad \hspace{-.5ex}
        && \Pi_{x \delta_{\mathbf{n}}}^- = 0,
        \quad \hspace{-.5ex}
        && \Pi_{x \beta}^- = \sigma_{\beta} (\cdot - x)^{\bf n} , \label{mb08} \\
        & \Gamma^* \pi \tilde{\pi} = (\Gamma^* \pi) \Gamma^* \tilde{\pi} , 
		\quad \hspace{-.5ex}
        && \Gamma_{x y}^* \mathsf{z}_{\mathfrak{3}} = \mathsf{z}_{\mathfrak{3}} ,
        \quad \hspace{-.5ex}
        && 
       (\Gamma_{x \, x + h}^*)_{\delta_{\mathbf{m}}}^{\gamma} \hspace{-.5ex} = \hspace{-.5ex}
        \sum_{\mathbf{n}} \binom{\mathbf{m}}{\mathbf{n}} h^{\mathbf{m} - \mathbf{n}} 
        \mathbf{1}_{\lbrace \gamma=\delta_{\mathbf{n}} \rbrace} ,
        \label{mb07} 
    \end{alignat}
where in the last expression of \eqref{mb08}, $\beta$ is of the form
$\beta=\delta_{\mathfrak{3}}+\sum_{j=1}^3\delta_{{\bf n}_j}$, 
${\bf n}=\sum_{\bf m}{\bf m}\beta({\bf m})$ and 
$\sigma_{\beta}:=(\sum_{\bf m} \beta ({\bf m})) !/\prod_{\bf m}(\beta({\bf m})!)$ is a combinatorial factor.
Finally, we demand the following population conditions: for $\beta$ populated and $\gamma$ non populated, 
	\begin{align} \label{mb09}
		\Pi_{x \gamma} & = 0, 
		\qquad \Pi_{x \gamma}^{-} = 0 ,
		\qquad (\Gamma_{x y}^*)_{\gamma}^\beta = 0 .
	\end{align}
\end{definition}

This definition is very similar to the one of Hairer \cite[Definition 2.17]{Hai14}. It expresses the analytic constraints in \eqref{mb01}, \eqref{mb03}, the algebraic constraints in \eqref{mb04} \eqref{mb05} and the standard action on the polynomial part of the structure in \eqref{mb08}, \eqref{mb07}. In the following two remarks we highlight two minor differences to the definition of Hairer. 
\begin{remark}
[Dual Perspective]
We adopt a dual perspective in which $\Gamma^*$ acts on $\Pi, \Pi^-$ whilst Hairer's $\Gamma$ would act on trees.
We emphasise that the triangular structure in \eqref{mb05} has the opposite direction to that of Hairer. Whilst the first expression in \eqref{mb07} at first appears similar to the multiplicativity that holds in the tree-based setting, it expresses a different algebraic structure naturally arising from the multi-index setup. 
\end{remark}

\begin{remark}
[On the Analytic Bounds]
The analytic constraints \eqref{mb01} and \eqref{mb03} encode global (as opposed to local) bounds in the basepoint and a different upper bound on length scales. 
Both are related to our choice of space-time periodic boundary conditions, which require an adaptation of the construction given in \cite{BOT}. 
Our bounds also differ from Hairer's in that we impose bounds on $\Pi, \Pi^-$ only when tested against a distinguished kernel rather than uniformly over a ball of test functions. However, by the change of kernel result contained in Lemma~\ref{kernel_swap}, our condition is equivalent to a bound holding uniformly over test functions $\psi$ satisfying $\|\psi\|_p \le 1$
for a sufficiently large choice of\footnote{unlike in the setting of compactly supported test functions, the minimal value of $p$ depends on $\eta$ in addition to $\alpha$ and $d$ due to requirements of integrability at $\infty$} $p$ where we write
\begin{align} \label{ssn}
    \|\psi\|_p = \sum_{|\mathbf{n}|, |\mathbf{m}| \le p} \sup_{x \in \mathbb{R}^{1+d}} |x^{\mathbf{n}} \partial^{\mathbf{m}} \psi(x)|
\end{align}
for the $p$-th Schwartz seminorm. We will freely make use of this equivalence throughout the paper.
\end{remark}

In addition to Definition~\ref{d:model}, 
we need to encode the data of the PDE in the form of the differential operator and the boundary condition.

\begin{assumption}\label{ass1}
We assume to be given a model $(\Pi, \Pi^-, \Gamma^*)$ with the following additional properties.
We assume that
       \begin{align}\label{Hi01}
        L \Pi_{x \beta} &= \Pi_{x \beta}^- \text{ mod polynomials of degree $\leq | \beta |$ ,}
    \end{align}
and that\footnote{the first item in \eqref{t15} should of course be interpreted distributionally in the usual way}
for $x, y \in \mathbb{R}^{1 + d}, h \in \ell^2 \mathbb{Z} \times ( \ell \mathbb{Z} )^{d}$,
and multi-indices $\beta, \gamma$,
	    \begin{align} 
    		\Pi_{x + h \, \beta} ( \cdot + h ) = \Pi_{x \beta} ( \cdot ) , 
    		\qquad ( \Gamma_{x + h \, y + h}^* )_{\beta}^{\gamma} = ( \Gamma_{x y}^* )_{\beta}^{\gamma} . \label{t15}
    	\end{align} 
\end{assumption}

As is typical in regularity structures, our solution theory will actually require only the components of the model up to some fixed truncation order.
We now fix a parameter $\kappa$ that will play the role of this truncation order. 
For technical reasons, we assume that
	\begin{align}
		\kappa \in (2, \infty ) \setminus \lbrace |\beta| : \beta \text{ is a populated multi-index} \rbrace . \label{rs01}
	\end{align}

\medskip

Our next definition is that of a smooth model, which will be needed in the existence part of our well-posedness result.
In practice, it is often constructed through mollification of the noise so that $\xi_{\rho}$ and all the subsequent components of $\Pi, \Pi^-$ are actually smooth.
However, it turns out that we only need limited H\"older regularity instead of smoothness. This can be compared with \cite[Remark 8.28]{Hai14}.
Thus, we prefer to state 
relatively minimalistic
assumptions,
which are expressed in terms of the truncation parameter $\kappa$ introduced just above.

\begin{definition}[Smooth model] \label{d:smooth}
We define a \emph{smooth model} to be a model satisfying Assumption~\ref{ass1} such that for each multi-index $\beta$,
$\Pi_{x\beta}$ is a (locally) $\kappa$-H\"older function
and is such that
\begin{align}\label{Hi02}
    \Pi_{x}^- = \mathsf{z}_{\mathfrak{3}} \Pi_x^3 + c \Pi_x + \xi \mathsf{1}
\end{align}
for a given $\xi$ and some $c = \sum_{k \ge 1} c_k \mathsf{z}_{\mathfrak{3}}^k$ with $c_k \in \mathbb{R}$ and $c_k = 0$ unless $k < (1+ \alpha)^{-1}$.
\end{definition}
\begin{remark} \label{r:1}
We note that Definition~\ref{d:smooth} does not stipulate the regularity of $\Pi^-$.
However, since $\Pi_{x \beta}$ is (locally) $\kappa$-H\"older, it follows from \eqref{Hi02} and \eqref{Hi01} that
for $\beta = 0$, $\Pi_{ x \, \beta = 0}^- = \xi = L \Pi_{x \, \beta = 0}$ is (locally) $(\kappa - 2)$-H\"older and
for $\beta \neq 0$, $\Pi_{x \beta}^-$ is (locally) $\kappa$-H\"older.

\medskip
We also note that the choice of a smooth model uniquely determines $c$ via \eqref{Hi02}.
\end{remark}

\begin{assumption}\label{ass2}
    We fix a model as in Assumption~\ref{ass1} and assume to be given a sequence of smooth models $(\Pi^{\rho}, \Pi^{-,\rho}, \Gamma^{*,\rho})_{\rho \downarrow 0}$ such that
    \begin{align*}
        \|\Pi - \Pi^{\rho}\|_{\kappa} + \|\Gamma^* - \Gamma^{*, \rho} \|_{\kappa} \to 0
    \end{align*}
    as $\rho \downarrow 0$.
\end{assumption}

\begin{remark}
Let us mention again that the existence of models satisfying Assumptions~\ref{ass1} and \ref{ass2} does \emph{not} follow from the related paper \cite{BOT}. 
This is because the models constructed therein do not satisfy the periodic boundary condition which we demand here in the form of \eqref{t15}.
In the remainder of the present article, we take Assumptions~\ref{ass1} and \ref{ass2} for granted and do not give a full construction of such a sequence of models, see however Appendix~\ref{s:model} for a sketch of the necessary adaptations to \cite{BOT}.
\end{remark}

\subsection{Modelled Distributions and Robust Formulation}

\medskip
\textbf{Motivation of robust formulation}.
Let us motivate and introduce our robust notion of solution for \eqref{t1}.
We return to the formal power series expansion \eqref{t56}.
We emphasise that in general,
this expansion 
is not expected to converge. 
In practice, this ansatz thus needs to be modified, which we do in two ways.
On the one hand, we truncate this expansion
at the order $\kappa$,
with hope that it produces a sufficiently good local approximation of the desired solution $u$ around the point $x \in \mathbb{R}^{1 + d}$.
In order to still have good approximations around any given point, we are led to consider the family of expansions indexed by the basepoint $x \in \mathbb{R}^{1 + d}$.
On the other hand, 
we need to provide coefficients $\frac{1}{\mathbf{n}!} \partial^{\mathbf{n}} p ( x ) \eqqcolon f_x . \mathsf{z}_{\mathbf{n}}$ such that the family of local approximations corresponds to a single distribution that satisfies the boundary constraint for the problem.
Because of the truncation, we will actually never need those $\mathbf{n}$ with $| \mathbf{n} | > \kappa$, so that we will not attempt to construct
a full analytic function $p$.
Rather, we replace the problem with that of finding 
a finite number of
scalar fields $(f_x . \mathsf{z}_{\mathbf{n}})_{x, \mathbf{n}}$, 
$x \in \mathbb{R}^{1 + d}$, 
$| \mathbf{n} | < \kappa$,
such that
for all $x$, 
	\begin{align} \label{t57}
		u ( \cdot ) = \sum_{| \beta | < \kappa} \Pi_{x \beta} ( \cdot) \, \lambda^{\beta ( \mathfrak{3} )} \prod_{\mathbf{n} \in \mathbb{N}_0^{1 + d}} (f_x . \mathsf{z}_{\mathbf{n}} )^{\beta ( \mathbf{n} )} + O ( | \cdot - x |^{\kappa} ) .
	\end{align}
The r.~h.~s.~of \eqref{t57} inspires Definition~\ref{rf01} of
what 
we will call
a
modelled distribution $f$ in this work.
On the other hand,
our robust formulation, 
Definition~\ref{rf03},
makes the notion of approximation 
that is denoted by $O ( | \cdot - x |^{\kappa} )$ in \eqref{t57} precise, where we take again $\kappa$ as in \eqref{rs01}.
\medskip

Given $\eta \in \mathbb{R}$, we will denote by 
	\begin{align} \label{nr11}
		Q_{\eta} \colon \mathbb{R} [[ \mathsf{z}_{\mathfrak{3}}, \mathsf{z}_{\mathbf{n}} ]] \to \mathbb{R} [ \mathsf{z}_{\mathfrak{3}}, \mathsf{z}_{\mathbf{n}} ] \subset  \mathbb{R} [[ \mathsf{z}_{\mathfrak{3}}, \mathsf{z}_{\mathbf{n}} ]] ,
	\end{align}
the projection\footnote{the fact that $Q_{\eta}$ is indeed valued in the polynomial algebra $\mathbb{R} [ \mathsf{z}_{\mathfrak{3}}, \mathsf{z}_{\mathbf{n}} ]$ follows from the coercivity of the homogeneity $| \cdot |$, recall \cite[Section~1.9]{BOT}}
onto multi-indices of homogeneity $| \cdot | < \eta$.

\begin{definition}\label{rf01}
A \emph{modelled distribution} is an 
	\begin{align*}
		f \colon \mathbb{R}^{1 + d} \to ( \mathbb{R} [\mathsf{z}_{\mathfrak{3}}, \mathsf{z}_{\mathbf{n}}] )^* ,
	\end{align*}
of the form
				\begin{align}\label{rf02}
					f_x . \pi = \sum_\beta \pi_{\beta} \lambda^{\beta ( \mathfrak{3} )} \prod_{| \mathbf{n} | < {\kappa}} ( f_x . \mathsf{z}_{\mathbf{n}} )^{\beta ( \mathbf{n} )} .
				\end{align}
Equivalently, 
for all 
$\pi, \tilde{\pi} \in \mathbb{R} [\mathsf{z}_{\mathfrak{3}}, \mathsf{z}_{\mathbf{n}}]$, one has\footnote{note that, provided $\lambda \neq 0$, the conditions \eqref{t8} and \eqref{t8b} imply $f_x.\mathsf{1} = 1$ \label{fn:2}} 
			\begin{align}
				f_x . \pi \tilde{\pi} & = (f_x . \pi) ( f_x . \tilde{\pi} ) , \label{t8} \\
				f_x . \mathsf{z}_{\mathfrak{3}} & \eqqcolon \lambda \quad \text{does not depend on\footnotemark $\, x$}, \label{t8b} \\
				f_x . \mathsf{z}_{\mathbf{n}} & = 0 \quad \text{for $| \mathbf{n} | > {\kappa}$, $\mathbf{n} \in \mathbb{N}_0^{1 + d}$}. \label{t5}
			\end{align}	
\footnotetext{note that \eqref{md01} in combination with the fact that $\Gamma^* \mathsf{z}_{\mathfrak{3}} = \mathsf{z}_{\mathfrak{3}}$ already imply that $x \mapsto f_x . \mathsf{z}_{\mathfrak{3}}$ is constant}
Furthermore, we assume 
			\begin{align}\label{md01}
				\| f \|_{\kappa} \coloneqq \sup_{|\beta| < \kappa} \left ( \sup\limits_{x \in \mathbb{R}^{1 + d}} | f_x . \mathsf{z}^\beta | + \sup_{x, y \in \mathbb{R}^{1 + d}} \frac{| ( f_y . - f_x . Q_{\kappa} \Gamma_{x y}^* ) \mathsf{z}^\beta |}{| y - x |^{\kappa - | \beta |}} \right ) < \infty
			\end{align}
where the supremum in $\beta$ is over populated 
non-zero 
multi-indices.
\end{definition}

\begin{remark}[Comparison with {\cite[Definition~3.1]{Hai14}}]
	This definition is more restrictive than the corresponding notion in \cite{Hai14}. 
	The main difference is that our modelled distributions are assumed to be multiplicative, which corresponds to the notion of coherence in the tree-based literature; see \cite[Definition 3.20]{BCCH}. 
	We emphasise that as a consequence, our space of modelled distributions is not a linear
	space.
	In addition, as with our definition of a model, we enforce global rather than local bounds in \eqref{md01}, due to our choice of periodic boundary conditions. 
\end{remark}

\begin{remark}
To motivate the second quantity in the r.~h.~s.~ of \eqref{md01},
one can informally remove the truncation in \eqref{t57} and assume that $u = f_x . \Pi_x$ for all $x$.
Then by 
\eqref{mb04} 
we would have that 
$0 = f_y. \Pi_y - f_x . \Pi_x = (f_y. - f_x . \Gamma_{x y}^* ) \Pi_y$,
which suggests the 
reexpansion property $f_y = f_x \Gamma_{x y}^*$.
$\|f\|_\kappa$ 
quantifies the error in this reexpansion caused by the truncation at order $\kappa$. 
We refer to \cite[Section~3]{OSSW} for an expanded discussion.
\end{remark}

\begin{definition}\label{rf03}
We will say that a modelled distribution $f$ satisfies the \emph{robust formulation} of the equation \eqref{t1} 
with respect to the model $(\Pi, \Pi^-, \Gamma^*)$ if
	\begin{enumerate}
		\item the map $x \mapsto f_x$ is periodic,
		\item there exist periodic (Schwartz) distributions $u, u^-$ such that $\fint u = 0$, 
			\begin{align}\label{rfPDE}
				L u = P u^- , 
			\end{align}
			and the germs\footnote{we adopt the language of \cite{CZ} where a (measurable) map $\mathbb{R}^{1 + d} \to \mathcal{S}^{\prime} ( \mathbb{R}^{1 + d} )$ is called a Germ}
			\begin{align}\label{t12}
				R_x \coloneqq u - f_x . Q_{\kappa} \Pi_x, \quad \text{and}\quad R_x^- \coloneqq u^- - f_x . Q_{\kappa} \Pi_x^{-} ,
			\end{align}
			satisfy
			\begin{align}\label{t13}
				R_{x \mu} ( x ) \to 0 , \qquad R_{x \mu}^- ( x ) \to 0 ,
			\end{align}
			as $\mu \to 0$,
			locally uniformly in $x$, where in \eqref{t13} the convolution is with respect to the Schwartz function defined in \eqref{t90}.
	\end{enumerate}
\end{definition}

In \eqref{rfPDE}, we have denoted by $P$ the projection against (Schwartz) distributions of vanishing space-time average, the definition of which we now recall.
By standard distribution theory, 
periodic distributions $F$ admit a Fourier series with coefficients\footnote{formally written as 
$\hat{F} ( k ) = \int_{[ 0, \ell^2 ) \times [0, \ell)^d} d x \, F( x ) \exp(-i k \cdot x)$} $\hat{F} ( k )$ 
for 
$k \in 2 \pi/\ell^2 \mathbb{Z} \times (2 \pi/\ell \mathbb{Z})^d$.
The space-time average is defined by $\fint F \coloneqq \hat{F} ( 0 )$.
Accordingly, $P F \coloneqq F - \fint F$.

\begin{remark}[Consistency]
We note that
the conditions in Definitions~\ref{rf01} and \ref{rf03} relate to the modelled distribution $f$, whilst the desired solution is a distribution $u$.
In particular, even in the smooth case,
it might at first not seem clear how to pass from a smooth solution $u$ of \eqref{t1} to a corresponding modelled distribution $f$ satisfying the robust formulation.
This connection between the PDE and the robust formulation will be made precise in Lemma~\ref{l:c} below.
\end{remark}

\begin{remark}[Generalised Da Prato--Debussche] \label{r:gDP}
Note that since only multi-indices of the form $\beta = k \delta_{\mathfrak{3}}$ may have negative homogeneity, the relation $\Gamma_{x y}^* ( \mathsf{z}_{\mathfrak{3}}^k ) = \mathsf{z}_{\mathfrak{3}}^k$ implies that,
for these $\beta$, $\Pi_{x \beta}$ does not depend on $x$.
Hence, appealing to the model estimates \eqref{mb01} and \eqref{t13}, we learn that 
	\begin{align*}
		\big( u - \sum\nolimits_{|\beta|<0} \lambda^{\beta ( \mathfrak{3} )} \Pi_{0 \beta} - f_x . \mathsf{z}_{\mathbf{0}} \big)_{\mu} ( x )
		\to 0 
	\end{align*}
	as $\mu \to 0$.
As a consequence, the `generalised Da Prato--Debussche remainder' $u - \sum_{|\beta|<0} \lambda^{\beta ( \mathfrak{3} )} \Pi_{0 \beta}$ (compare with \cite[Section~5.5]{BCCH}) is actually a function which coincides with $x \mapsto f_x . \mathsf{z}_{\mathbf{0}}$.
\end{remark}

\subsection{Main Results} \label{ss:main_results}

 Throughout the paper, various statements will involve a small parameter $\lambda_0$ which by default is allowed to depend on the dimension $d$, the torus size $\ell$, the truncation order $\kappa$, the bare regularity $\alpha$ and an additional free parameter $M$ which controls the size of various objects. Since this is mostly consistent, we will only make the dependence explicit when $\lambda_0$ depends on parameters in addition to this set. We are now prepared to state our main result, which is a well-posedness statement for the robust formulation of Definition~\ref{rf03}.

\begin{theorem}[Main Theorem]\label{main_result}
    For any $M< \infty$ there exists a $\lambda_0 > 0$ such that if $(\Pi, \Pi^-, \Gamma)$ is a model satisfying Assumptions~\ref{ass1} and \ref{ass2} and 
    $$\|\Pi\|_\kappa + \|\Gamma^*\|_\kappa \le M$$
    then for all $|\lambda| \leq \lambda_0$ there exists a unique solution $f$ to 
    the robust formulation, 
    Definition~\ref{rf03}, satisfying 
    \begin{align} \label{i4}
        \| f . \mathsf{z}_{\mathbf{0}} \| \coloneqq \sup_x |f_x. \mathsf{z}_{\mathbf{0}}| \le M.
    \end{align}
    
    Furthermore, the map $(\Pi, \Pi^-, \Gamma) \mapsto f$ is Lipschitz continuous with respect to the topologies induced by \eqref{mb01}-\eqref{mb03} 
    (with $\eta$ replaced by $\kappa$) 
    and \eqref{md01}.
\end{theorem}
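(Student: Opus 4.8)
The plan is to derive Theorem~\ref{main_result} by combining three self-contained results proved in the remaining sections: an a priori estimate (Theorem~\ref{a priori}), a continuity statement in the model topology (Theorem~\ref{uniqueness}), and an existence statement (Theorem~\ref{existence}). The unifying device, already visible in \eqref{rf02}--\eqref{t8b}, is that in the robust formulation the coupling constant $\lambda$ enters only through the multiplicative algebraic constraint: since $f_x.\mathsf{z}_{\mathfrak{3}}=\lambda$, every multi-index with $\beta(\mathfrak{3})\geq 1$ comes with a prefactor $\lambda^{\beta(\mathfrak{3})}$ in \eqref{rf02}, whereas the PDE ingredient \eqref{rfPDE} is genuinely linear. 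This is what will allow the nonlinear contributions to be absorbed on the left-hand side of each estimate once $|\lambda|\leq\lambda_0$.

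The first step is the a priori estimate. Following the scheme of \cite{OSSW}, but now incorporating the periodic boundary data and the constraint $\fint u=0$ through the projection $P$, one shows that any solution $f$ of Definition~\ref{rf03} with $\|f.\mathsf{z}_{\mathbf{0}}\|\leq M$, with respect to any model satisfying $\|\Pi\|_\kappa+\|\Gamma^*\|_\kappa\leq M$, obeys $\|f\|_\kappa\leq C(M)$ with $C(M)$ uniform over this model class. The argument proceeds by induction on the homogeneity $|\beta|<\kappa$: given control of the lower-order coefficients, the reexpansion defect in \eqref{md01} and the germs $R_x,R_x^-$ of \eqref{t12}--\eqref{t13} are estimated by a Schauder-type/reconstruction argument applied to the linear PDE \eqref{rfPDE}, while the only place nonlinearity intervenes is the product formula \eqref{rf02}, which supplies the decisive power $\lambda^{\beta(\mathfrak{3})}$. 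I expect this step --- producing a \emph{global} bound that is compatible with $P$ and uniform over the model class while cleanly isolating the factor of $|\lambda|$ --- to be the main obstacle; its linearised counterpart, needed below, is of comparable difficulty.

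The second step gives uniqueness and Lipschitz continuity simultaneously. Given two solutions $f,\tilde f$ for possibly different models, both with the $M$-bound, one cannot subtract them directly since the space of modelled distributions is not linear; instead one tracks $f-\tilde f$ as a modelled distribution valued in a linearised structure. By \eqref{rf02} this difference satisfies the robust formulation of the \emph{linearised} equation --- a linear PDE coupled to a linear algebraic constraint --- whose forcing is controlled by the coefficients of $f,\tilde f$ (bounded by Step~1) together with the model difference $\|\Pi-\tilde\Pi\|_\kappa+\|\Gamma^*-\tilde\Gamma^*\|_\kappa$. Re-running the a priori estimate for this linearised robust formulation and again harvesting the factor $|\lambda|$ from the nonlinear part yields, schematically, $\|f-\tilde f\|_\kappa\lesssim |\lambda|\,\|f-\tilde f\|_\kappa+\|\Pi-\tilde\Pi\|_\kappa+\|\Gamma^*-\tilde\Gamma^*\|_\kappa$. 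For $|\lambda|\leq\lambda_0$ the first term is absorbed: the choice of a single model gives uniqueness, and the choice of two models gives the asserted Lipschitz continuity of $(\Pi,\Pi^-,\Gamma^*)\mapsto f$ with respect to \eqref{mb01}--\eqref{mb03} and \eqref{md01}.

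The third step establishes existence by a continuity method. For a smooth model in the sense of Definition~\ref{d:smooth}, Lemma~\ref{l:c} identifies the robust formulation with the classical renormalised PDE \eqref{t1}, which for $|\lambda|\leq\lambda_0$ is solvable on the torus by standard parabolic theory (short-time existence plus the a priori bound of Step~1 used to globalise), yielding a solution $f^\rho$ for each smooth model of Assumption~\ref{ass2}. Step~1 bounds $\|f^\rho\|_\kappa\leq C(M)$ uniformly in $\rho$, and Step~2 applied to the pair $(\Pi^\rho,\Gamma^{*,\rho})$, $(\Pi^{\rho'},\Gamma^{*,\rho'})$ shows that $(f^\rho)$ is Cauchy in $\|\cdot\|_\kappa$ since the model differences tend to $0$; passing to the limit in \eqref{rfPDE}--\eqref{t13} --- the germs converge because $\Pi^\rho\to\Pi$ and $f^\rho\to f$ --- produces a solution $f$ for the limiting model, satisfying \eqref{i4}. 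The genuinely new analytic input beyond Steps~1--2 is a robust formulation for the linearisation of the equation together with its well-posedness, which plays the role of invertibility of the linearised operator; equivalently it underwrites a continuation in a parameter interpolating from $\lambda=0$, where the linear problem is trivially solvable, to the target $\lambda$, the solvable set being nonempty, open by the linearised estimates, and closed by the a priori estimate. Combining the three steps yields existence, uniqueness, and Lipschitz dependence, completing the proof of Theorem~\ref{main_result}.
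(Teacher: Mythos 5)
Your top-level argument is exactly the paper's: Theorem~\ref{main_result} is obtained by combining the a priori estimate (Theorem~\ref{a priori}), continuity in the model (Theorem~\ref{uniqueness}) and existence for smooth models (Theorem~\ref{existence}), with Assumption~\ref{ass2} used to produce approximate solutions $f^\rho$ that are Cauchy by Theorem~\ref{uniqueness} and whose limit solves the robust formulation for the limiting model. This combination, together with the uniqueness and Lipschitz statements read off from Theorem~\ref{uniqueness}, is precisely how the paper assembles the main result.

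One point in your sketch of the existence ingredient would not survive scrutiny as written: you claim that for a smooth model the renormalised equation \eqref{t30} is solvable ``by standard parabolic theory (short-time existence plus the a priori bound of Step~1 used to globalise)''. The problem is space-time periodic with the zero-average constraint enforced by $P$, so there is no Cauchy problem and no notion of short-time existence to globalise; the equation has to be treated as an elliptic-type problem on the space-time torus, where small and large solutions may coexist. Moreover, the whole point stressed in the paper is that solvability must hold for $|\lambda|\leq\lambda_0$ with $\lambda_0$ depending only on the \emph{model} norm, not on the $(\kappa-2)$-H\"older norm of $\xi$, which rules out a purely classical argument. The correct mechanism --- which you do gesture at in your closing sentences --- is the continuity method in $\lambda$: nonemptiness at $\lambda=0$, openness via invertibility of the linearised operator \eqref{eq:P2} and the Implicit Function Theorem, and closedness via the a priori estimates for both the equation and its linearisation (Theorem~\ref{a priori lin}), the latter requiring its own robust formulation and consistency lemma. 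If you delete the ``standard parabolic theory'' claim and let the continuation argument carry the existence step, your proposal coincides with the paper's proof. A smaller imprecision: in the uniqueness step the difference $f-\tilde f$ does not literally satisfy the linearised robust formulation (the space of modelled distributions is nonlinear and differences obey a discrete Leibniz-type rule, not \eqref{t8l}); the paper instead re-runs the estimate loop for differences via Lemmas~\ref{continuity_2} and \ref{3_point_2}, which is what your argument actually needs.
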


The constraint of smallness \eqref{i4} on $f$ is natural in our setting, since we treat the parabolic equation \eqref{t0} as an elliptic one, 
where typically large solutions may coexist with small ones. 
Recalling Remark~\ref{r:gDP}, one notes that
	\begin{align} \label{t48}
	\| f . \mathsf{z}_{\mathbf{0}} \|
		= \sup_{x \in \mathbb{R}^{1 + d}} \Big| \Big( u - \sum\nolimits_{| \beta | < 0} \lambda^{\beta ( \mathfrak{3} )} \Pi_{0 \beta} \Big) ( x ) \Big| ,
	\end{align}
so that the quantity $\| f . \mathsf{z}_{\mathbf{0}} \|$ can be interpreted as the supremum norm of the `generalised Da Prato--Debussche remainder’.

\medskip
Theorem~\ref{main_result} actually follows from 
a
straightforward combination of the three following results.

\begin{theorem}[A Priori Estimates]\label{a priori}
    For any $M < \infty$ there exists $\lambda_0 > 0$ such that if $(\Pi, \Pi^-, \Gamma^*)$ is a model satisfying Assumption~\ref{ass1},
    if $f$ solves the corresponding robust formulation with $|\lambda| \leq \lambda_0$ and also    
    \begin{align*}
        \|\Pi\|_\kappa + \|\Gamma^*\|_\kappa + \| f . \mathsf{z}_{\mathbf{0}} \| \le M
    \end{align*}
    then
    \begin{align} \label{t58}
        \|f\|_{\kappa} \lesssim |\lambda| 
    \end{align}
    where the implicit constant depends on $M, \alpha, d, \kappa$ and $\ell$.
\end{theorem}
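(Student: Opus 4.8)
The plan is to run the strategy of \cite{OSSW}, now in the global (periodic) setting and with smallness invested in $|\lambda|$ rather than in $\|f.\mathsf{z}_{\mathbf{0}}\|$. The starting observation is that because of multiplicativity \eqref{t8} and \eqref{t8b}, the only free data in $f$ is the finite collection of scalar fields $f_x.\mathsf{z}_{\mathbf{n}}$ with $|\mathbf{n}|<\kappa$; everything else is a polynomial in these and in $\lambda$. Moreover, by \eqref{rf02} together with \eqref{t45}, each coefficient $f_x.\mathsf{z}^\beta$ for populated non-polynomial $\beta$ carries at least one factor of $\lambda$ (indeed $\beta(\mathfrak{3})\ge 1$ for such $\beta$), while purely polynomial contributions are controlled directly by $f_x.\mathsf{z}_{\mathbf{0}}$ and its derivatives. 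So the target \eqref{t58} amounts to: (i) the derivative components $f_x.\mathsf{z}_{\mathbf{n}}$, $0<|\mathbf{n}|<\kappa$, and the increment-seminorm part of $\|f\|_\kappa$ are all $O(|\lambda|)$, and (ii) this bound is \emph{self-improving} — it can be bootstrapped from a crude a priori bound using the smallness of $\lambda$.

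First I would set up the three-scale / reconstruction machinery. From the robust formulation, the germ $R_x = u - f_x.Q_\kappa\Pi_x$ satisfies $R_{x\mu}(x)\to 0$ and $LR_x = PR_x^-$ modulo the polynomial ambiguity built into \eqref{Hi01}, with $R_x^-$ likewise reconstructed to zero. The key analytic input is a Schauder-type / integration lemma (the periodic analogue of the reconstruction-and-integration estimates in \cite{OSSW}) which, from $R^-_{x\mu}(x)\to 0$ and the model bounds \eqref{mb01}, yields control of $R_{x}$ at all scales, and in particular a bound on the ``off-diagonal'' behaviour $R_{x\mu}(y)$ for $|y-x|\sim\mu$. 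Feeding this into the definition of $\|f\|_\kappa$ via the identity $f_y.Q_\kappa\mathsf{z}^\beta - f_x.Q_\kappa\Gamma^*_{xy}\mathsf{z}^\beta = (R_x - R_y)$-type expressions tested at scale $|y-x|$ (this is where the algebraic relations \eqref{mb04} enter crucially, so that the $u$-contributions cancel), gives a first, $\lambda$-independent bound $\|f\|_\kappa \le C(M)$. This is the analogue of the local a priori estimate of \cite{OSSW} but now global thanks to the global model norms and periodicity.

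Next comes the bootstrap, which is the heart of the matter. One re-examines the PDE for the components: $L\Pi_{x\beta} = \Pi^-_{x\beta}$ mod polynomials, and $\Pi^-_{x} = \mathsf{z}_{\mathfrak{3}}\Pi_x^3 + c\Pi_x + \xi\mathsf{1}$, so on the level of the reconstructed $u^- = f_x.Q_\kappa\Pi^-_x$ the nonlinearity produces a factor $f_x.\mathsf{z}_{\mathfrak{3}} = \lambda$ in front of the cubic term and at least one power of $\lambda$ in every $c_k$ term as well (recall $c_k=0$ unless $k<(1+\alpha)^{-1}$, and the homogeneity counting \eqref{t45} forces those contributions into positive-$\lambda$-degree). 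Hence $u^- - (\text{the purely-noise/polynomial part that is already accounted for}) = \lambda \cdot(\text{something bounded by }C(M))$. Running the Schauder estimate again — now tracking the explicit $\lambda$ prefactor — upgrades the bound on the non-polynomial part of $f$ from $C(M)$ to $C(M)|\lambda|$. The purely-polynomial components $f_x.\mathsf{z}_{\mathbf{n}}$ for $0<|\mathbf{n}|<\kappa$ are handled in the same pass: they are the Taylor coefficients of the function $x\mapsto f_x.\mathsf{z}_{\mathbf{0}}$ (by Remark~\ref{r:gDP} this function is $u$ minus an explicit $x$-independent distribution), and its increments are governed by $R$, hence again $O(|\lambda|)$ once one knows $u-f_\cdot.\mathsf{z}_{\mathbf{0}}$ has an $O(|\lambda|)$ ``paracontrolled'' part. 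Choosing $\lambda_0$ small enough that $C(M)|\lambda_0|$ sits inside the region where the crude bound $C(M)$ was derived closes the loop and yields \eqref{t58}.

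The main obstacle I anticipate is the bootstrap step: one must organize the homogeneity bookkeeping so that \emph{every} term contributing to $\|f\|_\kappa$ beyond the trivially-$O(|\lambda|)$ ones genuinely carries a power of $\lambda$ with a constant depending only on $(M,\alpha,d,\kappa,\ell)$ and not degenerating as $\kappa$ grows or as $\lambda_0$ shrinks — in particular one needs uniformity of the Schauder/reconstruction constants in the (finitely many, but $\kappa$-dependent) multi-indices, and care that the polynomial ambiguity in \eqref{Hi01} is absorbed without introducing $\lambda$-independent errors. A secondary technical point is making the ``generalised Da Prato--Debussche'' decomposition rigorous at the negative-homogeneity level so that $f_x.\mathsf{z}_{\mathbf{0}}$ is genuinely a (continuous) function and its Hölder-type increments can be estimated; this is where the hypothesis $\|f.\mathsf{z}_{\mathbf{0}}\|\le M$ is used as the sole non-$\lambda$-small input, consistent with the elliptic viewpoint flagged after the statement of Theorem~\ref{main_result}.
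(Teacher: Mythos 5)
There is a genuine gap, and it sits exactly where you place your first step. You propose to first derive a crude, $\lambda$-independent bound $\|f\|_\kappa\le C(M)$ from reconstruction/integration and only afterwards bootstrap it to $O(|\lambda|)$. But the estimates are circular: the coherence bound on the germ $R^-$ that feeds Reconstruction is expressed through the seminorms $[f]_{\kappa,\beta}$ for non-purely-polynomial $\beta$, these are controlled (via multiplicativity, Lemma~\ref{continuity}) only in terms of $[f]_{\kappa,\mathrm{pol}}$, $\|f.\mathsf{z}_{\mathbf 0}\|$ and $|\lambda|$, and $[f]_{\kappa,\mathrm{pol}}$ is in turn what the Schauder/three-point output produces. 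There is no way to break this circle without an absorption, and the absorption is only possible because every non-pp populated coefficient carries the prefactor $|\lambda|$ \emph{and} because $\|f\|_\kappa<\infty$ is part of Definition~\ref{rf01}; the paper therefore runs a single buckling $\|f\|_\kappa\lesssim|\lambda|+(\rho^{-\delta}+\rho^{p}|\lambda|)\|f\|_\kappa$ (with $\rho$ the scale separating the regime $\mu\le\ell$, where the model bound on $\Pi^-$ holds with the $|\lambda|$ gain, from large scales, which come with no $\lambda$ but with $\rho^{-\delta}$), choosing $\rho$ large first and then $\lambda_0$ small. Your "crude bound first" is not easier than the final estimate — it is the final estimate — and your sketch does not explain how it would be obtained without already investing the smallness of $\lambda$; you also do not address the mismatch between the scales $\mu\le\ell$ available after Reconstruction and the all-scale input needed for the Schauder step, which is precisely what forces the $\rho$-splitting and the order of limits above.

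The second missing ingredient is the boundary-data step. Since $\delta_{\mathbf 0}$ is a populated nonzero multi-index, $\|f.\mathsf{z}_{\mathbf 0}\|\le\|f\|_\kappa$, so the conclusion \eqref{t58} forces $\|f.\mathsf{z}_{\mathbf 0}\|\lesssim|\lambda|$; moreover the interpolation inequality that reassembles $\|f\|_\kappa$ from $[f]_{\kappa,\mathrm{pol}}$ (Lemma~\ref{l:intI}) produces $\|f.\mathsf{z}_{\mathbf 0}\|$ \emph{without} a $\lambda$ prefactor, so treating the hypothesis $\|f.\mathsf{z}_{\mathbf 0}\|\le M$ as a static input, as you do, can only yield $\|f\|_\kappa\lesssim M$, not $O(|\lambda|)$. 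The paper closes this by using periodicity and the vanishing space-time average of $x\mapsto f_x.\mathsf{z}_{\mathbf 0}$ (via Remark~\ref{r:gDP} and \eqref{mb01}) to get a Poincar\'e-type bound $\|f.\mathsf{z}_{\mathbf 0}\|\lesssim[f]_{\kappa^-,\mathrm{pol}}$, then interpolates $[f]_{\kappa^-,\mathrm{pol}}$ between $[f]_{\kappa,\mathrm{pol}}+|\lambda|$ and $\|f.\mathsf{z}_{\mathbf 0}\|+|\lambda|$ and absorbs by Young's inequality to obtain $\|f.\mathsf{z}_{\mathbf 0}\|\lesssim[f]_{\kappa,\mathrm{pol}}+|\lambda|$. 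Your proposal gestures at the increments of $f.\mathsf{z}_{\mathbf 0}$ being governed by $R$, but never upgrades the sup norm itself, and without that upgrade the loop cannot be closed to give \eqref{t58}.
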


\begin{theorem}[Uniqueness/Continuity in the Model] \label{uniqueness}
    For any $M < \infty$ there exists $\lambda_0 > 0$ such that if $(\Pi, \Pi^-, \Gamma^*)$ and $(\tilde{\Pi}, \tilde{\Pi}^-, \tilde{\Gamma}^*)$ 
    are models
     and $f, \tilde{f}$ are solutions of the corresponding robust formulations
which satisfy the hypotheses of 
Theorem~\ref{a priori}
then 
    \begin{align}
    \|f; \tilde{f}\|_{\kappa} \lesssim \|\Pi - \tilde{\Pi}\|_\kappa
    	+ \|\Gamma^* - \tilde{\Gamma}^* \|_\kappa.  \label{t91}
    \end{align}
    Here the implicit constant depends on $M, \alpha, d, \kappa$ and $\ell$. Furthermore $\|f; \tilde{f}\|_{\kappa}$ is defined to be the quantity
    \begin{align*}
         \sup_{|\beta| < \kappa} \left (\sup_{x} |(f_x - \tilde{f}_x). {\mathsf{z}}^\beta | + \sup_{x, y} \frac{|(f_y . - f_x . Q_{\kappa} \Gamma_{xy}^* - \tilde{f}_y . + \tilde{f}_x . Q_{\kappa} \tilde{\Gamma}_{xy}^*) {\mathsf{z}}^\beta|}{|y-x|^{\kappa - |\beta|}} \right ) .
    \end{align*}
    where
    the supremum in $\beta$ is over populated multi-indices.
\end{theorem}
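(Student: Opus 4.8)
## Proof Strategy for Theorem~\ref{uniqueness}

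The plan is to mimic the a priori estimate of Theorem~\ref{a priori}, but applied to the \emph{difference} of the two solutions rather than to a single solution. The key conceptual point, as emphasised in the introduction, is that the robust formulation decouples into a genuinely linear PDE part (namely \eqref{rfPDE}, $Lu = Pu^-$, together with the germ-decay conditions \eqref{t13}) and a nonlinear \emph{algebraic} constraint (the multiplicativity \eqref{t8} and the reexpansion/coherence built into $\|f\|_\kappa < \infty$). Since the PDE part is linear, the difference $u - \tilde u$ and $u^- - \tilde u^-$ again solves a linear problem; all the nonlinearity lives in the algebraic relation between $f$ and $\tilde f$, and there it enters only through the polynomial combinations $\prod_{|\mathbf n|<\kappa}(f_x.\mathsf z_{\mathbf n})^{\beta(\mathbf n)}$, which one can expand telescopically as a difference. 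Concretely, I would first decompose $(f_y. - f_x.Q_\kappa\Gamma^*_{xy} - \tilde f_y. + \tilde f_x.Q_\kappa\tilde\Gamma^*_{xy})\mathsf z^\beta$ into a term measuring the reexpansion defect of $f - \tilde f$ against the \emph{common} model $\Gamma^*$, plus a term of the form $\tilde f_x.Q_\kappa(\Gamma^*_{xy} - \tilde\Gamma^*_{xy})\mathsf z^\beta$, which by the a priori bound $\|\tilde f\|_\kappa \lesssim |\lambda|$ (Theorem~\ref{a priori}) and the model norm $\|\Gamma^* - \tilde\Gamma^*\|_\kappa$ is already of the desired order. The analogous splitting is performed for the supremum quantity $|(f_x - \tilde f_x).\mathsf z^\beta|$ using multiplicativity \eqref{t8}.

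Next I would set up the same inductive scheme used to prove Theorem~\ref{a priori}: order populated multi-indices $\beta$ by homogeneity $|\beta|$ and prove the bound \eqref{t91} for the $\beta$-component assuming it holds for all $\gamma$ with $|\gamma| < |\beta|$. For each fixed $\beta$, one builds an integral (Schauder/reconstruction) representation of the component $(f_\cdot - \tilde f_\cdot).\mathsf z^\beta$ of the difference from the PDE data $R_x - \tilde R_x$ and $R_x^- - \tilde R_x^-$; by \eqref{t13} these germs vanish at small scales, and a reconstruction-type argument (the deterministic analogue of the reconstruction theorem, as in \cite{OSSW, CZ}) converts the scale-$\mu$ decay into the Hölder bound at the level of coefficients. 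The crucial structural input is that the `source terms' on the right-hand side of the PDE for the $\beta$-component — coming from $f_x.Q_\kappa\Pi_x^-$ with $\Pi_x^- = \mathsf z_{\mathfrak 3}\Pi_x^3 + c\Pi_x + \xi\mathsf 1$ — involve, after taking the difference and using the triangular structure \eqref{mb05}, only components $\gamma$ that are strictly lower in homogeneity (because the cubic term $\Pi_x^3$ raises homogeneity by $2(1+\alpha) > 0$) together with the model-difference terms. This is exactly where the smallness of $\lambda$ is spent: the prefactor $\lambda = f_x.\mathsf z_{\mathfrak 3}$ multiplying the cubic self-interaction makes the induction close with a constant that can be absorbed, just as in Theorem~\ref{a priori}.

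The main obstacle I anticipate is the bookkeeping around the base of the induction, i.e.\ the lowest non-polynomial homogeneities, and in particular controlling $\sup_x|(f_x - \tilde f_x).\mathsf z_{\mathbf 0}|$ — the difference of the generalised Da Prato–Debussche remainders, recall \eqref{t48}. This quantity is \emph{not} assumed small a priori (only the individual solutions are assumed bounded by $M$), so one cannot treat it inductively in the same cheap way; instead one must obtain it directly from the linear PDE $L(u - \tilde u) = P(u^- - \tilde u^-)$ with periodic boundary conditions and $\fint(u-\tilde u) = 0$, using Schauder theory on the torus together with the germ-decay \eqref{t13} for both solutions, and then feed the resulting bound back as the starting point of the induction. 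A secondary technical point is the handling of the projections $Q_\kappa$ and $P$ and the truncation errors they introduce when one differences two modelled distributions living over two different models — one must check that the commutator-type terms between $Q_\kappa$, $\Gamma^*$ and $\tilde\Gamma^*$ are, after using \eqref{mb05} and $\|\Gamma^* - \tilde\Gamma^*\|_\kappa$, of order $\|\Gamma^* - \tilde\Gamma^*\|_\kappa$ times the already-controlled $\|\tilde f\|_\kappa \lesssim |\lambda|$. Once these two points are dispatched, the remainder of the argument is a routine rerun of the a priori estimate with `$f$' replaced by `$f - \tilde f$' and an inhomogeneity proportional to the model difference.
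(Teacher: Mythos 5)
Your overall plan — rerun the a priori loop for the difference of the two solutions, with a Leibniz-type algebraic continuity step, reconstruction of the difference germs decomposed so that the $\|\Pi-\tilde{\Pi}\|_\kappa$ and $\|\Gamma^*-\tilde{\Gamma}^*\|_\kappa$ contributions pick up a factor $\|\tilde f\|_\kappa\lesssim|\lambda|$ from Theorem~\ref{a priori}, then integration and a three-point argument — is exactly the route the paper takes (Lemmas~\ref{continuity_2}, \ref{l:intII}, \ref{l:pcII}, \ref{3_point_2} and the five-step proof of Theorem~\ref{uniqueness}). The genuine gap is in your treatment of $\sup_x|(f_x-\tilde f_x).\mathsf{z}_{\mathbf{0}}|$. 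You propose to obtain this bound \emph{first}, "directly from the linear PDE $L(u-\tilde u)=P(u^--\tilde u^-)$ using Schauder theory on the torus together with the germ decay \eqref{t13}", and then feed it in as the starting point of the induction. This cannot work as a standalone preliminary step, for two reasons. First, the only way to bound $u^--\tilde u^-$ in any distributional norm is through its germ decomposition $u^--\tilde u^-=(f_x.Q_\kappa\Pi_x^--\tilde f_x.Q_\kappa\tilde\Pi_x^-)+(R_x^--\tilde R_x^-)$, and both terms already require control of the differences $(f-\tilde f).\mathsf{z}^\beta$ and of $[f;\tilde f]_{\kappa,\beta}$ for non-purely-polynomial $\beta$ — which, by Lemma~\ref{continuity_2}, themselves depend on $\|(f-\tilde f).\mathsf{z}_{\mathbf{0}}\|$. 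So the bound you want to "feed in" is not available before the loop; it can only be closed \emph{simultaneously} with the other estimates, by absorption, using that every dangerous occurrence carries a prefactor $|\lambda|$ or $\rho^{-\delta}$. Second, even granting a bound on $u^--\tilde u^-$ in $C^{\alpha-2}$, classical Schauder only returns $u-\tilde u\in C^{\alpha}$ with $\alpha<0$, which does not control the supremum norm of the Da Prato--Debussche-type remainder $(f-\tilde f).\mathsf{z}_{\mathbf{0}}$ (cf.\ \eqref{t48}); to see that the remainder is a bounded function one needs the integration step at order $\kappa$ applied to the difference germ $R-\tilde R$, i.e.\ again the full loop.

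What the paper does instead is to postpone this component to the final "incorporating boundary data" step: since $f.\mathsf{z}_{\mathbf{0}}$ and $\tilde f.\mathsf{z}_{\mathbf{0}}$ are periodic with vanishing space-time average, one has $\|(f-\tilde f).\mathsf{z}_{\mathbf{0}}\|\lesssim[f;\tilde f]_{\kappa^-,\delta_{\mathbf{0}}}$, which is then post-processed via the interpolation estimate for differences \eqref{wr36} and Young's inequality, absorbing $\|(f-\tilde f).\mathsf{z}_{\mathbf{0}}\|$ into the left-hand side to get \eqref{mc09}; this is inserted into the loop output and everything is buckled at once, taking first the localisation scale $\rho$ large and then $|\lambda|$ small. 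A secondary, less serious, mismatch: you describe an induction over homogeneity with a per-$\beta$ Schauder/reconstruction representation of $(f-\tilde f).\mathsf{z}^\beta$. In the paper no such per-component PDE exists or is used: the induction (in plain length) is confined to the purely algebraic Lemma~\ref{l:pcII}; reconstruction (Lemma~\ref{Recon}) and the Schauder estimate (Lemma~\ref{Schauder}) are applied once to the full truncated difference germs $S^-=R^--\tilde R^-$ and $S=R-\tilde R$, and only the purely polynomial coefficients are then extracted, via the three-point argument of Lemma~\ref{3_point_2}. Reorganising your argument along these lines — and in particular closing the zero-mode estimate by absorption inside the same buckling rather than as a precursor — would turn your sketch into the paper's proof.
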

In order to see that this result indeed contains a uniqueness statement, 
it suffices to take the two models to be equal, so that \eqref{t91} implies that $f$ and $\tilde{f}$ must coincide.

\begin{theorem}[Existence] \label{existence}
    For any $M < \infty$ there exists a $\lambda_0 > 0$ such that if $(\Pi, \Pi^-, \Gamma_{}^*)$ is a smooth model
as in Definition~\ref{d:smooth} 
     such that 
    \begin{align} \label{t31}
        \|\Pi\|_\kappa + \|\Gamma^*\|_\kappa \le M ,
    \end{align}
   and $|\lambda| \leq \lambda_0$ then there exists a 
    solution $f$ of the robust reformulation which satisfies $\| f . \mathsf{z}_{\mathbf{0}} \| \leq M$.
    Furthermore, if $u$ is as in \eqref{rfPDE} then 
    \begin{align}\label{t30}
        Lu = P( \lambda u^3 + h_{\lambda} u + \xi ),
      \qquad \fint u = 0 ,
    \end{align}
    where
    \begin{align}\label{t22}
    		h_{\lambda} = \sum\nolimits_k c_k \lambda^k . 
    \end{align}
\end{theorem}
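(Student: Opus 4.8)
The plan is to establish Theorem~\ref{existence} by a continuity method in the coupling constant $\lambda$, using the smooth model as the fixed input. First I would recast the robust formulation for a smooth model: since $\Pi_{x\beta}$ is locally $\kappa$-Hölder and $\Pi_x^-$ is given by \eqref{Hi02}, for $\rho>0$ (i.e.\ in the smooth case) a modelled distribution $f$ solving the robust formulation should correspond to an honest solution $u$ of the PDE \eqref{t30}. Concretely, given a modelled distribution $f$, one defines $u^- := \lambda u^3 + h_\lambda u + \xi$ with $h_\lambda = \sum_k c_k\lambda^k$ (the $c_k$ coming from the smooth model via Definition~\ref{d:smooth}), sets $u$ to be the unique periodic mean-zero solution of $Lu = Pu^-$, and checks that the germs $R_x, R_x^-$ in \eqref{t12} vanish at order $\kappa$ as in \eqref{t13}. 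The key algebraic point is that $f_x.Q_\kappa\Pi_x^- = f_x.Q_\kappa(\mathsf{z}_{\mathfrak 3}\Pi_x^3 + c\Pi_x + \xi\mathsf 1)$, and because $f$ is multiplicative with $f_x.\mathsf{z}_{\mathfrak 3} = \lambda$ and $f_x.\mathsf 1 = 1$, this reorganises into $\lambda (f_x.Q_\kappa\Pi_x)^3 + h_\lambda(f_x.Q_\kappa\Pi_x) + \xi$ up to terms that are $O(|\cdot - x|^\kappa)$; combined with the PDE identity $L\Pi_{x\beta} = \Pi_{x\beta}^-$ mod polynomials, this is exactly the statement that $LR_x = PR_x^-$ and that the remainders have the claimed regularity. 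This consistency direction — that the robust formulation for a smooth model is equivalent to the PDE — should be isolated as a lemma (the excerpt already anticipates this as Lemma~\ref{l:c}), and I would prove it by the germ/reconstruction bookkeeping just sketched.

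Next I set up the continuity method. Let $\mathcal{I}\subseteq[0,1]$ be the set of $t$ such that the equation with coupling constant $t\lambda$ (for a fixed $|\lambda|\le\lambda_0$) admits a solution $f^{(t)}$ of the robust formulation with $\|f^{(t)}.\mathsf{z}_{\mathbf 0}\|\le M$. I would show $\mathcal I$ is nonempty ($t=0$ gives the purely polynomial solution $f_x.\mathsf{z}_{\mathbf n} = $ derivatives of the harmonic-extension-type function solving $Lu = P\xi$, or more precisely the solution with $\lambda=0$ where $u$ solves the linear equation and $f_x.\mathsf{z}_{\mathbf n} = \frac1{\mathbf n!}\partial^{\mathbf n}(f_\cdot.\mathsf{z}_{\mathbf 0})$ evaluated appropriately), open, and closed. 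Openness is where the \emph{linearised} robust formulation enters: given a solution at parameter $t_0$, one solves the linearised equation — a genuinely \emph{linear} PDE coupled to a linear algebraic constraint on modelled distributions, which by hypothesis comes with its own a priori estimate — and runs a quantitative implicit-function-type argument (or a short Newton/Picard iteration) to produce solutions for $t$ near $t_0$; the a priori estimate \eqref{t58} of Theorem~\ref{a priori} guarantees the iterates stay in the ball $\|f.\mathsf{z}_{\mathbf 0}\|\le M$ as long as $\lambda_0$ is small. Closedness follows from the a priori estimate combined with a compactness argument: along a sequence $t_n\to t_*$ in $\mathcal I$, the bound $\|f^{(t_n)}\|_\kappa\lesssim|t_n\lambda|\le M'$ together with the model bounds gives equicontinuity of the finitely many scalar fields $f^{(t_n)}_x.\mathsf{z}^\beta$, so Arzelà–Ascoli extracts a limit $f^{(t_*)}$; one then checks the robust formulation passes to the limit (the germs $R_{x\mu}(x)$ converge because they are continuous functionals of $f$ and of the distributions $u,u^-$, the latter converging by Schauder estimates for $L$), and the smallness constraint is preserved. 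Hence $\mathcal I = [0,1]$, giving existence at $t=1$.

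Finally, once $f$ is obtained, the PDE conclusion \eqref{t30}–\eqref{t22} is precisely the consistency lemma applied in the forward direction: the robust formulation for the smooth model \emph{forces} the associated $u$ to solve $Lu = P(\lambda u^3 + h_\lambda u + \xi)$ with $h_\lambda = \sum_k c_k\lambda^k$, because (after reconstruction) $u^- - \lambda u^3 - h_\lambda u - \xi$ is a germ vanishing at order $\kappa>2$, hence zero as a distribution, so $Lu = Pu^- = P(\lambda u^3 + h_\lambda u + \xi)$; the constraint $\fint u = 0$ is built into Definition~\ref{rf03}.

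I expect the main obstacle to be the openness step, specifically the quantitative control of the linearised problem. Unlike Hairer's fixed-point setting, here the modelled distributions do not form a linear space (they are constrained to be multiplicative), so the linearisation of the constraint $f_x.\pi\tilde\pi = (f_x.\pi)(f_x.\tilde\pi)$ must be handled carefully — the tangent space at $f$ consists of \emph{derivations}, and one must verify that the linearised robust formulation is well-posed on this tangent space with estimates uniform enough to close the continuation (this is where a dedicated a priori estimate for the linearisation, with its own robust formulation as flagged in the introduction, is indispensable). A secondary difficulty is ensuring the smallness thresholds $\lambda_0$ from the a priori estimate, the linearised estimate, and the iteration all combine consistently and depend only on the allowed parameters $M,\alpha,d,\kappa,\ell$; this is bookkeeping but must be done with care so that the continuation does not shrink the admissible interval to zero length.
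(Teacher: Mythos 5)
Your overall architecture (continuity method in the coupling constant, a consistency lemma linking the robust formulation to the PDE for smooth models, a priori estimates for the equation and its linearisation, closedness via Arzel\`a--Ascoli) matches the paper's strategy, and your remark that the smallness thresholds must depend only on the model norm is exactly the right concern. However, there is a genuine gap in your openness step. You propose to produce solutions for nearby parameters by an ``implicit-function-type argument (or a short Newton/Picard iteration)'' run directly at the level of modelled distributions, invoking the a priori estimate for the linearised robust formulation. But an a priori estimate only gives uniqueness and bounds for solutions of the linearised problem; it does not give \emph{solvability} (surjectivity) of the linearisation, which is what any implicit-function or Newton scheme requires. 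Your continuation set $\mathcal I$ contains no invertibility information, and nothing in your scheme establishes it, so the iteration cannot be set up. Moreover, since the modelled distributions form a nonlinear (multiplicative) set, even granting invertibility one would have to formulate the Newton step and control the quadratic remainder in the $\|\cdot\|_\kappa$-topology, none of which is sketched. The paper resolves precisely this difficulty differently: because the model is qualitatively smooth, the equation \eqref{t30} is a classical semilinear PDE in H\"older spaces, so openness is carried out entirely classically --- the continuation set $\Lambda$ is defined to include, alongside $P_1$, the condition $P_2$ that the linearised operator $\dot{\Phi}(\lambda,u):C^{\kappa}_{\mathrm{per},0}\to C^{\kappa-2}_{\mathrm{per},0}$ is continuously invertible; this holds at $\lambda=0$ by classical Schauder theory, is propagated by the classical Implicit Function Theorem and a perturbation argument (no quantitative neighbourhood size is needed), and is recovered in the limit at the closedness step using the uniform bound $[\dot u]_\kappa\le C[\zeta]_{\kappa-2}$, which is where the a priori estimate for the linearised robust formulation (via the consistency lemma for the linearisation) actually enters.

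Two secondary points. First, in your closedness step you pass to the limit in the robust formulation itself, asserting that the germ conditions survive because they are ``continuous functionals''; the qualitative vanishing \eqref{t13} does not obviously pass to a uniform limit, and the paper avoids this entirely by taking the limit at the level of $u_n$ (bounded in $C^\kappa$ by the a priori estimate transferred through the consistency lemma) and then regaining the robust formulation for the limit by applying Lemma~\ref{l:c} to the limiting classical solution. Your route could be repaired by first upgrading \eqref{t13} to the quantitative bound $|R_{x\mu}(x)|\lesssim\mu^\kappa$ uniformly along the sequence, but this needs to be said. Second, your derivation of \eqref{t30} from the robust formulation (the converse direction of consistency) implicitly treats $u^-$ as a continuous function in order to identify it pointwise with $\lambda u^3+h_\lambda u+\xi$; in the paper this identification is never needed, since $f$ is constructed from a classical solution $u$ of \eqref{t30} in the first place, so the PDE holds by construction.
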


\section{Proof Strategies} \label{s:strat}

\subsection{A Priori Estimates: Strategy Of Proof}\label{ss:apriori}

We now sketch the strategy of proof for the a priori estimate for solutions of the robust formulation stated in Theorem~\ref{a priori}, which proceeds in five steps. 
Here our approach is heavily inspired by the one taken in \cite{OSSW} in the sense that the first four steps of our proof can be directly compared to those of \cite{OSSW}. 
In particular, these steps consist of applications of an Algebraic Continuity Lemma, a Reconstruction bound, a Schauder estimate and finally a post-processing of the output of the Schauder estimate via a Three-Point Argument. 
We note that a similar loop of estimates also appeared in 
\cite{CMW,MW18,EW24} where the strong damping effect of the cubic nonlinearity (when $\lambda < 0$) was further exploited to derive global a priori estimates, 
see also \cite{CFW24} in the case of the gPAM equation.
However our approach does differ in several points which we highlight below.
\medskip

In the first step, we establish our \emph{Algebraic Continuity Lemma} which is an analogue of the graded continuity lemma given in \cite[Section 3.1]{OSSW}. The idea is to use multiplicativity (cf. \eqref{rf02}) to upgrade the bounds \eqref{md01} on the purely-polynomial components of the modelled distribution $f$ to bounds on the non-purely-polynomial components. The crucial point in these estimates is that the right-hand side comes with a prefactor $|\lambda|$ and
depends on $[f]_{\kappa, \delta_{\mathbf{n}}}$ in an affine manner. Both of these facts are required in order to allow us to buckle after the fourth stage of our argument.

\medskip

 It will also be helpful to track the different components of $\|f\|_\kappa$. 
 To that end, for $\eta > 0$ and $\beta$ a populated multi-index with $|\beta| < \eta$ we introduce the quantities
\begin{align}
	 \| f . \mathsf{z}^{\beta} \| &
	 \coloneqq \sup_{x \in \mathbb{R}^{1+d}} |f_x. \mathsf{z}^{\beta}|  , \label{t46}
    \\
    [f]_{\eta, \beta} & \coloneqq \sup_{x, h \in \mathbb{R}^{1+d}} \frac{|(f_{x+h} . - f_x . Q_{\eta} \Gamma_{x \, x+h}^*) \mathsf{z}^{\beta}|}{|h|^{\eta - |\beta|}} , \nonumber
    \\
    [f]_{\eta, \mathrm{pol}} & \coloneqq \sum_{| \mathbf{n} | < \eta} [f]_{\eta, \delta_{\mathbf{n}}} , \label{rop15}
\end{align}
where in the first item (and throughout the remainder of this article)
we use the notation $\| \cdot \|$
without a subscript
to denote the supremum norm.

\begin{lemma}[Algebraic Continuity Lemma]\label{continuity}

    Fix a model $(\Pi, \Pi^-, \Gamma^*)$ and suppose that $f$ is a modelled distribution with 
    $|\lambda| + \|\Gamma^*\|_\kappa + \| f . \mathsf{z}_{\mathbf{0}} \| \le M$. 
    Then 
    for each populated and non-purely-polynomial multi-index $\beta$ with $|\beta| < \kappa$, 
    \begin{align}\label{md03}
        [f]_{\kappa, \beta} \lesssim |\lambda| ([f]_{\kappa, \mathrm{pol}} + \| f . \mathsf{z}_{\mathbf{0}} \| + |\lambda|) 
    \end{align}
    where the implicit constant depends only on $M, \alpha, d$ and $\kappa$. 
\end{lemma}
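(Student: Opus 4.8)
The plan is to exploit the multiplicative structure \eqref{rf02} of the modelled distribution, which expresses $f_x . \mathsf{z}^\beta$ as a monomial in the scalars $\lambda = f_x.\mathsf{z}_{\mathfrak{3}}$ and $f_x.\mathsf{z}_{\mathbf n}$ for $|\mathbf n| < \kappa$. The key observation is that the only populated non-purely-polynomial multi-indices $\beta$ with $|\beta| < \kappa$ necessarily satisfy $\beta(\mathfrak{3}) \geq 1$ (recall the population conditions: such a $\beta$ is either of the form $\delta_{\mathfrak 3} + \sum_{j=1}^3 \delta_{\mathbf n_j}$ or has $[\beta] \geq 0$, and in either case $\beta(\mathfrak 3) \geq 1$). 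Hence the monomial $\lambda^{\beta(\mathfrak 3)} \prod_{\mathbf n}(f_x.\mathsf{z}_{\mathbf n})^{\beta(\mathbf n)}$ carries at least one factor of $\lambda$, which will be the source of the prefactor $|\lambda|$ on the right-hand side of \eqref{md03}.

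First I would write, for a fixed pair $x, h$, the difference $(f_{x+h}. - f_x. Q_\kappa \Gamma^*_{x\,x+h})\mathsf{z}^\beta$. Using multiplicativity of $f_{x+h}$ on the one hand, and expanding $Q_\kappa \Gamma^*_{x\,x+h}\mathsf{z}^\beta$ via the coproduct-type structure of $\Gamma^*$ on the other, one obtains an algebraic identity expressing this difference as a sum of terms, each of which is a product of factors of the form $f_{x+h}.\mathsf{z}_{\mathbf n}$, factors of $\lambda$ (at least one, total, across the whole product, by the population count above), and at least one genuine increment factor $(f_{x+h}. - f_x. Q_\kappa \Gamma^*_{x\,x+h})\mathsf{z}_{\mathbf n}$ of a purely-polynomial component — together with matrix entries of $\Gamma^*_{x\,x+h}$ controlled by $\|\Gamma^*\|_\kappa$ and the metric factor $|h|$. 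This is the standard telescoping/Leibniz manipulation: write $ab - \tilde a\tilde b = a(b - \tilde b) + (a - \tilde a)\tilde b$ iteratively across the product, absorbing the $\Gamma^*$-expansion of each purely-polynomial slot. Each increment factor $(f_{x+h}. - f_x. Q_\kappa \Gamma^*_{x\,x+h})\mathsf{z}_{\mathbf n}$ is bounded by $[f]_{\kappa, \delta_{\mathbf n}} |h|^{\kappa - |\mathbf n|}$, each plain factor $f.\mathsf{z}_{\mathbf n}$ is bounded in terms of $\| f.\mathsf{z}_{\mathbf 0}\|$ and $[f]_{\kappa,\mathrm{pol}}$ (via $f_x.\mathsf{z}_{\mathbf n} = f_0.\mathsf{z}_{\mathbf n} + \ldots$, or more simply since these are finitely many quantities controlled by $\|f\|_\kappa$, but crucially I must keep the dependence \emph{affine} in $[f]_{\kappa,\mathrm{pol}}$, so only \emph{one} increment factor per term is permitted), and the $\Gamma^*$ entries by $M$. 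Collecting homogeneities, one checks that the powers of $|h|$ assemble to exactly $|h|^{\kappa - |\beta|}$, using the additivity of $|\cdot|$ under the product and the definition \eqref{t45}; this is the bookkeeping that makes the estimate close.

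The main obstacle — and the point requiring the most care — is precisely ensuring the right-hand side depends on $[f]_{\kappa,\mathrm{pol}}$ \emph{affinely} (degree one) rather than polynomially, and that it genuinely carries the prefactor $|\lambda|$. The affine dependence forces one to organise the telescoping so that in each resulting term at most one purely-polynomial slot is hit by a true increment while all others are replaced by their (bounded) values; terms with two or more increments must be shown to be higher order and reabsorbed, or the argument must be structured as a single Leibniz step isolating one increment. The $|\lambda|$ prefactor requires the population-theoretic input that every non-pp populated $\beta$ of homogeneity below $\kappa$ has $\beta(\mathfrak 3)\ge 1$, together with the assumption $|\lambda| \le M$ to bound the \emph{remaining} powers of $\lambda$ (of which there are boundedly many, since $|\beta| < \kappa$ bounds $\beta(\mathfrak 3)$ from above via \eqref{t45} and $\alpha > -1$). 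Once these two structural features are secured, the estimate \eqref{md03} follows, with the implicit constant absorbing the finitely many combinatorial coefficients, the bound $M$ on $\|\Gamma^*\|_\kappa$ and $\| f.\mathsf{z}_{\mathbf 0}\|$, and powers of the torus size implicit in comparing $f_x.\mathsf{z}_{\mathbf n}$ at different basepoints.
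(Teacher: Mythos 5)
Your overall skeleton — a Leibniz/telescoping decomposition of the increment applied to $\mathsf{z}^{\beta}$, with the $|\lambda|$ prefactor coming from $\beta(\mathfrak{3})\ge 1$ for populated non-pp $\beta\neq 0$ — does match the algebraic half of the paper's argument (Lemma~\ref{l:pcI}, proved by induction on the plain length via the identities \eqref{wr41}--\eqref{wr42}). But there is a genuine gap in how you propose to bound the \emph{plain} factors. Under the hypotheses of Lemma~\ref{continuity} you are given control only of $|\lambda|$, $\|\Gamma^*\|_\kappa$ and $\|f.\mathsf{z}_{\mathbf{0}}\|$; the quantities $\|f.\mathsf{z}_{\mathbf{n}}\|$ for $\mathbf{n}\neq\mathbf{0}$ and $[f]_{\kappa,\mathrm{pol}}$ are \emph{not} assumed finite of size $M$, so your fallback ``these are finitely many quantities controlled by $\|f\|_\kappa$'' is inadmissible (the constant may depend only on $M,\alpha,d,\kappa$, and this affinity with $f$-independent constants is exactly what the later buckling needs). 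The paper closes this hole with the interpolation estimates of Lemma~\ref{l:intI}: $\|f.\mathsf{z}_{\mathbf{n}}\|\lesssim([f]_{\kappa,\mathrm{pol}}+|\lambda|)^{|\mathbf{n}|/\kappa}(\|f.\mathsf{z}_{\mathbf{0}}\|+|\lambda|)^{1-|\mathbf{n}|/\kappa}$ and its analogue \eqref{wr01} for $[f]_{\tilde\kappa,\mathrm{pol}}$, whose proof is itself nontrivial because the space of modelled distributions is nonlinear (it runs through the inequality $F<A(F+1)^\kappa+B$ and a continuity argument in the scale $r$). You give no mechanism for producing such bounds, and a naive linear substitute $\|f.\mathsf{z}_{\mathbf{n}}\|\lesssim[f]_{\kappa,\mathrm{pol}}+\|f.\mathsf{z}_{\mathbf{0}}\|$ is both unobtainable (wrong scaling) and would destroy affinity once several plain factors multiply.

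Relatedly, your claim that affinity in $[f]_{\kappa,\mathrm{pol}}$ is secured by arranging ``at most one increment per term'' is not the actual mechanism. In the one-step Leibniz identity \eqref{wr42} the truncation order drops: the increments of the lower-order factors appear at level $\eta-(|\gamma|-\alpha)<\kappa$, so reduced-order seminorms $[f]_{\tilde\eta,\mathrm{pol}}$ (and products of several sup norms $\|f.\mathsf{z}_{\mathbf{n}}\|$) unavoidably enter, as recorded in \eqref{wr40}. These plain factors each carry a fractional power of $[f]_{\kappa,\mathrm{pol}}$ through interpolation, and the degree-one dependence is only recovered at the end by the homogeneity bookkeeping \eqref{wr47}, which guarantees the total exponent $(\tilde\eta+\sum_{\mathbf{n}}|\mathbf{n}|\tilde\beta(\mathbf{n}))/\kappa\le 1$, together with Young's inequality and absorption of the $(\|f.\mathsf{z}_{\mathbf{0}}\|+|\lambda|)$-powers using $\|f.\mathsf{z}_{\mathbf{0}}\|+|\lambda|\le M$. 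Without the interpolation lemma and this exponent tracking, your ``collecting homogeneities'' step does not close, and the estimate \eqref{md03} with its affine right-hand side cannot be reached from the stated hypotheses.
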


\begin{remark}
    In comparison to \cite[Section 3]{OSSW}, we note that our Algebraic Continuity Lemma is
	slightly more systematised in the sense that it is    
    valid to all truncation orders rather than only to truncation order $\kappa \in (2,3)$.
    
    \medskip

    In the case of the $\Phi^4$ equation, such a property was also observed 
in a tree-based context 
by \cite[Theorem~6.10]{CMW}, although only up to truncation order $2$. 
    We expect that a similar estimate should hold at a much higher level of generality and also in the tree-based setting, where in the latter case multiplicativity is replaced by the (multi-)pre-Lie morphism property 
of the coefficients of a coherent modelled distribution, 
as first described in the SPDE setting in \cite{BCCH} and in the related rough path setting in the earlier work \cite{BCFP19}. 
\end{remark}

\medskip
Our second step is a \emph{Reconstruction} step.
We observe that in combination with the assumed bounds on the model, the bound \eqref{md03} allows us to bound the so-called `coherence norms' of the germ $R_x$. 
This is the required input to apply a version of Hairer's Reconstruction Theorem \cite[Theorem~3.10]{Hai14}, 
which we write here in the language of germs \cite{CZ} and as an a priori estimate.
Since many versions of this result appear in the literature, we omit the proof here and instead refer the reader to \cite{Hai14,otto2018parabolic,MW18,FH,CZ} for several different proofs of mild variants of this result which would adapt in a straightforward manner to our setting.
\begin{lemma}[Reconstruction] \label{Recon}
      Fix $L \le \infty$, $\eta > 0$, $\alpha < 0$.
    Suppose that $x \mapsto F_x \in \mathcal{S}'(\mathbb{R}^{1+d})$ is a measurable map such that 
 	\begin{align*}
    	\sup_{x, h \in \mathbb{R}^{1+d}} \sup_{\mu \leq L} \frac{|(F_{x+h} - F_x)_{\mu}(x)|}{\mu^{\alpha} (|h| + \mu)^{\eta - \alpha}} \le 1, 
    \end{align*}
	where the convolution is with respect to the Schwartz function defined in \eqref{t90}.
    Suppose also that, 
    with respect to that same test function,
    $x \mapsto F_{x \mu}(x) \to 0$ in the sense of tempered distributions as $\mu \downarrow 0$.
    Then there exists $p > 0$ depending only on $\eta, \alpha$ and $d$ such that
    \begin{align*}
    	\sup_{\|\psi\|_p \le 1} \sup_{x \in \mathbb{R}^{1+d}} \sup_{\mu \leq L} \mu^{- \eta} |F_{x\mu}(x)| \le C
    \end{align*}
    where $C$ is a constant depending on $\alpha, \eta$ and $d$.
\end{lemma}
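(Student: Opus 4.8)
The plan is to recognise this as Hairer's Reconstruction Theorem \cite[Theorem~3.10]{Hai14} in the germ language of \cite{CZ}, and to note that the extra hypothesis $x\mapsto F_{x\mu}(x)\to 0$ in $\mathcal{S}'$ forces the reconstruction of the germ $(F_x)_x$ to be $0$, so that the statement is purely an \emph{a priori} estimate and no existence/fixed-point step is needed. Concretely, I would first prove the asserted bound for the distinguished kernel $\psi$ of \eqref{t90} by a semigroup telescoping argument in the spirit of \cite{otto2018parabolic,OSSW}, and only afterwards upgrade to test functions with $\|\psi\|_p\le 1$ by a change-of-kernel argument.

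The algebraic input that makes the telescoping clean is that $\psi$ from \eqref{t90} obeys the semigroup identity $\psi_\mu*\psi_\nu=\psi_{(\mu^4+\nu^4)^{1/4}}$ (since $\mathcal{F}(\psi_\mu)(q)=\exp(-\mu^4|q|^4)$) together with $\int\psi=1$. Fixing $x\in\mathbb{R}^{1+d}$ and $\mu\le L$, set $\mu_n:=2^{-n}\mu$ and $\rho_n:=(\mu_n^4-\mu_{n+1}^4)^{1/4}\simeq\mu_n$, so that $\psi_{\mu_n}=\psi_{\rho_n}*\psi_{\mu_{n+1}}$. Writing $G_n(y):=F_{y\mu_n}(y)$, a genuine (measurable) function of $y$, and inserting $F_{x\mu_{n+1}}(z)=F_{z\mu_{n+1}}(z)+(F_x-F_z)_{\mu_{n+1}}(z)$ into the identity $G_n(x)=\int\psi_{\rho_n}(x-z)\,F_{x\mu_{n+1}}(z)\,dz$ yields the one-step relation
\[ G_n=G_{n+1}*\psi_{\rho_n}+E_n,\qquad E_n(x):=\int\psi_{\rho_n}(x-z)\,(F_x-F_z)_{\mu_{n+1}}(z)\,dz . \]
Iterating and collapsing the convolutions by the semigroup law (so that $\psi_{\rho_0}*\cdots*\psi_{\rho_N}=\psi_{\tau_N}$ with $\tau_N\uparrow\mu$) gives the telescoped formula
\[ F_{x\mu}(x)=(G_{N+1}*\psi_{\tau_N})(x)+E_0(x)+\sum_{n=1}^{N}(E_n*\psi_{\tau_{n-1}})(x) . \]

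Next I would let $N\to\infty$. Since $y\mapsto F_{y\nu}(y)\to 0$ in $\mathcal{S}'$ as $\nu\downarrow 0$, the sequence $(G_{N+1})_N$ converges weak-$*$ and is therefore equicontinuous by Banach--Steinhaus; combined with $\psi_{\tau_N}(x-\cdot)\to\psi_\mu(x-\cdot)$ in $\mathcal{S}$ this forces $(G_{N+1}*\psi_{\tau_N})(x)=\langle F_{\cdot\,\mu_{N+1}}(\cdot),\psi_{\tau_N}(x-\cdot)\rangle\to 0$. For the error terms, the coherence hypothesis applied with $h=x-z$ gives $|(F_x-F_z)_{\mu_{n+1}}(z)|\le\mu_{n+1}^\alpha(|x-z|+\mu_{n+1})^{\eta-\alpha}$, and, $\psi$ being Schwartz, $\int|\psi_{\rho_n}(w)|(|w|+\mu_{n+1})^{\eta-\alpha}\,dw\lesssim\rho_n^{\eta-\alpha}$ (the implicit constant depending on $\eta-\alpha$ and $d$, and using $\eta-\alpha>0$ together with $\mu_{n+1}\le\rho_n$); hence $\|E_n\|_\infty\lesssim\mu_{n+1}^\alpha\rho_n^{\eta-\alpha}\lesssim 2^{-n\eta}\mu^\eta$. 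As $\|\psi_\tau\|_{L^1}$ is bounded uniformly in $\tau$, convolution does not spoil this, and summing the geometric series (which converges precisely because $\eta>0$) gives $|F_{x\mu}(x)|\lesssim\mu^\eta$, uniformly in $x$ and in $\mu\le L$, for the kernel $\psi$.

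Finally, to obtain the bound uniformly over all test functions with $\|\psi\|_p\le 1$, I would run the standard reconstruction change-of-kernel argument: via the reproducing identity associated with the semigroup $(\psi_\mu)_\mu$, a general test function at scale $\mu$ is written as an absolutely convergent superposition of parabolic rescalings of $\psi$ at scales $\le\mu$, and feeding the single-kernel bound through this representation yields the claim, with $p$ taken large enough --- depending on $\eta-\alpha$ and $d$ --- to make the tails of that superposition absolutely convergent at spatial infinity. I expect this last step, along with the justification of the $N\to\infty$ passage, to be the only points requiring genuine care; the remainder is routine bookkeeping, for which we refer to \cite{Hai14,otto2018parabolic,MW18,FH,CZ}.
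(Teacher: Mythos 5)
Your argument is correct in substance, but it cannot be compared line-by-line with the paper, because the paper deliberately omits a proof of Lemma~\ref{Recon} and instead points to \cite{Hai14,otto2018parabolic,MW18,FH,CZ} for variants that "adapt in a straightforward manner". What you supply is a self-contained proof in the semigroup style of \cite{otto2018parabolic,OSSW}: the telescoping over dyadic scales works exactly because the distinguished kernel \eqref{t90} satisfies $\psi_\mu*\psi_\nu=\psi_{(\mu^4+\nu^4)^{1/4}}$, your error estimate $\|E_n\|_\infty\lesssim \mu_{n+1}^{\alpha}\rho_n^{\eta-\alpha}\simeq 2^{-n\eta}\mu^{\eta}$ is the correct use of the coherence hypothesis with $h=x-z$ (valid since all scales in the telescoping are $\le\mu\le L$ and $\eta-\alpha>0$), and the treatment of the remainder via weak-$*$ convergence of $G_{N+1}$ plus Banach--Steinhaus equicontinuity is a legitimate way to exploit the vanishing hypothesis $F_{x\mu}(x)\to0$ in $\mathcal{S}'$; this is also where the a priori (rather than existence) character of the statement is used, as you note. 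Two small points deserve to be made explicit. First, to justify writing $(G_{N+1}*\psi_{\tau_N})(x)$ as a pairing you should record that $G_{N+1}$ is locally integrable of polynomial growth, which follows from decomposing $G_{N+1}(z)=F_{x\mu_{N+1}}(z)+(F_z-F_x)_{\mu_{N+1}}(z)$ and using coherence. Second, in the final change-of-kernel step the single-kernel bound $|F_{x\mu}(x)|\lesssim\mu^{\eta}$ alone is not enough: the superposition involves translates, so you need the displaced bound $|F_{x\nu}(y)|\lesssim \nu^{\alpha}(\nu+|y-x|)^{\eta-\alpha}$, which you get by again combining the diagonal bound with coherence; once you have it, this is precisely the setting of the paper's Lemma~\ref{kernel_swap} with weight $w(r)=r^{\eta-\alpha}$, whose doubling constant depends only on $\eta-\alpha$, so the exponent $p$ and the constant depend only on $\alpha,\eta,d$ as claimed. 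With these two clarifications your proof is complete and entirely consistent with the sources the paper cites.
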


We note that the germ $R^-$ depends on the values of $f . \mathsf{z}^{\beta}$ only for non-purely polynomial multi-indices $\beta$. Indeed, by \eqref{mb08}, $\Pi_{x \delta_{\mathbf{n}}}^- = 0$ for $\mathbf{n} \in \mathbb{N}_0^{1 + d}$. 
In particular, applying Lemma~\ref{Recon} with $F = R^{-}$ in combination with the output of the Algebraic Continuity Lemma will yield the bound 
\begin{align}\label{md04}
    \frac{|R_{x\mu}^-(x)|}{\mu^{\kappa - 2}} \lesssim |\lambda| ([f]_{\kappa, \text{pol}} + \| f . \mathsf{z}_{\mathbf{0}} \| + |\lambda|) 
   \lesssim | \lambda | \, \| f \|_{\kappa} ,
\end{align}

for $\mu \in (0,\ell]$, where the upper bound on $\mu$ comes from the same upper bound in \eqref{mb01}.

\medskip
Our third step is an \emph{Integration} step.
Here, we make use of 
the PDEs \eqref{rfPDE} and \eqref{Hi01} satisfied by $u$ and the model respectively, 
which imply that $R_x$ and $R_x^-$ are coupled via the simple linear PDE
\begin{align}\label{t2}
	L R_x = R_x^- + P_x 
\end{align}
where $P_x$ is the polynomial
	\begin{align}\label{po1}
		P_x = - \fint u^- - \sum_{|\beta| < \kappa} f_x.\mathsf{z}^\beta (L\Pi_{x\beta} - \Pi_{x\beta}^-).
	\end{align}
In particular, we deduce from the PDE \eqref{Hi01} and the model bounds \eqref{mb01} that if $P_x = \sum_{\mathbf{n}} a_{x \mathbf{n}} (\cdot - x)^\mathbf{n}$ then $a_{x\mathbf{n}} = 0$ unless $|\mathbf{n}| < \kappa$.
On the other hand, using the fact that $P_x = L R_x - R_x^-$
together with \eqref{md04} and its analogue for\footnote{which we emphasise does not come with a prefactor $|\lambda|$ at this stage} $R$
that $| P_{x \mu} ( x ) | \lesssim \mu^{\kappa - 2}$ as $\mu \to 0$
whence $a_{x\mathbf{n}} = 0$ unless $\kappa - 2 < |\mathbf{n}|$.
This allows us to transfer our estimates on $R_x^-$ to ones on $R_x$.
In this step, our approach differs from the corresponding step in \cite{OSSW}. On the one hand, we do not have to be careful to get the optimal behaviour in 
terms of a quasilinearity that appears in their setting. On the other hand, in order to have a priori bounds that do not require qualitative smoothness, we have to take care to provide a proof that is robust to the lack of that assumption. To that end, we adopt the approach to Schauder estimates taken in \cite{BOT} which we refine to more carefully track the precise seminorms needed.
We note that as opposed to 
\cite[Proposition~2]{OSSW} (see also the recent \cite{SS24a})
our version of the Schauder estimate does not come with a `three-point condition' on the right-hand side since we work with a `pointed' Schauder estimate rather than a Schauder estimate for germs.
\begin{lemma}[Integration]\label{Schauder}
	Suppose that 
$\eta \in (0,\infty) \setminus \mathbb{Z}$, $p \in \mathbb{N}$ and
that  $v, v^-$  are distributions such that $Lv = v^-$ and that
\begin{align} \label{schauder_ass}
	\sup_{\|\psi\|_p \le 1} \sup_{\mu < \infty} \mu^{2 - \eta} |v^-_\mu (0) | \le 1.
\end{align}
Suppose further that there exists a $\psi \in \mathcal{S} ( \mathbb{R}^{1 + d} )$ with $\int \psi = 1$ such that $$\sup_{\mu < \infty} \mu^{-\eta} |v_\mu(0)| < \infty.$$
Then
there exists a $p^\prime \in \mathbb{N}$ depending only on $\eta, p$ and $d$ such that 
\begin{align}\label{int_bound}
	\sup_{\|\psi\|_{p^\prime} \le 1} \sup_{\mu < \infty} \mu^{-\eta} |v_\mu(0)| \lesssim 1
\end{align}
where the implicit constant depends only on $\eta, d$ and $p$.
\end{lemma}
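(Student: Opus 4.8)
The plan is to prove the pointed Schauder estimate by representing $v$ via the heat kernel and carefully decomposing the convolution $v_\mu(0) = (v * \psi_\mu)(0)$ into dyadic scales, controlling each scale using the hypothesis \eqref{schauder_ass} on $v^-$ and absorbing the low-frequency part using the single finite bound on $v$ coming from the auxiliary test function $\psi$. Concretely, write $v = G * v^- + (\text{harmonic part})$, where $G$ is the heat kernel, so that $v_\mu(0)$ splits as $(G * v^- * \psi_\mu)(0)$ plus a term involving a caloric function. For the first term, the key observation is that $G * \psi_\mu$ is, up to rescaling, a fixed Schwartz function at scale $\mu$ tested against $v^-$, but with two extra degrees of parabolic homogeneity; iterating over dyadic scales $\mu, 2\mu, 4\mu, \dots$ and using $\mu^{2-\eta}|v^-_\mu(0)| \lesssim 1$ from \eqref{schauder_ass}, the geometric series in $\eta - 2 < \eta$ converges precisely because $\eta \notin \mathbb{Z}$ rules out the resonant case (the exponent $\eta$ never hits an integer where logarithmic corrections would appear). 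This is exactly the refinement of the Schauder argument of \cite{BOT} alluded to in the text: one follows their kernel-based proof but tracks which Schwartz seminorm $\|\cdot\|_{p'}$ of the test function is actually consumed, obtaining $p'$ depending only on $\eta, p, d$.

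The main steps, in order: (i) reduce to $x = 0$ (already done in the statement) and record the Fourier/heat-kernel representation $v = G * v^-$ modulo a caloric remainder, noting that periodicity or its absence does not matter here since the estimate is local in the basepoint; (ii) decompose $\psi_\mu = \sum_{j \ge 0} (\text{pieces at scale } 2^j \mu) $ --- more precisely, use a telescoping identity $\psi_\mu = \psi_{1} * (\text{difference kernels})$ adapted to the heat semigroup, so that $G*\psi_\mu$ becomes a sum of Schwartz functions rescaled at scales $2^j\mu$ with rapidly decaying amplitudes and two extra powers of the scale; (iii) apply \eqref{schauder_ass} scale by scale and sum the resulting geometric series $\sum_j 2^{j(\eta-2)} \cdot 2^{-jN}$ for $N$ large, which converges; (iv) handle the caloric remainder: a caloric function on $\mathbb{R}^{1+d}$ that is suitably bounded is a polynomial of controlled degree, and the hypothesis $\sup_\mu \mu^{-\eta}|v_\mu(0)| < \infty$ with the auxiliary $\psi$ (only \emph{qualitatively}, to pin down that $v$ is not wildly growing) forces this polynomial to have degree $< \eta$, hence contributes $O(\mu^{\lfloor \eta \rfloor})$ which is $\lesssim \mu^\eta$ is false --- rather one shows the polynomial part is controlled by the already-estimated $v^-$ contribution together with the finite constant, and its test against $\psi_\mu$ at the origin is $O(1)$ after dividing by $\mu^{-\eta}$; (v) collect the bounds, checking that the Schwartz seminorm order $p'$ used in (ii)--(iii) depends only on $\eta, p, d$, and that the implicit constant likewise depends only on $\eta, d, p$.

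The hard part will be step (iv), the treatment of the caloric (harmonic-for-$L$) remainder and the role of the auxiliary test function. Unlike the germ-based Schauder estimate of \cite{OSSW}, here there is no three-point condition to fall back on, so one must extract the polynomial ambiguity purely from the pointed bounds: the subtlety is that \eqref{schauder_ass} only controls $v^-$, and a priori $v$ differs from $G*v^-$ by an arbitrary caloric function, which could be a high-degree polynomial. The resolution is that the \emph{qualitative} finiteness hypothesis $\sup_\mu \mu^{-\eta}|v_\mu(0)| < \infty$ for the specific $\psi$ with $\int\psi=1$ is exactly strong enough to cap the degree of that polynomial strictly below $\eta$ --- so that its contribution to $\mu^{-\eta}|v_\mu(0)|$ stays bounded --- but delivers no uniform constant, which is why the conclusion \eqref{int_bound} must be stated with a \emph{new} test-function class $\|\psi\|_{p'} \le 1$ and the constant depends only on $\eta, d, p$ and not on that qualitative input. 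Making this degree-counting argument precise while keeping the bookkeeping of Schwartz seminorms clean is the delicate point; everything else is a routine, if lengthy, dyadic summation of the kind carried out in \cite{BOT}.
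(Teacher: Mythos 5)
There is a genuine gap, and it sits exactly where you locate the difficulty. Your representation $v = G*v^- + (\text{caloric part})$ is not available as stated: with only the pointed bound \eqref{schauder_ass}, which allows $|v^-_\mu(0)|$ to grow like $\mu^{\eta-2}$ (and in the application $\eta=\kappa>2$), the time integral defining $G*v^-$ diverges in the far field, so the decomposition in your steps (ii)--(iii) never gets off the ground. The ``rapidly decaying amplitudes'' you invoke for the pieces at scales $2^j\mu$ have no source in the plain heat-kernel representation; the decay must be \emph{manufactured} by subtracting the Taylor jet of order $\eta$ at the base point. This is the device the paper uses: it works with $\tilde v \coloneqq \int_0^\infty dt\,(\operatorname{Id}-T_0^\eta)L^*v^-_t$, where $T_0^\eta$ is the parabolic Taylor polynomial at $0$ of degree $<\eta$; the near field ($t\le\mu^4$) is estimated term by term, while in the far field ($t>\mu^4$) Taylor's theorem with $k=\lceil\eta\rceil$ produces the extra factors of $\mu/\sqrt[4]{t}$ that make the $t$-integral converge. (Incidentally, the role of $\eta\notin\mathbb{Z}$ is tied to this jet subtraction, not to avoiding a ``resonant'' exponent in a dyadic geometric series, which would converge for any negative exponent.)

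Your step (iv) is also not a proof as sketched, and the two resolutions you offer pull in opposite directions: capping the degree of the caloric polynomial below $\eta$ does not make its contribution $O(\mu^\eta)$ uniformly (a nonzero low-degree term tested at scale $\mu$ is \emph{not} $\lesssim\mu^\eta$ as $\mu\downarrow 0$ with a constant depending only on $\eta,d,p$), while ``controlling the polynomial part by the finite constant'' would smuggle the qualitative hypothesis into the final bound, contradicting the stated dependence of the implicit constant. The paper's resolution is sharper: one checks $L\tilde v = v^-$ (by integrating $\partial_t$ of the truncated integral over $[\tau,T]$ and passing to the limits), so that $v-\tilde v$ is annihilated by $L$, and then a Liouville-type argument (as in \cite[Section~1.11]{BOT}), using that \emph{both} $v$ and $\tilde v$ satisfy $\sup_{\mu<\infty}\mu^{-\eta}|\cdot_\mu(0)|<\infty$ for the given kernel with $\int\psi=1$, forces $v=\tilde v$ exactly. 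Thus the caloric ambiguity is zero rather than merely bounded, the qualitative hypothesis is consumed only in this identification step, and the quantitative bound \eqref{int_bound} comes entirely from the estimates on the Taylor-subtracted integral. Your near-field bookkeeping of Schwartz seminorms is in the right spirit, but without the jet subtraction and the $v=\tilde v$ identification the argument does not close.
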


The careful reader will note that there is a mismatch in the scales allowed in the estimate \eqref{md04} and the required input for Lemma~\ref{Schauder} since the former holds for $\mu \le \ell$ whilst the latter is required for all $\mu < \infty$. 
This detail will be taken care of via 
a separation of scales argument where large scales are estimated in a different way to small scales. Since the estimates for large scales will not come with a good prefactor $|\lambda|$, it will be necessary to carefully optimise in the division between large and small scales. To that end, we introduce a scale parameter $\rho$ which quantifies the difference between the large and small scales.
For the purpose of exposition we do not elaborate on this detail in this sketch proof and simply claim that we can apply Lemma~\ref{Schauder} with $v = R_x$, $v^- = R_x^-$ to the effect of 
\begin{align}\label{md06}
    \frac{|R_{x\mu}(x)|}{\mu^{\kappa}} \lesssim \Big( \Big( \frac{\rho}{\ell} \Big)^{-\delta} + \Big( \frac{\rho}{\ell} \Big)^{p} \, |\lambda| \Big) \|f\|_\kappa
\end{align}
uniformly over $\mu \leq \rho$, where 
$\delta, p > 0$
are fixed constants depending only on $\alpha, \kappa$ and $d$.

\medskip
In our fourth step, which consists of a \emph{Three-Point Argument}, we make use of \eqref{md06} in combination with the output of the Continuity Lemma \eqref{md03} to obtain estimates on $[f]_{\kappa, \delta_{\mathbf{n}}}$ for $|\mathbf{n}| < \kappa$. 
In \cite{OSSW}, the Three-Point Argument appears as a step within the integration argument therein, but was later isolated as a convenient tool in a related setting \cite[Proposition~4.4]{LOTT}.
Here we present a slight variant of the version in the aforementioned works, due to our distributional point of view and our localisation scale.

\begin{lemma}[Three-Point Argument]\label{3_point}
   There exists a finite set $\mathcal{B}$ of test funtions\footnote{which can be taken to be supported in $B_1(0)$} 
   such that if $f: \mathbb{R}^{1+d} \to (\mathbb{R}[\mathsf{z}_\mathfrak{3}, \mathsf{z}_\mathbf{n}])^*$ 
    and 
    \begin{align}\label{3ps01}
    	R_x ( y) \coloneqq f_y . \mathsf{z}_\mathbf{0} - \sum_{0 \le |\beta| < \kappa} f_x . \mathsf{z}^{\beta} \Pi_{x\beta} ( y)
    \end{align}
    then for $|\mathbf{n}| < \kappa$, we have that 
    \begin{align*}
        [f]_{\kappa, \delta_{\mathbf{n}}} \lesssim  \sup_{\psi \in \mathcal{B}} \sup_{x \in \mathbb{R}^{1 + d}} \sup_{\mu < \infty} \frac{|R_{x\mu}(x)|}{\mu^{\kappa}} + \|\Pi\|_{\kappa} \, \max_{\beta} [f]_{\kappa, \beta} 
    \end{align*}
    where the maximum is over populated non-purely-polynomial multi-indices $\beta$ such that $|\beta| < \kappa$ and the implicit constant depends only on $\alpha, d$ and $\kappa$.
\end{lemma}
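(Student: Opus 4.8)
The plan is to adapt the three-point argument of \cite{OSSW}, isolated as \cite[Proposition~4.4]{LOTT}, to the present distributional formulation. Write $g \coloneqq f_{\cdot}.\mathsf{z}_{\mathbf 0}$ and $T_x \coloneqq \sum_{0 \le |\beta| < \kappa} f_x.\mathsf{z}^\beta\, \Pi_{x\beta}$, so that the germ of the lemma is $R_x = g - T_x$ and in particular $R_x - R_y = T_y - T_x$ as distributions, for any $x, y$. For $\mathcal B$ I would fix, for each $\mathbf n$ with $|\mathbf n| < \kappa$, a function $\psi^{(\mathbf n)} \in C_c^\infty(B_1(0))$ dual to the monomials, i.e.\ $\int (-w)^{\mathbf m}\, \psi^{(\mathbf n)}(w)\, dw = \delta_{\mathbf m, \mathbf n}$ for all $|\mathbf m| < \kappa$ --- finitely many linear constraints, hence solvable --- and let $\mathcal B$ consist of these (and, if needed, their parabolic derivatives up to a finite order depending only on $\alpha, d, \kappa$). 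The role of the duality is that testing the exact polynomial $\Pi_{z\, \delta_{\mathbf m}}(\cdot) = (\cdot - z)^{\mathbf m}$ against $\psi^{(\mathbf n)}_\mu(z - \cdot)$ returns exactly $\mu^{|\mathbf n|}\delta_{\mathbf m, \mathbf n}$ whenever $|\mathbf m| < \kappa$, so a single test function extracts the $\mathbf n$-th polynomial coefficient.

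The algebraic core would then be: fix $x$, $h$, $\mathbf n$ with $|\mathbf n| < \kappa$, set $y = x + h$ and $\mu = |h|$, and re-expand each $\Pi_{x\beta}$ ($|\beta| < \kappa$) around $y$ by means of the model axiom $\Gamma^*_{xy}\Pi_y = \Pi_x$ of \eqref{mb04}. By coercivity of $|\cdot|$ the relevant sums are finite, and by the triangularity of $\Gamma^*$ together with \eqref{rs01} the re-expansion only produces homogeneities $< \kappa$ up to a truncation remainder, so that $T_y - T_x = \sum_{0 \le |\gamma| < \kappa} \big( (f_y. - f_x.Q_\kappa \Gamma^*_{xy}) \mathsf{z}^\gamma \big)\, \Pi_{y\gamma} + (\mathrm{trunc})$. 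Testing this against $\psi^{(\mathbf n)}_\mu(y - \cdot)$ and splitting the sum into purely-polynomial $\gamma = \delta_{\mathbf m}$ and non-purely-polynomial $\gamma$, the duality collapses the polynomial part to the \emph{single} term $\mu^{|\mathbf n|}(f_y. - f_x.Q_\kappa \Gamma^*_{xy})\mathsf{z}_{\mathbf n}$; for the non-polynomial part I would use \eqref{md01}, which gives $|(f_y. - f_x.Q_\kappa \Gamma^*_{xy})\mathsf{z}^\gamma| \le [f]_{\kappa, \gamma}|h|^{\kappa - |\gamma|}$, together with \eqref{mb01} and the change-of-kernel Lemma~\ref{kernel_swap}, which give $|\Pi_{y\gamma\mu}(y)| \lesssim \|\Pi\|_\kappa \mu^{|\gamma|}$; with $\mu = |h|$ the powers recombine to $|h|^\kappa \|\Pi\|_\kappa \max_\beta [f]_{\kappa, \beta}$, exactly the second term of the statement. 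It is essential that the bump be centred at $y$ and that one re-expand \emph{before} testing: this is what makes the non-polynomial contributions carry the reexpansion seminorms $[f]_{\kappa, \gamma}$ rather than the sup-norms $\|f.\mathsf{z}^\gamma\|$ (which do not appear in the statement), and what kills every purely-polynomial reexpansion error except the one being estimated --- so that no buckling in $\|\Pi\|_\kappa$ is needed.

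It then remains to estimate $\langle T_y - T_x, \psi^{(\mathbf n)}_\mu(y - \cdot)\rangle = \langle R_x, \psi^{(\mathbf n)}_\mu(y - \cdot)\rangle - \langle R_y, \psi^{(\mathbf n)}_\mu(y - \cdot)\rangle$; dividing the resulting identity by $\mu^{|\mathbf n|} = |h|^{|\mathbf n|}$ and then by $|h|^{\kappa - |\mathbf n|}$ yields the claim. The term $\langle R_y, \psi^{(\mathbf n)}_\mu(y - \cdot)\rangle$ is exactly $R_{y\mu}(y)$ tested against $\psi^{(\mathbf n)} \in \mathcal B$, hence directly $\lesssim \mu^\kappa$ times the first quantity on the right-hand side of the lemma. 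The main obstacle --- and the reason for the name ``three-point'' --- is the term $\langle R_x, \psi^{(\mathbf n)}_\mu(y - \cdot)\rangle$, in which the germ $R_x$ is evaluated \emph{off its basepoint}, at the point $y$ lying at distance $|h| \sim \mu$ from $x$, so that it is not among the quantities $R_{z\nu}(z)$ appearing on the right. I expect to dispose of it as in the cited three-point arguments: re-centre the shifted bump $\psi^{(\mathbf n)}_\mu(y - \cdot)$ as a bump at $x$ of scale comparable to $\mu$ with a parabolic shift of size $|h|/\mu \le 1$, and control the resulting fixed family of ``shifted'' test functions at the basepoint $x$ via Lemma~\ref{kernel_swap} (together, if necessary, with a Taylor expansion of the shift into the derivatives of the $\psi^{(\mathbf n)}$, also placed in $\mathcal B$, and a geometric iteration over scales $\gtrsim \mu$ --- which is why the statement takes a supremum over all $\mu < \infty$ rather than over a ball of test functions). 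The remaining work --- checking that the $(\mathrm{trunc})$ term is controlled by $\|\Pi\|_\kappa$, $\|\Gamma^*\|_\kappa$ and the non-purely-polynomial $[f]_{\kappa, \beta}$, using that $\Gamma^*_{xy}$ raises homogeneity only by bounded amounts --- is routine bookkeeping.
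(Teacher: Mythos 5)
Your proposal follows essentially the same route as the paper's proof: the same dual test functions $\int \psi^{(\mathbf n)}(w)\,(-w)^{\mathbf m}\,dw=\delta_{\mathbf m,\mathbf n}$, the same re-expansion of $R_x-R_y$ via $\Gamma_{xy}^*\Pi_y=\Pi_x$ tested at scale $\mu=|h|$ centred at $y$, the same split into purely-polynomial and non-purely-polynomial multi-indices, and the same re-centring of the off-basepoint term $\langle R_x,\psi_\mu(y-\cdot)\rangle$. Two small simplifications you could make: by the triangular structure of $\Gamma^*$ the re-expansion only produces homogeneities $|\gamma|\le|\beta|<\kappa$, so your $(\mathrm{trunc})$ term vanishes identically and the identity is exact (had it produced homogeneities $\ge\kappa$ it could not be controlled by $\|\Pi\|_\kappa$); and the off-basepoint term needs no Taylor expansion or iteration over scales, since one may simply write $\psi_{|y-x|}(y-\cdot)=\tilde\psi_{2|y-x|}(x-\cdot)$ for a family of Schwartz functions $\tilde\psi$ whose seminorms are controlled by those of $\psi$, the supremum in the statement being understood (as in the paper) over such a Schwartz-bounded family.
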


\begin{remark}\label{3p_remark}
    In the above statement, since $f$ is not assumed to be a solution of the robust formulation, $R_x$ is not given as its reconstruction. However for the $\beta$ appearing in the sum $\Pi_{x \beta}$ is automatically a function since we can write
    \begin{align*}
    \Pi_{x \beta}(y) &= \left ( \Gamma_{xy}^* \Pi_{y} \right )_\beta (y) = \sum_{\gamma} (\Gamma_{xy}^*)_\beta^\gamma \Pi_{y \gamma}(y).
    \end{align*}
    Now if $|\gamma| < 0$ then $\gamma \in \{\mathsf{z}_{\mathfrak{3}}^k: k \in \mathbb{N}_0\}$. However $\Gamma_{xy}^* \mathsf{z}_{\mathfrak{3}}^k = \mathsf{z}_{\mathfrak{3}}^k$ so that no such multi-indices contribute. Furthermore $\Pi_{y \gamma}(y) = 0$ for $|\gamma| > 0$ so that 
	\begin{align*}
		\Pi_{x\beta}(y) = (\Gamma_{xy}^*)_{\beta}^{\delta_{\mathbf{0}}} .
\end{align*}

    In particular, it follows that when $f$ is a solution of the robust formulation one has that $R_x(y) = f_y . \mathsf{z}_{\mathbf{0}} - f_x . Q_\kappa \Gamma_{xy}^* \mathsf{z}_{\mathbf{0}}$ thus explaining our choice of notation.
\end{remark}

The combination of Lemma~\ref{3_point} and \eqref{md06} will allow us to estimate the part of $[f]_{\kappa, \delta_\mathbf{n}}$ corresponding to increments $h$ satisfying $|h| < \rho$. In the complementary regime, we simply make use of the triangle inequality and the seminorms $\| f . \mathsf{z}^{\beta} \|$. In total, this allows us deduce from the previous estimates that for $|\mathbf{n}| < \kappa$
\begin{align}\label{pb01}
    [f]_{\kappa, \mathrm{pol}} \lesssim \Big( \Big( \frac{\rho}{\ell} \Big)^{-\delta} + \Big( \frac{\rho}{\ell} \Big)^{p} \, |\lambda| \Big) \|f\|_\kappa .
\end{align}
At this point, we note that it follows from Lemma~\ref{continuity} and the interpolation estimates contained in its proof (see Lemma~\ref{l:intI} below) that
\begin{align} \label{pb02}
    \|f\|_\kappa \lesssim [f]_{\kappa, \mathrm{pol}} + \| f . \mathsf{z}_{\mathbf{0}} \| + |\lambda|.
\end{align}
Substituting this estimate into \eqref{pb01} is not quite sufficient for our purposes since a supremum-type norm of the modelled distribution $f$ still appears on the right-hand side.

\medskip
In order to correct for this, in our fifth and final step (\emph{Incorporating Boundary Data}), we make use of the periodic structure of the problem.
More precisely, 
observe by \eqref{t15} that for $|\beta|<0$, $\Pi_{0 \beta}$ is a periodic distribution of space-time average $\fint \Pi_{0 \beta} = \lim_{\mu \to \infty} \Pi_{0 \beta \mu} ( 0 ) = 0$ by the model bounds \eqref{mb01}.
It follows from Remark~\ref{r:gDP} that $x \mapsto f_x . \mathsf{z}_{\mathbf{0}}$ is periodic of vanishing space-time average. Therefore
\begin{align*}
    |f_x . \mathsf{z}_\mathbf{0}| \lesssim_{\ell, \eta} [f . \mathsf{z}_\mathbf{0} ]_{\eta}  \qquad \text{for any $\eta \in (0, 1)$,}
\end{align*}
where $[\cdot]_{\eta}$ denotes the (parabolic) $\eta$-H\"older seminorm
\begin{align}\label{b15}
		[ u ]_{\eta} & \coloneqq
		\sup_{x, y \in \mathbb{R}^{1 + d}} \frac{\big| u ( y ) - \sum\nolimits_{| \mathbf{n} | < \eta} \frac{1}{\mathbf{n}!} ( y - x )^{\mathbf{n}} \partial^{\mathbf{n}} u ( x ) \big|}{| x - y |^{\eta}} 
		\qquad \text{for any $\eta> 0$.}
\end{align}
Making the choice $\eta = \kappa^-$ with 
\begin{align} \label{i7}
    \kappa^- \coloneqq \min \{ |\beta| > 0: \beta \text{ is populated.}\}
\end{align}
leads to the estimate 
\begin{align}\label{md08}
    |f_x . \mathsf{z}_\mathbf{0}| \lesssim_{\ell, \eta} [f]_{\kappa^-, \mathrm{pol}}
\end{align}
since the right-hand side coincides with $[f.\mathsf{z}_\mathbf{0}]_{\kappa^-}$.
We now wish to replace the right-hand side of this estimate with a term depending instead on $[f]_{\kappa, \mathrm{pol}}$. It is essential that we do this in such a way that any dependence on $\| f . \mathsf{z}_{\mathbf{0}} \|$ appears with a small constant so that we may absorb this term into the left-hand side in the final estimate. To this end, we make use of the interpolation estimate 
\begin{align*}
	 [f]_{\kappa^-, \mathrm{pol}}
	& \lesssim ( [f]_{\kappa, \mathrm{pol}} + |\lambda| )^\frac{\kappa^-}{\kappa} \, ( \| f . \mathsf{z}_{\mathbf{0}} \| + |\lambda| )^{\frac{\kappa - \kappa^-}{\kappa}}
\end{align*}
which is part of the statement of Lemma~\ref{l:intI} appearing in the proof of Lemma~\ref{continuity}.
By combining this estimate, Young's inequality and \eqref{md08} we obtain the bound
\begin{align*}
	\| f . \mathsf{z}_{\mathbf{0}} \| \lesssim C_{\varepsilon} ([f]_{\kappa, \mathrm{pol}} + |\lambda|) + \varepsilon \| f . \mathsf{z}_{\mathbf{0}} \| 
\end{align*}
which is valid for any $\varepsilon > 0$. In particular, by taking $\varepsilon$ small enough this implies that
\begin{align*}
	\| f . \mathsf{z}_{\mathbf{0}} \| \lesssim [f]_{\kappa, \mathrm{pol}} + |\lambda| .
\end{align*}
Substituting this bound and the estimate \eqref{pb01} into the right-hand side of \eqref{pb02} yields the estimate
\begin{align*}
    \|f\|_\kappa 
    \lesssim \Big( \Big( \frac{\rho}{\ell} \Big)^{-\delta} + \Big( \frac{\rho}{\ell} \Big)^{p} |\lambda| \Big) \|f\|_\kappa 
    	+ |\lambda|.
\end{align*}
Taking first the localisation scale $\rho$ to be sufficiently large and then $|\lambda|$ sufficiently small so as to buckle completes the proof.

\subsection{Uniqueness and Continuity in the Model: Strategy of Proof}\label{ss:cont}

With our a priori estimate in hand, we now turn to sketch our strategy of proof for Theorem~\ref{uniqueness}. 
At its most basic, the idea is to apply the same kind of PDE estimates as sketched in the previous section to the difference of two solutions to the robust formulation. 
At the level of the nonlinear PDE \eqref{t1}, it may seem that these differences do not have good structure.
However, one has more hope when considering our robust formulation, since it replaces the nonlinear (and singular) PDE with a linear PDE that is coupled to a nonlinear constraint at the level of the corresponding modelled distributions.
At the level of differences, this algebraic constraint behaves in a way that is akin to a (discrete) Leibniz' rule.

\medskip
In particular, the parts of the argument sketched in the previous section that purely depend on the PDE \eqref{rfPDE} go largely unaltered. 
Instead, the main novelty in our proof of Theorem~\ref{uniqueness} in comparison to that of Theorem~\ref{a priori} is in the more algebraic steps; namely in adapting Lemma~\ref{continuity} and Lemma~\ref{3_point}. 
We have the following analogues of those results, which we state only for solutions of the robust formulation so as to make use of Theorem~\ref{a priori} to simplify the statements.

\begin{lemma}[Algebraic Continuity Lemma II]\label{continuity_2}
	Suppose that $(\Pi, \Pi^-, \Gamma^*)$, $(\tilde{\Pi}, \tilde{\Pi}^-, \tilde{\Gamma}^*)$ 
	are models
	and $f, \tilde{f}$ corresponding modelled distributions such that
	\begin{align*}
		|\lambda| + \|f\|_\kappa + \|\tilde{f}\|_\kappa + \|\Gamma^*\|_\kappa + \|\tilde{\Gamma}^*\|_\kappa \le M
	\end{align*}
 then for each populated and non-purely polynomial multi-index $\beta$ with $|\beta| < \kappa$, 
	\begin{align}\label{md09}
	 [f;\tilde{f}]_{\kappa, \beta} \lesssim |\lambda| \left ( [f; \tilde{f}]_{\kappa, \mathrm{pol}} + |\lambda| \|\Gamma^* - \tilde{\Gamma}^*\|_\kappa + \| (f - \tilde{f}) . \mathsf{z}_{\mathbf{0}} \|  \right )
	\end{align}
	where
	\begin{align*}
		[f; \tilde{f}]_{\eta, \beta} \coloneqq \sup_{x, h \in \mathbb{R}^{1+d}} \frac{|(f_{x+h}. - f_x. Q_\eta \Gamma_{x \, x+h}^* - \tilde{f}_{x+h}. + \tilde{f}_x. Q_\eta \tilde{\Gamma}_{x \, x+h}^*) \mathsf{z}^\beta|}{|h|^{\eta - |\beta|}}
	\end{align*}
        and 
	\begin{align} \label{refonp19}
		[f; \tilde{f}]_{\eta, \mathrm{pol}} = \sum_{|\mathbf{n}| < \eta} [f; \tilde{f}]_{\eta, \delta_\mathbf{n}} .
	\end{align}
\end{lemma}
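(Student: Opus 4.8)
The plan is to mimic the proof of the Algebraic Continuity Lemma (Lemma~\ref{continuity}), but now applied to the ``difference'' of the two modelled distributions, exploiting the multiplicativity \eqref{rf02} (equivalently \eqref{t8}) together with the reexpansion structure encoded in $[f;\tilde f]_{\kappa,\beta}$. First I would fix a populated, non-purely-polynomial $\beta$ with $|\beta|<\kappa$ and unpack the definition: since $f_{x+h}.\mathsf{z}^\beta = \lambda^{\beta(\mathfrak{3})}\prod_{|\mathbf{n}|<\kappa}(f_{x+h}.\mathsf{z}_\mathbf{n})^{\beta(\mathbf{n})}$ and similarly for $\tilde f$, the quantity inside $[f;\tilde f]_{\kappa,\beta}$ is a difference of two products of the form $A - \tilde A$ where each factor is a purely-polynomial increment-type object. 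Using the elementary algebraic identity $\prod a_i - \prod \tilde a_i = \sum_j (\prod_{i<j} a_i)(a_j - \tilde a_j)(\prod_{i>j}\tilde a_i)$ — a ``discrete Leibniz rule'' — I would reduce the estimate to controlling, factor by factor, the single-factor differences, which are either of the form $(f_x. - f_x.Q_\kappa\Gamma^*_{x\,x+h})\mathsf{z}_\mathbf{n}$ type (the polynomial reexpansion errors, controlled by $[f;\tilde f]_{\kappa,\mathrm{pol}}$ and $[f]_{\kappa,\mathrm{pol}}$, $[\tilde f]_{\kappa,\mathrm{pol}}$ separately) or the genuine differences $(f_x - \tilde f_x).\mathsf{z}_\mathbf{n}$ and $((\Gamma^*-\tilde\Gamma^*)_{x\,x+h})$. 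The point is that each summand in the discrete Leibniz expansion contains exactly one difference factor, and the remaining factors are bounded by the a priori hypotheses plus $\|f\|_\kappa, \|\tilde f\|_\kappa \le M$.

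The prefactor $|\lambda|$ appears for the same structural reason as in Lemma~\ref{continuity}: since $\beta$ is non-purely-polynomial and populated, one has $\beta(\mathfrak{3})\ge 1$ (this is where one uses the population conditions: the only way to be populated without being pp and without $\beta(\mathfrak3)\ge1$ would be $\beta = \delta_{\mathfrak3}+\sum_j\delta_{\mathbf{n}_j}$, which still has $\beta(\mathfrak3)=1$, or $[\beta]\ge0$, which forces $\beta(\mathfrak3)\ge1$ as well). Hence each term carries at least one factor of $\lambda = f_x.\mathsf{z}_{\mathfrak3}$, producing the claimed $|\lambda|$. The terms where the difference factor is $(f-\tilde f).\mathsf{z}_\mathbf{n}$ need a further argument: for $\mathbf n = \mathbf 0$ this is exactly $\|(f-\tilde f).\mathsf{z}_\mathbf{0}\|$; for $\mathbf n \ne \mathbf 0$ one has $|\mathbf n| = |\delta_\mathbf{n}| > 0$ so $\delta_\mathbf{n}$ is populated with positive homogeneity, and $(f_x - \tilde f_x).\mathsf{z}_\mathbf{n}$ should be controlled by interpolation (Lemma~\ref{l:intI}, used as in the sketch for Theorem~\ref{a priori}) against $[f;\tilde f]_{\kappa,\mathrm{pol}}$, $\|(f-\tilde f).\mathsf{z}_\mathbf{0}\|$ and $|\lambda|$ — the a priori estimate \eqref{t58} is what makes $f_x.\mathsf{z}_\mathbf{n}$, $\tilde f_x.\mathsf{z}_\mathbf{n}$ themselves of size $O(|\lambda|)$ for such $\mathbf n$, which is crucial to keep the right-hand side affine. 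Finally the $\Gamma^*-\tilde\Gamma^*$ contributions come with their own $|\lambda|$ from a companion factor, yielding the $|\lambda|^2\|\Gamma^*-\tilde\Gamma^*\|_\kappa$-type term in \eqref{md09} (the extra $|\lambda|$ being absorbed/displayed as $|\lambda|\|\Gamma^*-\tilde\Gamma^*\|_\kappa$ inside the bracket).

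The main obstacle I anticipate is the bookkeeping needed to ensure the right-hand side of \eqref{md09} genuinely depends on the polynomial difference seminorms $[f;\tilde f]_{\kappa,\mathrm{pol}}$ (and $\|(f-\tilde f).\mathsf{z}_\mathbf{0}\|$, $\|\Gamma^*-\tilde\Gamma^*\|_\kappa$) \emph{affinely}, with no hidden factor of $[f;\tilde f]_{\kappa,\beta}$ for non-pp $\beta$ slipping in through a term like $\prod_{i<j}a_i$ when some $a_i$ is itself a non-pp object — this is avoided because in \eqref{rf02} every factor $f_x.\mathsf{z}_\mathbf{n}$ is purely polynomial, so the discrete Leibniz expansion only ever produces pp-type difference factors. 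One must also handle carefully the discrepancy between $Q_\kappa$ applied to a product versus the product of $Q_\kappa$-truncated factors (multiplicativity is only exact before truncation), which introduces controllable error terms of order $\kappa$ in the increment; this is precisely the kind of estimate already carried out in the proof of Lemma~\ref{continuity} and only needs to be differenced. Once these points are organised, the estimate \eqref{md09} follows by collecting terms and invoking the a priori bound \eqref{t58} together with $\|f\|_\kappa,\|\tilde f\|_\kappa,\|\Gamma^*\|_\kappa,\|\tilde\Gamma^*\|_\kappa\le M$ to absorb all non-difference factors into the implicit constant.
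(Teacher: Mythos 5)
Your overall architecture is the same as the paper's. The paper proves Lemma~\ref{continuity_2} by combining a preliminary algebraic estimate for differences (Lemma~\ref{l:pcII}) with an interpolation estimate for differences (Lemma~\ref{l:intII}). Lemma~\ref{l:pcII} is exactly your ``one difference factor per term'' Leibniz expansion, except that the paper organises it as an induction in the plain length of $\beta$, taking differences of the identities \eqref{wr41}--\eqref{wr42}; this letter-by-letter bookkeeping is what handles your anticipated issue that $Q_\kappa$ does not commute with products, via the shifted truncations $Q_{\eta-(|\gamma|-\alpha)}$, and is why the algebraic lemma is stated for all $\eta\le\kappa$ rather than only $\eta=\kappa$. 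The prefactor $|\lambda|$ does come from $\beta(\mathfrak{3})\ge 1$ for populated non-pp $\beta$ (the exception $\beta=0$ being trivial since the increment vanishes), and the extra $|\lambda|$ in front of $\|\Gamma^*-\tilde\Gamma^*\|_\kappa$ arises because $\Gamma^*$ and $\tilde\Gamma^*$ act identically on purely polynomial multi-indices (last item of \eqref{mb07}), so the difference only ever multiplies $\tilde f_x.\mathsf{z}^\gamma$ with $\gamma(\mathfrak{3})\ge 1$, which is $O(|\lambda|)$ by multiplicativity; this is the precise mechanism behind your ``companion factor'' remark.

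The one genuine gap is the interpolation step. Lemma~\ref{l:intI}, which you cite, controls $\|f.\mathsf{z}_{\mathbf{n}}\|$ and $\|\tilde f.\mathsf{z}_{\mathbf{n}}\|$ separately; applied with the triangle inequality it bounds $\|(f-\tilde f).\mathsf{z}_{\mathbf{n}}\|$ only by quantities that do not vanish with the difference, which would destroy \eqref{md09} (and with it uniqueness). What is actually needed is a difference analogue, the paper's Lemma~\ref{l:intII}, and this requires its own proof: first the Leibniz-type bound $\|(f-\tilde f).\mathsf{z}^\beta\|\lesssim|\lambda|^{\beta(\mathfrak{3})}\sum_{\mathbf{n}}\beta(\mathbf{n})\|(f-\tilde f).\mathsf{z}_{\mathbf{n}}\|$ (estimate \eqref{wr18}), obtained by differentiating along the linear interpolation $f_t$ between $\tilde f$ and $f$, which stays in the multiplicative class; then a rerun of the buckling-in-$r$ argument of Lemma~\ref{l:intI} in which a new cross-term $\|\Gamma^*-\tilde\Gamma^*\|\,\|\tilde f.\mathsf{z}^\beta\|$ appears (restricted to $\beta(\mathfrak{3})\neq 0$, whence the $|\lambda|\|\Gamma^*-\tilde\Gamma^*\|$ contributions in \eqref{wr16}). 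Relatedly, your appeal to the a priori estimate \eqref{t58} to make $f.\mathsf{z}_{\mathbf{n}}$ of size $O(|\lambda|)$ is not what keeps the right-hand side affine: affinity in the difference quantities comes from the one-difference-factor structure together with Young's inequality after interpolation, and only the hypothesis $\|f\|_\kappa+\|\tilde f\|_\kappa\le M$ is used for the non-difference factors.
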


\begin{lemma}[Three-Point Argument II]\label{3_point_2}
	There exists a finite set $\mathcal{B}$ of smooth functions\footnote{which can be taken to be supported in $B_1(0)$} such that if $(\Pi, \Pi^-, \Gamma^*)$, $(\tilde{\Pi}, \tilde{\Pi}^-, \tilde{\Gamma}^*)$ are models and 
	 $f, \tilde{f} \colon \mathbb{R}^{1+d} \to (\mathbb{R}[\mathsf{z}_\mathfrak{3}, \mathsf{z}_n])^*$ 
	then for $|\mathbf{n}| < \kappa$
	\begin{align*}
		[f; \tilde{f}]_{\kappa, \delta_{\mathbf{n}}} \lesssim  \sup_{\psi \in \mathcal{B}} \sup_{x \in \mathbb{R}^{1 + d}} \sup_{\mu < \infty} & \frac{|(R_x - \tilde{R}_x)(\psi_x^\mu)|}{\mu^\kappa}
		\\ &+ \max_\beta [\tilde{f}]_{\kappa, \beta} \|\Pi - \tilde{\Pi}\|_{\kappa} + \|\Pi\|_\kappa \max_{\beta} [f; \tilde{f}]_{\kappa, \beta}
	\end{align*}
	where the maxima are over populated non-purely polynomial multi-indices $\beta$ with $|\beta| < \kappa$ and we have defined 
    \begin{align*}
        	(R_x - \tilde{R}_x)(y) &  \coloneqq (f_y - \tilde{f}_y) . \mathsf{z}_\mathbf{0} - \sum_{0 \le |\beta| < \kappa}  \big ( f_x . \mathsf{z}^{\beta} \Pi_{x\beta} ( y)- \tilde{f}_x . \mathsf{z}^{\beta} \tilde{\Pi}_{x\beta} ( y) \big) .
    \end{align*}
\end{lemma}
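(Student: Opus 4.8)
The plan is to repeat, for the difference of the two configurations $(\Pi,\Gamma^*,f)$ and $(\tilde{\Pi},\tilde{\Gamma}^*,\tilde{f})$, the argument that proves the single--model Three--Point Argument, Lemma~\ref{3_point}, keeping the same finite family $\mathcal{B}\subset\mathcal{S}(\mathbb{R}^{1+d})$ of test functions supported in $B_1(0)$. First I would record the exact algebraic identity underlying Lemma~\ref{3_point}: using $\Pi_{x\beta}(y)=(\Gamma_{xy}^*)_\beta^{\delta_{\mathbf 0}}$ for populated non--cubic $\beta$ (Remark~\ref{3p_remark}), the definition \eqref{3ps01} of $R_x$, and the cocycle relation $\Gamma_{xy}^*=\Gamma_{x\,x+h}^*\Gamma_{x+h\,y}^*$ from \eqref{mb04}, one obtains
\begin{align*}
	R_x-R_{x+h}=\sum_{|\mathbf m|<\kappa}\big((f_{x+h}.-f_x.Q_\kappa\Gamma_{x\,x+h}^*)\mathsf z^{\delta_{\mathbf m}}\big)\,(\cdot-x-h)^{\mathbf m}+\sum_{\gamma}\big((f_{x+h}.-f_x.Q_\kappa\Gamma_{x\,x+h}^*)\mathsf z^{\gamma}\big)\,\Pi_{x+h\,\gamma},
\end{align*}
the last sum running over populated non--purely--polynomial $\gamma$ with $0<|\gamma|<\kappa$; the purely cubic $\gamma=k\delta_{\mathfrak 3}$ contribute zero, since $f.\mathsf z_{\mathfrak 3}=\lambda$ is basepoint--independent and $\Gamma^*$ fixes $\mathsf z_{\mathfrak 3}$. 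In words, the purely polynomial increments are exactly the Taylor coefficients at $x+h$ of the polynomial part of $R_x-R_{x+h}$. Then I would subtract the tilded version of this identity and expand every product by the discrete Leibniz rule $ab-\tilde{a}\tilde{b}=(a-\tilde{a})b+\tilde{a}(b-\tilde{b})$ --- applied to the coefficient--times--polynomial products and, more delicately, to the reexpansion $\Gamma_{xy}^*=\Gamma_{x\,x+h}^*\Gamma_{x+h\,y}^*$, in such a way that the combination $f_x.Q_\kappa\Gamma_{x\,x+h}^*-\tilde{f}_x.Q_\kappa\tilde{\Gamma}_{x\,x+h}^*$ is never broken up --- to arrive at
\begin{align*}
	(R_x-\tilde{R}_x)-(R_{x+h}-\tilde{R}_{x+h})=\sum_{|\mathbf m|<\kappa}\big((f_{x+h}.-f_x.Q_\kappa\Gamma_{x\,x+h}^*-\tilde{f}_{x+h}.+\tilde{f}_x.Q_\kappa\tilde{\Gamma}_{x\,x+h}^*)\mathsf z^{\delta_{\mathbf m}}\big)\,(\cdot-x-h)^{\mathbf m}+\mathcal{E},
\end{align*}
where $\mathcal{E}$ is a finite sum, over populated non--purely--polynomial $\gamma$, of terms of the two shapes $\big((f_{x+h}.-f_x.Q_\kappa\Gamma_{x\,x+h}^*-\tilde{f}_{x+h}.+\tilde{f}_x.Q_\kappa\tilde{\Gamma}_{x\,x+h}^*)\mathsf z^{\gamma}\big)\Pi_{x+h\,\gamma}$ and $\big((\tilde{f}_{x+h}.-\tilde{f}_x.Q_\kappa\tilde{\Gamma}_{x\,x+h}^*)\mathsf z^{\gamma}\big)(\Pi_{x+h\,\gamma}-\tilde{\Pi}_{x+h\,\gamma})$.

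Next I would extract the polynomial coefficients exactly as in the proof of Lemma~\ref{3_point}, with the same $\mathcal{B}$: testing the second identity against $\psi\in\mathcal{B}$ centred at $x$ or $x+h$ and at finitely many scales $\mu\sim|h|$ isolates, level by level in the homogeneity $|\mathbf m|$ by a Vandermonde argument in the scales and then within each level using the invertibility of the moment matrix of $\mathcal{B}$, the coefficients $\big(f_{x+h}.-f_x.Q_\kappa\Gamma_{x\,x+h}^*-\tilde{f}_{x+h}.+\tilde{f}_x.Q_\kappa\tilde{\Gamma}_{x\,x+h}^*\big)\mathsf z^{\delta_{\mathbf m}}$; the three--point structure is invoked, as in \cite{LOTT,OSSW}, precisely so that on the left one only meets germ evaluations of the form $(R_z-\tilde{R}_z)(\psi_z^\mu)$ with $z\in\{x,x+h\}$ and $\psi\in\mathcal{B}$, which divided by $\mu^\kappa$ are bounded by the first term on the right--hand side of the lemma. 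The error $\mathcal{E}$ is controlled at scale $\mu\sim|h|$ by the model bounds $|(\Pi_{x+h\,\gamma})_\mu(x+h)|\le\mu^{|\gamma|}\|\Pi\|_\kappa$ and $|(\Pi_{x+h\,\gamma}-\tilde{\Pi}_{x+h\,\gamma})_\mu(x+h)|\le\mu^{|\gamma|}\|\Pi-\tilde{\Pi}\|_\kappa$, together with the increment bounds $[f;\tilde{f}]_{\kappa,\gamma}|h|^{\kappa-|\gamma|}$ and $[\tilde{f}]_{\kappa,\gamma}|h|^{\kappa-|\gamma|}$ for the corresponding coefficient factors; dividing by $|h|^{\kappa-|\mathbf m|}$ and taking the supremum over $x$ and $h$ then produces exactly the three terms in the statement.

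The hard part is not any single estimate --- each is a minor variant of an ingredient already present in Lemma~\ref{3_point} --- but the bookkeeping of the Leibniz split. One must arrange that every factor $\Pi-\tilde{\Pi}$ or $\Gamma^*-\tilde{\Gamma}^*$ ends up paired with the second map $\tilde{f}$, so that it is absorbed into $\max_\beta[\tilde{f}]_{\kappa,\beta}\|\Pi-\tilde{\Pi}\|_\kappa$ --- in particular no loose factor of $\|\Gamma^*-\tilde{\Gamma}^*\|_\kappa$ is left behind --- while the coefficient differences appear only through the packaged quantity $[f;\tilde{f}]_{\kappa,\beta}$, which already bundles $f_x.Q_\kappa\Gamma_{x\,x+h}^*-\tilde{f}_x.Q_\kappa\tilde{\Gamma}_{x\,x+h}^*$ into a single object, and never through a bare supremum norm of $f-\tilde{f}$, which would be too strong to buckle in the proof of Theorem~\ref{uniqueness}. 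As in Lemma~\ref{3_point}, one must also carry out the polynomial extraction in a way that is robust to the lack of any smoothness assumption on the models.
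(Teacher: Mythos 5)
Your proposal is correct and follows essentially the same route as the paper's proof: take the difference of the identity \eqref{3pid} for the two configurations, split each non-purely-polynomial summand by the discrete Leibniz rule into a term $[f;\tilde f]_{\kappa,\beta}$-type coefficient times $\Pi_{y\beta}$ plus a term $[\tilde f]_{\kappa,\beta}$-type coefficient times $\Pi_{y\beta}-\tilde\Pi_{y\beta}$, and isolate the purely-polynomial coefficients by testing at scale $\mu=|y-x|$ against test functions adapted to monomials. The only differences are cosmetic: the paper extracts each coefficient directly with a single test function whose moments are $\delta_{\mathbf n,\mathbf m}$ rather than via your Vandermonde-in-scales argument, and your remark that the purely cubic $\gamma=k\delta_{\mathfrak{3}}$ drop out uses $f.\mathsf{z}_{\mathfrak{3}}\equiv\lambda$, which is not assumed in the lemma's generality but is harmless since those terms are in any case absorbed by the maxima on the right-hand side.
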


\subsection{Pathwise Existence of Solutions: Strategy of Proof} \label{ss:exist_strat}

We now sketch our strategy of proof for the existence result of  Theorem~\ref{existence}.
This relies on constructing sufficiently smooth (in fact, $\kappa$-H\"older) solutions $u$ to equation \eqref{t30}, where $\xi$ is now sufficiently smooth (in fact, $(\kappa - 2)$-H\"older).
We emphasize that this result does not follow immediately from classical PDE theory.
The existence of such a $u$ in a range of $| \lambda | \leq \lambda_0$ is nontrivial, since it is essential that $\lambda_0 > 0$
depends only on the norm of the model rather than the stronger $(\kappa-2)$-H\"older norm of $\xi$, so that we can pass to the limit.
On the other hand,
we will take advantage of the smoothness of the given model $(\Pi, \Pi^-, \Gamma^*)$ in a qualitative way; see Definition~\ref{d:smooth}.

\medskip
\textbf{Consistency}.
One of the ingredients coming with the smoothness
of the model is a consistency statement, 
which asserts that to any solution $u$ of \eqref{t30}
one can associate a corresponding modelled distribution $f$ satisfying the robust formulation given in Definition~\ref{rf03}.
\begin{lemma}[Consistency]\label{l:c}
Let $(\Pi, \Pi^-, \Gamma^*)$ be a smooth model (as in Definition~\ref{d:smooth}).
Let 
$\lambda \in \mathbb{R}$
and $u$ be a $\kappa$-H\"older periodic function solving \eqref{t30}.
Then, there is a unique
modelled distribution
$f$ satisfying 
	\begin{align}
		f_x . \mathsf{z}_{\mathfrak{3}} & = \lambda , \label{t9} \\
			f_x . \mathsf{z}_{\mathbf{n}} & = \frac{1}{\mathbf{n}!} \Big ( \partial^{\mathbf{n}} u ( x ) - f_x . Q_{| \mathbf{n} |} \partial^{\mathbf{n}} \Pi_{x} ( x ) \Big ) ,  \quad \text{for all } | \mathbf{n} | < \kappa , \label{t10} \\
		f_x . \mathsf{z}_{\mathbf{n}} & = 0 \quad \text{for all } | \mathbf{n} | > \kappa . \label{t9b}
	\end{align}
Furthermore, $f$ satisfies the robust formulation of the equation \eqref{t30} as 
in Definition~\ref{rf03}.
\end{lemma}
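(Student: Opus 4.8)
The plan is to construct $f$ via the explicit formulas \eqref{t9}--\eqref{t9b}, then verify the two things that need proving: that this recipe indeed defines a modelled distribution in the sense of Definition~\ref{rf01} (in particular that it is multiplicative and satisfies the re-expansion bound \eqref{md01}), and that it solves the robust formulation of Definition~\ref{rf03} with the natural choice $u^- = \lambda u^3 + h_\lambda u + \xi$.

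\textbf{Setting up $f$ and multiplicativity.}
First I would observe that \eqref{t9}--\eqref{t9b} prescribe $f_x.\mathsf{z}_{\mathfrak{3}}$ and all $f_x.\mathsf{z}_{\mathbf{n}}$, which by the multiplicativity requirement \eqref{rf02} determines $f_x$ on all of $\mathbb{R}[\mathsf{z}_{\mathfrak{3}},\mathsf{z}_{\mathbf{n}}]$ uniquely; so existence and uniqueness of $f$ \emph{as a linear functional of the prescribed form} is automatic, and what must be checked is that the resulting object genuinely lies in the space of modelled distributions, i.e.\ that the bound \eqref{md01} holds. Note \eqref{t10} is not circular: since $Q_{|\mathbf{n}|}\partial^{\mathbf{n}}\Pi_x$ only involves $\Pi_{x\gamma}$ with $|\gamma| < |\mathbf{n}|$, and evaluating the contraction $f_x.Q_{|\mathbf{n}|}\partial^{\mathbf{n}}\Pi_x(x)$ only requires $f_x.\mathsf{z}_{\mathbf{m}}$ for $|\mathbf{m}|<|\mathbf{n}|$ together with $f_x.\mathsf{z}_{\mathfrak{3}}=\lambda$, one defines $f_x.\mathsf{z}_{\mathbf{n}}$ by induction on $|\mathbf{n}|$ (using \eqref{t93} to order the populated homogeneities). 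The supremum bound $\|f.\mathsf{z}^\beta\|<\infty$ then follows from the $\kappa$-H\"older (hence bounded, on the torus) regularity of $u$ together with the model bound $\|\Pi\|_\kappa \le M$ and multiplicativity; here periodicity of $u$ and \eqref{t15} give the needed uniformity in $x$.

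\textbf{The re-expansion bound.}
The heart of the matter is the increment estimate, the second term in \eqref{md01}: one must show $|(f_y. - f_x.Q_\kappa\Gamma^*_{xy})\mathsf{z}^\beta| \lesssim |y-x|^{\kappa-|\beta|}$. I would reduce this to the purely polynomial multi-indices $\beta = \delta_{\mathbf{n}}$, since once those increments are controlled the general case follows from multiplicativity of $f$ together with the module property $\Gamma^*\pi\tilde\pi = (\Gamma^*\pi)(\Gamma^*\tilde\pi)$ in \eqref{mb07} (a discrete Leibniz argument, exactly as in the Algebraic Continuity Lemma~\ref{continuity}). For $\beta=\delta_{\mathbf{n}}$ one expands $f_y.\mathsf{z}_{\mathbf{n}}$ and $f_x.Q_\kappa\Gamma^*_{xy}\mathsf{z}_{\mathbf{n}}$ using \eqref{t10} and the polynomial action \eqref{mb07} of $\Gamma^*$; the difference rearranges into the Taylor remainder of the $\kappa$-H\"older function $u$ at order $\kappa$ around $x$ evaluated at $y$, plus analogous remainders for the (also $\kappa$-H\"older, by Remark~\ref{r:1}) functions $\Pi_{x\gamma}$, all of which are $O(|y-x|^{\kappa-|\mathbf{n}|})$ by definition of the H\"older seminorm \eqref{b15}. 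This is where the smoothness hypothesis on the model is genuinely used, and I expect the bookkeeping in this step---keeping the algebra of $Q_\kappa\Gamma^*$ straight against the Taylor expansions---to be the main obstacle, though it is morally the same computation as underlies Lemma~\ref{3_point}.

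\textbf{Verifying the robust formulation.}
Finally, with $u^- := \lambda u^3 + h_\lambda u + \xi = \Pi^-_{x}$ evaluated through $f_x$ (consistent with \eqref{Hi02}), periodicity of $x\mapsto f_x$ follows from periodicity of $u$ and the model (via \eqref{t15}), and $\fint u = 0$ and $Lu = Pu^-$ are exactly \eqref{t30}. It remains to check $R_{x\mu}(x)\to 0$ and $R^-_{x\mu}(x)\to 0$ as $\mu\to 0$, locally uniformly. For $R_x = u - f_x.Q_\kappa\Pi_x$: by \eqref{t10}, $f_x.Q_\kappa\Pi_x$ is precisely the order-$\kappa$ Taylor polynomial of $u$ at $x$ modified by the non-polynomial part of the model, so $R_x(y)$ vanishes to order $\kappa > 0$ at $y=x$ in the H\"older sense (this is the $\beta=\mathbf{0}$ case of the increment computation above with $y$ replaced by the test-function scale), hence $|R_{x\mu}(x)|\lesssim \mu^\kappa\to 0$. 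For $R^-_x = u^- - f_x.Q_\kappa\Pi^-_x$: using \eqref{Hi02}, $u^- - \mathsf{z}_{\mathfrak{3}}(f_x.\Pi_x)^3 - c(f_x.\Pi_x) - \xi$ and expanding the cube via multiplicativity, the difference $u^- - f_x.Q_\kappa\Pi^-_x$ is a polynomial combination of the $R_x$-type remainders (products of $\Pi_{x\gamma}$'s differing from $u$ only by higher-order terms), so it too is $O(\mu^{\kappa-2})\to 0$; the $\xi$-term drops out since $\xi\mathsf{1}$ is reproduced exactly by $f_x.\Pi^-_x$ at $\beta=0$. Uniqueness of $f$ is already settled by the first paragraph: \eqref{t9}--\eqref{t9b} pin down $f_x$ on generators and multiplicativity does the rest.
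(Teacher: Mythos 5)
Your route is in outline the paper's own: define $f$ by \eqref{t9}--\eqref{t9b} (non-circular by induction on homogeneity), get the sup-norm part of \eqref{md01} from periodicity and the model bounds, get the increment part from Taylor remainders of the $\kappa$-H\"older functions $u$ and $\Pi_{x\gamma}$ via the reexpansion property, and deduce \eqref{t13} from the vanishing of the germs at the basepoint. However, the key reduction as you state it does not go through. You propose to control the purely polynomial increments first and then obtain the non-purely-polynomial ones by multiplicativity; but when one actually rearranges $(f_y.-f_x.Q_\kappa\Gamma^*_{xy})\mathsf{z}_{\mathbf{n}}$, the outcome is \emph{not} only Taylor remainders of $u$ and of the $\Pi_{x\gamma}$: applying $\partial^{\mathbf{n}}$ to the germ difference and evaluating at $y$ produces, besides $\partial^{\mathbf{n}}R_x(y)$, the cross terms $(f_y.-f_x.Q_\kappa\Gamma^*_{xy})\mathsf{z}^\beta\,\partial^{\mathbf{n}}\Pi_{y\beta}(y)$ for non-purely-polynomial $\beta$ with $|\beta|<|\mathbf{n}|$. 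So the pp increment at level $|\mathbf{n}|$ requires the non-pp increments at strictly lower homogeneity, which in turn (by the Leibniz-type argument you invoke, i.e.\ Lemma~\ref{l:pcI}) require pp increments at lower homogeneity: the correct structure is a single induction over populated multi-indices ordered by homogeneity, interleaving both families, as in the paper's Steps 5--6. Moreover, bounding $\partial^{\mathbf{n}}R_x(y)$ and $R^-_x(y)$ near $y=x$ rests on the pointwise identities $\partial^{\mathbf{n}}R_x(x)=0$ and $R_x^-(x)=0$, which use the model bound \eqref{mb01} through \eqref{t35} and \eqref{t75} and, for $R^-$, the matching of $f_x.Q_\kappa(c\,\Pi_x)$ with $h_\lambda u(x)$; your sketch asserts this cancellation but never verifies it, and this is precisely where \eqref{t10}, \eqref{Hi02} and the counterterm enter.

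A second, smaller gap: the step from ``$R_x(y)$ vanishes to order $\kappa$ as $y\to x$'' to ``$|R_{x\mu}(x)|\lesssim\mu^{\kappa}$'' is not immediate, because the kernel \eqref{t90} appearing in \eqref{t13} is Schwartz but not compactly supported, while the H\"older-type bound (the paper's \eqref{t44}) only controls increments $|h|\le\ell$. One needs the paper's cutoff argument (Step 5b): split $\psi_\mu$ into $\chi\psi_\mu$ and $(1-\chi)\psi_\mu$, use \eqref{t44} in the near field and the convergence $(1-\chi)\psi_\mu\to 0$ in $\mathcal{S}$ together with a crude scale-$\ell$ bound on $R_x$ in the far field, with uniformity in $x$ by periodicity. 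The same remark applies to $R^-$, where incidentally only convergence to zero is needed and the paper obtains the rate $\kappa^-$, not the $\mu^{\kappa-2}$ you claim; the weaker rate suffices but your stated one is not justified by the argument you give.
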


We will prove Lemma~\ref{l:c} in Section~\ref{ss:lift} below.

\medskip
\textbf{A continuity method}.
We resume the discussion of our existence result, Theorem~\ref{existence}.
First, we note that up to choosing $\lambda_0$ small enough, by \eqref{t58}, we may (and will) assume that $M \geq 1$.
Our construction of solutions to \eqref{t30}
under the assumptions of Theorem~\ref{existence} 
is based on a continuity method in $\lambda$ in the spirit of bifurcation theory.
In particular, we simultaneously monitor
\eqref{t30} and its linearisation
	\begin{align*}
		\dot{u} \mapsto L \dot{u} - P \big( (3 \lambda u^2 + h_{\lambda} ) \dot{u} \big) .
	\end{align*}
More precisely, we will consider the following properties $P_1 ( \lambda, u )$ and $P_2 ( \lambda, u )$ of a real number $\lambda$ and a 
$\kappa$-H\"older
periodic function $u$ with vanishing space-time average: 
	\begin{align}
		\label{eq:P1} \tag{$P_1 ( \lambda , u)$}
		& L u = P ( \lambda u^3 + h_{\lambda} u + \xi ) , \text{ and $\| f . \mathsf{z}_{\mathbf{0}} \| \leq 1$ (recall \eqref{t48})} . \\
		\label{eq:P2} \tag{$P_2 ( \lambda, u )$}
		& \text{The linearised operator} \\
		& \quad 
		\begin{array}[t]{lrcl}
		\dot{\Phi} ( \lambda, u ) : & C_{\mathrm{per}, 0}^{\kappa} & \longrightarrow & C_{\mathrm{per}, 0}^{\kappa - 2} \\
    & \dot{u} & \longmapsto & L \dot{u} - P \big( ( 3 \lambda u^2 + h_{\lambda} ) \dot{u} \big) ,
  		\end{array} \nonumber \\
  		&\text{is continuously invertible.} \nonumber
	\end{align}

Here, we denote by $C_{\mathrm{per}, 0}^{\eta}$ the space of $\eta$-H\"older  functions which are $\ell$-periodic with vanishing space-time average.
In particular, the standard (parabolic) H\"older seminorm
$[\cdot]_{\eta}$  defined in \eqref{b15},
is now a norm for which this space is complete.
For $\lambda_0 >0$ to be adjusted later, we define a subset of $[- \lambda_0, \lambda_0]$ by
\begin{align*}
		\Lambda 
		\coloneqq \lbrace \lambda \in [- \lambda_0, \lambda_0] , \text{there is $u$ satisfying } \eqref{eq:P1} \text{ and } \eqref{eq:P2} \rbrace .
	\end{align*}
We aim to prove that so long as $\lambda_0$ was suitably chosen, we have that
$\Lambda = [- \lambda_0, \lambda_0]$, see \eqref{t31} in Theorem~\ref{existence}.
To that effect, it suffices to prove that
$\Lambda$ contains $0$, that it is closed, and open
(relative to $[- \lambda_0, \lambda_0]$).

\medskip
Let us justify in a few words how those properties are obtained.
The rigorous proofs will be presented in Section~\ref{ss:exist} below.
When $\lambda = 0$, 
note that both \eqref{eq:P1} and \eqref{eq:P2} follow (with $u = L^{-1} \xi = \Pi_{\beta = 0}$) from 
the
(standard)
fact that $L$ is continuously invertible from $C_{\mathrm{per}, 0}^{\kappa}$ to $C_{\mathrm{per}, 0}^{\kappa-2}$.

\medskip
The openness will follow from more classical arguments in the standard H\"older topologies.
More precisely,
for $\lambda \in \Lambda$, 
the combination of \eqref{eq:P2} and the Implicit Function Theorem imply that $(P_1)$ remains valid in a neighbourhood of $\lambda$. Furthermore, $(P_2)$ also remains valid in a neighbourhood of $\lambda$ as will follow from a perturbation argument.
It is worth mentioning that
we have no quantitative control on the size of the corresponding neighbourhood in terms of $M$.
Indeed, such a control is not to be expected. This will not be problematic for our argument.

\medskip
The argument for the closedness of $\Lambda$ dictates the choice of $\lambda_0$ and relies on a priori estimates (in the less classical topologies of modelled distributions) for both \eqref{t30} and for $\dot{\Phi} ( \lambda, u )^{-1}$, 
the latter being reformulated as the linearised PDE with right-hand side
	\begin{align}\label{t7}
		L \dot{u} = P \dot{u}^- + \zeta , 
		\quad \dot{u}^- \coloneqq (3 \lambda u^2 + h_{\lambda} ) \dot{u},
		\quad \zeta \in C_{\mathrm{per}; 0}^{\kappa - 2} ,
		\quad \fint \dot{u} = 0 .
	\end{align}
	
\medskip
\textbf{Linearisation}.
The a priori estimates on \eqref{t30} have been discussed above in Subsection~\ref{ss:apriori}.
Thus,
the remaining ingredient towards our proof of Theorem~\eqref{existence} is
a suitable a priori estimate for the linearisation \eqref{t7} of the equation \eqref{t30}.
They will be obtained by yet another loop of Algebraic Continuity Lemma, Reconstruction, Integration, Three-Point Argument, and an
incorporation of boundary data, similar to the loop presented in Subsections~\ref{ss:apriori} and \ref{ss:cont} above.
Indeed, it turns out that \eqref{t7} also comes with a corresponding robust formulation based on a notion of linearised modelled distributions, where multiplicativity \eqref{t8} is replaced by Leibniz’ rule 
\eqref{t8l}.

\medskip
Let us motivate our definition of a linearised modelled distribution by a geometric heuristic.
Consider a curve $\mathbb{R} \ni t \mapsto f ( t )$ of modelled distributions as defined in Definition~\ref{rf01} with 
 $f_x ( t ) . \mathsf{z}_{\mathfrak{3}} \equiv \lambda$.
Suppose also that the curve passes through a given modelled distribution $f$ at $t = 0$.
We think of the linearised modelled distribution $\dot{f} = \frac{d}{dt}|_{t = 0} f ( t )$ as the corresponding `tangent vector’ at $f$ in the (nonlinear) space of modelled distributions.
Differentiating the identity \eqref{rf02} at $t = 0$, we learn 
that $\dot{f}.\pi$
is explicitly given
for $\pi \in \mathbb{R} [\mathsf{z}_{\mathfrak{3}}, \mathsf{z}_{\mathbf{n}}]$ by
	\begin{align}
		\dot{f}_x.\pi
		& = \sum_{\beta} \pi_{\beta} \lambda^{\beta ( \mathfrak{3} )} \sum_{| \mathbf{n} | < \kappa} \beta ( \mathbf{n} ) \, (\dot{f}_x.\mathsf{z}_{\mathbf{n}}) \, (f_x . \mathsf{z}_{\mathbf{n}})^{\beta ( \mathbf{n}) - 1} \, \prod_{\substack{{|\mathbf{m}|<\kappa} \\ \mathbf{m} \neq \mathbf{n}}} (f_x.\mathsf{z}_{\mathbf{m}})^{\beta ( \mathbf{m})} . \nonumber
	\end{align}
Defining the derivative
$\partial_{\mathsf{z}_{\mathbf{n}}} \colon \mathbb{R} [[ \mathsf{z}_{\mathfrak{3}}, \mathsf{z}_{\mathbf{n}} ]] \to \mathbb{R} [[ \mathsf{z}_{\mathfrak{3}}, \mathsf{z}_{\mathbf{n}} ]]$
to be the linear extension of
	\begin{align}
		\partial_{\mathsf{z}_{\mathbf{n}}}\mathsf{z}^{\beta} \coloneqq \beta ( \mathbf{n} ) \mathsf{z}_{\mathbf{n}}^{\beta ( \mathbf{n} ) - 1} \lambda^{\beta ( \mathfrak{3} )} \prod_{\mathbf{m} \neq \mathbf{n}} \mathsf{z}_{\mathbf{m}}^{\beta ( \mathbf{m} )} , \nonumber
	\end{align}
this may be reformulated as $\dot{f}_x . \pi = \sum_{\beta} \pi_{\beta} \sum_{| \mathbf{n} | < \kappa} (\dot{f}_x . \mathsf{z}_{\mathbf{n}}) \, f_x . \partial_{\mathsf{z}_{\mathbf{n}}} \mathsf{z}^{\beta}$.
We now simply record the output of this heuristic argument as the algebraic constraint in our definition of a linearised modelled distribution.

\begin{definition}\label{rf01l}
Let $f$ be a modelled distribution.
A \emph{linearised modelled distribution} $\dot{f}$ at $f$ is an 
	\begin{align*}
		\dot{f} \colon \mathbb{R}^{1 + d} \to ( \mathbb{R} [\mathsf{z}_{\mathfrak{3}}, \mathsf{z}_{\mathbf{n}}] )^* ,
	\end{align*}
of the form
	\begin{align}
		\dot{f}_x . \pi
		& = \sum_{\beta} \pi_{\beta} \sum_{| \mathbf{n} | < \kappa} (\dot{f}_x . \mathsf{z}_{\mathbf{n}}) \, f_x . \partial_{\mathsf{z}_{\mathbf{n}}} \mathsf{z}^{\beta} . \label{t49}
	\end{align}
	
Equivalently,
for all $x \in \mathbb{R}^{1 + d}$, $\pi, \tilde{\pi} \in \mathbb{R} [\mathsf{z}_{\mathfrak{3}}, \mathsf{z}_{\mathbf{n}}]$, one has 
			\begin{align}
				\dot{f}_x . \pi \tilde{\pi} & = (\dot{f}_x . \pi) ( f_x . \tilde{\pi} ) + (f_x . \pi) ( \dot{f}_x . \tilde{\pi} ) , \label{t8l} 
				\\
				\dot{f}_x . \mathsf{z}_{\mathfrak{3}} &= 0 , \qquad \dot{f}_x . \mathsf{z}_{\mathbf{n}} = 0 \quad \text{for {$| \mathbf{n} | > \kappa$}, $\mathbf{n} \in \mathbb{N}_0^{1 + d}$.} \nonumber
			\end{align}	
	
Furthermore, we assume that
$\| \dot{f} \|_{\kappa} < \infty$, 
where $\| \cdot \|_{\kappa}$ is as in \eqref{md01}.
\end{definition}

\begin{definition}\label{rf03l}
We will say that a linearised modelled distribution $\dot{f}$ at $f$ satisfies the \emph{robust formulation} of the linearised equation \eqref{t7} (for the model $(\Pi, \Pi^-, \Gamma^*)$ and $\zeta \in C_{\mathrm{per}, 0}^{\kappa - 2}$) if
	\begin{enumerate}
		\item the map $x \mapsto \dot{f}_x$ is periodic,
		\item there exist periodic (Schwartz) distributions $\dot{u}, \dot{u}^-$ such that $\fint \dot{u} = 0$,
			\begin{align}\label{rfPDEl}
				L \dot{u} = P \dot{u}^- + \zeta , 
			\end{align}
			and the germs 
			\begin{align}\label{t12l}
				\dot{R}_x \coloneqq \dot{u} - \dot{f}_x . Q_{\kappa} \Pi_x, \quad \text{and}\quad \dot{R}_x^- \coloneqq \dot{u}^- - \dot{f}_x . Q_{2 + \kappa^-} \Pi_x^{-} ,
			\end{align}
			satisfy
			\begin{align}\label{t13l}
				\dot{R}_{x \mu} ( x ) \to 0 , \qquad \dot{R}_{x \mu}^- ( x ) \to 0 ,
			\end{align}
		as $\mu \to 0$ 
		locally uniformly in $x$, where in \eqref{t13l} the convolution is with respect to the Schwartz function defined in \eqref{t90}.
	\end{enumerate}
\end{definition}

We note that arguing as in Remark~\ref{r:gDP} it follows that $\dot{u}$ is a function and 
	\begin{align}\label{i2}
		\dot{u} ( x ) = \dot{f}_x.\mathsf{z}_{\mathbf{0}} .
	\end{align}
	
Similarly to Lemma~\ref{l:c}, such an $\dot{f}$ may be related to actual solutions to \eqref{t7}
when the model is qualitatively smooth
by the following consistency lemma.
\begin{lemma}[Consistency II]\label{l:c2}
Let $(\Pi, \Pi^-, \Gamma^*)$ be 
a smooth model (as in Definition~\ref{d:smooth}).
Let $u$ be a 
periodic
$\kappa$-H\"older
solution of \eqref{t30}
with 
$\lambda \in \mathbb{R}$.
Let $\dot{u}$ be a periodic $\kappa$-H\"older solution of \eqref{t7}.
Let $f$ be as in Lemma~\ref{l:c}.
Then there exists a unique linearised modelled distribution $\dot{f}$ satisfying 
	\begin{align}
		\dot{f}_x . \mathsf{z}_{\mathfrak{3}} & = 0, \label{t50} \\
			\dot{f}_x . \mathsf{z}_{\mathbf{n}} & = \frac{1}{\mathbf{n}!} \Big ( \partial^{\mathbf{n}} \dot{u} ( x ) - \dot{f}_x . Q_{| \mathbf{n} |} \partial^{\mathbf{n}} \Pi_{x} ( x ) \Big ),  \quad \text{for all } | \mathbf{n} | < \kappa , \label{t51} \\
		\dot{f}_x . \mathsf{z}_{\mathbf{n}} & = 0 \quad \text{for all } | \mathbf{n} | > \kappa . \label{t50b}
	\end{align}

Furthermore, $\dot{f}$ satisfies the robust formulation of the linearised equation \eqref{t7}, 
at $f$
w.r.t.\
$\zeta \in C_{\mathrm{per} ,0}^{\kappa - 2}$, 
as defined in Definition~\ref{rf03l}.
\end{lemma}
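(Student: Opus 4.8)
The plan is to follow the structure of the proof of Lemma~\ref{l:c} (Consistency), adapting it from the multiplicative to the Leibniz setting. First I would establish existence and uniqueness of a linearised modelled distribution $\dot f$ satisfying \eqref{t50}--\eqref{t50b}. The key observation is that \eqref{t51} is not a circular definition: since $\partial^{\mathbf{n}} \Pi_{x\beta}(x) = 0$ for $|\beta| \geq |\mathbf{n}|$ by the model bounds \eqref{mb01} (the $\mu \to 0$ limit together with $\mu^{-|\beta|}|\Pi_{x\beta\mu}(x)| \lesssim 1$), the right-hand side of \eqref{t51} via \eqref{t49} only involves $\dot f_x . \mathsf{z}_{\mathbf{m}}$ for $|\mathbf{m}| < |\mathbf{n}|$. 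Hence \eqref{t50}--\eqref{t51} can be solved by induction on $|\mathbf{n}|$ (ordering the finitely many $\mathbf{n}$ with $|\mathbf{n}| < \kappa$ by homogeneity), which yields both existence and uniqueness of the scalars $(\dot f_x . \mathsf{z}_{\mathbf{n}})$; the full functional $\dot f_x$ is then \emph{defined} by \eqref{t49}, which automatically enforces Leibniz \eqref{t8l} and the vanishing conditions. Finiteness of $\|\dot f\|_\kappa$ follows since each $\dot f_x . \mathsf{z}_{\mathbf{n}}$ is a finite linear combination (with bounded model-dependent coefficients) of $\partial^{\mathbf{m}}\dot u(x)$ for $|\mathbf{m}| \leq |\mathbf{n}| < \kappa$, and $\dot u$ is $\kappa$-H\"older; the increment bound in \eqref{md01} is obtained as in Lemma~\ref{l:c}, using the reexpansion property of $\Gamma^*$ on the polynomial sector \eqref{mb07} and the H\"older continuity of the derivatives of $\dot u$ and of the (smooth) model components.

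Next I would verify that this $\dot f$ satisfies the robust formulation of Definition~\ref{rf03l}. Take $\dot u$ as given and set $\dot u^- \coloneqq (3\lambda u^2 + h_\lambda)\dot u$, which is $(\kappa-2)$-H\"older since $u, \dot u$ are $\kappa$-H\"older and $\kappa - 2 < \kappa$; then \eqref{rfPDEl} holds by \eqref{t7}, and $\fint \dot u = 0$ by hypothesis. Periodicity of $x \mapsto \dot f_x$ follows from periodicity of $u, \dot u$ and the periodicity \eqref{t15} of the model. It remains to check the two germ conditions \eqref{t13l}. For $\dot R_x = \dot u - \dot f_x . Q_\kappa \Pi_x$: differentiating the analogous identity from Lemma~\ref{l:c}, or directly expanding via \eqref{t49}, one finds $\dot R_x(y)$ equals a Taylor-type remainder of $\dot u$ around $x$ (at order $\kappa$) plus controlled model terms, so that $\dot R_{x\mu}(x) \to 0$ at rate $\mu^\kappa$ by the $\kappa$-H\"older regularity — this is where the choice of truncation $Q_\kappa$ in \eqref{t12l} is used. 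The genuinely new point is $\dot R_x^-$: here the truncation in \eqref{t12l} is $Q_{2+\kappa^-}$ rather than $Q_\kappa$, because $\dot u^-$ is only $(\kappa - 2)$-H\"older and, more importantly, $\dot f_x . \Pi_x^-$ may contain components of negative homogeneity. I would compute $\dot u^- - \dot f_x . Q_{2+\kappa^-}\Pi_x^-$ by differentiating the smooth-model identity $\Pi_x^- = \mathsf{z}_{\mathfrak{3}}\Pi_x^3 + c\Pi_x + \xi\mathsf{1}$ from \eqref{Hi02}: the $t$-derivative of $\mathsf{z}_{\mathfrak{3}}\Pi_x^3$ along the curve, combined with Leibniz for $\dot f$, reproduces the $3\lambda u^2 \dot u$ term at the reconstruction level, and $c\Pi_x$ reproduces $h_\lambda \dot u$, leaving a remainder governed by $\dot R_x$, which we have already controlled, and by the analogous remainder for the model itself. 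The scale $2 + \kappa^-$ is exactly what is needed so that all multi-indices $\beta$ with $|\beta| < 2 + \kappa^-$ appearing in $\Pi_x^-$ either are purely polynomial (handled by $\xi$ and the Taylor structure) or of the special form $\delta_{\mathfrak 3} + \sum \delta_{\mathbf{n}_j}$, whose associated $\Pi^-$ is an explicit monomial by \eqref{mb08}.

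The main obstacle I anticipate is the bookkeeping in the $\dot R_x^-$ computation: one must carefully match the chain-rule expansion of $\partial_t|_{t=0}(\mathsf{z}_{\mathfrak{3}}\Pi_x(t)^3)$ — using that $f_x(t).\mathsf{z}_{\mathfrak{3}} \equiv \lambda$ is held fixed, so only the $\mathsf{z}_{\mathbf{n}}$-variables vary — against the PDE right-hand side $3\lambda u^2 \dot u + h_\lambda \dot u$, and confirm that the leftover germ truly reconstructs to zero at the claimed rate. This requires knowing that $u$ itself reconstructs to the model expansion (i.e. the $f$ from Lemma~\ref{l:c} genuinely satisfies its robust formulation, which is the content of that lemma) and then linearising that statement; the delicate part is that the linearised germ $\dot R_x^-$ inherits a prefactor structure that must be tracked through the negative-homogeneity sector, which is precisely why $Q_{2+\kappa^-}$ and not $Q_\kappa$ is the correct truncation. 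I would isolate this as a short computational sub-lemma, reusing the reconstruction bounds already available from the proof of Lemma~\ref{l:c} rather than redoing them from scratch. Uniqueness of $\dot f$ as a solution of the robust formulation (beyond uniqueness among those satisfying \eqref{t50}--\eqref{t50b}) is not claimed here and is instead deferred to the a priori estimates for the linearised robust formulation, consistent with how uniqueness is handled for $f$ via Theorem~\ref{uniqueness}.
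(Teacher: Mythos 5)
Your proposal is correct and follows essentially the same route as the paper's proof: a recursion in homogeneity defining and uniquely determining $\dot f_x.\mathsf{z}_{\mathbf{n}}$ (non-circular thanks to the truncation $Q_{|\mathbf{n}|}$ and the Leibniz structure \eqref{t49}), the pointwise identities $\partial^{\mathbf{n}}\dot R_x(x)=0$ and $\dot R_x^-(x)=0$ obtained by combining the Leibniz rule with the smooth-model identity \eqref{Hi02} and the facts $u(x)=f_x.\Pi_x(x)$, $\dot u(x)=\dot f_x.\Pi_x(x)$, followed by post-processing to \eqref{t13l} via $\kappa$-H\"older regularity exactly as in Lemma~\ref{l:c}. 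The only place where ``as in Lemma~\ref{l:c}'' is slightly too quick is the finiteness of $[\dot f]_{\kappa,\beta}$ for non-purely-polynomial $\beta$, which needs the Leibniz analogue of the algebraic estimate (Lemma~\ref{l:pcIII} in place of Lemma~\ref{l:pcI}); with that substitution your argument coincides with the paper's.
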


We now state our a priori estimate for such an $\dot{f}$.
\begin{theorem}[A Priori Estimates for the linearisation]\label{a priori lin}
    For any $M < \infty$ there exists $\lambda_0 > 0$ such that if 
    $(\Pi, \Pi^-, \Gamma^*)$ is a model satisfying Assumption~\ref{ass1}, 
    $f$ satisfies the robust reformulation of Definition~\ref{rf03} with $|\lambda| \leq \lambda_0$,
    $\dot{f}$ satisfies the robust formulation of Definition~\ref{rf03l} at $f$
with respect to $\zeta \in C_{\mathrm{per},0}^{\kappa - 2}$, 
    and further
    \begin{align*}
        \|\Pi\|_{\kappa} + \|\Gamma^*\|_{\kappa} + \| f . \mathsf{z}_{\mathbf{0}} \| \le M
    \end{align*}
    then 
    \begin{align*}
		\| \dot{f} \|_{\kappa} 
		\lesssim [ \zeta ]_{\kappa - 2} ,
    \end{align*}
where the implicit constant depends on $M, \alpha, d, \kappa$ and $\ell$.
\end{theorem}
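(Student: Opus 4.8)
The plan is to mirror the five-step loop used for Theorem~\ref{a priori}, but now applied to the \emph{linearised} robust formulation of Definition~\ref{rf03l}. The key structural observation is that the linearised constraint \eqref{t8l} is a Leibniz rule over the (already-controlled) modelled distribution $f$, so the linearised quantities $\dot f . \mathsf{z}^\beta$ depend on $\dot f . \mathsf{z}_{\mathbf{n}}$ in an affine manner, with coefficients built from the $f_x . \mathsf{z}_{\mathbf{m}}$ — and by Theorem~\ref{a priori} those coefficients (for non-purely-polynomial $\beta$) are already $O(|\lambda|)$. This is exactly the gain needed to buckle. Throughout, smallness is now measured against $[\zeta]_{\kappa-2}$ rather than $|\lambda|$, and the role played in Theorem~\ref{a priori} by the smallness of $\|f\|_\kappa \lesssim |\lambda|$ is played here by the already-established a priori bounds on $f$.

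\textbf{Steps.} First I would prove an \emph{Algebraic Continuity Lemma for the linearisation}: differentiating \eqref{rf02} (equivalently, using \eqref{t49} and \eqref{t8l}) and invoking Theorem~\ref{a priori} to control $\|f\|_\kappa$, one obtains for populated non-purely-polynomial $\beta$ with $|\beta|<\kappa$ an estimate of the shape $[\dot f]_{\kappa,\beta} \lesssim |\lambda|([\dot f]_{\kappa,\mathrm{pol}} + \|\dot f . \mathsf{z}_{\mathbf{0}}\|)$, together with the companion interpolation inequality $\|\dot f\|_\kappa \lesssim [\dot f]_{\kappa,\mathrm{pol}} + \|\dot f . \mathsf{z}_{\mathbf{0}}\|$; here one uses that $\dot f . \mathsf{z}_{\mathfrak{3}} = 0$ so no zeroth-order term in $|\lambda|$ survives. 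Second, a \emph{Reconstruction} step: the bound on the coherence norms of the germ $\dot R^-$ coming from \eqref{t12l} and the previous step (noting, as in Remark~\ref{r:1} and the discussion after Lemma~\ref{Recon}, that $\dot R^-$ only sees non-purely-polynomial $\beta$) gives $|\dot R^-_{x\mu}(x)| \lesssim \mu^{\kappa-2}\,|\lambda|\,(\,[\dot f]_{\kappa,\mathrm{pol}} + \|\dot f . \mathsf{z}_{\mathbf{0}}\|\,)$ for $\mu\le\ell$. Third, an \emph{Integration} step: from \eqref{rfPDEl} and \eqref{Hi01}, $\dot R_x$ and $\dot R^-_x$ are coupled via a linear PDE $L\dot R_x = \dot R^-_x + \zeta + (\text{polynomial})$ — the new feature relative to Theorem~\ref{a priori} being the inhomogeneity $\zeta\in C^{\kappa-2}_{\mathrm{per},0}$, which contributes a term $\lesssim [\zeta]_{\kappa-2}$ not carrying a prefactor $|\lambda|$ — so that Lemma~\ref{Schauder} (together with the large-scale/small-scale split using a localisation scale $\rho$, exactly as sketched after Lemma~\ref{Schauder}) yields a bound on $|\dot R_{x\mu}(x)|/\mu^\kappa$. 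Fourth, a \emph{Three-Point Argument} in the spirit of Lemma~\ref{3_point}, applied to $\dot R$, to convert the reconstruction bound into a bound on $[\dot f]_{\kappa,\delta_{\mathbf{n}}}$ for $|\mathbf{n}|<\kappa$, feeding back the output of step one. Fifth, \emph{Incorporating boundary data}: as in Remark~\ref{r:gDP} (cf.\ \eqref{i2}) the function $x\mapsto \dot f_x . \mathsf{z}_{\mathbf{0}} = \dot u(x)$ is periodic with vanishing space-time average (since $\fint\dot u=0$), so $\|\dot f . \mathsf{z}_{\mathbf{0}}\| \lesssim [\dot f . \mathsf{z}_{\mathbf{0}}]_{\kappa^-} = [\dot f]_{\kappa^-,\mathrm{pol}}$, and the interpolation inequality plus Young let one absorb $\|\dot f . \mathsf{z}_{\mathbf{0}}\|$. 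Combining everything gives
\begin{align*}
	\|\dot f\|_\kappa \lesssim \Big( \big(\tfrac{\rho}{\ell}\big)^{-\delta} + \big(\tfrac{\rho}{\ell}\big)^{p}|\lambda| \Big)\|\dot f\|_\kappa + [\zeta]_{\kappa-2},
\end{align*}
and choosing $\rho$ large, then $|\lambda|\le\lambda_0$ small, one buckles to $\|\dot f\|_\kappa \lesssim [\zeta]_{\kappa-2}$.

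\textbf{Main obstacle.} The delicate point is the same tension already flagged after Lemma~\ref{Schauder}: the reconstruction bound on $\dot R^-$ only holds for $\mu\le\ell$ (from the upper scale in \eqref{mb01}), whereas Lemma~\ref{Schauder} wants all $\mu<\infty$, and the large-scale contribution — now including the genuinely inhomogeneous term $\zeta$ — does \emph{not} come with a gain $|\lambda|$. One must therefore run the separation-of-scales argument with the localisation parameter $\rho$ and optimise carefully, ensuring that the only term without a $|\lambda|$-prefactor or a $(\rho/\ell)^{-\delta}$-type loss is the harmless $[\zeta]_{\kappa-2}$. A secondary subtlety is purely bookkeeping: verifying that the Leibniz structure \eqref{t8l} indeed produces the claimed affine-in-$[\dot f]_{\kappa,\mathrm{pol}}$ dependence with $O(|\lambda|)$ coefficients for all non-purely-polynomial $\beta$ and all truncation orders $\kappa$ as in \eqref{rs01} — this is the linearised analogue of Lemma~\ref{continuity} and should follow from the same combinatorial estimates (Lemma~\ref{l:intI}), now with one factor $f_x . \mathsf{z}_{\mathbf{n}}$ replaced by $\dot f_x . \mathsf{z}_{\mathbf{n}}$ and the remaining factors controlled via Theorem~\ref{a priori}.
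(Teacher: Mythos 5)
Your proposal follows essentially the same route as the paper's proof: the same five-step loop (the linearised Algebraic Continuity Lemma~\ref{continuity_3}, Reconstruction via Lemma~\ref{Recon}, pointed Schauder integration with the localisation scale $\rho$, the Three-Point Argument, and incorporation of periodicity via interpolation, Young's inequality and buckling), with $\zeta$ entering only through the integration step without a $|\lambda|$-prefactor, exactly as in the paper. The one detail you leave implicit is that in the integration step one must subtract the Taylor germ $T_x^{\kappa-2}\zeta$ and absorb it into the polynomial $P_x$ so that the $\zeta$-contribution at the basepoint is genuinely $O([\zeta]_{\kappa-2}\,\mu^{\kappa-2})$ at small scales (the raw $\zeta_\mu(x)$ does not decay), which is precisely how the paper sets up \eqref{apl02}.
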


\medskip
In order to prove Theorem~\ref{a priori lin}, we again broadly follow the strategy outlined in Subsection~\ref{ss:apriori}.
The main difference lies in the Algebraic Continuity Lemma, which needs to be adapted.
This is due to the replacement of the multiplicativity \eqref{t8} by the Leibniz-type rule \eqref{t8l} and is analogous to the phenomenon already described in Subsection~\ref{ss:cont}; see Lemma~\ref{continuity_2}.
\begin{lemma}[Algebraic Continuity Lemma III] \label{continuity_3}
  Fix a model $(\Pi, \Pi^-, \Gamma^*)$, suppose that $f$ is a modelled distribution with $$|\lambda| + \| \Pi \|_{\kappa} + \|\Gamma^*\|_{\kappa} + \| f\|_\kappa \le M$$
  and that $\dot{f}$ is a linearised modelled distribution at $f$.
  Then for each populated and non-purely-polynomial multi-index $\beta$ with $|\beta| < \kappa$, 
    \begin{align}\label{md03-l} 
        [\dot{f}]_{\kappa, \beta} 
        \lesssim |\lambda| ([\dot{f}]_{\kappa, \mathrm{pol}} + \| \dot{f} . \mathsf{z}_{\mathbf{0}} \| ) 
    \end{align}
    where the implicit constant depends on $M, \alpha, d$ and $\kappa$. 
\end{lemma}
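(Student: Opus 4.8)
The plan is to run the argument behind the Algebraic Continuity Lemma (Lemma~\ref{continuity}), replacing the multiplicativity \eqref{t8} of $f$ by the Leibniz-type rule \eqref{t8l}, equivalently the explicit expansion \eqref{t49} of $\dot{f}$ in terms of $f$ and the purely polynomial scalars $(\dot{f}_x.\mathsf{z}_{\mathbf{n}})_{|\mathbf{n}|<\kappa}$. The key algebraic input is the same as in the multiplicative case: a populated non-purely-polynomial $\beta$ with $|\beta|<\kappa$ satisfies $\beta(\mathfrak{3})\ge 1$ (the only exception, $\beta=0$, being trivial since $\dot{f}_x.\mathsf{1}=0$). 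Because $f_x.\mathsf{z}_{\mathfrak{3}}=\lambda$ is constant in $x$, $\Gamma_{xy}^*\mathsf{z}_{\mathfrak{3}}=\mathsf{z}_{\mathfrak{3}}$, and $\partial_{\mathsf{z}_{\mathbf{n}}}$ leaves the $\mathsf{z}_{\mathfrak{3}}$-slot untouched, the monomial $\mathsf{z}^\beta$ carries an inert scalar factor $\lambda^{\beta(\mathfrak{3})}$ throughout the computation; after shrinking $\lambda_0$ so that $|\lambda|\le 1$, this yields the prefactor $|\lambda|$.

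Concretely, I would first rewrite, via \eqref{t49},
\begin{align*}
  \dot{f}_x.\mathsf{z}^\beta
  &= \lambda^{\beta(\mathfrak{3})}\sum_{|\mathbf{n}|<\kappa}\beta(\mathbf{n})\,(\dot{f}_x.\mathsf{z}_{\mathbf{n}})\,(f_x.\mathsf{z}_{\mathbf{n}})^{\beta(\mathbf{n})-1}\prod_{\mathbf{m}\neq\mathbf{n}}(f_x.\mathsf{z}_{\mathbf{m}})^{\beta(\mathbf{m})},
\end{align*}
so that each summand is a product of a single purely polynomial $\dot{f}$-factor and several purely polynomial $f$-factors. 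The increment $(\dot{f}_{x+h}.-\dot{f}_x.Q_\kappa\Gamma_{x\,x+h}^*)\mathsf{z}^\beta$ is then expanded by a discrete Leibniz (telescoping) procedure over these factors: in each resulting term exactly one factor is replaced by its (appropriately truncated) increment while the others are frozen at $x$ or at $x+h$. Frozen and incremented $f$-factors are controlled by $\|f.\mathsf{z}_{\mathbf{m}}\|$ and by $[f]_{\kappa,\delta_{\mathbf{m}}}|h|^{\kappa-|\delta_{\mathbf{m}}|}$-type quantities, which under $\|f\|_\kappa\le M$ (and the interpolation estimates of Lemma~\ref{l:intI}) are bounded by constants depending only on $M$; the single $\dot{f}$-factor is controlled by $[\dot{f}]_{\kappa,\delta_{\mathbf{n}}}\le[\dot{f}]_{\kappa,\mathrm{pol}}$ when it is the incremented factor, and by $\|\dot{f}.\mathsf{z}_{\mathbf{n}}\|\lesssim\|\dot{f}.\mathsf{z}_{\mathbf{0}}\|+[\dot{f}]_{\kappa,\mathrm{pol}}$ (the same interpolation, now for $\dot{f}$, together with Young's inequality) when it is frozen. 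Tracking homogeneities through the telescoping — using that $|\cdot|$ is additive over the factors up to the fixed $\alpha$-shifts and that $\beta(\mathbf{n})\ge 1$ forces $|\mathbf{n}|\le|\beta|$ — the total power of $|h|$ produced is at least $\kappa-|\beta|$, which is precisely what $[\dot{f}]_{\kappa,\beta}$ demands; and since $\dot{f}$ enters each term exactly once, the bound is affine in $[\dot{f}]_{\kappa,\mathrm{pol}}+\|\dot{f}.\mathsf{z}_{\mathbf{0}}\|$, as needed for the later buckling.

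The main obstacle — exactly as in the proof of Lemma~\ref{continuity} — is that $Q_\kappa\Gamma_{xy}^*$ fails to be an algebra homomorphism on the truncated polynomial algebra; it is one only modulo multi-indices of homogeneity $\ge\kappa$, so the naive telescoping identity does not hold on the nose and the defect must be carried along. I would handle this by first recording a graded reexpansion statement in the spirit of \cite[Section~3.1]{OSSW}, which lets one split $Q_\kappa\Gamma^*$ of a product factor by factor up to an error supported on homogeneities $\ge\kappa$; such an error contributes at worst $O(|h|^{\kappa-|\beta|})$ and is therefore harmless. A secondary point of bookkeeping is to check that the inert factor $\lambda^{\beta(\mathfrak{3})}$ is genuinely never consumed in these manipulations — it is not, precisely because the $\mathsf{z}_{\mathfrak{3}}$-slot is fixed by $\Gamma^*$ and annihilated by each $\partial_{\mathsf{z}_{\mathbf{n}}}$ — so that the final estimate comes with a single power of $|\lambda|$ and, unlike in Lemma~\ref{continuity}, no additive constant, the inhomogeneous term being absent here by linearity of Definition~\ref{rf01l}. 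The remaining hypotheses $\|\Pi\|_\kappa,\|\Gamma^*\|_\kappa\le M$ enter only through these constant bounds on the $\Gamma^*$-coefficients and, where needed, in Lemma~\ref{l:intI}.
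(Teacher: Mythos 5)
Your overall strategy is the paper's: exploit the Leibniz structure \eqref{t8l}/\eqref{t49} so that each term carries exactly one $\dot f$-factor (hence linearity in $\dot f$ and no additive constant), extract the prefactor $|\lambda|$ from $\beta(\mathfrak{3})\ge 1$ for populated non-purely-polynomial $\beta\neq 0$, and reduce the sup norms $\|\dot f.\mathsf{z}_{\mathbf{n}}\|$ to $[\dot f]_{\kappa,\mathrm{pol}}+\|\dot f.\mathsf{z}_{\mathbf{0}}\|$ by an interpolation estimate for $\dot f$. These are exactly the two ingredients the paper isolates as Lemma~\ref{l:pcIII} and Lemma~\ref{l:intIII}; your appeal to ``the same interpolation, now for $\dot f$'' is legitimate but is a genuine ingredient that must be proved (it is easier than Lemma~\ref{l:intI} because the resulting inequality is linear in $\dot f$ and can be absorbed directly, no continuity/buckling in $r$ being needed).

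The one step that does not work as stated is the power counting in your telescoping at fixed truncation $\kappa$. You bound the reexpansion defect (homogeneities $\ge\kappa$) and the generic telescoped term by ``at least $|h|^{\kappa-|\beta|}$'' and call this harmless; but $[\dot f]_{\kappa,\beta}$ is a supremum over \emph{all} increments $h$, so any excess power of $|h|$ blows up for large increments and an ``$O(|h|^{\kappa-|\beta|})$ or better'' bound is not sufficient. The paper avoids this entirely by never producing excess powers: using the commutation relation $Q_\eta\mathsf{z}^\gamma=\mathsf{z}^\gamma Q_{\eta-(|\gamma|-\alpha)}$ (see \eqref{wr43}) it peels off one factor at a time, obtaining the exact identities \eqref{wr48}--\eqref{wr49} in which every term carries precisely the power $|h|^{\eta-|\beta+\delta_{\mathbf{n}}|}$, at the price that seminorms at the \emph{lowered} truncation orders $\eta-(|\gamma|-\alpha)$ appear — which is exactly why Lemma~\ref{l:intIII} must control $[\dot f]_{\tilde\kappa,\mathrm{pol}}$ for all $\tilde\kappa\le\kappa$, and why the algebraic estimate is then closed by induction on the plain length of $\beta$. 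To salvage your version you would either have to adopt this exact truncation bookkeeping, or supplement the small-increment telescoping with a separate large-increment argument (triangle inequality plus the sup-norm bound $\|\dot f.\mathsf{z}^{\gamma}\|\lesssim|\lambda|^{\gamma(\mathfrak{3})}\sum_{\mathbf{n}}\gamma(\mathbf{n})\|\dot f.\mathsf{z}_{\mathbf{n}}\|$, noting that $\Gamma^*\mathsf{z}_{\mathfrak{3}}=\mathsf{z}_{\mathfrak{3}}$ preserves $\gamma(\mathfrak{3})=\beta(\mathfrak{3})\ge 1$ so the $|\lambda|$ prefactor survives there too). With either repair your argument coincides in substance with the paper's proof.
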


\section{A Priori Estimate for the Robust Formulation} \label{s:proof_a_priori}

\subsection{Proof of Algebraic Continuity Lemma I}
Our proof of the first Algebraic Continuity Lemma is split into two main parts respectively consisting of an interpolation estimate and a preliminary algebraic estimate.

\medskip
More precisely, in a first part, we will prove 
an
interpolation estimate
for modelled distributions.
It will be useful both 
in the proof of Lemma~\ref{continuity}
and also to later post-process our bounds on higher-order seminorms to the full form of the a priori estimate of Theorem~\ref{a priori}.
By an interpolation estimate,
we mean an estimate that controls a seminorm of $f$ by the geometric average
of two other seminorms. The norms involved are the H\"older-type seminorm
$[\cdot]_{\kappa,{\rm pol}}$, see \eqref{rop15}, 
and the supremum norm $\|\cdot\|$ of the Gubinelli derivatives 
$\{f.\mathsf{z}_{\bf n}\}_{|{\bf n}|<\kappa}$, see \eqref{t46}. 
These norms are characterized by a space-time scaling, 
which determine the exponents in the geometric average. We note that Lemma~\ref{l:intI} coincides with the standard interpolation estimates in H\"older spaces when $\lambda = 0$. Lemma~\ref{l:intI} will be established as a perturbation of its H\"older analogue by a continuity argument in $\lambda$.
While \eqref{wr01} assumes the form of a linear
interpolation estimate, the objects $f$ are elements of a nonlinear space in view of
(\ref{rf02}). This constraint is essential for restricting the r.~h.~s.~of (\ref{wr01})
to $[\cdot]_{\kappa,{\rm pol}}$ rather than 
$\sup_{|\beta|<\kappa} [\cdot]_{\kappa, \beta}$,
which in turn is crucial
for establishing the Algebraic Continuity Lemma \ref{continuity}. The price 
is an additional additive term $\|f.\mathsf{z}_{\bf 0}\|$ 
in the r.~h.~s.~of (\ref{wr01}), which is the lowest-order norm in terms of scaling.
It is crucial for the buckling argument leading to the a priori estimate of
Theorem \ref{a priori} in the regime $|\lambda|\ll 1$
that this term is multiplied with a positive power of $|\lambda|$.

\begin{lemma}[Interpolation I]\label{l:intI}
 Suppose that $f$ is a modelled distribution as in Definition~\ref{rf01} with respect to a model $(\Pi, \Pi^-, \Gamma^*)$.
We assume that
\begin{align}\label{wr07}
|\lambda| + \| \Gamma^* \|_{\kappa} + \|f.\mathsf{z}_{\bf 0}\| \le M
\end{align}
for some $M<\infty$. Then we have
\begin{align}
\|f.\mathsf{z}_{\bf n}\|&\lesssim
([f]_{\kappa,{\rm pol}}+|\lambda|)^\frac{|{\bf n}|}{\kappa}
(\|f.\mathsf{z}_{\bf 0}\|+|\lambda|)^{1-\frac{|{\bf n}|}{\kappa}}\quad
\mbox{for}\;|{\bf n}|<\kappa,\label{wr02} 
\\
[f]_{\tilde{\kappa},{\rm pol}}&\lesssim
([f]_{\kappa,{\rm pol}}+|\lambda|)^\frac{\tilde{\kappa}}{\kappa}
(\|f.\mathsf{z}_{\bf 0}\|+|\lambda|)^{1-\frac{\tilde{\kappa}}{\kappa}}\quad
\mbox{for}\;\tilde{\kappa}\le\kappa , \label{wr01} 
\end{align}
where the implicit constant depends on $M, \alpha, d$ and $\kappa$.
\end{lemma}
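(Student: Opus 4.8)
The plan is to first establish the special case $\lambda=0$, where $f$ reduces to a genuine (parabolic) H\"older function. Indeed, when $\lambda=0$ the multiplicativity relation \eqref{rf02} together with \eqref{t8b} forces $f_x.\mathsf{z}^\beta=0$ for every populated $\beta$ that is not purely polynomial, so the only surviving coefficients are $f_x.\mathsf{z}_{\mathbf{n}}=f_x.\delta_{\mathbf{n}}$ with $|\mathbf{n}|<\kappa$. Using $\Pi_{x\delta_{\mathbf{n}}}(\cdot)=(\cdot-x)^{\mathbf{n}}$ and the polynomial action \eqref{mb07} of $\Gamma^*$, the reexpansion seminorms $[f]_{\tilde\kappa,\delta_{\mathbf{n}}}$ literally become the coefficients of the Taylor-remainder estimates for the function $u(x):=f_x.\mathsf{z}_{\mathbf{0}}$, and $\|f.\mathsf{z}_{\mathbf{0}}\|=\|u\|$. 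Hence \eqref{wr02} and \eqref{wr01} with $\lambda=0$ are exactly the classical interpolation inequalities in parabolic H\"older spaces $C^{\tilde\kappa}\hookrightarrow (C^0)^{1-\tilde\kappa/\kappa}(C^\kappa)^{\tilde\kappa/\kappa}$, which I would cite or prove by the standard scaling/convolution argument (mollify at scale $\mu$, estimate low- and high-frequency parts, optimise in $\mu$).

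Next I would run a \emph{continuity argument in $\lambda$} to transfer the estimate to small $|\lambda|$. The idea is to write the full collection of seminorms $\{[f]_{\kappa,\delta_{\mathbf{n}}}\}_{|\mathbf{n}|<\kappa}$, i.e.\ $[f]_{\kappa,\mathrm{pol}}$, as being controlled by the right-hand side of \eqref{wr02}/\eqref{wr01} up to a term that is itself small in $|\lambda|$ and the non-purely-polynomial seminorms. Concretely, from \eqref{rf02} one expresses any $f_x.\mathsf{z}_{\mathbf{n}}$ and any increment $(f_{x+h}.-f_x.Q_\kappa\Gamma^*_{x\,x+h})\mathsf{z}^\beta$ for non-pp $\beta$ in terms of products of the pp-coefficients and powers of $\lambda$; each monomial in such a product carries at least one factor $\lambda^{\beta(\mathfrak{3})}$ with $\beta(\mathfrak{3})\geq1$ whenever $\beta$ is non-pp and populated, giving the crucial prefactor $|\lambda|$. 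Combined with the telescoping/Leibniz identity for the difference of products (the same mechanism as in Lemma~\ref{continuity}), this shows $\sup_{\beta\ \mathrm{non\text{-}pp}}[f]_{\kappa,\beta}\lesssim|\lambda|\big([f]_{\kappa,\mathrm{pol}}+\|f.\mathsf{z}_{\mathbf{0}}\|+|\lambda|\big)$, together with the matching bounds on $\|f.\mathsf{z}^\beta\|$. Feeding these into the purely-polynomial interpolation inequality (which, \emph{seen as a statement about the scalar function $x\mapsto f_x.\mathsf{z}_{\mathbf{0}}$ plus error terms measuring the failure of $\Pi_{x\beta}$ to be exactly $(\cdot-x)^{\mathbf{n}}$}, holds by the $\lambda=0$ analogue and the model bounds $\|\Pi\|_\kappa,\|\Gamma^*\|_\kappa\leq M$) yields \eqref{wr02}/\eqref{wr01} with an extra additive contribution of order $|\lambda|(\text{r.h.s.})$, which is absorbed for $|\lambda|$ small. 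The continuity-in-$\lambda$ packaging is what lets us avoid circularity: one shows the set of $\lambda$ for which the estimate holds with a fixed implicit constant is open and closed.

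The main obstacle I anticipate is the bookkeeping in the second step: the non-pp coefficients $f_x.\mathsf{z}^\beta$ are polynomial expressions in infinitely many pp-coefficients $\{f_x.\mathsf{z}_{\mathbf{m}}\}_{|\mathbf{m}|<\kappa}$, and to get \eqref{wr01} restricted to $[\cdot]_{\kappa,\mathrm{pol}}$ (rather than $\sup_\beta[\cdot]_{\kappa,\beta}$) one must exploit that the homogeneity $|\beta|$ of a non-pp populated $\beta$ already "costs" enough scaling — via \eqref{t45} and the population conditions — that each factor $f_x.\mathsf{z}_{\mathbf{m}}$ appearing can be interpolated against $\|f.\mathsf{z}_{\mathbf{0}}\|$ and $[f]_{\kappa,\mathrm{pol}}$ with exponents summing correctly, and that the leftover $\lambda$-power is a genuine positive power. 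Getting the exponents to balance (so that the product of interpolated factors lands exactly on the right-hand side of \eqref{wr01} with the stated powers $\tilde\kappa/\kappa$ and $1-\tilde\kappa/\kappa$) is the delicate computation; it is essentially a convexity/Young's-inequality argument on the homogeneity grading, and is the technical heart of the lemma. The reconstruction and Schauder inputs are not needed here — this is a purely algebraic-plus-classical-interpolation statement — so once the grading arithmetic is set up, the proof closes by the continuity argument in $\lambda$ described above.
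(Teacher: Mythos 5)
There is a genuine gap in the second (and decisive) step of your plan. The intermediate bound you want to "show" there, namely $\sup_{\beta\ \mathrm{non\text{-}pp}}[f]_{\kappa,\beta}\lesssim|\lambda|\big([f]_{\kappa,\mathrm{pol}}+\|f.\mathsf{z}_{\mathbf{0}}\|+|\lambda|\big)$, is essentially Lemma~\ref{continuity}, and the telescoping/multiplicativity mechanism you invoke does not produce it on its own: what multiplicativity gives (cf.\ \eqref{wr08} and Lemma~\ref{l:pcI}) is a bound of the form $|\lambda|^{\beta(\mathfrak{3})}\,[f]_{\cdot,\mathrm{pol}}\prod_{\mathbf{n}}\|f.\mathsf{z}_{\mathbf{n}}\|^{\beta(\mathbf{n})}$, i.e.\ it still contains the intermediate Gubinelli derivatives $\|f.\mathsf{z}_{\mathbf{n}}\|$, $0<|\mathbf{n}|<\kappa$. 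Controlling those by $[f]_{\kappa,\mathrm{pol}}$ and $\|f.\mathsf{z}_{\mathbf{0}}\|$ is precisely the content of \eqref{wr02}, the statement you are trying to prove, so your scheme is circular at exactly the point where the work has to happen. Moreover, the self-referential term is superlinear: when you express the reexpansion error of $\mathsf{z}_{\mathbf{0}}$ through $\Gamma^*$ and multiplicativity, the quantity $F\sim\max_{\mathbf{n}}\|f.\mathsf{z}_{\mathbf{n}}\|r^{|\mathbf{n}|}$ reappears on the right-hand side raised to powers up to $\kappa$ (this is the inequality $F<A(F+1)^\kappa+B$ of \eqref{wr10} in the paper). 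A term like $|\lambda|(F+1)^\kappa$ cannot simply be "absorbed for $|\lambda|$ small"; absorption works for linear terms, and the proposed continuity-in-$\lambda$ packaging does not repair this: no family of modelled distributions is actually defined (for a given $f$, $\lambda=f.\mathsf{z}_{\mathfrak{3}}$ is fixed), and no openness/self-improvement mechanism is supplied that would make an open-and-closed argument in $\lambda$ run. Finally, even if such an argument could be made to work it would only yield the estimate for $|\lambda|$ small, whereas the lemma is stated and used (e.g.\ in Lemma~\ref{continuity}) for all $|\lambda|\le M$, with constants depending on $M$.

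The paper's resolution is different in exactly this respect: the continuity argument is run in the scale variable $r$, not in $\lambda$. One first extracts the coefficients from the polynomial bound by norm equivalence on polynomials of degree $<\kappa$, arrives at $F<A(F+1)^\kappa+B$ with $A=|\lambda|\max\{r^\kappa,r^\delta\}$ and $B=[f]_{\kappa,\mathrm{pol}}r^\kappa+\|f.\mathsf{z}_{\mathbf{0}}\|$, and then uses that $F,A,B$ are continuous, monotone functions of $r$ with $F_{r=0}=0$ to conclude $F\lesssim A+B$ whenever $A$ is below an $M$-dependent threshold; the required smallness is $|\lambda|r^\kappa\ll_M1$, which is bought by choosing $r$ small (ultimately $r\sim\big((\|f.\mathsf{z}_{\mathbf{0}}\|+|\lambda|)/([f]_{\kappa,\mathrm{pol}}+|\lambda|)\big)^{1/\kappa}$), not by assuming $\lambda$ small. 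Your $\lambda=0$ reduction and the observation that non-pp coefficients carry a factor $|\lambda|$ are the right starting intuition (the paper itself advertises the lemma as a perturbation of the H\"older case), but without the scale-parameter buckling, with its well-defined starting point $F_{r=0}=0$, the proof does not close.
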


In a second part, we will prove 
the following
preliminary
algebraic estimate.
\begin{lemma}\label{l:pcI}
Suppose that $f$ is a modelled distribution in the sense of Definition~\ref{rf01} with respect to a model $(\Pi, \Pi^-, \Gamma^*)$.
We assume that
\begin{align}\label{wr44}
|\lambda| + \| \Gamma^* \|_{\kappa} \le M
\end{align}
for some $M<\infty$. 
Then
for all $\beta$ and $\eta$ with $|\beta|<\eta\le\kappa$ we have
\begin{align}\label{wr40}
[f]_{\eta,\beta}\lesssim|\lambda|^{\beta ( \mathfrak{3} )}
\max_{\tilde\eta+\sum_{\bf n}|\mathbf{n}|\tilde\beta({\bf n})\le\eta}[f]_{\tilde\eta,{\rm pol}}
\prod_{\bf n}\|f.\mathsf{z}_{\bf n}\|^{\tilde\beta({\bf n})},
\end{align}
where the max runs over all $\tilde\eta> 0$ of the form $\tilde{\eta} = \eta - ( | \gamma | - \alpha )$ and all multi-indices
$\tilde\beta$, 
and where the implicit constant depends on $M, \alpha, d$ and $\kappa$.
\end{lemma}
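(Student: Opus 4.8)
The plan is to unfold the definition \eqref{rf02} of a modelled distribution to write $f_x.\mathsf{z}^\beta$ as an explicit monomial in the Gubinelli derivatives, and then to estimate the increment $f_{x+h}.\mathsf{z}^\beta - f_x.Q_\eta\Gamma^*_{x\,x+h}\mathsf{z}^\beta$ that defines $[f]_{\eta,\beta}$. The key structural input is multiplicativity \eqref{t8} together with the fact that $\Gamma^*$ acts as an algebra morphism on $\mathbb{R}[\mathsf{z}_\mathfrak{3},\mathsf{z}_\mathbf{n}]$ via \eqref{mb07}, so that $\Gamma^*_{x\,x+h}\mathsf{z}^\beta$ factorises as $\mathsf{z}_\mathfrak{3}^{\beta(\mathfrak{3})}\prod_\mathbf{m}(\Gamma^*_{x\,x+h}\mathsf{z}_\mathbf{m})^{\beta(\mathbf{m})}$; applying $f_x.\,$ and using \eqref{t8} again turns this into a product of factors $f_x.\Gamma^*_{x\,x+h}\mathsf{z}_\mathbf{m}$. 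Hence both sides of the increment are products over the multi-index $\beta$, and the difference of two products can be expanded by a telescoping (discrete Leibniz) identity: $\prod a_j - \prod b_j = \sum_k (a_k - b_k)\prod_{j<k} a_j \prod_{j>k} b_j$. This exhibits $[f]_{\eta,\beta}$ as controlled by single-factor increments $f_{x+h}.\mathsf{z}_\mathbf{n} - f_x.Q_?\Gamma^*_{x\,x+h}\mathsf{z}_\mathbf{n}$ (which are exactly purely-polynomial seminorm quantities $[f]_{\tilde\eta,\mathrm{pol}}$) multiplied by suprema of the remaining factors $\|f.\mathsf{z}_\mathbf{n}\|$.

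The main bookkeeping task is tracking homogeneities so that the resulting bound is organised by the scaling exponent $\eta$. Each factor $\mathsf{z}_\mathbf{n}$ carries homogeneity $|\mathbf{n}|-\alpha$ and $\mathsf{z}_\mathfrak{3}$ carries $2(1+\alpha)$, consistent with \eqref{t45}. The factor $|\lambda|^{\beta(\mathfrak{3})}$ in \eqref{wr40} simply comes from the $\beta(\mathfrak{3})$ copies of $f_x.\mathsf{z}_\mathfrak{3}=\lambda$, which are unchanged by the increment. For the remaining polynomial factors, one increment factor is a genuine $[f]_{\tilde\eta,\mathrm{pol}}$-type quantity measured at scale $\tilde\eta=\eta-(|\gamma|-\alpha)$ — where $\gamma=\delta_\mathbf{n}$ is the differentiated slot — and this is precisely why the max in \eqref{wr40} is constrained to $\tilde\eta$ of the form $\eta-(|\gamma|-\alpha)$. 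The remaining $\tilde\beta$ slots each contribute a $\|f.\mathsf{z}_\mathbf{n}\|$, and their homogeneities sum with $\tilde\eta$ to at most $\eta$, giving the constraint $\tilde\eta+\sum_\mathbf{n}|\mathbf{n}|\tilde\beta(\mathbf{n})\le\eta$ (up to the linear change of variables between $|\mathbf{n}|$ and the homogeneity $|\mathbf{n}|-\alpha$, which one must carefully reconcile with the precise form of $\tilde\eta$). One also needs the truncation \eqref{t5}: only $\mathbf{n}$ with $|\mathbf{n}|<\kappa$ appear, so all factors are well-defined and finitely many.

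A subtle point is the projection $Q_\eta$ appearing inside $[f]_{\eta,\beta}$: the increment is $f_{x+h}.\mathsf{z}^\beta - f_x.Q_\eta\Gamma^*_{x\,x+h}\mathsf{z}^\beta$, and $Q_\eta$ discards the contributions to $\Gamma^*_{x\,x+h}\mathsf{z}^\beta$ of homogeneity $\ge\eta$. Before expanding the product via the telescoping identity one should first compare $f_x.Q_\eta\Gamma^*\mathsf{z}^\beta$ with the untruncated $f_x.\Gamma^*\mathsf{z}^\beta = \prod(f_x.\Gamma^*\mathsf{z}_\mathbf{n})^{\beta(\mathbf{n})}$; the discarded high-homogeneity terms are bounded using the model bound \eqref{mb03} on $\Gamma^*$ and the assumed bound $\|\Gamma^*\|_\kappa\le M$ together with $|h|^{\eta-|\beta|}$, which is acceptable since those pieces already carry a gain of $|h|$ beyond $\eta$. (When testing against $|h|\ge 1$ one can instead use $|h|^{\eta-|\beta|}\gtrsim 1$ and the sup-norm bounds directly.) Once this truncation error is disposed of, the telescoping argument above on the untruncated products gives the claim. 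The main obstacle I anticipate is precisely this interplay between the truncation $Q_\eta$, the correct choice of intermediate scale $\tilde\eta$ at which to measure the single increment factor so that the exponent bookkeeping closes, and verifying that each term produced by telescoping genuinely has all its ``frozen'' factors bounded by $\|f.\mathsf{z}_\mathbf{n}\|$ (using \eqref{wr44}, i.e. $|\lambda|\le M$, to absorb any $\lambda$-powers beyond the explicit $|\lambda|^{\beta(\mathfrak{3})}$ into the implicit constant). Everything else is a routine, if somewhat intricate, multi-index computation.
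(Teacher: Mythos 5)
Your overall mechanism — unfold \eqref{rf02}, use multiplicativity of $f$ and of $\Gamma^*$ to factorise both halves of the increment, pull out $\lambda^{\beta(\mathfrak{3})}$, and telescope a difference of products so that one slot becomes a purely polynomial increment and the remaining slots become sup-norms — is indeed the skeleton of the paper's argument, which implements exactly this telescoping one letter at a time as an induction on the plain length $\beta(\mathfrak{3})+\sum_{\bf n}\beta({\bf n})$ via the identities \eqref{wr41}--\eqref{wr42}. The genuine gap is in your treatment of $Q_\eta$, which you yourself flag as the crux. You propose to first replace $f_x.Q_\eta\Gamma^*_{x\,x+h}\mathsf{z}^\beta$ by the untruncated $f_x.\Gamma^*_{x\,x+h}\mathsf{z}^\beta$ and to bound the discarded part using \eqref{mb03} and the surplus power of $|h|$. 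But the discarded part is $\sum_{|\gamma'|\ge\eta}f_x.\mathsf{z}^{\gamma'}\,(\Gamma^*_{x\,x+h})_{\gamma'}^{\beta}$, a sum over \emph{infinitely many} populated $\gamma'$ of homogeneity $\ge\eta$; in the relevant case $\eta=\kappa$ (and in any case for $|\gamma'|\ge\kappa$) neither the entries $(\Gamma^*)_{\gamma'}^{\beta}$ nor the components $f_x.\mathsf{z}^{\gamma'}$ are controlled by the hypothesis \eqref{wr44}, which only provides $\|\Gamma^*\|_{\kappa}$. Note that the truncation \eqref{t5} does not make these components vanish: e.g.\ $\gamma'=k\delta_{\mathfrak{3}}+2k\delta_{\bf 0}$ is populated, has homogeneity $\alpha+2k\to\infty$, and $f_x.\mathsf{z}^{\gamma'}=\lambda^k(f_x.\mathsf{z}_{\bf 0})^{2k}$ is generically nonzero. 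So the ``untruncated'' object is not even well defined under the stated assumptions, let alone estimable with constants depending only on $M$. Moreover, even the finitely many discarded terms with $\eta\le|\gamma'|<\kappa$ (and likewise your fallback for $|h|\ge 1$ by plain sup-norm bounds) would yield products of sup-norms \emph{without} the factor $[f]_{\tilde\eta,{\rm pol}}$ that every term on the right of \eqref{wr40} must carry; this factor is not cosmetic — it is what later produces the $|\lambda|$-prefactor in Lemma~\ref{continuity} and allows the buckling in Theorem~\ref{a priori}.

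The repair is to never drop the truncation. The commutation relation $Q_\eta\mathsf{z}^\gamma=\mathsf{z}^\gamma Q_{\eta-(|\gamma|-\alpha)}$, i.e.\ \eqref{wr43}, which follows from the additivity of $|\cdot|-\alpha$ in \eqref{t45}, lets you push $Q_\eta$ through the product structure while peeling off a single factor $\mathsf{z}_{\mathfrak{3}}$ or $\mathsf{z}_{\bf n}$ at a time. Each single-factor increment then appears already as a truncated quantity at the reduced level $\tilde\eta=\eta-(|\gamma|-\alpha)$ — which is precisely where the specific form of $\tilde\eta$ in the statement comes from — and no components of homogeneity $\ge\eta$ (hence none $\ge\kappa$) are ever touched. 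Combined with the triangularity of $\Gamma^*$, which restricts the $\gamma$-sum to $|\gamma|<\eta-(|\beta|-\alpha)$, and with \eqref{wr47}, the two resulting recursions \eqref{wr45}--\eqref{wr46} close by induction on the plain length and give \eqref{wr40}.
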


We now demonstrate how Lemmas~\ref{l:intI} and \ref{l:pcI} imply 
the Algebraic Continuity Lemma \ref{continuity}.
\begin{proof}[Proof of Lemma~\ref{continuity}]
	Inserting the interpolation estimates \eqref{wr02} and \eqref{wr01} into \eqref{wr40} yields
	\begin{align*}
		[ f ]_{\kappa, \beta}
		\lesssim|\lambda|^{\beta ( \mathfrak{3} )}
\max_{\tilde\kappa+\sum_{\bf n}|\mathbf{n}|\tilde\beta({\bf n})\le\kappa}
	( [f]_{\kappa, \mathrm{pol}} + |\lambda| )^{a_1 ( \tilde{\kappa}, \tilde{\beta} )} \, 
			( \| f . \mathsf{z}_{\mathbf{0}} \| + |\lambda| )^{a_2( \tilde{\kappa}, \tilde{\beta} )} ,
	\end{align*}
for the exponents
	\begin{align*}
		a_1 \coloneqq \frac{\tilde{\kappa} + \sum_{\mathbf{n}} |\mathbf{n}| \tilde{\beta} ( \mathbf{n} )}{\kappa} \in [0, 1] ,
		\qquad a_2 \coloneqq (1 - a_1) + \sum_{\mathbf{n}} \tilde{\beta} ( \mathbf{n} ) .
	\end{align*}
Since by assumption one has
$| \lambda | + \| f . \mathsf{z}_{\mathbf{0}} \| \leq M$, 
we may absorb the term 
$( \| f . \mathsf{z}_{\mathbf{0}} \| + |\lambda| )^{\sum_{\mathbf{n}} \tilde{\beta} ( \mathbf{n} )}$ into the implicit prefactor.
The result then follows from Young's inequality.
\end{proof}

We turn to the proof of Lemma~\ref{l:intI}.
\begin{proof}[Proof of Lemma~\ref{l:intI}]
We start from the definition~\ref{rop15}
for ${\bf n}={\bf 0}$.
In 
	\begin{align*}
		f_x.Q_\kappa\Gamma_{x\,x+h}^*\mathsf{z}_{0}
		=\sum_{0\le|\beta|<\kappa}f_x.\mathsf{z}^\beta
(\Gamma_{x\,x+h}^*)_{\beta}^{\delta_{\bf 0}}
	\end{align*}
we distinguish purely polynomial $\beta$,
on which we use \eqref{mb07}, and the other 
$\beta$, which 
are populated by \eqref{mb09}
thus satisfy 
$\beta ( \mathfrak{3} )\not=0$ and on which we use
the model bound
(\ref{mb03}). 
This yields
\begin{align}\label{wr21}
\big|\sum_{|{\bf n}|<\kappa}f_x.\mathsf{z}_{\bf n}h^{\bf n}\big|
\le[f]_{\kappa,{\rm pol}}|h|^\kappa+\|f.\mathsf{z}_{\bf 0}\|
+\|\Gamma^*\|\sum_{\stackrel{0\le|\beta|<\kappa}{\beta ( \mathfrak{3} )\not=0}}
\|f.\mathsf{z}^\beta\||h|^{|\beta|}.
\end{align}
Given a scale $r$ we have by equivalence of norms on the finite-dimensional 
space of polynomials in $h$ of (parabolic) degree less than $\kappa$ and a scaling argument
\begin{align}\label{wr22}
\max_{|{\bf n}|<\kappa}|f_x.\mathsf{z}_{\bf n}|r^{|{\bf n}|}
\lesssim\sup_{r\leq|h|\leq 2r}
\big|\sum_{|{\bf n}|<\kappa}f_x.\mathsf{z}_{\bf n}h^{\bf n}\big|.
\end{align}
Hence taking the supremum over $x$ yields
\begin{align}\label{wr08bis}
\max_{0<|{\bf n}|<\kappa}\|f.\mathsf{z}_{\bf n}\|r^{|{\bf n}|}
\lesssim[f]_{\kappa,{\rm pol}}r^\kappa
+\|f.\mathsf{z}_{\bf 0}\|+\max_{\stackrel{0\le|\beta|<\kappa}{\beta ( \mathfrak{3} )\not=0}}
\|f.\mathsf{z}^\beta\|r^{|\beta|}.
\end{align}
According to (\ref{rf02}) we have
\begin{align}\label{wr08}
\|f.\mathsf{z}^\beta\|\le|\lambda|^{\beta ( \mathfrak{3} )}
\prod_{\bf n}\|f.\mathsf{z}_{\bf n}\|^{\beta({\bf n})},
\end{align}
so that by (\ref{wr07}) for $\beta$ with $\beta ( \mathfrak{3} )\not=0$
\begin{align*}
\|f.\mathsf{z}^\beta\|r^{|\beta|}
\lesssim|\lambda|
\big(\max_{0<|{\bf n}|<\kappa}\|f.\mathsf{z}_{\bf n}\|r^{|{\bf n}|}
\big)^{\sum_{\bf n\not={\bf 0}}\beta({\bf n})}r^{|\beta|-\sum_{\bf n}|{\bf n}|\beta({\bf n})}.
\end{align*}
Note that for $\beta$ with $|\beta|\ge 0$ and $\beta ( \mathfrak{3} )\not=0$
we have $\sum_{{\bf n}\not={\bf 0}}\beta({\bf n})$ 
$\le\sum_{\bf n}|{\bf n}|\beta({\bf n})$ $<|\beta|$ so that
\begin{align}\label{wr09}
\max_{\stackrel{0\le|\beta|<\kappa}{\beta ( \mathfrak{3} )\not=0}}\|f.\mathsf{z}^\beta\|r^{|\beta|}
\lesssim|\lambda|
\big(\max_{0<|{\bf n}|<\kappa}\|f.\mathsf{z}_{\bf n}\|r^{|{\bf n}|}
+1\big)^{\kappa}\max\{r^{\kappa},r^{\delta}\}
\end{align}
where $\delta:=\min\{\,|\beta|-\lfloor|\beta|\rfloor\,
|\,\mathbb{Z}\not\ni|\beta|<\kappa\,\}$.

\medskip
Momentarily introducing the abbreviations
$A:=|\lambda|\max\{r^\kappa,r^\delta\}$, 
$B:=[f]_{\kappa,{\rm pol}}r^\kappa
+\|f.\mathsf{z}_{\bf 0}\|$ and 
$F:=C^{-1}\max_{0<|{\bf n}|<\kappa}\|f.\mathsf{z}_{\bf n}\|r^{|{\bf n}|}$
for some large constant $C\lesssim_M1$,
we combine (\ref{wr08bis}) and (\ref{wr09}) to
\begin{align}\label{wr10}
F<A(F+1)^\kappa+B.
\end{align}
We then note that for fixed real numbers $B \le M + 1$ and $A \le (M+3)^{-\kappa}$, the set of real numbers $F$ that
satisfy (\ref{wr10}) is disconnected from infinity. Indeed, setting $F = (M+3)^\kappa A + B$ yields $F \ge A(F+1)^\kappa + B$.
As a consequence, the connected component containing
$F=0$ satisfies $F\lesssim_M A+B$.

\medskip
 Since $F$, $A$, and $B$ as defined above are continuous 
and monotone functions of $r$ with $F_{r=0}=0$, we infer from a simple continuity argument that
\begin{align}\label{wr12}
F\lesssim A+B\quad\mbox{provided that}\quad A\le (M+3)^{-\kappa}\;\mbox{and}\;B\le M+1.
\end{align}
Since $\delta>0$ and in view of the first item in \eqref{wr07}, by writing $r^\delta = (r^\kappa)^{\frac{\delta}{\kappa}}$ we have 
$A \le (M+3)^{-\kappa}$
if $|\lambda|r^\kappa \ll_M 1$ by which we mean that the l.h.s. is smaller than a small $M$ dependent constant whose precise value is not important here.
Hence 
in view of the second item in (\ref{wr07}), (\ref{wr12}) translates back into
\begin{align*}
&\|f.\mathsf{z}_{\bf n}\|r^{|{\bf n}|}
\lesssim [f]_{\kappa,{\rm pol}}r^\kappa+\|f.\mathsf{z}_{\bf 0}\|
+|\lambda|\max\{r^\kappa,r^\delta\}\quad
\mbox{for}\;0<|{\bf n}|<\kappa\\
&\mbox{provided}\quad 
[f]_{\kappa,{\rm pol}}r^\kappa\le 1\quad\mbox{and}\quad|\lambda|r^\kappa\ll 1,
\end{align*}
which by $\max\{r^\kappa,r^\delta\}$ $\le r^\kappa+1$ consolidates to
\begin{align*}
&\|f.\mathsf{z}_{\bf n}\|r^{|{\bf n}|}
\lesssim([f]_{\kappa,{\rm pol}}+|\lambda|)r^\kappa+(\|f.\mathsf{z}_{\bf 0}\|+|\lambda|)
\quad\mbox{for}\;0<|{\bf n}|<\kappa\\
&\mbox{provided}\quad
([f]_{\kappa,{\rm pol}}+|\lambda|)r^\kappa\ll 1.
\end{align*}
In order to establish (\ref{wr02}),
it remains to choose $r$ to be an $M$-dependent small fraction of 
$(\frac{\|f.\mathsf{z}_{\bf 0}\|+|\lambda|}{[f]_{\kappa,{\rm pol}}+|\lambda|})^\frac{1}{\kappa}$,
which is admissible because of (\ref{wr07}).

\medskip

In preparation for (\ref{wr01}), we post-process (\ref{wr02}) to
\begin{align}\label{wr26}
\|f.\mathsf{z}^\beta\|
\lesssim ([f]_{\kappa,{\rm pol}}+|\lambda|)^{\frac{|\beta|}{\kappa}}
(\|f.\mathsf{z}_{\bf 0}\|+|\lambda|)^{1-\frac{|\beta|}{\kappa}}
\end{align}
for populated $\beta$ with $0\le|\beta|<\kappa$.
Indeed, inserting (\ref{wr02}) into (\ref{wr08}) we obtain
\begin{align}\label{wr27}
\lefteqn{\|f.\mathsf{z}^\beta\|}\nonumber\\
&\lesssim|\lambda|^{\beta ( \mathfrak{3} )}
([f]_{\kappa,{\rm pol}}+|\lambda|)^{\frac{\sum_{\bf n}|{\bf n}|\beta({\bf n})}{\kappa}}
(\|f.\mathsf{z}_{\bf 0}\|+|\lambda|)^{\sum_{\bf n}\beta({\bf n})
-\frac{\sum_{\bf n}|{\bf n}|\beta({\bf n})}{\kappa}}.
\end{align}
In order to pass from (\ref{wr27}) to (\ref{wr26}) we distinguish the cases $\beta ( \mathfrak{3} )=0$
and $\beta ( \mathfrak{3} )$ $\not=0$. In the first case of $\beta ( \mathfrak{3} )=0$, we note that since $|\beta| \ge 0$
we have in particular $\beta\not=0$. Therefore since $\beta$ is populated
we must have $\sum_{\bf n}\beta({\bf n})=1$. Then also $\sum_{\bf n}|{\bf n}|\beta({\bf n})$ 
$=|\beta|$, so that (\ref{wr27}) is identical to (\ref{wr26}). 
In the second case where $\beta ( \mathfrak{3} )\ge 1$
we appeal to the assumption (\ref{wr07}) to estimate the first factor in (\ref{wr27}) 
by $|\lambda|^{1-\frac{\sum_{\bf n}|{\bf n}|\beta({\bf n})}{\kappa}}$ and
the last factor by $1$.
We then appeal to the inequality $\sum_{\bf n}|{\bf n}|\beta({\bf n})$ $\le|\beta|$,
which holds for any multi-index $\beta$ with $|\beta|\ge 0$, to pass to (\ref{wr26}).

\medskip

We now turn to (\ref{wr01}) proper; following definition (\ref{rop15}) we consider
an ${\bf n}$ with $|{\bf n}|<\tilde{\kappa}$. We distinguish the ranges $|h|\ge r$ and
$|h|\le r$ for some $r$ to be optimised later.
Using (\ref{mb03}) and (\ref{mb05}) we have
in the large-increment range where $|h|\ge r$
\begin{align*}
|(f_{x+h}.-f_x.Q_{\tilde{\kappa}}\Gamma_{x\,x+h}^*)\mathsf{z}_{\bf n}|
&\le\|f.\mathsf{z}_{\bf n}\|
+\|\Gamma^*\|_{\tilde{\kappa}}\sum_{\stackrel{\beta\;\mbox{\tiny populated}}
{|{\bf n}|\le|\beta|<\tilde{\kappa}}}
\|f.\mathsf{z}^\beta\|
|h|^{|\beta|-|{\bf n}|}\\
&\lesssim |h|^{\tilde{\kappa}-|{\bf n}|}
\max_{\stackrel{\beta\;\mbox{\tiny populated}}{0\le|\beta|<\tilde{\kappa}}}
r^{|\beta|-\tilde{\kappa}}\|f.\mathsf{z}^\beta\|,
\end{align*}
and we have in the small-increment range $|h|\le r$
\begin{align*}
\lefteqn{|(f_{x+h}.-f_x.Q_{\tilde{\kappa}}\Gamma_{x\,x+h}^*)\mathsf{z}_{\bf n}|}\nonumber\\
&\le[f]_{\kappa,{\rm pol}}|h|^{\kappa-|{\bf n}|}
+\|\Gamma^*\|_{\kappa}\sum_{\stackrel{\beta\;\mbox{\tiny populated}}
{\tilde{\kappa}\le|\beta|<\kappa}}\|f.\mathsf{z}^\beta\|
|h|^{|\beta|-|{\bf n}|}\\
&\lesssim |h|^{\tilde{\kappa}-|{\bf n}|}\big(r^{\kappa-\tilde{\kappa}}[f]_{\kappa,{\rm pol}}
+\max_{\stackrel{\beta\;\mbox{\tiny populated}}{\tilde{\kappa}\le|\beta|<\kappa}}
r^{|\beta|-\tilde{\kappa}}\|f.\mathsf{z}^\beta\|\big).
\end{align*}
Combining both gives 
\begin{align}\label{wr30}
[f]_{\tilde{\kappa},{\rm pol}}
&\lesssim r^{\kappa-\tilde{\kappa}}[f]_{\kappa,{\rm pol}}
+\max_{\stackrel{\beta\;\mbox{\tiny populated}}{0\le|\beta|<\kappa}}
\|f.\mathsf{z}^\beta\|r^{|\beta|-\tilde{\kappa}},
\end{align}
into which we insert (\ref{wr26}) to obtain
\begin{align*}
[f]_{\tilde{\kappa},{\rm pol}}
&\lesssim r^{\kappa-\tilde{\kappa}}([f]_{\kappa,{\rm pol}}+|\lambda|)\nonumber\\
&\quad+\max_{|\beta|<\kappa}r^{|\beta|-\tilde{\kappa}}
([f]_{\kappa,{\rm pol}}+|\lambda|)^{\frac{|\beta|}{\kappa}}
(\|f.\mathsf{z}_{\bf 0}\|+|\lambda|)^{1-\frac{|\beta|}{\kappa}}.
\end{align*}
With the choice of 
\begin{align*}
r=\Big(\frac{\|f.\mathsf{z}_{\bf 0}\|+|\lambda|}
{[f]_{\kappa,{\rm pol}}+|\lambda|}\Big)^\frac{1}{\kappa}
\end{align*}
this yields (\ref{wr01}).
\end{proof}

We turn to the proof of Lemma~\ref{l:pcI}.
\begin{proof}[Proof of Lemma~\ref{l:pcI}]
We establish (\ref{wr40}) by induction
in the `plain length' $\beta(\mathfrak{3})+\sum_{{\bf n}}\beta({\bf n})$.
To this purpose, we derive two formulas that relate the application of the increment
$f_{x+h}.-f_x.Q_\cdot\Gamma_{x\,x+h}^*$ to
$\mathsf{z}^{\beta+\delta_{\mathfrak{3}}}$ and $\mathsf{z}^{\beta+\delta_{\bf n}}$
to its application to $\mathsf{z}^{\beta}$ and $\mathsf{z}_{\bf n}$.
These formulas rely on the multiplicativity of $\Gamma^*$, 
see the first item in (\ref{mb07}), and of $f$, see (\ref{t8}).
They also rely on the commutation relation between the truncation operator
(\ref{nr11}) and the multiplication with a monomial
\begin{align}\label{wr43}
Q_\eta\mathsf{z}^\gamma=\mathsf{z}^\gamma Q_{\eta-(|\gamma|-\alpha)},
\end{align}
which is a consequence of the additivity of $|\cdot|-\alpha$, cf.~(\ref{t45}).
The formulas are given by
\begin{align}
(f_{x+h}.-f_x.Q_\eta\Gamma_{x\,x+h}^*)\mathsf{z}^{\beta+\delta_{\mathfrak{3}}}
&=(f_{x+h}.-f_x.Q_{\eta-(|\delta_{\mathfrak{3}}|-\alpha)}\Gamma_{x\,x+h}^*)\mathsf{z}^{\beta}\,
\lambda,\label{wr41}
\end{align}
and
\begin{align}\label{wr42}
\lefteqn{(f_{x+h}.-f_x.Q_\eta\Gamma_{x\,x+h}^*)\mathsf{z}^{\beta+\delta_{\bf n}}}\nonumber\\
&=(f_{x+h}.\mathsf{z}^{\beta})
(f_{x+h}.-f_x.Q_{\eta-(|\beta|-\alpha)}\Gamma_{x\,x+h}^*)\mathsf{z}_{\bf n}\nonumber\\
&\quad+\sum_{|\gamma|<\eta-(|\beta|-\alpha)}
(f_{x+h}.-f_x.Q_{\eta-(|\gamma|-\alpha)}\Gamma_{x\,x+h}^*)\mathsf{z}^\beta
(\Gamma_{x\,x+h}^*)_\gamma^{\delta_{\bf n}}f_x.\mathsf{z}^\gamma.
\end{align}

\medskip

Identity (\ref{wr41}) follows from the trivial action of $\Gamma^*$ and $f$
on $\mathsf{z}_{\mathfrak{3}}$, see the middle item in (\ref{mb07}) and (\ref{t8b}),
and on (\ref{wr43}) with $\beta=\delta_{\mathfrak{3}}$.
Identity (\ref{wr42}) follows from the operator identity
\begin{align*}
Q_\eta\Gamma_{x\,x+h}^*\mathsf{z}_{\bf n}
\stackrel{(\ref{wr43})}{=}\sum_{\gamma}(\Gamma_{x\,x+h}^*)_\gamma^{\delta_{\bf n}}
\mathsf{z}^\gamma Q_{\eta-(|\gamma|-\alpha)},
\end{align*}
which we apply to $\Gamma_{x\,x+h}^*\mathsf{z}^\beta$. By the triangularity (\ref{mb05})
of $\Gamma^*$, the result vanishes unless $|\beta|<\eta-(|\gamma|-\alpha)$.

\medskip

We recall the definition (\ref{rop15}) of $[\cdot]_{\eta,\beta}$ and note
that (\ref{wr41}) yields
\begin{align}\label{wr45}
[f]_{\eta,\beta+\delta_{\mathfrak{3}}}\le|\lambda|[f]_{\eta-(|\delta_{\mathfrak{3}}|-\alpha),\beta}.
\end{align}
Evoking also (\ref{mb03}), (\ref{wr42}) yields the inequality
\begin{align*}
\lefteqn{[f]_{\eta,\beta+\delta_{\bf n}}}\nonumber\\
&\le\|f.\mathsf{z}^{\beta}\|[f]_{\eta-(|\beta|-\alpha),\delta_{\bf n}}
+\|\Gamma^*\|_{\eta}\sum_{|\gamma|<\eta-(|\beta|-\alpha)}
[f]_{\eta-(|\gamma|-\alpha),\beta}\|f.\mathsf{z}^\gamma\|.
\end{align*}
This in turn implies that
\begin{align}\label{wr46}
[f]_{\eta,\beta+\delta_{\bf n}}
&\lesssim \|f.\mathsf{z}^{\beta}\|[f]_{\eta-(|\beta|-\alpha),{\rm pol}}
+\max_{|\gamma|<\eta-(|\beta|-\alpha)}
[f]_{\eta-(|\gamma|-\alpha),\beta}\|f.\mathsf{z}^\gamma\|\nonumber\\
&\stackrel{(\ref{wr08})}{\lesssim} 
|\lambda|^{\beta ( \mathfrak{3} )}[f]_{\eta-(|\beta|-\alpha),{\rm pol}}
\prod_{\bf m}\|f.\mathsf{z}_{\bf m}\|^{\beta({\bf m})}\nonumber\\
&\quad+\max_{|\gamma|<\eta-(|\beta|-\alpha)}
[f]_{\eta-(|\gamma|-\alpha),\beta}\prod_{\bf m}\|f.\mathsf{z}_{\bf m}\|^{\gamma({\bf m})}.
\end{align}

\medskip

Equipped with (\ref{wr45}) and (\ref{wr46}), we finally embark on the
induction argument. The base case $\beta=0$ is trivial since by
footnotes~\ref{fn:1} and \ref{fn:2},
the l.~h.~s.~of (\ref{wr40}) vanishes. Turning to the induction step,
we note that (\ref{wr40}) is obviously preserved under passage from $\beta$ to $\beta + \delta_\mathfrak{3}$.
We now consider (\ref{wr46}), where we shall appeal to the following
easy consequence of (\ref{t45})
\begin{align}\label{wr47}
\sum_{\bf m}|{\bf m}|\gamma({\bf m})\le|\gamma|-\alpha.
\end{align}
We start by noting that the first r.~h.~s.~term in (\ref{wr46}) is contained 
in the r.~h.~s.~of (\ref{wr40}) by (\ref{wr47}) (with $\gamma$ replaced by $\beta$).
Inserting the induction hypothesis (\ref{wr40}) into the second r.~h.~s.~term of (\ref{wr46})
we see that the latter is estimated by 
\begin{align*}
|\lambda|^{\beta ( \mathfrak{3} )}\max_{\stackrel{|\gamma|<\eta-(|\beta|-\alpha)}
{\tilde\eta+\sum_{\bf m}|{\bf m}|\tilde\beta({\bf m})\le
\eta-(|\gamma|-\alpha)}}[f]_{\tilde\eta,{\rm pol}}
\prod_{\bf m}\|f.\mathsf{z}_{\bf m}\|^{(\tilde\beta+\gamma)({\bf m})};
\end{align*}
It remains to note that by (\ref{wr47}) the multi-index $\tilde\beta+\gamma$
satisfies the desired 
constraint $\tilde\eta+\sum_{\bf m}|{\bf m}|(\tilde\beta+\gamma)({\bf m})\le\eta$.
\end{proof}

\subsection{Proof of (Pointed) Schauder Estimate}

As in \cite{LOTT, BOT}, 
our Schauder theory is based on the semi-group $(\Psi_t)_{t>0}$ of Schwartz kernels generated by the symmetric positive operator $LL^*$, i.e.\
	\begin{align}
		\Psi_t \coloneqq \mathcal{F}^{-1} \big( q \mapsto \exp ( - t | q |^4) \big) . \label{t92}
	\end{align}
Given a Schwartz distribution $F$,
we denote $F_t \coloneqq F * \Psi_t$.
We emphasise that this is not quite compatible with the notation \eqref{t87} because $\Psi_t$ scales in $\sqrt[4]{t}$, see \eqref{t73} below, however this notation is useful in view of the properties of $\Psi$, see \eqref{t89} and \eqref{t89b} below.
In what follows,
the notation $F_{\mu}$ with the letter $\mu$ is reserved for the convolution \eqref{t87} with respect to an implicit Schwartz function $\psi$, 
while the notation $F_{t}$ with the letter $t$ is reserved for the convolution with respect to the semi-group $\Psi$.

\medskip
In the following proof, we will rely on the following properties of the semi-group $\Psi$ defined in \eqref{t92}, which are readily obtained on the Fourier level:
	\begin{align}
		& \Psi_{t} * \Psi_T = \Psi_{t + T} , \label{t89} \\ 
		& \partial_t \Psi_t + L L^* \Psi_t = 0 , \label{t89b} \\
		& \Psi_t ( x ) = (\sqrt[4]{t})^{- D} \Psi_{t = 1} \big( (\sqrt[4]{t})^{-2} x_0, (\sqrt[4]{t})^{-1} x_1 , \cdots , (\sqrt[4]{t})^{-1} x_d \big) . \label{t73}
	\end{align}

\begin{proof}[Proof of Lemma~\ref{Schauder}] 
	We fix $v$ and $v^-$ as in the statement. We aim to show that
	\begin{align}\label{int_rep}
		v = \int_0^\infty dt (\operatorname{Id} - T_0^\eta) L^* v^-_t ,
		\quad \text{where} \quad T_0^{\eta} g ( y ) = \sum_{| \mathbf{n} |< \eta} \frac{1}{\mathbf{n} !} \partial^{\mathbf{n}} g ( 0 ) \, y^{\mathbf{n}} , 
	\end{align}	
	In a first step, we obtain bounds for the integral formula appearing in \eqref{int_rep} which will in particular imply that it converges in the sense of distributions. 
We assume to be given $p \in \mathbb{N}$ and shall prove that the statement holds for $p^{\prime} = \max \lbrace \eta+1, p+2, p+D+1 \rbrace$.
We choose an arbitrary $\psi$ such that 
$\| \psi \|_{p^{\prime}} \leq 1$. 
We also fix $\mu < \infty$ and proceed by considering separately the near field regime where $t \le \mu^4$ and the far field regime where $t > \mu^4$. 
	\medskip
	
	We start with the near field. Here we consider each term in the Taylor expansion separately. For the first term, we have	
	\begin{align}\label{sp02}
		\Big | \int_0^{\mu^4} (L^* v^-_t)_\mu(0) dt \Big | 
		& \le \big\| (L \psi_\mu \ast \Psi_t)_{\mu^{-1}} \big\|_p \int_0^{\mu^4} \mu^{\eta - 4} dt 
		\\ \nonumber
		&
		= \big\| (L \psi_\mu \ast \Psi_t)_{\mu^{-1}} \|_p \, \mu^\eta
	\end{align}
	where we have made use of the assumption \eqref{schauder_ass}. Now we claim that 
$\| (L \psi_\mu \ast \Psi_t)_{\mu^{-1}} \|_p \lesssim \|\psi\|_{p^{\prime}}$ 
uniformly in $0 < t \le \mu^4 < \infty$. To this end, it suffices to note that
	\begin{align*}
		\| (L \psi_\mu \ast \Psi_t)_{\mu^{-1}} \|_p 
		&= \|(L\psi_\mu)_{\mu^{-1}} \ast \Psi_{t/\mu^4} \|_p
		\\
		&= \mu^{-2} \| L \psi \ast \Psi_{t/\mu^4} \|_p
		\\
		& \lesssim \mu^{-2} \|\psi\|_{p+2}
	\end{align*}
	where the last line follows by writing
	\begin{align*}
		& \sup_x |x|^\alpha \int |L\psi(x-y) \Psi_{t/\mu^4}(y)| dy 
		\\
		&\le \sup_x \int (1+ |x-y|)^\alpha |L\psi(x-y)| (1+|y|)^\alpha |\Psi_{t/\mu^4}(y)| dy
		\\
		&\lesssim \| \psi \|_{p+2} \int (1+|y|)^\alpha |\Psi_{t/\mu^4}(y)| dy
	\end{align*}
	and noting that a simple change of variables allows one to bound the integral in terms of the Schwartz seminorms of $\Psi$.	
	In the near field range, it remains to consider the terms appearing in the Taylor jet. For $|\mathbf{n}| < \eta$, we have that 
	\begin{align}\label{sp03}
		\left | \int_0^{\mu^4} dt ((\cdot)^{\mathbf{n}})_\mu(0) \partial^{\mathbf{n}} L^* v^-_t(0) \right | 
		& \lesssim \| \psi \|_{p^{\prime}} \, \mu^{|\mathbf{n}|} \int_0^{\mu^4} dt (\sqrt[4]{t})^{\eta - 4 - |\mathbf{n}|} 
		\\ \nonumber
		& \lesssim \| \psi \|_{p^{\prime}} \, \mu^{\eta}
	\end{align}
	where we have made use of the fact that $p^{\prime} > \eta$. Combining \eqref{sp02}, \eqref{sp03} yields the near field bound
	\begin{align*}
		\left | \int_0^{\mu^4} dt (\operatorname{Id} - T_0^\eta) (L^* v^-_t)_\mu(0) \right | \lesssim \| \psi \|_{p^{\prime}} \mu^{\eta} .
	\end{align*}
	We now turn to the far field regime. As in \cite[Section 3.6]{BOT}, we can apply Taylor’s Theorem to the effect of
	\begin{align*}
		(\operatorname{Id} - T_0^\eta) L^* v^-_t(y) = \sum_{\substack{|\mathbf{n}| \ge k \\ \sum_i n_i \le k}} c_{\mathbf{n}} y^{\mathbf{n}} \int_0^1 ds (1-s)^{k-1} s^{|\mathbf{n}| - k} \partial^{\mathbf{n}} L^* v^-_t(Sy)
	\end{align*}
	where $Sy = (s^2 y_0, sy_1, \dots, sy_d)$ and we have set $k = \lceil \eta \rceil$.
	Therefore 
	\begin{align*}
		[(\operatorname{Id} -  &T_0^\eta) L^* v^-_t]_\mu(0) \nonumber \\ & = \sum_{\substack{|\mathbf{n}| \ge k \\ \sum_i n_i \le k}} c_{\mathbf{n}} \mu^{|\mathbf{n}|} \int_0^1 ds (1-s)^{k-1} s^{|\mathbf{n}| - k} L^* v^- \left ( (\cdot^{\mathbf{n}} \psi)_{s \mu} \ast \partial^{\mathbf{n}} \Psi_t \right ).
	\end{align*}
	
	Therefore, we obtain that
	\begin{align*}
		& \Bigg | \int_{\mu^4}^\infty dt  |((\operatorname{Id} - T_0^\eta) L^* v_t^-)_\mu(0) \Bigg | 
		\\ 
		& \lesssim \sum_{\substack{|\mathbf{n}| \ge k \\ \sum_i n_i \le k}} \mu^{|\mathbf{n}|} \int_{\mu^4}^\infty dt \sup_{s \in (0,1)} 
		\big\|
		( (\cdot^{\mathbf{n}} \psi)_{s \mu} \ast L \partial^{\mathbf{n}} \Psi_t)_{(\sqrt[4]{t})^{-1}} \big\|_p
		(\sqrt[4]{t})^{\eta - 4 - |\mathbf{n}|}. 
	\end{align*}
	Now we note that
	\begin{align*}
		\big\| ( (\cdot^{\mathbf{n}} \psi)_{s \mu} \ast L \partial^{\mathbf{n}} \Psi_t)_{(\sqrt[4]{t})^{-1}} \big\|_p
		& = (\sqrt[4]{t})^{-|\mathbf{n}| - 2} \big\| (\cdot^{\mathbf{n}} \psi)_{s \mu/ \sqrt[4]{t}} \ast L \partial^{\mathbf{n}} \Psi \big\|_p
	\end{align*}	
	and write
	\begin{align*}
		& \sup_x \int |x|^\alpha |\psi_{s\mu/\sqrt[4]{t}}(x-y)| |L\partial^{\mathbf{n} + \beta} \Psi(y)| dy 
		\\
		& \leq \sup_x \int (1+ |x-y|)^\alpha |\psi_{s\mu/\sqrt[4]{t}}(x-y)| (1+|y|)^\alpha |L\partial^{\mathbf{n} + \beta} \Psi(y)| dy
		\\
		& \lesssim \int \left ( 1 + \frac{s\mu}{\sqrt[4]{t}} |x-y| \right )^\alpha |\psi(x-y)| dy
	\end{align*}	
	where the implicit constant depends on the Schwartz seminorms of $\Psi$. Since $s\mu \le \sqrt[4]{t}$, we conclude that 
	\begin{align*}
		\sup_{s \in (0,1)} \big\| ( (\cdot^{\mathbf{n}} \psi)_{s \mu} \ast L \partial^{\mathbf{n}} \Psi_t)_{(\sqrt[4]{t})^{-1}} \big\|_p 
		\lesssim (\sqrt[4]{t})^{-|\mathbf{n}| - 2} \| \psi \|_{p + D + 1} 
	\end{align*}
	which yields the far field estimate
	\begin{align*}
		\left | \int_{\mu^4}^\infty dt \langle (\operatorname{Id} - T_0^\eta) L^* v_t^-, \psi^\mu\rangle  \right | 
		\lesssim \|\psi\|_{p^{\prime}} \, \mu^{\eta}.
	\end{align*}
	
	We now argue that $L \tilde{v} = v^-$ where we defined $\tilde{v}$ to be the r.~h.~s.~ of \eqref{int_rep}. Indeed, for any Schwartz function $\psi$ we can write
	\begin{align*}
		L \int_{\tau}^T dt \, \left [(\operatorname{Id} - T_0^\eta) L^* v^-_t \right](y) &=\int_{\tau}^T dt \, \langle (\operatorname{Id} - T_0^{\eta - 2}) L^* L v^-_t , \psi \rangle  
		\\
		&= - \int_\tau^T dt \, \Big \langle  \frac{d}{dt}  (\operatorname{Id} - T_0^{\eta - 2}) v^-_t , \psi \Big \rangle 
		\\ & = \langle (\operatorname{Id} - T_0^{\eta - 2})  v^-_\tau ,\psi \rangle -  \langle (\operatorname{Id} - T_0^{\eta - 2}) L^* v^-_T ,\psi \rangle.
	\end{align*}
	By the same techniques as used in the integral estimates above we see that
	\begin{align*}
		\langle T_0^{\eta - 2}v^-_\tau, \psi \rangle \to 0, \qquad \langle  (\operatorname{Id} - T_0^{\eta - 2}) v^-_T, \psi \rangle \to 0
	\end{align*}
	as $\tau \to 0$ and $T \to \infty$. Since $ v^-_\tau \to v^-$ as $\tau \to 0$, we indeed have that $L \tilde{v} = v^-$. 
	Since we also have that $L v = v^-$ and 
	\begin{align*}
		\sup_{\mu < \infty} \mu^{-\eta} |v_\mu(0)| + \sup_{\mu < \infty} \mu^{-\eta} |\tilde{v}_\mu(0)| < \infty
	\end{align*}
	by a Liouville principle argument of the same type as given \cite[Section 1.11]{BOT}, we conclude that $v = \tilde{v}$. This yields the integral formula \eqref{int_rep} and hence also the bound \eqref{int_bound}.
\end{proof}

\subsection{Proof of Three-point Argument I}
\begin{proof}
We note that 
\begin{align}\label{3pid}
	\langle R_x - R_y, \psi \rangle = \sum_{0 \le |\beta| < \kappa} (f_y. - f_x. Q_\kappa \Gamma_{xy}^*) \mathsf{z}^{\beta} \, \langle \Pi_{y\beta}, \psi \rangle.
\end{align}
As a consequence, for any choice of Schwartz function $\psi$ and any $\mu < \infty$, we can write 
\begin{align*}
	 \Big | \langle R_x &- R_y, \psi_{\mu}(y- \cdot) \rangle - \sum_{\substack{0 \le |\beta| < \kappa \\ \beta \neq \text{pp}}} (f_y - f_x . Q_\kappa \Gamma_{xy}^*)\mathsf{z}^{\beta} \, \langle \Pi_{y \beta}, \psi_{\mu}(y - \cdot)\rangle \Big | \\ \nonumber &= \Big | \int dz \Bigg [\sum_{|\mathbf{n}| < \kappa} (f_y - f_x. Q_\kappa \Gamma_{xy}^*) \mathsf{z}_{\mathbf{n}} (z-y)^{\mathbf{n}} \Bigg ] \psi_{\mu}(y - z) \Big |.
\end{align*}
The idea of the proof is then to obtain bounds for the left-hand side of the above for choices of $\psi$ that allow us to isolate terms on the right-hand side and with $\mu = |y-x|$.
We first note that for any finite\footnote{in fact, this claim holds for any subset which is bounded w.r.t.\ the Schwartz topology}
subset $\mathcal{B} \subset \mathcal{S} ( \mathbb{R}^{1+d})$, the l.h.s.\ is bounded uniformly over $\psi$ in that set by a constant times
	\begin{align*}
		|y-x|^\kappa \Big ( \sup_{\psi \in \mathcal{B}} \sup_{x \in \mathbb{R}^{1 + d}} \sup_{\mu < \infty} \frac{|R_{x\mu}(x)|}{\mu^{\kappa}} + \sum_{\substack{0\le |\beta| < \kappa \\ \beta \neq \text{pp}}} [f]_{\kappa, \beta} \|\Pi\|_{\kappa} \Big ) .
	\end{align*}
Indeed, the bound for the sum follows by definition of the norms involved, whilst the bound for each term containing $R$ follows 
by observing that we may write $\psi_{|y-x|}(y-\cdot) = \tilde{\psi}_{2|y-x|}(x - \cdot)$ for some Schwartz functions $\tilde{\psi}$ whose seminorms are bounded in terms of those of $\psi$. 
By choosing a $\psi$ such that $ \int dz\, \psi(z) z^\mathbf{n} = \delta_{\mathbf{n}, \mathbf{m}}$ for $|n| < \kappa$ and noting that then $$ \int dz\, \psi_{y}^{|y-x|}(z) (z - y)^{\mathbf{n}} = |y-x|^{|\mathbf{n}|} \delta_{\mathbf{n}, \mathbf{m}}$$ for $|\mathbf{n}| < \kappa$, we obtain the bound for $[f]_{\kappa, \delta_{\mathbf{m}}}$. 
\end{proof}

\subsection{Proof of A Priori Estimates}

\begin{proof}[Proof of Theorem~\ref{a priori}]
We now demonstrate how to make use of the ingredients assembled above to obtain Theorem~\ref{a priori}. We assume to be given $f$ as in the statement of the result with $|\lambda| \le \lambda_0$ where $\lambda_0$ is free to be tuned inside the proof. 
Throughout this proof, we assume for convenience and without loss of generality that the torus is of unit size i.e.\ that $\ell=1$.
\medskip

\PfStart{ap} % NB:Creates a new counter.
\PfStep{ap}{ap:1} 
\textsc{Algebraic continuity.}
By our Algebraic Continuity Lemma (Lemma~\ref{continuity}), for populated multi-indices $\beta$ with $|\beta| < \kappa$ and $\beta \neq \text{pp}$, we have that
\begin{align*}
    [f]_{\kappa, \beta} \lesssim |\lambda| ([f]_{\kappa, \mathrm{pol}} + \| f . \mathsf{z}_{\mathbf{0}} \| + | \lambda |) 
    \lesssim | \lambda | \, \| f \|_{\kappa} .
\end{align*}

\PfStep{ap}{ap:2}
\textsc{Reconstruction.}
We then note that
\begin{align*}
    R_x^- = u^- - f_x. Q_{\kappa} \Pi_x^-
\end{align*}
so that
\begin{align}\label{ap01}
    R_{x+h}^- - R_x^- =  \sum_{0 < |\beta| < \kappa} (f_{x}. - f_{x+h}. Q_{\kappa} \Gamma_{x+h \, x}^* ) \mathsf{z}^{\beta} \Pi_{x\beta}^-.
\end{align}

As a result, 
for $\mu \leq \ell = 1$,
we have the bound 
\begin{align}
    |(R_{x+h}^- - R_x^-)_{\mu}(x)| &\lesssim  \sum_{\substack{\kappa^- \le |\beta| < \kappa \\ \beta \neq \text{pp}}} [f]_{\kappa, \beta} |h|^{\kappa - |\beta|} \mu^{|\beta|-2} \nonumber
	\\
    &\lesssim |\lambda| \, \| f \|_{\kappa} \, \mu^{\kappa^- - 2} ( \mu + |h|)^{\kappa - \kappa^-} \label{ap02}
\end{align}
where we made use of the definition
\eqref{i7}
of $\kappa^-$ and the fact that $\Pi^-$ has vanishing purely-polynomial components.
Since $R_{x \mu}^-(x) \to 0$ in the sense of tempered distributions as $\mu \downarrow 0$ by assumption
\eqref{t13} together with periodicity of $x \mapsto R_{x \mu}^{-} ( x )$ and $\kappa > 2$,
we can apply the Reconstruction Theorem of Lemma~\ref{Recon} to conclude that there exists a $p > 0$
depending only on $\alpha, \kappa$ and $d$,
such that
$\mu \leq 1$,
\begin{align}\label{ap03}
    |R_{x\mu}^-(x)| \lesssim |\lambda| \, \| f \|_{\kappa} \, \mu^{\kappa - 2} ,
\end{align}
uniformly over $x \in \mathbb{R}^{1+d}, \mu \le \ell = 1$ and $\psi$ such that $\|\psi\|_p \le 1$. We now consider this choice of $p$ to be fixed for the remainder of the proof.

\PfStep{ap}{ap:3} 
\textsc{Integration}.
We would like to apply our Schauder estimate in the form of Lemma~\ref{Schauder} to the PDE \eqref{t2}
\begin{align*}
    L R_x = R_x^- + P_x
\end{align*}
where we recall from \eqref{po1} and the following discussion that 
\begin{align} \label{apn10b}
    P_x(\cdot) = \sum_{\kappa - 2 < |\mathbf{n}| < \kappa} a_{x\mathbf{n}} (\cdot - x)^{\mathbf{n}}
\end{align}
and in a first step 
we claim that 
	\begin{align} \label{apn10}
		\sup_{x \in \mathbb{R}^{1 + d}} |a_{x \mathbf{n}}|
		& \lesssim |\lambda| \| f \|_{\kappa} .
	\end{align}
To that effect, define coefficients $b_{x \beta \mathbf{n}}$
by 
$L \Pi_{x \beta} - \Pi_{x \beta}^- = \sum_{| \mathbf{n}| \leq \beta} b_{x \beta \mathbf{n}} ( \cdot - x )^{\mathbf{n}}$, recall \eqref{Hi01}.
On the one hand, combining the expression \eqref{po1} of $P$ with the fact that $a_{x\mathbf{n}}$ vanishes unless 
$\kappa - 2 < |\mathbf{n}| < \kappa$, 
we learn that 
	\begin{align*}
		a_{x \mathbf{n}}
		& = \sum_{| \beta | < \kappa, \beta \neq \mathrm{pp}} f_x . \mathsf{z}^{\beta} \, b_{x \beta \mathbf{n}}
		\qquad \text{for $\kappa - 2 < |\mathbf{n}| < \kappa$}.
	\end{align*}
where the restriction to non-pp multi-indices comes from the observation that $b_{x \delta_{\mathbf{m}} \mathbf{n}} = 0$ unless $| \mathbf{n} | \leq | \mathbf{m} | - 2$,
recall \eqref{mb08}.
On the other hand, 
by equivalence of norms on the finite-dimensional space of polynomials of (parabolic) degree less than $| \beta |$,
	\begin{align*}
		\max_{| \mathbf{n} | \leq | \beta |} | b_{x \beta \mathbf{m}} |
		& \lesssim
		\sup_{\frac{1}{2} \leq \mu \leq 1} \Big| \sum_{|\mathbf{n}| \leq |\beta|} b_{x \beta \mathbf{n}} \mu^{\mathbf{n}} \Big|
		\lesssim \sup_{\frac{1}{2} \leq \mu \leq 1} | (L\Pi_{x \beta} - \Pi_{x \beta}^-)_{\mu} ( x ) | 
		\lesssim \| \Pi \|_{\kappa} ,
	\end{align*}
where in the second inequality we have fixed an arbitrary test-function $\psi$ such that $\int \psi ( z ) z^{\mathbf{n}} dz \neq 0$ for all $| \mathbf{n} | \leq | \beta |$,
and where 
we have implicitly used Lemma~\ref{kernel_swap} to upgrade the model bounds in terms of the semigroup to ones for a generic test function.
It follows that
$\sup_{x \in \mathbb{R}^{1 + d}} |a_{x \mathbf{n}}|\lesssim \max_{| \beta | < \kappa, \beta \neq \mathrm{pp}} \| f . \mathsf{z}^{\beta} \|$.
Arguing as in the proof of Lemma~\ref{continuity} one may post-process \eqref{wr08} and \eqref{wr02} into 
	\begin{align*}
		\max_{| \beta |< \kappa, \beta \neq \mathrm{pp}}
		\| f . \mathsf{z}^{\beta} \| 
		\lesssim |\lambda| \| f \|_{\kappa} ,
	\end{align*}
which
in combination with the above
yields the desired inequality
\eqref{apn10}.

\medskip

We now claim that 
	\begin{align} \label{apn11}
		|(L R_x)_\mu (x) | 
		& \lesssim \big( \rho^{- \delta} + \rho^{p} | \lambda | \big) \| f \|_{\kappa}  \, \mu^{\kappa - 2} ,
	\end{align}
where the implicit constant is uniform over $x \in \mathbb{R}^{1 + d}$, $\mu < \infty$, $\| \psi \|_{p} \leq 1$ and $\rho \geq 1$, 
and where 
	\begin{align} \label{is03}
		\delta \coloneqq \min \lbrace \kappa - | \beta | : |\beta| < \kappa \rbrace > 0 .
	\end{align}
For that purpose, we distinguish the cases $\mu \lessgtr \rho$.
We start with the case $\mu \geq \rho$, where we use the expression
(recall Remark~\ref{r:gDP})
	\begin{align} \label{apn12}
		R_{x \mu} ( x )
		& = [( f_{\cdot} - f_x ).\mathsf{z}_\mathbf{0}] * \psi_\mu ( x ) - \sum_{\kappa^- \leq | \beta | < \kappa} f_x . \mathsf{z}^{\beta} \, \Pi_{x \beta \mu} ( x ).
	\end{align}
The same identity holds for $LR$ provided one replaces $\psi$ by $\mu^{-2} L \psi$ in the r.h.s.\  therefore we obtain
	\begin{align*}
		| (L R_{x} )_{\mu}(x) |
		& \lesssim \|f\|_{\kappa} \sum_{0 \leq | \beta | < \kappa} \mu^{| \beta |-2} 
		\lesssim \mu^{\kappa - 2} \|f\|_{\kappa} \sum_{0 \leq | \beta | < \kappa} \rho^{| \beta | - \kappa} .
	\end{align*}
The desired 
$| (L R_{x} )_{\mu}(x) | \lesssim \rho^{- \delta} \|f\|_{\kappa} \, \mu^{\kappa - 2}$ follows.

\medskip
We turn to the case $\mu \leq \rho$, 
where we use 
the observation that
$\psi_{\mu} = ( \psi_{\rho} )_{\mu/\rho}$
to write
	\begin{align*}
		(L R_{x} )_{\mu}(x) = R_x^{-} * ( \psi_{\rho} )_{\frac{\mu}{\rho}} ( x ) + P_{x \mu} ( x ) .
	\end{align*}
On the one hand, we learn from \eqref{apn10b} and \eqref{apn10}
that 
	\begin{align*}
		| P_{x \mu} ( x ) |
		& \lesssim \sum_{\kappa - 2 < |\mathbf{n}|< \kappa} |\lambda | \|f\|_{\kappa} \, \mu^{| \mathbf{n}|}
		\lesssim \rho^2 |\lambda | \|f\|_{\kappa} \, \mu^{\kappa - 2} 
		\leq \rho^p |\lambda | \|f\|_{\kappa} \, \mu^{\kappa - 2} ,
	\end{align*}
where we implicitly used the fact that $p$ can be chosen so large that $p > \kappa + D \geq 2$.
On the other hand, by
\eqref{ap03}
	\begin{align*}
		| R_x^{-} * ( \psi_{\rho} )_{\frac{\mu}{\rho}} ( x ) |
		& \lesssim | \lambda | \| f \|_{\kappa} \| \psi_{\rho} \|_p \, \mu^{\kappa - 2}
		\leq \rho^p | \lambda | \| f \|_{\kappa} \, \mu^{\kappa - 2} ,
	\end{align*}
where the latter inequality follows from the estimate 
$\| \psi_{\rho} \|_p \leq \rho^p \| \psi \|_p$
which can be obtained by directly computing the corresponding Schwartz seminorms \eqref{ssn}.

\medskip
By applying our Schauder estimate, Lemma~\ref{Schauder} with \eqref{apn11} as input, 
we conclude that there exists $p^\prime \in \mathbb{N}$
depending only on $\alpha, \kappa$ and $d$,
such that 
 \begin{align}\label{apn06}
		| R_{x \mu} ( x ) |
		& \lesssim ( \rho^{- \delta} + \rho^{p} | \lambda | ) \, \| f \|_{\kappa} \, \mu^{\kappa} ,
    \end{align}
uniformly over $x \in \mathbb{R}^{1 + d}$, $\mu < \infty$, $\| \psi \|_{p^{\prime}} \leq 1$ and $\rho \geq 1$.

\PfStep{ap}{ap:4} 
\textsc{Three-Point Argument.}
    We now 
    apply the Three-Point Argument contained in Lemma~\ref{3_point} to transfer this estimate to a bound on $f$. 
In particular, we see that the combination of \eqref{apn06}, Lemma~\ref{continuity} and Lemma~\ref{3_point} imply that
    \begin{align} \label{apn08}
		[ f ]_{\kappa, \mathrm{pol}}
		& \lesssim ( \rho^{- \delta} + \rho^{p} | \lambda | ) \, \| f \|_{\kappa}.
    \end{align}

\PfStep{ap}{ap:5} 
\textsc{Incorporating Boundary Data.}
    We now claim that as a consequence of our boundary data, namely the periodicity of $f.\mathsf{z}_\mathbf{0}$, we have the estimate 
    \begin{align}\label{apn07}
        \|f\|_{\kappa} \le \sum_{\substack{|\beta| < \kappa \\ \beta \ne 0}} \left ( \| f . \mathsf{z}^{\beta} \| + [f]_{\kappa, \beta} \right ) \lesssim |\lambda| + [f]_{\kappa, \mathrm{pol}}.
    \end{align}
    Due to the action of $f$ and $\Gamma^*$ on multi-indices of the form $\mathsf{z}_\mathfrak{3}^k$, it suffices to consider $\beta$ such that $|\beta| \ge 0$ in the above sum. We then note that as a consequence of Lemma~\ref{continuity},
    \begin{align*}
        \sum_{\substack{0 \le |\beta| < \kappa \\ \beta \ne 0}} [f]_{\kappa, \beta} \lesssim [f]_{\kappa, \mathrm{pol}} + |\lambda| (\| f . \mathsf{z}_{\mathbf{0}} \| + |\lambda|).
    \end{align*}
    Also, as a consequence of our interpolation estimates in Lemma~\ref{l:intI} and multiplicativity, we have the estimate
    \begin{align*}
        & \sum_{\substack{0 \le |\beta| < \kappa \\ \beta \ne 0}} \| f . \mathsf{z}^{\beta} \| \\
        & \lesssim \sum_{\substack{0 \le |\beta| < \kappa \\ \beta \ne 0}} \lambda^{\beta(\mathfrak{3})} ([f]_{\kappa, \mathrm{pol}} + |\lambda|)^{\frac{\sum_{\mathbf{n}} \beta(\mathbf{n}) |\mathbf{n}|}{\kappa}} (\| f . \mathsf{z}_{\mathbf{0}} \| + |\lambda|)^{\frac{\kappa - |\mathbf{n}|}{\kappa}\sum_{\mathbf{n}} \beta(\mathbf{n})}.
    \end{align*}
    By distinguishing the cases where $\sum_{\mathbf{n}} \beta(\mathbf{n}) = 0$ and $\sum_{\mathbf{n}} \beta(\mathbf{n}) > 0$ and using the fact that the assumptions of Theorem~\ref{a priori} imply that
    \begin{align*}
        (\| f . \mathsf{z}_{\mathbf{0}} \| + |\lambda|)^{\sum_\mathbf{n} \beta(\mathbf{n}) - 1} \lesssim 1
    \end{align*}
    in the latter case,
    we see that the summand can be estimated by Young's inequality to the effect of
    \begin{align*}
        \sum_{\substack{0 \le |\beta| < \kappa \\ \beta \ne 0}} \lambda^{\beta(\mathfrak{3})} ([f]_{\kappa, \mathrm{pol}} + |\lambda|)^{\frac{\sum_{\mathbf{n}} \beta(\mathbf{n}) |\mathbf{n}|}{\kappa}} & (\| f . \mathsf{z}_{\mathbf{0}} \| + |\lambda|)^{\frac{\kappa - |\mathbf{n}|}{\kappa}\sum_{\mathbf{n}} \beta(\mathbf{n})} \\
       & \lesssim [f]_{\kappa, \mathrm{pol}} + \| f . \mathsf{z}_{\mathbf{0}} \| + |\lambda|.
    \end{align*}
    Thus we have shown that
    \begin{align*}
        \|f\|_\kappa \lesssim [f]_{\kappa, \mathrm{pol}} + \| f . \mathsf{z}_{\mathbf{0}} \| + |\lambda|.
    \end{align*}
    In order to obtain \eqref{apn07} it remains to remove the dependence on $\| f . \mathsf{z}_{\mathbf{0}} \|$ on the right-hand side.
    \medskip

    To this end, we use the periodicity of $f.\mathsf{z}_\mathbf{0}$ and the fact that $[f]_{\kappa^-, \mathrm{pol}}$ coincides with $[f.\mathsf{z}_\mathbf{0}]_{\kappa^-}$ to obtain
    \begin{align*}
        \| f . \mathsf{z}_{\mathbf{0}} \| \lesssim [f]_{\kappa^-, \mathrm{pol}} \lesssim ([f]_{\kappa, \mathrm{pol}} + |\lambda|)^{\frac{\kappa^-}{\kappa}}(\| f . \mathsf{z}_{\mathbf{0}} \| + |\lambda|)^{\frac{\kappa - \kappa^-}{\kappa}}
    \end{align*}
    where the last inequality follows from Lemma~\ref{l:intI}. 
    By Young's inequality this implies that for every $\varepsilon > 0$
    \begin{align*}
        \| f . \mathsf{z}_{\mathbf{0}} \| \lesssim C_{\varepsilon} ([f]_{\kappa, \mathrm{pol}} + |\lambda|) + \varepsilon \| f . \mathsf{z}_{\mathbf{0}} \| 
    \end{align*}
    which can be buckled to the effect of
    \begin{align*}
        \| f . \mathsf{z}_{\mathbf{0}} \| \lesssim [f]_{\kappa, \mathrm{pol}} + |\lambda|.
    \end{align*}
    This completes the proof of \eqref{apn07}. Combining \eqref{apn08} and \eqref{apn07} we see that
    \begin{align*}
        \|f\|_{\kappa} \lesssim |\lambda| +(\rho^{-\delta} + \rho^{p} |\lambda|) \|f\|_{\kappa}
    \end{align*}
    which can be buckled by taking first $\rho \gg 1$ and then $|\lambda| \ll 1$ to the effect of
    \begin{align*}
        \|f\|_\kappa \lesssim |\lambda|,
    \end{align*}
as desired.
\end{proof}

\section{Continuity of the Solution Map in the Model}\label{Con}

\subsection{Proof of Algebraic Continuity Lemma II}
Following a similar strategy of proof to that of Lemma~\ref{continuity}, our proof of Lemma~\ref{continuity_2} is split into two parts. We note that due to Theorem~\ref{a priori} it is legitimate to invest control on $\|f\|_\kappa$ rather than merely on $\|f.\mathsf{z}_\mathbf{0}\|$ in this section.

\medskip

In our first step, we establish the following interpolation estimate, which is the analogue of Lemma~\ref{l:intI}
for differences of modelled distributions.
\begin{lemma}[Interpolation II]\label{l:intII}
Suppose that $f, \tilde{f}$ are modelled distributions in the sense of Definition~\ref{rf01} 
with respect to models $(\Pi, \Pi^-, \Gamma^*)$, $(\tilde{\Pi}, \tilde{\Pi}^-, \tilde{\Gamma}^*)$ respectively (but for the same $\lambda$).
We assume that
\begin{align}\label{wr20}
	|\lambda|
		+ \| \Gamma^* \|_\kappa + \|\tilde{\Gamma}^*\|_\kappa 
		+ \sum_{|{\bf n}|<\kappa} ( \|f.\mathsf{z}_{\bf n}\| + \|\tilde f.\mathsf{z}_{\bf n}\| )	
		\le M
\end{align}
for some $M < \infty$.
Then we have 
\begin{align}
\|(f-\tilde f).\mathsf{z}_{\bf n}\|&
\lesssim
\big([f;\tilde f]_{\kappa,{\rm pol}}+|\lambda|\|(f-\tilde f).\mathsf{z}_{\bf 0}\|
+|\lambda|\|\Gamma^*-\tilde\Gamma^*\|\big)^{\frac{|{\bf n}|}{\kappa}}\nonumber\\
&\quad\times\big(\|(f-\tilde f).\mathsf{z}_{\bf 0}\|
+|\lambda|\|\Gamma^*-\tilde\Gamma^*\|\big)^{1-\frac{|{\bf n}|}{\kappa}}
\quad\mbox{for}\;|{\bf n}|<\kappa,\label{wr16}\\
[f;\tilde f]_{\tilde{\kappa},{\rm pol}}&
\lesssim
\big([f;\tilde f]_{\kappa,{\rm pol}}+|\lambda|\|(f-\tilde f).\mathsf{z}_{\bf 0}\|
+|\lambda|\|\Gamma^*-\tilde\Gamma^*\|\big)^{\frac{\tilde{\kappa}}{\kappa}}\nonumber\\
&\quad\times\big(\|(f-\tilde f).\mathsf{z}_{\bf 0}\|
+|\lambda|\|\Gamma^*-\tilde\Gamma^*\|\big)^{1-\frac{\tilde{\kappa}}{\kappa}}
\quad\mbox{for}\;\tilde{\kappa}<\kappa,\label{wr36}
\end{align}
where the implicit constant depends on $M, \alpha, d$ and $\kappa$.
\end{lemma}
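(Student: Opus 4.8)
The plan is to mirror the proof of Lemma~\ref{l:intI} line for line, substituting for the multiplicativity \eqref{rf02} of $f$ its ``discrete Leibniz'' version for the difference $f-\tilde f$. A key simplification, absent in Lemma~\ref{l:intI}, is that \eqref{wr20} already bounds every $\|f.\mathsf{z}_{\mathbf{n}}\|$ and $\|\tilde f.\mathsf{z}_{\mathbf{n}}\|$ by $M$; consequently the algebraic estimates below turn out to be \emph{linear} in the differences $\|(f-\tilde f).\mathsf{z}_{\mathbf{n}}\|$, so the connectedness-from-infinity device of Lemma~\ref{l:intI} is replaced by a plain absorption.

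For \eqref{wr16} I start, as for \eqref{wr21}, from \eqref{rop15} at $\mathbf{n}=\mathbf{0}$, expanding $f_x.Q_\kappa\Gamma_{x\,x+h}^*\mathsf{z}_{\mathbf{0}}=\sum_{0\le|\beta|<\kappa}f_x.\mathsf{z}^\beta(\Gamma_{x\,x+h}^*)_\beta^{\delta_{\mathbf{0}}}$ and likewise for $\tilde f$, then splitting each term-by-term difference into a ``$(f-\tilde f)$-part'' $(f_x-\tilde f_x).\mathsf{z}^\beta(\Gamma_{x\,x+h}^*)_\beta^{\delta_{\mathbf{0}}}$ and a ``$(\Gamma^*-\tilde\Gamma^*)$-part'' $\tilde f_x.\mathsf{z}^\beta\big((\Gamma_{x\,x+h}^*-\tilde\Gamma_{x\,x+h}^*)_\beta^{\delta_{\mathbf{0}}}\big)$. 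On purely polynomial $\beta=\delta_{\mathbf{m}}$ the universal formula \eqref{mb07} forces $(\Gamma_{x\,x+h}^*)_{\delta_{\mathbf{m}}}^{\delta_{\mathbf{0}}}=h^{\mathbf{m}}=(\tilde\Gamma_{x\,x+h}^*)_{\delta_{\mathbf{m}}}^{\delta_{\mathbf{0}}}$, so the $(\Gamma^*-\tilde\Gamma^*)$-part drops out and the polynomial jet $\sum_{|\mathbf{m}|<\kappa}(f_x-\tilde f_x).\mathsf{z}_{\mathbf{m}}h^{\mathbf{m}}$ survives; on the remaining (populated, hence $\beta(\mathfrak{3})\ge1$) $\beta$ I use \eqref{mb03} to bound both $\Gamma$-components by $\|\Gamma^*\|\,|h|^{|\beta|}$, resp.\ $\|\Gamma^*-\tilde\Gamma^*\|\,|h|^{|\beta|}$, producing the analogue of \eqref{wr21}:
\begin{align*}
\Big|\sum_{|\mathbf{m}|<\kappa}(f_x-\tilde f_x).\mathsf{z}_{\mathbf{m}}h^{\mathbf{m}}\Big|
&\lesssim_M[f;\tilde f]_{\kappa,\mathrm{pol}}|h|^\kappa+\|(f-\tilde f).\mathsf{z}_{\mathbf{0}}\|\\
&\quad+\sum_{\substack{0\le|\beta|<\kappa\\\beta(\mathfrak{3})\ne0}}\big(\|(f-\tilde f).\mathsf{z}^\beta\|+\|\Gamma^*-\tilde\Gamma^*\|\,\|\tilde f.\mathsf{z}^\beta\|\big)|h|^{|\beta|}.
\end{align*}
The new algebraic input is the telescoping identity: since $f,\tilde f$ carry the same $\lambda$, writing $f_x.\mathsf{z}^\beta-\tilde f_x.\mathsf{z}^\beta$ as $\lambda^{\beta(\mathfrak{3})}$ times a difference of finite products and expanding so that exactly one factor is a difference $(f_x-\tilde f_x).\mathsf{z}_{\mathbf{m}}$ while the rest are $M$-bounded by \eqref{wr20}, I get $\|(f-\tilde f).\mathsf{z}^\beta\|\lesssim_M|\lambda|^{\beta(\mathfrak{3})}\sum_{\mathbf{m}:\beta(\mathbf{m})\ne0}\|(f-\tilde f).\mathsf{z}_{\mathbf{m}}\|$, and likewise $\|\tilde f.\mathsf{z}^\beta\|\lesssim_M|\lambda|^{\beta(\mathfrak{3})}$; for $\beta(\mathfrak{3})\ge1$ both carry a genuine factor $|\lambda|$ (as $|\lambda|\le M$), which accounts for the $|\lambda|$-weighting of $\|\Gamma^*-\tilde\Gamma^*\|$ and of $\|(f-\tilde f).\mathsf{z}_{\mathbf{0}}\|$ in \eqref{wr16}--\eqref{wr36}.

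I then run the scaling argument of Lemma~\ref{l:intI}: equivalence of norms on parabolic-degree-$<\kappa$ polynomials in $h$ and the supremum over $x$ turn the last display into a bound for $F:=\max_{0<|\mathbf{m}|<\kappa}\|(f-\tilde f).\mathsf{z}_{\mathbf{m}}\|r^{|\mathbf{m}|}$; inserting the telescoping bounds and using $|\beta|-|\mathbf{m}|\ge2(1+\alpha)>0$ (for populated non-purely-polynomial $\beta$ with $|\beta|\ge0$ and $\beta(\mathbf{m})\ge1$) together with $0<|\beta|<\kappa$ to replace the surplus $r$-powers by a single fixed positive power $r^{\delta}$, I arrive at the \emph{linear} inequality
\begin{align*}
F\lesssim_M[f;\tilde f]_{\kappa,\mathrm{pol}}\,r^\kappa+\|(f-\tilde f).\mathsf{z}_{\mathbf{0}}\|+|\lambda|r^\delta\big(F+\|(f-\tilde f).\mathsf{z}_{\mathbf{0}}\|+\|\Gamma^*-\tilde\Gamma^*\|\big).
\end{align*}
Absorbing $|\lambda|r^\delta F$ once $|\lambda|r^\delta\ll_M1$, and then choosing $r^\kappa$ proportional to $\big(\|(f-\tilde f).\mathsf{z}_{\mathbf{0}}\|+|\lambda|\|\Gamma^*-\tilde\Gamma^*\|\big)\big/\big([f;\tilde f]_{\kappa,\mathrm{pol}}+|\lambda|\|(f-\tilde f).\mathsf{z}_{\mathbf{0}}\|+|\lambda|\|\Gamma^*-\tilde\Gamma^*\|\big)$, with a small $M$-dependent prefactor (admissible by \eqref{wr20}, exactly as in Lemma~\ref{l:intI}), yields \eqref{wr16}. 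For \eqref{wr36} I first post-process \eqref{wr16}, through the same telescoping identity, into a geometric-average bound for $\|(f-\tilde f).\mathsf{z}^\beta\|$ (and $\|\tilde f.\mathsf{z}^\beta\|\lesssim_M|\lambda|$) for populated $\beta$ with $0\le|\beta|<\kappa$, the analogue of \eqref{wr26}, and then repeat verbatim the large-increment ($|h|\ge r$: via \eqref{mb03} and its $\tilde{}$- and difference-versions) versus small-increment ($|h|\le r$: via $[f;\tilde f]_{\kappa,\mathrm{pol}}$ together with the $\mathsf{z}^\beta$-terms for $\tilde\kappa\le|\beta|<\kappa$) split of the proof of \eqref{wr01}, insert the post-processed bounds, and optimise $r$ with the same choice.

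The main difficulty is bookkeeping rather than conceptual: tracking which terms carry the prefactor $|\lambda|$, so that $\|\Gamma^*-\tilde\Gamma^*\|$ (which enters only through $\|\tilde f.\mathsf{z}^\beta\|\lesssim_M|\lambda|$) and $\|(f-\tilde f).\mathsf{z}_{\mathbf{0}}\|$ appear in \eqref{wr16}--\eqref{wr36} in precisely the weighted combinations stated — this is what makes the subsequent buckling in Lemma~\ref{continuity_2} and Theorem~\ref{uniqueness} close — and checking that the surplus $r$-powers produced by the telescoping are uniformly bounded below by a fixed positive power of $r$ over the finitely many populated $\beta$ with $|\beta|<\kappa$, which is where the subcriticality constraint $\alpha>-1$ is used.
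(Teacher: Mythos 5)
Your proposal is correct and follows essentially the same route as the paper's proof: the same jet expansion in which the extra $\|\Gamma^*-\tilde\Gamma^*\|$ contribution is restricted to non-purely-polynomial $\beta$ via the last item of \eqref{mb07}, the same Leibniz-type bound $\|(f-\tilde f).\mathsf{z}^\beta\|\lesssim|\lambda|^{\beta(\mathfrak{3})}\sum_{\mathbf{m}}\beta(\mathbf{m})\|(f-\tilde f).\mathsf{z}_{\mathbf{m}}\|$ (which the paper derives via the interpolating family $f_t$ rather than your telescoping of products, an immaterial difference), the same replacement of the nonlinear buckling of Lemma~\ref{l:intI} by a plain linear absorption, and the same large-/small-increment split for \eqref{wr36}. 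The only bookkeeping caveats are that the surplus $r$-powers should be bounded by $\max\{r^\kappa,r^\delta\}$ rather than $r^\delta$ (since the optimised $r$ need not satisfy $r\le1$), and that your choice of $r$ with $|\lambda|\|(f-\tilde f).\mathsf{z}_{\mathbf{0}}\|$ in the denominator is an equivalent substitute for the paper's $\min\{\cdot,\,1/|\lambda|\}$; neither affects the validity of the argument.
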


In our second step, we establish the following preliminary algebraic estimate, which is the analogue of Lemma~\ref{l:pcI} for differences of modelled distributions.
\begin{lemma}\label{l:pcII}
Suppose that $f, \tilde{f}$ are modelled distributions in the sense of Definition~\ref{rf01} 
with respect to models $(\Pi, \Pi^-, \Gamma^*)$, $(\tilde{\Pi}, \tilde{\Pi}^-, \tilde{\Gamma}^*)$ respectively (but for the same $\lambda$).
We assume the a priori control of
\begin{align}\label{wr58}
	\| \Gamma^* \|_\kappa + \|\tilde{\Gamma}^*\|_\kappa 
		+ \|f\|_\kappa + \|\tilde f\|_\kappa
		\le M
\end{align}
for some $M < \infty$.
Then we have for all $\beta$ and $\eta$ with $|\beta|<\eta\le\kappa$
\begin{align}\label{wr59}
\lefteqn{[f; \tilde f]_{\eta,\beta}}\nonumber\\
&\lesssim|\lambda|^{\beta ( \mathfrak{3} )}
\big(|\lambda|\|\tilde\Gamma^*-\Gamma^*\|_{\eta}+\max_{\tilde\eta\le\eta}
[\tilde f;f]_{\tilde\eta,{\rm pol}}+\max_{|{\bf n}|<\eta}\|(\tilde f-f).\mathsf{z}_{\bf n}\|\big) ,
\end{align}
where the 
(first)
max runs over all $\tilde{\eta} > 0$ of the form $\tilde{\eta} = \eta - ( | \gamma | - \alpha )$
and where
the implicit constant depends on $M, \alpha, d$ and $\kappa$.
\end{lemma}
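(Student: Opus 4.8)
The plan is to mirror the proof of Lemma~\ref{l:pcI}: run an induction on the \emph{plain length} $\beta(\mathfrak{3}) + \sum_{\mathbf{n}} \beta(\mathbf{n})$, now for the quantity $[f;\tilde{f}]_{\eta,\beta}$ in place of $[f]_{\eta,\beta}$. The recursion formulas will be the \emph{difference} analogues of \eqref{wr41} and \eqref{wr42}, obtained by expanding every product occurring there through the discrete Leibniz identities $ab - \tilde{a}\tilde{b} = (a - \tilde{a})b + \tilde{a}(b - \tilde{b})$ and $abc - \tilde{a}\tilde{b}\tilde{c} = (a - \tilde{a})bc + \tilde{a}(b - \tilde{b})c + \tilde{a}\tilde{b}(c - \tilde{c})$; the relevant products are those encoding multiplicativity of $f$ and $\Gamma^*$, see the first item of \eqref{mb07} and \eqref{rf02}. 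For the passage $\beta \mapsto \beta + \delta_{\mathfrak{3}}$ the only factor produced is $\lambda$, which is common to $f$ and $\tilde{f}$, so the difference version of \eqref{wr41} reads $[f;\tilde{f}]_{\eta,\beta + \delta_{\mathfrak{3}}} \le |\lambda|\,[f;\tilde{f}]_{\eta - (|\delta_{\mathfrak{3}}| - \alpha),\beta}$, visibly consistent with the target \eqref{wr59} and its prefactor $|\lambda|^{(\beta + \delta_{\mathfrak{3}})(\mathfrak{3})}$. The substance lies in the step $\beta \mapsto \beta + \delta_{\mathbf{n}}$: the difference version of \eqref{wr42} is a sum of terms, in each of which the single difference factor is one of (i) $f_{x+h}.\mathsf{z}^\beta - \tilde{f}_{x+h}.\mathsf{z}^\beta$; (ii) the difference increment applied to $\mathsf{z}_{\mathbf{n}}$, contributing $[f;\tilde{f}]_{\cdot,\mathrm{pol}}$; (iii) $(\Gamma_{x\,x+h}^*)_\gamma^{\delta_{\mathbf{n}}} - (\tilde{\Gamma}_{x\,x+h}^*)_\gamma^{\delta_{\mathbf{n}}}$, contributing $\|\Gamma^* - \tilde{\Gamma}^*\|$; (iv) $f_x.\mathsf{z}^\gamma - \tilde{f}_x.\mathsf{z}^\gamma$; or (v) the difference increment applied to $\mathsf{z}^\beta$, which is the inductive quantity $[f;\tilde{f}]_{\cdot,\beta}$.

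Two auxiliary estimates feed the induction. First, telescoping the product formula \eqref{rf02} and using $\|f\|_\kappa, \|\tilde{f}\|_\kappa \le M$ yields $\|(f - \tilde{f}).\mathsf{z}^\beta\| \lesssim |\lambda|^{\beta(\mathfrak{3})} \max_{|\mathbf{n}| < \eta} \|(f - \tilde{f}).\mathsf{z}_{\mathbf{n}}\|$ for populated $\beta$ with $|\beta| < \eta$; the restriction $|\mathbf{n}| < \eta$ is legitimate because every summand in \eqref{t45} is nonnegative, so $\beta(\mathbf{n}) \neq 0$ forces $|\mathbf{n}| \le |\beta| < \eta$. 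This disposes of (i) and (iv). Second, the \emph{non}-difference factors appearing alongside — namely $(\tilde{f}_{x+h}. - \tilde{f}_x. Q_{\eta'} \tilde{\Gamma}_{x\,x+h}^*)\mathsf{z}^\beta$ for the appropriate truncation order $\eta'$, and $\|f_x.\mathsf{z}^\gamma\|$ — will be controlled using the \emph{already established} Lemma~\ref{l:pcI} applied to $\tilde{f}$ (respectively \eqref{wr08} for $f$), together with $\|\tilde{f}\|_\kappa, \|f\|_\kappa \le M$, to the effect that they are $\lesssim |\lambda|^{\beta(\mathfrak{3})}$ and $\lesssim |\lambda|^{\gamma(\mathfrak{3})}$ respectively. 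The structural point that makes \eqref{wr59} come out with a single extra power of $|\lambda|$ in front of $\|\Gamma^* - \tilde{\Gamma}^*\|$ is that every $\gamma$ arising in (iii) and (iv) is populated and \emph{not} purely polynomial — the action of $\Gamma^*$ and $f_x$ on purely polynomial multi-indices being model-independent, recall \eqref{mb07} — and every such $\gamma$ has $\gamma(\mathfrak{3}) \ge 1$.

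With these ingredients in hand, the induction step becomes bookkeeping: insert the induction hypothesis into (v), the first auxiliary estimate into (i) and (iv), the definition of $[f;\tilde{f}]_{\cdot,\mathrm{pol}}$ into (ii), and bound every remaining factor by $M$ using \eqref{mb03} and the hypothesis \eqref{wr58}. One then checks — exactly as in Lemma~\ref{l:pcI}, via the additivity of $|\cdot| - \alpha$ (equivalently \eqref{wr43}) and the inequality $\sum_{\mathbf{m}} |\mathbf{m}| \gamma(\mathbf{m}) \le |\gamma| - \alpha$ of \eqref{wr47} — that the powers of $|h|$ assemble to $|h|^{\eta - |\beta + \delta_{\mathbf{n}}|}$, that the truncation orders produced are of the admissible form $\eta - (|\gamma| - \alpha)$, and that the multi-indices surviving in the maxima obey the stated constraints. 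The base case $\beta = 0$ is immediate, since $f_x.\mathsf{1} = \tilde{f}_x.\mathsf{1} = 1$ and $Q_\eta \Gamma_{x\,x+h}^* \mathsf{1} = \mathsf{1} = Q_\eta \tilde{\Gamma}_{x\,x+h}^* \mathsf{1}$, so the left-hand side of \eqref{wr59} vanishes there. I expect the main obstacle to be precisely this combinatorial bookkeeping in the $\beta \mapsto \beta + \delta_{\mathbf{n}}$ step: arranging the discrete-Leibniz expansion of the difference of \eqref{wr42} so that each term carries exactly one difference factor of one of the five admissible types with all other factors uniformly controlled, and verifying that the terms of types (iii) and (iv) genuinely pick up the extra factor of $|\lambda|$ (via $\gamma(\mathfrak{3}) \ge 1$) rather than only the bare $\|\Gamma^* - \tilde{\Gamma}^*\|$ or $\max_{\mathbf{n}} \|(f - \tilde{f}).\mathsf{z}_{\mathbf{n}}\|$.
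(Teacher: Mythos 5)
Your proposal follows essentially the same route as the paper: an induction on the plain length based on the difference analogues of \eqref{wr41} and \eqref{wr42} obtained by discrete Leibniz expansion (the paper's identities have exactly your five types of terms), with $\|(f-\tilde f).\mathsf{z}^\beta\|$ reduced to $\max_{\mathbf n}\|(f-\tilde f).\mathsf{z}_{\mathbf n}\|$ via the analogue of \eqref{wr08} for differences (the paper's \eqref{wr18}, proved there by interpolating along a segment rather than telescoping, which is immaterial), non-difference increments controlled by Lemma~\ref{l:pcI} together with \eqref{wr08} and \eqref{wr58}, and the extra power of $|\lambda|$ on $\|\Gamma^*-\tilde\Gamma^*\|$ coming from the fact that the $\gamma$'s in the $\Gamma$-difference term are non-purely-polynomial by the last item of \eqref{mb07}, whence $\|\tilde f.\mathsf{z}^\gamma\|\lesssim|\lambda|$. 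One small correction: your claim that the $\gamma$'s arising in type (iv) are also non-purely-polynomial is not right — the coefficient there is the full $(\Gamma^*_{x\,x+h})_\gamma^{\delta_{\mathbf n}}$, which does not vanish on $\gamma=\delta_{\mathbf m}$, and $f_x.\mathsf{z}_{\mathbf m}$ is certainly not independent of the modelled distribution — but this is harmless: those purely polynomial contributions are exactly the ones absorbed into the term $\max_{|\mathbf n|<\eta}\|(\tilde f-f).\mathsf{z}_{\mathbf n}\|$ of \eqref{wr59}, which carries no extra factor of $|\lambda|$, so only type (iii) needs (and gets) the additional $|\lambda|$.
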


With these ingredients in hand, it is now straightforward to prove Lemma~\ref{continuity_2}.
\begin{proof}[Proof of Lemma~\ref{continuity_2}]
Recall that in the setting of Lemma~\ref{continuity_2} we assume the a priori estimate of Theorem~\ref{a priori} to hold, so that we may apply Lemmas~\ref{l:intII} and \ref{l:pcII}.
The desired result then follows by inserting \eqref{wr16} and \eqref{wr36} into the right-hand side of \eqref{wr59} (where we take $\eta = \kappa$).
\end{proof}

We turn to the proof of Lemma~\ref{l:intII}. 
\begin{proof}[Proof of Lemma~\ref{l:intII}]
We start by establishing the following
version of 
\eqref{wr08} for finite differences
\begin{align}\label{wr18}
\|(f-\tilde f).\mathsf{z}^\beta\|
\lesssim|\lambda|^{\beta ( \mathfrak{3} )}
\sum_{\bf n}\beta({\bf n})\|(f-\tilde f).\mathsf{z}_{\bf n}\|.
\end{align}
For this purpose, for $t\in[0,1]$ we introduce 
$f_t$ in line with Definition
\ref{rf01} by imposing
\begin{align}\label{wr19}
f_t.\mathsf{z}_{\mathfrak{3}}=\lambda,\quad
f_t.\mathsf{z}_{\bf n}=tf.\mathsf{z}_{\bf n}+(1-t)\tilde f.\mathsf{z}_{\bf n},
\end{align}
together with the multiplicative form \eqref{rf02}.
By construction, 
$f_t$ interpolates 
between
$f_{t=0}=\tilde f$ and $f_{t=1}=f$.
By definition (\ref{wr19}), assumption (\ref{wr20}) implies 
$\|f_t.\mathsf{z}_{\bf n}\|\lesssim 1$. 
Appealing to (\ref{wr08}) we learn
$\|f_t.\mathsf{z}^\beta\|$ $\lesssim 1$.
Note that 
$\frac{df}{dt}$
satisfies the form 
\eqref{t49}
w.r.t.\ $f_t$, so that by (\ref{wr20})
\begin{align*}
\Big \|\frac{df}{dt}.\mathsf{z}^\beta \Big \|\lesssim|\lambda|^{\beta ( \mathfrak{3} )}
\sum_{\bf n}\beta({\bf n})\Big \|\frac{df}{dt}.\mathsf{z}_{\bf n}\Big \|.
\end{align*}
Since by definition (\ref{wr19}) we have $\frac{df}{dt}.\mathsf{z}_{\bf n}$
$=(f-\tilde f).\mathsf{z}_{\bf n}$, we deduce (\ref{wr18}) by integration. 

\medskip

We now turn to the proof of (\ref{wr16}) and start from the definition \eqref{refonp19}
of $[f;\tilde f]_{\kappa,{\rm pol}}$, which we treat as $[f]_{\kappa,{\rm pol}}$
in the proof of Lemma \ref{l:intI} (see \eqref{wr21}). 
This leads to one additional term,
which by the last item in (\ref{mb07}) is restricted to $\beta$ not purely polynomial:
\begin{align*}
\lefteqn{\big|\sum_{|{\bf n}|<\kappa}(f_x-\tilde f_x).\mathsf{z}_{\bf n}h^{\bf n}\big|
\le[f;\tilde f]_{\kappa,{\rm pol}}|h|^\kappa+\|(f-\tilde f).\mathsf{z}_{\bf 0}\|}\nonumber\\
&+\|\Gamma^*\|\sum_{\stackrel{0\le|\beta|<\kappa}{\beta ( \mathfrak{3} )\not=0}}
\|(f-\tilde f).\mathsf{z}^\beta\||h|^{|\beta|}
+\|\Gamma^*-\tilde\Gamma^*\|\sum_{\stackrel{0\le|\beta|<\kappa}{\beta ( \mathfrak{3} )\not=0}}
\|\tilde f.\mathsf{z}^\beta\||h|^{|\beta|}.
\end{align*}
By the norm equivalence argument of Lemma \ref{l:intI}, see (\ref{wr22}), this yields
\begin{align*}
\lefteqn{\max_{|{\bf n}|<\kappa}\|(f_x-\tilde f_x).\mathsf{z}_{\bf n}\|r^{|{\bf n}|}
\lesssim[f;\tilde f]_{\kappa,{\rm pol}}r^\kappa+\|(f-\tilde f).\mathsf{z}_{\bf 0}\|}\nonumber\\
&+\max_{\stackrel{0\le|\beta|<\kappa}{\beta ( \mathfrak{3} )\not=0}}
\|(f-\tilde f).\mathsf{z}^\beta\|r^{|\beta|}
+\|\Gamma^*-\tilde\Gamma^*\|\max_{\stackrel{0\le|\beta|<\kappa}{\beta ( \mathfrak{3} )\not=0}}
\|\tilde f.\mathsf{z}^\beta\|r^{|\beta|}.
\end{align*}
For the penultimate term, we appeal to (\ref{wr18}).
For the last term we use (\ref{wr08}) and appeal to our assumption (\ref{wr20}), which yields
\begin{align}\label{wr37}
\|\tilde f.\mathsf{z}^\beta\|\lesssim|\lambda|\quad\mbox{since}\;\beta ( \mathfrak{3} )\ge 1,
\end{align}
and combine it with 
the elementary $\max_{0\le|\beta|<\kappa}r^{|\beta|}$ $\le r^\kappa+1$
to the effect of
\begin{align*}
\lefteqn{\max_{|{\bf n}|<\kappa}\|(f-\tilde f).\mathsf{z}_{\bf n}\|r^{|{\bf n}|}}\nonumber\\
&\lesssim([f;\tilde f]_{\kappa,{\rm pol}}+|\lambda|\|\Gamma^*-\tilde\Gamma^*\|)r^\kappa
+(\|(f-\tilde f).\mathsf{z}_{\bf 0}\|+|\lambda|\|\Gamma^*-\tilde\Gamma^*\|)\nonumber\\
&\quad+|\lambda|\max_{|{\bf n}|<\kappa}\|(f-\tilde f).\mathsf{z}_{\bf n}\|r^{|{\bf n}|}
\max\{r^\kappa,r^\delta\}.
\end{align*}
Absorbing the last term to the l.h.s. we obtain
	\begin{align*}
		\lefteqn{\max_{|{\bf n}|<\kappa}\|(f-\tilde f).\mathsf{z}_{\bf n}\|r^{|{\bf n}|}}\nonumber\\
&\lesssim([f;\tilde f]_{\kappa,{\rm pol}}+|\lambda|\|\Gamma^*-\tilde\Gamma^*\|)r^\kappa
+(\|(f-\tilde f).\mathsf{z}_{\bf 0}\|+|\lambda|\|\Gamma^*-\tilde\Gamma^*\|)\nonumber\\
	&\quad\text{provided $|\lambda|\max\{r^\kappa,r^\delta\}\ll1$.}
	\end{align*}
Since as in the proof of Lemma~\ref{l:intI} the proviso is equivalent to $|\lambda|r^{\kappa}\ll1$,
choosing $r$ to be a small multiple of 
	\begin{align*}
		\Big( \min \Big\lbrace \frac{\|(f-\tilde f).\mathsf{z}_{\bf 0}\|+|\lambda|\|\Gamma^*-\tilde\Gamma^*\|}{[f;\tilde f]_{\kappa,{\rm pol}}+|\lambda|\|\Gamma^*-\tilde\Gamma^*\|} , \frac{1}{|\lambda|} 
		\Big\rbrace \Big)^{\frac{1}{\kappa}}
	\end{align*}
yields \eqref{wr16}.

\medskip

We finally address (\ref{wr36}). 
Following the definition (\ref{refonp19}) 
of $[f;\tilde f]_{\tilde{\kappa},{\rm pol}}$
we consider ${\bf n}$ with $|{\bf n}|<\tilde{\kappa}$. As before we distinguish $|h|\ge r$ and
$|h|\le r$ for some $r$ to be chosen later.
In the range $|h|\ge r$ we write
\begin{align*}
\lefteqn{(f_{x+h}.-f_x.Q_{\tilde{\kappa}}\Gamma_{x\,x+h}^*)
-(\tilde f_{x+h}.-\tilde f_x.Q_{\tilde{\kappa}}\tilde\Gamma_{x\,x+h}^*)}\nonumber\\
&=(f_{x+h}-\tilde f_{x+h}).-(f_x-\tilde f_x).Q_{\tilde{\kappa}}\Gamma_{x\,x+h}^*
-\tilde f_x.Q_{\tilde{\kappa}}(\Gamma_{x\,x+h}^*-\tilde\Gamma_{x\,x+h}^*)
\end{align*}
to deduce the estimate
\begin{align*}
\lefteqn{\big|\big((f_{x+h}.-f_x.Q_{\tilde{\kappa}}\Gamma_{x\,x+h}^*)
-(\tilde f_{x+h}.-\tilde f_x.Q_{\tilde{\kappa}}\tilde\Gamma_{x\,x+h}^*)\big)\mathsf{z}_{\bf n}\big|}\\
&\le\|(f-\tilde f).\mathsf{z}_{\bf n}\|
+\|\Gamma^*\|_{\tilde{\kappa}}\sum_{\stackrel{\beta\;\mbox{\tiny populated}}
{|{\bf n}|\le|\beta|<\tilde{\kappa}}}
\|(f-\tilde f).\mathsf{z}^\beta\||h|^{|\beta|-|{\bf n}|}\nonumber\\
&\quad+\|\Gamma^*-\tilde\Gamma^*\|_{\tilde{\kappa}}
\sum_{\stackrel{\beta\;\mbox{\tiny populated},\;\beta ( \mathfrak{3} )\not=0}{|{\bf n}|\le|\beta|<\tilde{\kappa}}}
\|\tilde f.\mathsf{z}^\beta\||h|^{|\beta|-|{\bf n}|}\\
&\stackrel{(\ref{wr37})}{\lesssim} |h|^{\tilde{\kappa}-|{\bf n}|}
\max_{\stackrel{\beta\;\mbox{\tiny populated}}{0\le|\beta|<\tilde{\kappa}}}
r^{|\beta|-\tilde{\kappa}}\big(
\|(f-\tilde f).\mathsf{z}^\beta\|+|\lambda|\|\Gamma^*-\tilde\Gamma^*\|_{\tilde{\kappa}}\big).
\end{align*}
In the range $|h|\le r$ we write
\begin{align*}
\lefteqn{(f_{x+h}.-f_x.Q_{\tilde{\kappa}}\Gamma_{x\,x+h}^*)
-(\tilde f_{x+h}.-\tilde f_x.Q_{\tilde{\kappa}}\tilde\Gamma_{x\,x+h}^*)}\nonumber\\
&=(f_{x+h}.-f_x.Q_{\kappa}\Gamma_{x\,x+h}^*)
-(\tilde f_{x+h}.-\tilde f_x.Q_{\kappa}\tilde\Gamma_{x\,x+h}^*)\nonumber\\
&\quad+(f_x-\tilde f_x).(Q_\kappa-Q_{\tilde{\kappa}})\Gamma_{x\,x+h}^*
+\tilde f_x.(Q_\kappa-Q_{\tilde{\kappa}})(\Gamma^*-\tilde\Gamma^*)_{x\,x+h}
\end{align*}
to infer the estimate
\begin{align*}
\lefteqn{\big|\big((f_{x+h}.-f_x.Q_{\tilde{\kappa}}\Gamma_{x\,x+h}^*)
-(\tilde f_{x+h}.-\tilde f_x.Q_{\tilde{\kappa}}\tilde\Gamma_{x\,x+h}^*)\big).\mathsf{z}_{\bf n}\big|
}\nonumber\\
&\le[f;\tilde f]_{\kappa,{\rm pol}}|h|^{\kappa-|{\bf n}|}
+\|\Gamma^*\|_{\kappa}\sum_{\stackrel{\beta\;\mbox{\tiny populated}}
{\tilde{\kappa}\le|\beta|<\kappa}}
\|(f-\tilde f).\mathsf{z}^\beta\||h|^{|\beta|-|{\bf n}|}\nonumber\\
&\quad+\|\Gamma^*-\tilde\Gamma^*\|_{\kappa}\sum_{\stackrel{\beta\;\mbox{\tiny populated},\;\beta ( \mathfrak{3} ) \neq 0}{\tilde{\kappa} \le |\beta| < \kappa}}
\|\tilde f.\mathsf{z}^\beta\||h|^{|\beta|-|{\bf n}|}\nonumber\\
&\stackrel{(\ref{wr37})}{\lesssim} |h|^{\tilde{\kappa}-|{\bf n}|}
\Big(r^{\kappa-\tilde{\kappa}}[f;\tilde f]_{\kappa,{\rm pol}}\nonumber\\
&\quad+\max_{\stackrel{\beta\;\mbox{\tiny populated}}{\tilde{\kappa}\le|\beta|<\kappa}}r^{|\beta|-\tilde{\kappa}}
\big(\|(f-\tilde f).\mathsf{z}^\beta\|
+|\lambda|\|\Gamma^*-\tilde\Gamma^*\|_{\kappa}\big)\Big).
\end{align*}
Combining both ranges we obtain
\begin{align}\label{wr38}
[f;\tilde f]_{\kappa,{\rm pol}}
&\lesssim
r^{\kappa-\tilde{\kappa}}[f;\tilde f]_{\kappa,{\rm pol}}\nonumber\\
&\quad+\max_{\stackrel{\beta\;\mbox{\tiny populated}}{0\le|\beta|<\kappa}}
r^{|\beta|-\tilde{\kappa}}\big(
\|(f-\tilde f).\mathsf{z}^\beta\|+|\lambda|\|\Gamma^*-\tilde\Gamma^*\|_{\kappa}\big).
\end{align}

\medskip

Estimate \eqref{wr38} 
should be seen as the analogue of \eqref{wr30}, and we now proceed as for \eqref{wr01}. 
Inserting the interpolation estimate \eqref{wr16} for $\|(f- \tilde{f}). \mathsf{z}_\mathbf{n}\|$ into \eqref{wr18}, we learn that
\begin{align}
\|(f-\tilde f).\mathsf{z}^\beta\|
&\lesssim |\lambda|^{\beta ( \mathfrak{3} )}\max_{{\bf n}:\beta({\bf n})\not=0}
([f;\tilde f]_{\kappa,{\rm pol}}+|\lambda|\|\dot f.\mathsf{z}_{\bf 0}\|
+|\lambda|\|\Gamma^*-\tilde\Gamma^*\|_{\kappa})^\frac{|{\bf n}|}{\kappa}\nonumber\\
&\quad\times (\|(f-\tilde f).\mathsf{z}_{\bf 0}\|
+|\lambda|\|\Gamma^*-\tilde\Gamma^*\|_{\kappa})^{1-\frac{|{\bf n}|}{\kappa}} \label{wr31b}
\end{align}
Provided $\beta$ is populated with $0\le|\beta|<\kappa$,
we claim that this implies
\begin{align}
\|(f-\tilde f).\mathsf{z}^\beta\|
&\lesssim 
([f;\tilde f]_{\kappa,{\rm pol}}+|\lambda|\|(f-\tilde f).\mathsf{z}_{\bf 0}\|
+|\lambda|\|\Gamma^*-\tilde\Gamma^*\|_{\kappa})^\frac{|\beta|}{\kappa}\nonumber\\
&\quad\times (\|(f-\tilde f).\mathsf{z}_{\bf 0}\|
+|\lambda|\|\Gamma^*-\tilde\Gamma^*\|_{\kappa})^{1-\frac{|\beta|}{\kappa}}. \label{wr32b}
\end{align}
Indeed, in case of $\beta ( \mathfrak{3} )=0$, and for ${\bf n}$ with $\beta({\bf n})\not=0$ we
must have $\beta=\delta_{\bf n}$ since $\beta$ is populated, so that (\ref{wr31b})
is identical to (\ref{wr32b}).
In case of $\beta ( \mathfrak{3} )\not=0$ we have $\beta ( \mathfrak{3} )\ge|\beta|/\kappa$ so that
by assumption (\ref{wr20}) $|\lambda|^{\beta ( \mathfrak{3} )}$ $\le|\lambda|^{|\beta|/\kappa}$
$\le 1$. Since for ${\bf n}$ with $\beta({\bf n})\not=0$ we have $|{\bf n}|$ $\le|\beta|$,
we may pass from (\ref{wr31b}) to (\ref{wr32b}) by Young's inequality.
Obviously the r.h.s.\ of \eqref{wr32b} is still an estimate of
$\|(f-\tilde f).\mathsf{z}^\beta\|$ $+|\lambda|\|\Gamma^*-\tilde\Gamma^*\|_{\kappa}$.
Inserting this into 
\eqref{wr38} and choosing
\begin{align*}
r=\Big(\frac{\|(f-\tilde f).\mathsf{z}_{\bf 0}\|+|\lambda|\|\Gamma^*-\tilde\Gamma^*\|_{\kappa}}
{[f;\tilde f]_{\kappa,{\rm pol}}+|\lambda|\|(f-\tilde f).\mathsf{z}_{\bf 0}\|
+|\lambda|\|\Gamma^*-\tilde\Gamma^*\|_{\kappa}}\Big)^\frac{1}{\kappa},
\end{align*}
we obtain (\ref{wr36}).
\end{proof}

We 
turn to
the proof of 
Lemma~\ref{l:pcII}.
\begin{proof}[Proof of Lemma~\ref{l:pcII}]
We start by 
taking the differences of the versions of the formulas (\ref{wr41}) and (\ref{wr42})
for $f$ and $\tilde{f}$, 
which implies for differences of modelled distributions that
	\begin{align}
		& \big( (f_{x+h}.-f_x.Q_\eta\Gamma_{x\,x+h}^*)
		- (\tilde{f}_{x+h}.-\tilde{f}_x.Q_\eta\tilde{\Gamma}_{x\,x+h}^*) \big) \mathsf{z}^{\beta+\delta_{\mathfrak{3}}} \nonumber \\
		& =\lambda \big( (f_{x+h}.-f_x.Q_{\eta_{\delta_{\mathfrak{3}}}}\Gamma_{x\,x+h}^*) - (\tilde{f}_{x+h}.-\tilde{f}_x.Q_{\eta_{\delta_{\mathfrak{3}}}}\tilde{\Gamma}_{x\,x+h}^*) \big) \mathsf{z}^{\beta}\,
 , \label{wr48b}
	\end{align}
and 
	\begin{align}
		& \big( (f_{x+h}.-f_x.Q_\eta\Gamma_{x\,x+h}^*)
		- (\tilde{f}_{x+h}.-\tilde{f}_x.Q_\eta\tilde{\Gamma}_{x\,x+h}^*) \big) \mathsf{z}^{\beta+\delta_{\mathbf{n}}} \nonumber \\
		& = ((f - \tilde{f})_{x+h}.\mathsf{z}^{\beta})
(f_{x+h}.-f_x.Q_{\eta_{\beta}}\Gamma_{x\,x+h}^*)\mathsf{z}_{\bf n} \nonumber \\
			& \quad +( \tilde{f}_{x+h}.\mathsf{z}^{\beta})
				\big( (f_{x+h}.-f_x.Q_{\eta_{\beta}}\Gamma_{x\,x+h}^*) - (\tilde{f}_{x+h}.-\tilde{f}_x.Q_{\eta_{\beta}}\tilde{\Gamma}_{x\,x+h}^*) \big)\mathsf{z}_{\bf n} \nonumber \\
			& \quad +\sum_{|\gamma|<\eta_{\beta}} (f_{x+h}.-f_x.Q_{\eta_{\gamma}}\Gamma_{x\,x+h}^*)\mathsf{z}^\beta
(\Gamma_{x\,x+h}^*)_\gamma^{\delta_{\bf n}} (f - \tilde{f})_x.\mathsf{z}^\gamma \nonumber \\
			& \quad +\sum_{|\gamma|<\eta_{\beta}} (f_{x+h}.-f_x.Q_{\eta_{\gamma}}\Gamma_{x\,x+h}^*)\mathsf{z}^\beta
((\Gamma - \tilde{\Gamma})_{x\,x+h}^*)_\gamma^{\delta_{\bf n}}\tilde{f}_x.\mathsf{z}^\gamma \nonumber \\
			& \quad +\sum_{|\gamma|<\eta_{\beta}} \big( (f_{x+h}.-f_x.Q_{\eta_{\gamma}}\Gamma_{x\,x+h}^*) - (\tilde{f}_{x+h}.-\tilde{f}_x.Q_{\eta_{\gamma}}\tilde{\Gamma}_{x\,x+h}^*) \big)\mathsf{z}^\beta \nonumber \\
			& \phantom{\quad +\sum_{|\gamma|<\eta_{\beta}}} \quad \times (\tilde{\Gamma}_{x\,x+h}^*)_\gamma^{\delta_{\bf n}}\tilde{f}_x.\mathsf{z}^\gamma , \label{wr49b}
	\end{align}
where in the above we denoted 
$\eta_{\beta} = \eta-(|\beta|-\alpha)$ for convenience.
As in the proof of Lemma \ref{l:pcI}, the identities (\ref{wr48b})
and \eqref{wr49b}
yield the inequalities
\begin{align}\label{wr53b}
[f; \tilde{f}]_{\eta,\beta+\delta_{\mathfrak{3}}}\le|\lambda|[f; \tilde{f}]_{\eta-(|\delta_{\mathfrak{3}}|-\alpha),\beta}
\le|\lambda|\max_{\tilde\eta\le\eta}[f; \tilde{f}]_{\tilde\eta,\beta}
\end{align}
and
	\begin{align*}
		& [f; \tilde{f}]_{\eta, \beta + \delta_{\mathbf{n}}}
		\leq 
			\| (f - \tilde{f}).\mathsf{z}^{\beta} \| [f]_{\eta-(|\beta|-\alpha),\delta_{\bf n}}
			+ \| \tilde{f}.\mathsf{z}^{\beta} \| [f; \tilde{f}]_{\eta-(|\beta|-\alpha),\delta_{\bf n}} \\
			& \quad + \sum_{|\gamma|<\eta-(|\beta|-\alpha)}
				\big( 
				[f]_{\eta-(|\gamma|-\alpha),\beta} \| \Gamma^* \|_{\eta} \| (f - \tilde{f}) . \mathsf{z}^{\gamma} \| \\
				& \quad \phantom{\quad + \sum_{|\gamma|<\eta-(|\beta|-\alpha)}
				\big(}
				+ [f]_{\eta-(|\gamma|-\alpha),\beta} \| \Gamma^* - \tilde{\Gamma}^* \|_{\eta} \| \tilde{f} . \mathsf{z}^{\gamma} \| \mathbf{1}_{\gamma ( \mathfrak{3} ) \neq 0} \\
				& \quad \phantom{\quad + \sum_{|\gamma|<\eta-(|\beta|-\alpha)}
				\big(}
				+ [f; \tilde{f}]_{\eta-(|\gamma|-\alpha),\beta} \| \Gamma^* \|_{\eta} \| \tilde{f} . \mathsf{z}^{\gamma} \|
				\big) ,
	\end{align*}
where the presence of the indicator function is a consequence of the last item in \eqref{mb07}.
By inequality \eqref{wr08} and our assumption \eqref{wr58}
the last inequality yields the estimate
\begin{align*}
&[f; \tilde{f}]_{\eta,\beta+\delta_{\bf n}}
\lesssim
\| (f - \tilde{f}).\mathsf{z}^{\beta} \| 
+ | \lambda |^{\beta ( \mathfrak{3} )} [f; \tilde{f}]_{\eta-(|\beta|-\alpha),\delta_{\bf n}}\\
&+\hspace{-.5ex}\max_{|\gamma|<\eta-(|\beta|-\alpha)} \hspace{-.5ex}
\big(
| \lambda |^{\beta ( \mathfrak{3} )} \| (f - \tilde{f}) . \mathsf{z}^{\gamma} \| 
+ | \lambda |^{\beta ( \mathfrak{3} ) + 1} \| \Gamma^* - \tilde{\Gamma}^* \|_{\eta} 
+ [f; \tilde{f}]_{\eta-(|\gamma|-\alpha),\beta} 
\big) ,
\end{align*}
where we also used 
Lemma~\ref{l:pcI}
and the fact
that 
$\sup_{\tilde{\kappa} \leq \kappa} \| f \|_{\tilde{\kappa}} \lesssim \|f\|_{\kappa} \lesssim 1$
as readily follows from the interpolation inequalities of Lemma~\ref{l:intI}.
Appealing to inequality (\ref{wr18})  
this estimate reduces to
\begin{align} 
& [f; \tilde{f}]_{\eta,\beta+\delta_{\bf n}}
\lesssim
\max_{\tilde\eta\le\eta}
[f; \tilde{f}]_{\tilde\eta,\beta} \nonumber \\
& \quad + |\lambda|^{\beta ( \mathfrak{3} )}
\big(\max_{\tilde\eta\le\eta}[f; \tilde{f}]_{\tilde\eta,{\rm pol}}+\max_{|{\bf m}|<\eta}\|(f - \tilde{f}).\mathsf{z}_{\bf m}\| + |\lambda| \| \Gamma^* - \tilde{\Gamma}^* \|_{\eta} \big) . \label{wr54b}
\end{align}
As in the proof of Lemma \ref{l:pcI}, 
(\ref{wr59}) follows from (\ref{wr53b}) and (\ref{wr54b}) by induction
in the plain length of $\beta$. 
\end{proof}

\subsection{Proof of Three-point Argument II}

In this subsection, we provide a proof of the final remaining ingredient needed in preparation for our proof of Theorem~\ref{uniqueness} which is our second three-point argument, Lemma~\ref{3_point_2}. Since the proof requires only a slight adaptation of the proof of Lemma~\ref{3_point}, we only provide a sketch of the details that change.

\begin{proof}[Proof of Lemma~\ref{3_point_2}]
	By taking the difference of \eqref{3pid} with the corresponding expression for $\tilde{R}$, we see that for any Schwartz function $\psi$ 
	\begin{align*}
		& \langle R_x - R_y - \tilde{R}_x + \tilde{R}_y, \psi \rangle
		\\ & = \sum_{0 \le |\beta| < \kappa} \left [ (f_y. - f_x. Q_\kappa \Gamma_{xy}^*)\mathsf{z}^\beta \langle \Pi_{y\beta}, \psi \rangle - (\tilde{f}_y. - \tilde{f}_x. Q_\kappa \tilde{\Gamma}_{xy}^*)\mathsf{z}^\beta \langle \tilde{\Pi}_{y\beta},\psi \rangle  \right ].
	\end{align*}
	In particular, by preceding as in the proof of Lemma~\ref{3_point}, we see that it suffices to bound the summand for $\beta$ not purely-polynomial with $\psi$ replaced by $\psi_{|y-x|}$, where $\psi$ is chosen to have the correct behaviour when tested against monomials.
	The desired bound is then immediate by writing the summand as
	\begin{align*}
		(f_y. - f_x.Q_\kappa \Gamma_{xy}^* - \tilde{f}_y. &+ \tilde{f}_x.Q_\kappa \tilde{\Gamma}_{xy}^*) \mathsf{z}^\beta \langle \Pi_{y\beta}, \psi_{|y-x|} \rangle \\ &+ (\tilde{f}_y. - \tilde{f}_x.Q_\kappa \tilde{\Gamma}_{xy}^*) \mathsf{z}^\beta \langle \Pi_{y \beta} - \tilde{\Pi}_{y\beta}, \psi_{|y-x|}\rangle .
	\end{align*}
\end{proof}

\subsection{Proof of Uniqueness and Continuity in Model Norm}
	We now turn to the proof of Theorem~\ref{uniqueness}. Since the overall strategy of proof is quite similar to that of Theorem~\ref{a priori}, we choose to be more brief here. 
 \begin{proof}[Proof of Theorem~\ref{uniqueness}]
Throughout this proof, we assume for convenience and without loss of generality that the torus is of unit size i.e.\ that $\ell=1$.

\medskip

\PfStart{un} % NB:Creates a new counter.
\PfStep{un}{un:1} 
\textsc{Algebraic continuity.}
By our second Algebraic Continuity Lemma, Lemma~\ref{continuity_2} for populated multi-indices $\beta$ with $|\beta| < \kappa$ and $\beta \ne$pp, we have that
\begin{align*}
    [f; f]_{\kappa, \beta} \lesssim \lambda([f;\tilde{f}]_{\kappa, \mathrm{pol}} + \| (f - \tilde{f}) . \mathsf{z}_{\mathbf{0}} \| + \|\Gamma^* - \tilde{\Gamma}^*\|_\kappa).
\end{align*}

\PfStep{un}{un:2} 
\textsc{Reconstruction.}
We now note that if $S_x^- = R_x^- - \tilde{R}_x^-$ then we can write
\begin{align} \label{mc00}
  S_{x+h}^- - S_x^- 
    & = \sum_{0 < |\beta| < \kappa} (f_x. - f_{x+h}. Q_\kappa \Gamma_{x+h \, x}^* - \tilde{f}_x. + \tilde{f}_{x+h}. Q_\kappa \tilde{\Gamma}_{x+h \, x}^*)\Pi_{x\beta}^- 
   	\nonumber \\
    & \qquad - (\tilde{f}_x. - \tilde{f}_{x+h}. Q_\kappa \tilde{\Gamma}_{x+h \, x}^*) (\tilde{\Pi}_{x \beta}^- - \Pi_{x\beta}^-).
\end{align}
In the same way that we obtain the bound \eqref{ap02} in the corresponding step in the proof of Theorem~\ref{a priori}, this implies that for 
$\mu \leq \ell = 1$,
\begin{align*}
    & \frac{|(S_{x+h}^- - S_x^-)_{\mu}(x)|}{\mu^{\kappa^- - 2} (\mu + |h|)^{\kappa - \kappa^-} } \\
    & \lesssim |\lambda|([f;\tilde{f}]_{\kappa, \mathrm{pol}} + \| ( f - \tilde{f} ) . \mathsf{z}_{\mathbf{0}} \| + \|\Gamma^* - \tilde{\Gamma}^* \|_\kappa + \|\Pi -\tilde{\Pi}\|_\kappa) 
\end{align*}
where we made use of the fact that $[\tilde{f}]_{\beta, \kappa} \lesssim |\lambda|$ by the a priori estimate given in Theorem~\ref{a priori} in order to obtain the prefactor $|\lambda|$ in front of the term in $\|\Pi - \tilde{\Pi}\|_\kappa$.
\medskip

By the Reconstruction Theorem given in Lemma~\ref{Recon} we then obtain for 
$\mu \leq \ell =1$
\begin{align} \label{ap03d}
  & |(R_x^- - \tilde{R}_x^-)_\mu(x)| \nonumber \\
  & \lesssim |\lambda| ([f;\tilde{f}]_{\kappa, \mathrm{pol}} + \| ( f - \tilde{f} ) . \mathsf{z}_{\mathbf{0}} \| + \|\Gamma^* - \tilde{\Gamma}^* \|_\kappa + \|\Pi - \tilde{\Pi}\|_\kappa)
  \mu^{\kappa- 2}
\end{align}
uniformly over $x \in \mathbb{R}^{1+d}$, $\mu \leq 1$ and $\psi$ with $\|\psi\|_p \le 1$ where $p$ is as in the statement of Lemma~\ref{Recon}.
which is the output of this step of the proof.

\PfStep{un}{un:3} 
\textsc{Integration.} 
As in the proof of Theorem~\ref{a priori}, it is again necessary to include a localisation before applying our Schauder estimate in order to deal with the restriction to scales at most 
$1$
in the output of our Reconstruction step.
As in the corresponding step of the proof of Theorem~\ref{a priori}, we observe that there exists a polynomial $P_x$ of degree strictly less than $\kappa$ such that
\begin{align*}
    LS_x = S_x^- + P_x ,
    \qquad 
    \text{where}
    \qquad P_x ( \cdot ) = \sum_{\kappa - 2 < | \mathbf{n} | < \kappa} c_{x\mathbf{n}} ( \cdot - x )^{\mathbf{n}} ,
\end{align*}
for some coefficients $c_{x\mathbf{n}} $, 
where arguing as for \eqref{apn10} 
reveals that
	\begin{align}\label{apn10d}
		\sup_{x \in \mathbb{R}^{1 + d}} | c_{x\mathbf{n}} |
		& \lesssim |\lambda | ( \| f; \tilde{f} \|_{\kappa} + \| \Pi - \tilde{\Pi} \|_{\kappa} ) .
	\end{align}

We now claim that
with $\delta>0$ as defined in \eqref{is03} we have
	\begin{align} \label{apn11d}
		& |(L S_x)_\mu (x) | \nonumber \\
		& \lesssim \big( \rho^{- \delta} + \rho^{p} | \lambda | \big) \big( \| f - \tilde{f} \|_{\kappa} + \| \Gamma^* - \tilde{\Gamma}^* \|_{\kappa} + \| \Pi - \tilde{\Pi} \|_{\kappa} \big) \, \mu^{\kappa - 2} ,
	\end{align}
where the implicit constant is uniform over $x \in \mathbb{R}^{1 + d}$, $\mu < \infty$, $\| \psi \|_{p} \leq 1$ and $\rho \geq 1$.
Again, we distinguish the regimes $\mu \lessgtr \rho$.
When $\mu \geq \rho$ we use the expression 
	\begin{align*}
     S_{x\mu}(x) = & [(f_{\cdot} - f_x - \tilde{f}_\cdot + \tilde{f}_x ).\mathsf{z}_\mathbf{0}] \ast \psi_\mu(x) \\ & - \sum_{\kappa^- \le |\beta| < \kappa} (f_x. \mathsf{z}^\beta - \tilde{f}_x. \mathsf{z}^\beta) \Pi_{x\beta \mu}(x) + \tilde{f}_x.\mathsf{z}^\beta (\Pi_{x\beta} - \tilde{\Pi}_{x\beta})_\mu(x),
 \end{align*}
which implies
	\begin{align*}
		|(L S_x)_\mu (x) |
		& \lesssim (\|f; \tilde{f}\|_{\kappa} + \|\Pi - \tilde{\Pi}\|_{\kappa}) \sum_{0 \leq |\beta|<\kappa} \mu^{| \beta | - 2} \\
		& \lesssim  (\|f; \tilde{f}\|_{\kappa} + \|\Pi - \tilde{\Pi}\|_{\kappa}) \rho^{- \delta} \, \mu^{\kappa - 2} ,
	\end{align*}
as desired.
On the other hand, when $\mu \leq \rho$ we write 
	\begin{align*}
		(L S_x)_\mu (x)
		& = S_x^{-} * ( \psi_{\rho} )_{\frac{\mu}{\rho}} ( x ) + P_{x \mu} ( x ) .
	\end{align*}
Since \eqref{apn10d} implies in this regime
	\begin{align*}
		| P_{x \mu} ( x ) | 
		& \lesssim \rho^p |\lambda | ( \| f; \tilde{f} \|_{\kappa} + \| \Pi - \tilde{\Pi} \|_{\kappa} ) \, \mu^{\kappa - 2} ,
	\end{align*}
and \eqref{ap03d} implies
	\begin{align*}
		|  S_x^{-} * ( \psi_{\rho} )_{\frac{\mu}{\rho}} |
		& \lesssim \rho^p | \lambda |
		(\|f;\tilde{f}\|_{\kappa} + \|\Gamma^* - \tilde{\Gamma}^* \|_\kappa + \|\Pi - \tilde{\Pi}\|_\kappa) \, \mu^{\kappa- 2} ,
	\end{align*}
the desired 
\eqref{apn11d} follows.

\medskip
Applying our Schauder estimate of Lemma~\ref{Schauder} 
with \eqref{apn11d} as input,
we conclude that there exists a $p^\prime$ such that for any $\rho \ge 1$,
 \begin{align}\label{mc03}
 	| S_{x \mu} ( x ) | 
		& \lesssim 
		\big( \rho^{- \delta} + \rho^{p} | \lambda | \big) \big( \| f - \tilde{f} \|_{\kappa} + \| \Gamma^* - \tilde{\Gamma}^* \|_{\kappa} + \| \Pi - \tilde{\Pi} \|_{\kappa} \big) \, \mu^{\kappa}  
    \end{align}
uniformly over $x \in \mathbb{R}^{1 + d}$, $\mu < \infty$, $\| \psi \|_{p^{\prime}} \leq 1$ and $\rho \geq 1$.

\PfStep{un}{un:4} 
\textsc{Three-Point Argument.}
As in the analogous step in the proof of Theorem~\ref{a priori}, we deduce from \eqref{mc03}, Lemma~\ref{continuity_2} and Lemma~\ref{3_point_2} that
    \begin{align}\label{mc04}
        [f; \tilde{f}]_{\kappa, \mathrm{pol}} 
        \lesssim (\rho^{-\delta} + \rho^{p} |\lambda|) \, \|f; \tilde{f}\|_{\kappa} + \|\Gamma^* - \tilde{\Gamma}^*\|_\kappa + \|\Pi - \tilde{\Pi}\|_\kappa .
    \end{align}

\PfStep{un}{un:5} 
\textsc{Incorporating Boundary Data.}
We begin this step by noting that as a consequence of the algebraic continuity result given in Lemma~\ref{continuity_2} and the interpolation result in Lemma~\ref{l:intII}, we have that
\begin{align}\label{mc10}
    \|f; \tilde{f}\|_\kappa \lesssim [f; \tilde{f}]_{\kappa, \mathrm{pol}} + \| ( f - \tilde{f} ) . \mathsf{z}_{\mathbf{0}} \| + \|\Pi - \tilde{\Pi}\|_\kappa + \|\Gamma^* - \tilde{\Gamma}^*\|_{\kappa}.
\end{align}
We now wish to remove the appearance of 
$\|(f- \tilde{f}).\mathsf{z}_{\mathbf{0}} \|$ 
from the right-hand side. 
To this end, we note that since $f. \mathsf{z}_\mathbf{0}, \tilde{f}. \mathsf{z}_\mathbf{0}$ are both periodic and of vanishing space-time average, 
we can write
\begin{align} \label{mc05}
   \|(f- \tilde{f}).\mathsf{z}_{\mathbf{0}} \|
    & \lesssim [f - \tilde{f}]_{\kappa^-} 
    = [f; \tilde{f}]_{\kappa^-, \delta_\mathbf{0}} 
    \\ \nonumber
    & \lesssim
    \big([f;\tilde f]_{\kappa,{\rm pol}}+|\lambda|\|(f-\tilde f).\mathsf{z}_{\bf 0}\|
+|\lambda|\|\Gamma^*-\tilde\Gamma^*\|\big)^{\frac{\kappa^{-}}{\kappa}}\nonumber\\
&\quad\times\big(\|(f-\tilde f).\mathsf{z}_{\bf 0}\|
+|\lambda|\|\Gamma^*-\tilde\Gamma^*\|\big)^{1-\frac{\kappa^{-}}{\kappa}}, \nonumber
\end{align}
where the second line followed from the use of the interpolation estimate \eqref{wr36}. 
By appealing to Young's inequality in the same manner as in the analogous step in the proof of Theorem~\ref{a priori}, we obtain that
\begin{align}\label{mc09}
    \|(f- \tilde{f}).\mathsf{z}_{\mathbf{0}} \| \lesssim [f; \tilde{f}]_{\kappa, \mathrm{pol}} + \|\Gamma^* - \tilde{\Gamma}^*\|_\kappa.
\end{align}

Substituting \eqref{mc09} into the right-hand side of \eqref{mc10} gives us the estimate
\begin{align*}
    \|f; \tilde{f}\|_\kappa & \lesssim [f; \tilde{f}]_{\kappa, \mathrm{pol}} + \|\Gamma^* - \tilde{\Gamma}^*\|_\kappa + \|\Pi - \tilde{\Pi}\|_\kappa
    \\ & \lesssim (\rho^{-\delta} + \rho^{p} |\lambda|) \|f;\tilde{f}\|_\kappa + \|\Gamma^* - \tilde{\Gamma}^* \|_\kappa + \|\Pi - \tilde{\Pi}\|_\kappa
\end{align*}
where the second line follows by use of the output \eqref{mc04} of the previous step. Taking $\rho$ large and then $|\lambda|$ small in the same manner as in Theorem~\ref{a priori} concludes the proof.
 \end{proof}

\section{Pathwise Existence for Fixed (Deterministic) Model with Qualitative Smoothness} \label{ss:exist}

\subsection{Proof of Consistency}\label{ss:lift}

\begin{proof}[Proof of Lemma~\ref{l:c}]
\PfStart{c1} % NB:Creates a new counter named c1.
\PfStep{c1}{c1:1} % NB: c1 = counter to increment; c1:1 chosen label for step.
%Setting.
For the convenience of the reader, we first recall the asumptions of the lemma and spell out the tasks to perform.
We are given a smooth model, i.e.\ as in Definition~\ref{d:smooth}.
We are also given $\lambda \in \mathbb{R}$ and a smooth $\ell$-periodic function $u$ satisfying
the PDE \eqref{t30}, which we rewrite as
	\begin{align}\label{t21}
		L u = P u^- , \qquad \fint u = 0, \qquad u^- \coloneqq \lambda u^3 + h_{\lambda} u + \xi ,
	\end{align}
where $h_{\lambda} = \sum_k c_k \lambda^k$.
We shall prove that $f$ satisfies the robust formulation Definition~\ref{rf03}.
For that purpose, inspecting Definition~\ref{rf03}, it remains to prove:
\begin{enumerate}
	\item that the map $x \mapsto f_x$ defined by \eqref{rf02} with the choices \eqref{t9}-\eqref{t9b} for $\mathsf{z}_\mathbf{n}$ and $\mathsf{z}_\mathfrak{3}$ is well-defined and periodic (see Step~\ref{c1:2} below); 
	\item that the quantities $[f]_{\kappa, \beta}$, $\| f . \mathsf{z}^{\beta} \|$ are finite for all $| \beta | < \kappa$ (see Steps~\ref{c1:3} and \ref{c1:6} below); 
	\item that the germs $R$ and $R^-$ defined by \eqref{t12} satisfy \eqref{t13} (see Step~\ref{c1:5b} below).
\end{enumerate}

\PfStep{c1}{c1:2}
We first emphasize that
\eqref{t9}-\eqref{t9b} 
in combination with 
the form \eqref{rf02} 
provides an unambiguous definition of $f$.
We first observe from 
the definition \eqref{t45} of homogeneity 
and the range \eqref{t53} of $\alpha$ 
that
	\begin{align} \label{t54}
		\text{for non-purely-polynomial $\beta$}, 
		\quad \mathsf{z}^{\beta} = \mathsf{z}_{\mathfrak{3}}^{\beta ( \mathfrak{3} )} \prod_{|\mathbf{n}| < |\beta|} \mathsf{z}_{\mathbf{n}}^{\beta ( \mathbf{n} )} .
	\end{align}	
Thus the multiplicativity
\eqref{rf02}
of $f$ implies
	\begin{align}\label{t55}
	\text{for non-purely-polynomial $\beta$}, 
	\quad f_x . \mathsf{z}^{\beta} = \lambda^{\beta ( \mathfrak{3} )} \prod\nolimits_{| \mathbf{n} | < | \beta |} ( f_x . \mathsf{z}_{\mathbf{n}} )^{\beta ( \mathbf{n} )} .
	\end{align}
In particular,
$f_x . \mathsf{z}^{\beta}$
is a function of $(f_x . \mathsf{z}_{\mathbf{n}})_{| \mathbf{n} | < |\beta |}$ only, 
so that the definition \eqref{t9}-\eqref{t9b} is not circular.
Now, the periodicity of $x \mapsto f_x$ directly follows from
that definition
along with the assumption that $u$ is periodic and the (joint) periodicity assumption \eqref{t15} on $\Pi$.

\PfStep{c1}{c1:3} 
In this step, we claim that $\| f . \mathsf{z}^{\beta} \| < \infty$ for all $| \beta |<\kappa$.
Our argument is recursive in the homogeneity $| \beta |$.
The base case of $\beta = 0$ follows by \eqref{rf02} which enforces $f_x . \mathbf{1} = 1$.
We turn to the induction step, and consider $\beta \neq 0$.
We distinguish the cases whether $\beta$ is pp or not.
If $\beta \neq \text{pp}$, the desired claim follows from \eqref{t55} and the induction hypothesis in the case where $\beta$ is purely polynomial.
It thus remains to consider the case $\beta = \delta_{\mathbf{n}}$.
Since by the definition \eqref{t9b}, $f_x.\mathsf{z}_\mathbf{n} = 0$ for $|\mathbf{n}| > \kappa$ the claim follows in this case.
Thus, we let $| \mathbf{n} | < \kappa$, where $f_x . \mathsf{z}_{\mathbf{n}}$ is defined by \eqref{t10}, which implies 
$\| f . \mathsf{z}_{\mathbf{n}} \| \leq \| \partial^{\mathbf{n}} u \| + \sum_{| \beta | < | \mathbf{n} |} \| f . \mathsf{z}^{\beta} \| \, \sup_{x \in \mathbb{R}^{1 + d}} | \partial^{\mathbf{n}} \Pi_{x \beta} ( x ) |$.
Since $u$ is periodic and $\kappa$-H\"older, $\| \partial^{\mathbf{n}} u \| < \infty$.
By induction, $\| f . \mathsf{z}^{\beta} \|< \infty$ in this sum.
Finally, 
	\begin{align}
		\sup_{x \in \mathbb{R}^{1 + d}} | \partial^{\mathbf{n}} \Pi_{x \beta} ( x ) |
		& = \sup_{| x | \leq \sqrt{D} \ell} | \partial^{\mathbf{n}} \Pi_{x \beta} ( x ) | \label{t94} \\
		& = \sup_{| x | \leq \sqrt{D} \ell} \Big| \sum_{| \gamma | \leq | \beta |} ( \Gamma_{x 0}^* )^{\gamma}_{\beta} \partial^{\mathbf{n}} \Pi_{0 \gamma} ( x ) \Big|
		< \infty , \nonumber
	\end{align}
where in the first equality we used the (joint) periodicity \eqref{t15} of $\Pi$, 
in the second equality we used the reexpansion property \eqref{mb04},
and we then uniformly bounded $\Gamma$ by \eqref{mb03} and $\partial^{\mathbf{n}}\Pi_{0 \gamma}$ by the $\kappa$-H\"older assumption.
This establishes the desired claim.

\PfStep{c1}{c1:4} 
In this step, we claim that for all 
$x \in \mathbb{R}^{1 + d}$ and 
$| \mathbf{n} | < \kappa$
	\begin{align}\label{t16}
		\partial^{\mathbf{n}} R_x ( x ) = 0,  
		\qquad R_x^{-} ( x ) = 0 .
	\end{align}
We let $| \mathbf{n} | < \kappa$ 
and start with the first identity in \eqref{t16}.
Since $\Pi_{x \beta}$ is $\kappa$-H\"older
it follows from standard properties of mollification that for all $|\mathbf{n}| < \kappa$, $\partial^\mathbf{n} \Pi_{x \beta} ( x ) = \lim_{r \to 0} \partial^\mathbf{n}\Pi_{x \beta r}( x )$
(provided $\int \psi = 1$). 
Thus, in view of the model bound \eqref{mb01} and the property \eqref{t93} of the homogeneity,
we learn:
	\begin{align}\label{t35}
		\text{$\partial^{\mathbf{n}} \Pi_{x \beta} ( x ) = 0$ unless $| \beta | < | \mathbf{n} |$ or $\beta = \mathbf{n}$,}
	\end{align}
so that 
	\begin{align} \label{t95} 
		Q_\kappa \partial^{\mathbf{n}} \Pi_x ( x ) = Q_{\mathbf{n}} \partial^{\mathbf{n}} \Pi_x ( x ) +  \mathbf{n}! \, \mathsf{z}_{\mathbf{n}} .
	\end{align}
Substituting into the definition \eqref{t12} of $R$,
we deduce that
	\begin{align*}
		\partial^{\mathbf{n}} R_x ( x ) 
			= \partial^{\mathbf{n}} u ( x ) - f_x . Q_{| \mathbf{n} |} \partial^{\mathbf{n}} \Pi_x ( x ) - \mathbf{n}! \, f_x . \mathsf{z}_{\mathbf{n}} , 
	\end{align*}
which vanishes as desired by the definition \eqref{t10} of $f_x . \mathsf{z}_{\mathbf{n}}$.

\medskip
We turn to the second identity in \eqref{t16}.
Again by the model bound 
\eqref{mb01}, 
we learn
	\begin{align} \label{t75}
		\Pi_{x \beta}^- ( x ) = 0 \text{ unless } | \beta | \leq 2 ,
	\end{align}
so that 
	\begin{align} \label{t96}
		Q_{\kappa} \Pi_x^- ( x ) = \Pi_x^- ( x ) = \mathsf{z}_{\mathfrak{3}} \Pi_x ( x )^3 + c \Pi_x ( x ) + \xi ( x ) \mathsf{1} .
	\end{align}
By the form \eqref{rf02} of $f$ and the first identity in \eqref{t16}, we deduce that
	\begin{align*}
		f_x . Q_{\kappa} \Pi_x^- ( x )
		= \lambda u ( x )^3 + h_{\lambda} u ( x ) + \xi ( x ) = u^{-} ( x ) ,
	\end{align*}
so that we have established $R_x^- ( x ) = 0$ after reporting in the definition \eqref{t12} of $R^-$.

\PfStep{c1}{c1:5} 
In this step, we post-process \eqref{t16} into
	\begin{align}
		\sup_{x \in \mathbb{R}^{1 + d} , \, | h | \leq \ell , \, | \mathbf{n} | < \kappa } 
			\frac{| \partial^{\mathbf{n}} R_x ( x + h ) |}{| h |^{\kappa - | \mathbf{n} |}} 
			+ \frac{| R_x^- ( x + h ) |}{| h |^{\kappa^-}} 
		 	< \infty . \label{t44}
	\end{align}
We only prove the claim for $R$ and $\mathbf{n} = 0$, since the cases of $\mathbf{n} \neq 0$ and $R^-$ are obtained in exactly the same way.
To that effect,
we first recall that since $u$ is periodic and $\Pi$ is (jointly) periodic \eqref{t15}, also $(x, y) \mapsto R_x ( y )$ is periodic, so that in \eqref{t44} one may restrict to $|x| \leq \sqrt{D} \ell$ without changing the value of the supremum.
Now we use \eqref{t16} to rewrite for $|x|, | h | \leq \sqrt{D} \ell$,
	\begin{align*}
		R_x ( x + h )
		& = R_x ( x + h ) - \sum_{|\mathbf{n}| < \kappa} \frac{1}{\mathbf{n} !} \partial^{\mathbf{n}} R_x ( x ) \, h^{\mathbf{n}} .
	\end{align*}
Appealing to the explicit expression \eqref{t12} of $R$ along with the reexpansion property \eqref{mb04} of the model, 
this implies that
	\begin{align*}
		& R_x ( x + h )
		= u ( x + h ) - \sum_{|\mathbf{n}| < \kappa} \frac{1}{\mathbf{n} !} \partial^{\mathbf{n}} u ( x ) \, h^{\mathbf{n}} \\
			& \quad - \sum_{| \gamma | \leq | \beta | < \kappa} f_x . \mathsf{z}^{\beta} ( \Gamma_{x 0}^* )_{\beta}^{\gamma} \Big( \Pi_{0 \gamma} ( x + h ) - \sum_{|\mathbf{n}| < \kappa} \frac{1}{\mathbf{n} !} \partial^{\mathbf{n}} \Pi_{0 \gamma} ( x ) \, h^{\mathbf{n}} \Big) .
	\end{align*}
The desired conclusion follows by Taylor's Theorem from the assumption that $u$ and $\Pi_{0 \gamma}$ are $\kappa$-H\"older, the model bound \eqref{mb03} on $\Gamma^*$, and the output of Step~\ref{c1:3}.

\PfStep{c1}{c1:5b}
In this step, we post-process \eqref{t44} into the desired vanishing \eqref{t13}.
Once again, we only prove the first item in \eqref{t13}, since the claim on $R^-$ is established in exactly the same way.
To that effect, let $x \in \mathbb{R}^{1 + d}$ and $\psi \in \mathcal{S}$ 
be the Schwartz function defined in \eqref{t90}.
Fix a smooth cut-off $\chi$ i.e.\
	\begin{align*}
		\chi \in C_c^{\infty} ( \mathbb{R}^{1 + d} ) , \qquad \mathbf{1}_{B_{\ell/2}(0)} \leq \chi \leq \mathbf{1}_{B_\ell(0)} .
	\end{align*}
Then, we write
	\begin{align*}
		R_{x \mu} ( x ) 
		& = \int d h \, R_x ( x - h ) \chi ( h ) \psi_{\mu} ( h ) 
		+ R_x * ( (1 - \chi) \psi_{\mu} ) ( x ) .
	\end{align*}
The first term in the r.h.s.\ is a $O ( \mu^{\kappa} ) = o_{\mu \to 0} ( 1 )$ 
uniformly in $x \in \mathbb{R}^{1 + d}$
by \eqref{t44}.
On the other hand,
one readily checks that $((1 - \chi) \psi_{\mu})_{\mu \to 0}$ converges to zero in the Schwartz class $\mathcal{S}$.
This implies that the second term in the r.h.s.\
is also a $o_{\mu \to 0} ( 1 )$ uniformly in $x$
after noting that
by definition of $R$, the model estimates \eqref{mb01} and e.g.\ the change of kernel result of Lemma~\ref{kernel_swap}, one has for some $p \in \mathbb{N}$,
$\sup_{\| \psi \|_p \leq 1} \sup_{x \in \mathbb{R}^d} | R_{x \, \mu = \ell} ( x ) | < \infty$.
This establishes the desired \eqref{t13}.

\PfStep{c1}{c1:6}
In this step, we claim that for all populated $\beta$ with $|\beta|<\kappa$, $[f]_{\kappa, \beta}< \infty$.
In combination with the output of Step~\ref{c1:3}, this then yields the desired $\| f \|_{\kappa} < \infty$.
As in Step~\ref{c1:3} above, our argument is inductive in the homogeneity $|\beta|$.
More precisely, our induction hypothesis is that $\sup_{0 \leq \eta \leq \kappa} [f]_{\eta, \beta} < \infty$.
In fact, by 
the interpolation inequality of Lemma~\ref{l:intI}
and the output of Step~\ref{c1:3}, this is equivalent to $[f]_{\kappa, \beta} < \infty$.
The base case $\beta = 0$ of the induction follows from \eqref{rf02} which implies that $[f]_{\eta, 0} = 0$.
We now turn to the inductive step, and consider $\beta \neq 0$.
We disinguish the case where $\beta$ is pp from the one where it is not.
If $\beta \neq \text{pp}$ the desired claim follows from the induction hypothesis and the output of Step~\ref{c1:3} by
the output of Lemma~\ref{l:pcI}.
It remains to consider the case $\beta = \delta_{\mathbf{n}}$.
When $|\mathbf{n}| > \kappa$, one has $[f]_{\kappa, \delta_{\mathbf{n}}}=0$ by definition so that we consider $| \mathbf{n} | < \kappa$ in what follows.
We use the expansion
	\begin{align}\label{t32}
		R_{x + h} ( \cdot ) - R_x ( \cdot )
		& = - \sum\limits_{\substack{0<| \beta | < \kappa \\ \beta \neq \text{pp}}} \big( f_{x + h} . - f_x . Q_{\kappa} \Gamma_{x \, x+ h}^* ) \mathsf{z}^{\beta} \, \Pi_{x + h \, \beta} ( \cdot) \\
		& \quad - \sum_{| \mathbf{n} | < \kappa} \big( f_{x + h} . - f_x . Q_{\kappa} \Gamma_{x \, x+ h}^* ) \mathsf{z}_{\mathbf{n}} \, ( \cdot - x - h)^{\mathbf{n}} \nonumber .
	\end{align}
Applying $\partial^{\mathbf{n}}$ and evaluating at $x + h$, we learn by \eqref{t35} and \eqref{t16} that 
	\begin{align}
		& \big( f_{x + h} . - f_x . Q_{\kappa} \Gamma_{x \, x+ h}^* ) \mathsf{z}_{\mathbf{n}} \nonumber \\
		& = \partial^{\mathbf{n}} R_x ( x + h ) 
			- \sum\limits_{\substack{| \beta | < | \mathbf{n} | \\ \beta \neq \text{pp}}} \big( f_{x + h} . - f_x . Q_{\kappa} \Gamma_{x \, x+ h}^* ) \mathsf{z}^{\beta} \, \partial^{\mathbf{n}} \Pi_{x + h \, \beta} ( x + h ) . \nonumber
	\end{align}
Now appealing to \eqref{t44}, the induction hypothesis, and \eqref{t94}, we deduce
	\begin{align}
		\sup_{x \in \mathbb{R}^{1 + d}, | h | \leq \ell} \frac{\big| \big( f_{x + h} . - f_x . Q_{\kappa} \Gamma_{x \, x+ h}^* ) \mathsf{z}_{\mathbf{n}} \big|}{| h |^{\kappa - | \mathbf{n} |}}
		& < \infty . \nonumber
	\end{align}
By 
the triangle inequality,
the output of Step~\ref{c1:3} 
and \eqref{mb03}, 
also the supremum over $| h | > \ell$ is finite, so that we have established $[ f ]_{\kappa, \delta_{\mathbf{n}}} < \infty$ as desired.
\end{proof}

\subsection{Proof of Consistency for Linearisation}

\begin{proof}[Proof of Lemma~\ref{l:c2}]
\PfStart{c3} % NB:Creates a new counter named c3.
\PfStep{c3}{c3:1} % NB: c3 = counter to increment; c3:1 chosen label for step.
We follow the same strategy as for the proof of the Consistency Lemma, Lemma~\ref{l:c}.
We remain brief when the arguments are similar, 
but present the details when they are different.
Our task is to prove:
\begin{enumerate}
	\item that the map $x \mapsto \dot{f}_x$ defined by the form \eqref{t49} with \eqref{t50}-\eqref{t50b} is well-defined and periodic (see Step~\ref{c3:2} below); 
	\item that the quantities $[\dot{f}]_{\kappa, \beta}$, $\| \dot{f} . \mathsf{z}^{\beta} \|$ are finite for all $| \beta | < \kappa$ (see Steps~\ref{c3:3} and \ref{c3:6} below); 
	\item that the germs $\dot{R}$ and $\dot{R}^-$ defined by \eqref{t12l} satisfy \eqref{t13l} (see Step~\ref{c3:5} below).
\end{enumerate}

\PfStep{c3}{c3:2}
We first emphasize that
$\dot{f}$ is well-defined.
Indeed,
\eqref{t54}
in combination with
the Leibniz rule
\eqref{t49},
implies that 
	\begin{align}\label{t55b}
	\text{for non-purely-polynomial $\beta$}, 
	\quad \dot{f}_x . \mathsf{z}^{\beta} = \sum\nolimits_{| \mathbf{n} | < | \beta |} ( \dot{f}_x . \mathsf{z}_{\mathbf{n}} ) \, (f_x.\partial_{\mathsf{z}_{\mathbf{n}}} \mathsf{z}^{\beta}) ,
	\end{align}
so that the definition \eqref{t50}-\eqref{t50b} is not circular.
The periodicity of $x \mapsto \dot{f}_x$ follows similarly by that of $\dot{u}$ along with \eqref{t15}.

\PfStep{c3}{c3:3} 
In this step, we claim that $\| \dot{f} . \mathsf{z}^{\beta} \| < \infty$ for all $| \beta |<\kappa$.
As in the proof of Lemma~\ref{l:c} above, we argue by recursion in $|\beta|$.
When $\beta = 0$ the claim follows from $\dot{f}_x . \mathbf{1} = 0$.
In the induction step, the case $\beta = \text{pp}$ proceeds as in the proof of Lemma~\ref{l:c}, recall the definition \eqref{t51} of $\dot{f}.\mathsf{z}_{\mathbf{n}}$.
Finally, the case $\beta \neq \text{pp}$ follows directly from \eqref{t55b}.

\PfStep{c3}{c3:4} 
In this step, we claim that for all 
$x \in \mathbb{R}^{1 + d}$ and 
$| \mathbf{n} | < \kappa$
	\begin{align}\label{t16b}
		\partial^{\mathbf{n}} \dot{R}_x ( x ) = 0, 
		\qquad \dot{R}_x^{-} ( x ) = 0 .
	\end{align}
The first item in \eqref{t16b} is obtained from \eqref{t95} as in the proof of Lemma~\ref{l:c} above, and follows from (in fact, motivates) the definition \eqref{t51} of $\dot{f}$.
Turning to the second identity in \eqref{t16b}, we deduce from \eqref{t96} in combination with the Leibniz rule \eqref{t49} and the first identity in \eqref{t16b} that
	\begin{align*}
		\dot{f}_x.Q_{\kappa}\Pi_x^- ( x )
		& = 3 \lambda \, ( f_x. \Pi_x ( x ))^2 \dot{f}_x . \Pi_x ( x ) + h_{\lambda} \dot{f}_x . \Pi_x ( x ) \\
		& = ( 3 \lambda u ( x )^2 + h_{\lambda} ) \dot{u} ( x ) = \dot{u}^- (x) .
	\end{align*}
This is the desired $\dot{R}_x^- ( x ) = 0$.

\PfStep{c3}{c3:5} 
Arguing exactly as in the Steps~\ref{c1:5} and \ref{c1:5b} of the proof of Lemma~\ref{l:c} above
(simply replacing all quantities with their dotted versions), 
the output of Step~\ref{c3:4} can be post-processed to
	\begin{align*}
		\sup_{x \in \mathbb{R}^{1 + d} , \, | h | \leq \ell , \, | \mathbf{n} | < \kappa } 
			\frac{| \partial^{\mathbf{n}} \dot{R}_x ( x + h ) |}{| h |^{\kappa - | \mathbf{n} |}} 
			+ \frac{| \dot{R}_x^- ( x + h ) |}{| h |^{\kappa^-}} 
		 	< \infty ,
	\end{align*}
and in turn into \eqref{t13l}.

\PfStep{c3}{c3:6}
In this step, we claim that for all populated $\beta$ with $|\beta|<\kappa$, $[\dot{f}]_{\kappa, \beta}< \infty$.
In combination with the output of Step~\ref{c1:3}, this then yields the desired $\| \dot{f} \|_{\kappa} < \infty$. 
As in the corresponding step in the proof of Lemma~\ref{l:c}, we argue inductively in the homogeneity $|\beta|$,
and we distinguish whether 
$\beta$ is pp, or not.
In the former case, we argue exactly as in the proof of Lemma~\ref{l:c}.
In the latter case, the argument is also very similar, the only difference being that we appeal to
Lemma~\ref{l:pcIII} instead of Lemma~\ref{l:pcI}.
\end{proof}

\subsection{Proof of Algebraic Continuity Lemma III}

As in the case of Lemmas~\ref{continuity} and \ref{continuity_2},
our proof of the algebraic continuity lemma
for the linearisation
is split into two main parts.

\medskip

In a first part, we will prove the following interpolation estimate which is the analogue of Lemmas~\ref{l:intI} and \ref{l:intII} for linearised modelled distributions. 
In view of the already established a priori estimate (Theorem \ref{a priori})
we may assume strong a priori control of $f$, cf.~(\ref{wr05}).
\begin{lemma}[Interpolation III]\label{l:intIII} 
Suppose that $f$ is a modelled distribution in the sense of Definition~\ref{rf01} with respect to a model $(\Pi, \Pi^-, \Gamma^*)$, and $\dot f$ a linearised modelled distribution as in Definition \ref{rf01l}.
We assume that
\begin{align}\label{wr05}
|\lambda| + \| \Gamma^* \|_{\kappa} + \sum_{| \mathbf{n} | < \kappa} \|f.\mathsf{z}_{\bf n}\|\le M
\end{align}
for some $M < \infty$.
Then we have
\begin{align}
\|\dot f.\mathsf{z}_{\bf n}\|&\lesssim
\big([\dot f]_{\kappa,{\rm pol}}
+|\lambda|\|\dot f.\mathsf{z}_{\bf 0}\|\big)^\frac{|{\bf n}|}{\kappa}
\|\dot f.\mathsf{z}_{\bf 0}\|^{1-\frac{|{\bf n}|}{\kappa}}\quad
\mbox{for}\;|{\bf n}|<\kappa,\label{wr03}\\
[\dot f]_{\tilde{\kappa},{\rm pol}}&\lesssim
\big([\dot f]_{\kappa,{\rm pol}}
+|\lambda|\|\dot f.\mathsf{z}_{\bf 0}\|\big)^\frac{\tilde{\kappa}}{\kappa}
\|\dot f.\mathsf{z}_{\bf 0}\|^{1-\frac{\tilde{\kappa}}{\kappa}}\quad
\mbox{for}\;\tilde{\kappa}\le\kappa,\label{wr04}
\end{align}
where the implicit constant depends on $M, \alpha, d$ and $\kappa$.
\end{lemma}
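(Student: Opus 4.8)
The plan is to follow the same continuity-in-$\lambda$ strategy used for Lemma~\ref{l:intI}, now adapted to the Leibniz rule \eqref{t8l} in place of multiplicativity \eqref{t8}. First I would record the linearised analogue of \eqref{wr08}: applying \eqref{t55b} together with the a priori control \eqref{wr05} on $\|f.\mathsf{z}_{\bf n}\|$ gives
\begin{align*}
\|\dot f.\mathsf{z}^\beta\|\lesssim|\lambda|^{\beta(\mathfrak{3})}\sum_{\bf n}\beta({\bf n})\,\|\dot f.\mathsf{z}_{\bf n}\|,
\end{align*}
where the prefactor $|\lambda|^{\beta(\mathfrak{3})}$ comes from $f_x.\mathsf{z}_{\mathfrak{3}}=\lambda$ appearing in $\partial_{\mathsf{z}_{\bf n}}\mathsf{z}^\beta$, and the constant absorbs the bounded factors $\|f.\mathsf{z}_{\bf m}\|$. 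This is the linear-in-$\dot f$ replacement for the quadratic-looking \eqref{wr08}, and it is what makes the buckling close without an analogue of the additive $|\lambda|$ term (the lowest-order norm $\|\dot f.\mathsf{z}_{\bf 0}\|$ already plays that role on the right-hand side).

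Next I would run the norm-equivalence/scaling argument exactly as in \eqref{wr21}--\eqref{wr08bis}: starting from $[\dot f]_{\kappa,\mathrm{pol}}$ with ${\bf n}={\bf 0}$, split the monomials $(\Gamma^*_{x\,x+h})^{\delta_{\bf 0}}_\beta$ into purely polynomial $\beta$ (handled by \eqref{mb07}, which for $\delta_{\bf 0}$ contributes only $\dot f.\mathsf{z}_{\bf 0}$) and populated non-pp $\beta$ (handled by the model bound \eqref{mb03} and the display above), yielding
\begin{align*}
\max_{0<|{\bf n}|<\kappa}\|\dot f.\mathsf{z}_{\bf n}\|r^{|{\bf n}|}\lesssim [\dot f]_{\kappa,\mathrm{pol}}r^\kappa+\|\dot f.\mathsf{z}_{\bf 0}\|+|\lambda|\max\{r^\kappa,r^\delta\}\Big(\max_{0<|{\bf n}|<\kappa}\|\dot f.\mathsf{z}_{\bf n}\|r^{|{\bf n}|}+\|\dot f.\mathsf{z}_{\bf 0}\|\Big),
\end{align*}
with $\delta$ as in the proof of Lemma~\ref{l:intI}. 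Because the right-hand side is now genuinely \emph{linear} in $F:=\max_{0<|{\bf n}|<\kappa}\|\dot f.\mathsf{z}_{\bf n}\|r^{|{\bf n}|}$ (there is no $(F+1)^\kappa$), for $|\lambda|r^\kappa\ll_M1$ the coefficient of $F$ is $<1$ and I can simply absorb it to the left, obtaining $F\lesssim ([\dot f]_{\kappa,\mathrm{pol}}+|\lambda|\|\dot f.\mathsf{z}_{\bf 0}\|)r^\kappa+\|\dot f.\mathsf{z}_{\bf 0}\|$. Optimising $r\sim(\|\dot f.\mathsf{z}_{\bf 0}\|/([\dot f]_{\kappa,\mathrm{pol}}+|\lambda|\|\dot f.\mathsf{z}_{\bf 0}\|))^{1/\kappa}$ (admissible by \eqref{wr05}) yields \eqref{wr03}. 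Then, as in Lemma~\ref{l:intI}, I post-process \eqref{wr03} through the $\dot f$-version of \eqref{wr08} into $\|\dot f.\mathsf{z}^\beta\|\lesssim([\dot f]_{\kappa,\mathrm{pol}}+|\lambda|\|\dot f.\mathsf{z}_{\bf 0}\|)^{|\beta|/\kappa}\|\dot f.\mathsf{z}_{\bf 0}\|^{1-|\beta|/\kappa}$ for populated $\beta$ (distinguishing $\beta(\mathfrak{3})=0$, where populatedness forces $\sum_{\bf n}\beta({\bf n})=1$, from $\beta(\mathfrak{3})\ge1$, where $|\lambda|^{\beta(\mathfrak{3})}\le1$ and $\sum_{\bf n}|{\bf n}|\beta({\bf n})\le|\beta|$), and then feed this into the large-increment/small-increment split bounding $[\dot f]_{\tilde\kappa,\mathrm{pol}}$ (the analogue of \eqref{wr30}) to get \eqref{wr04} with the same choice of $r$.

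The main obstacle, and the only place where real care is needed beyond bookkeeping, is verifying that the continuity-in-$\lambda$ step still applies: unlike Lemma~\ref{l:intI}, the space of linearised modelled distributions $\dot f$ at a \emph{fixed} $f$ is linear, so I should argue instead by considering the family $t\mapsto f_t$ built as in \eqref{wr19} and the induced $\dot f_t$ (or, more directly, observe that the absorbed inequality above is already unconditional once $|\lambda|r^\kappa\ll_M1$, so no continuity argument is needed at all — the linearity of the Leibniz rule replaces it). I expect the cleanest writeup is the latter: the linear structure in $\dot f$ lets one skip the disconnectedness/continuity device entirely and absorb directly, which is a genuine simplification over the proof of Lemma~\ref{l:intI}.
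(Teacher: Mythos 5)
Your plan reproduces the paper's proof essentially verbatim: the linear-in-$\dot f$ bound $\|\dot f.\mathsf{z}^\beta\|\lesssim|\lambda|^{\beta(\mathfrak{3})}\sum_{\bf n}\beta({\bf n})\|\dot f.\mathsf{z}_{\bf n}\|$ is the paper's \eqref{wr17}, your scaling/norm-equivalence inequality is the combination of \eqref{wr14}--\eqref{wr15}, your choice of $r$ is equivalent to the paper's, and the post-processing to \eqref{wr04} (via the $\beta(\mathfrak{3})=0$ versus $\beta(\mathfrak{3})\ge1$ case distinction and the analogue of \eqref{wr30}) is identical. Your closing observation is also exactly what the paper does: because the estimate is linear in $\dot f$, it absorbs directly under $|\lambda|r^\kappa\ll_M1$ and the disconnectedness/continuity device of Lemma~\ref{l:intI} is not needed.
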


In a second part, we 
establish the following preliminary algebraic estimate, which is the analogue of Lemmas~\ref{l:pcI} and \ref{l:pcII} for linearised modelled distributions.
\begin{lemma}\label{l:pcIII}
Suppose that $f$ is a modelled distribution in the sense of Definition~\ref{rf01} with respect to a model $(\Pi, \Pi^-, \Gamma^*)$, and $\dot f$ a linearised modelled distribution in the sense of Definition \ref{rf01l}.
We assume that
\begin{align}\label{wr55bis}
	\| \Gamma^* \|_{\kappa} + \|f\|_{\kappa} \le M
\end{align}
for some $M < \infty$.
Then we have for all $\beta$ and $\eta$ with $|\beta|<\eta\le\kappa$
\begin{align}\label{wr57}
[\dot f]_{\eta,\beta}
&\lesssim|\lambda|^{\beta ( \mathfrak{3} )}
\big(\max_{\tilde\eta\le\eta}[\dot f]_{\tilde\eta,{\rm pol}}
+\max_{|{\bf n}|<\eta}\|\dot f.\mathsf{z}_{\bf n}\|\big),
\end{align}
where the (first) max runs over all $\tilde{\eta} > 0$ of the form $\tilde{\eta} = \eta - ( | \gamma | - \alpha )$
and where
the implicit constant depends on $M, \alpha, d$ and $\kappa$.
\end{lemma}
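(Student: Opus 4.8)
The plan is to follow the proof of Lemma~\ref{l:pcI} essentially verbatim, making two changes: the multiplicativity \eqref{t8} of $f$ is replaced by the Leibniz rule \eqref{t8l} for $\dot f$, and every contribution that involves $f$ alone (and not $\dot f$) is controlled by invoking the already-established Lemma~\ref{l:pcI} together with the a priori bound \eqref{wr55bis} and the interpolation Lemma~\ref{l:intI}.

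As a preliminary, in place of \eqref{wr08} I would record that \eqref{t49} and \eqref{wr55bis} (the latter yielding $\|f.\mathsf{z}_{\mathbf{k}}\|\lesssim 1$ for $|\mathbf{k}|<\kappa$) give $\|\dot f.\mathsf{z}^\beta\|\lesssim|\lambda|^{\beta(\mathfrak{3})}\sum_{\mathbf{n}}\beta(\mathbf{n})\,\|\dot f.\mathsf{z}_{\mathbf{n}}\|$ for populated $\beta$, where by \eqref{wr47} the right-hand side only involves $\|\dot f.\mathsf{z}_{\mathbf{n}}\|$ with $|\mathbf{n}|<\eta$ once $|\beta|<\eta$. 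Next I would derive the two recursion identities replacing \eqref{wr41} and \eqref{wr42}. Running the computation of Lemma~\ref{l:pcI} with \eqref{wr43}, the multiplicativity of $\Gamma^*$, and \eqref{t8l} in place of \eqref{t8}, and using crucially that $\dot f_x.\mathsf{z}_{\mathfrak{3}}=0$ (so no cross term appears in the $\delta_{\mathfrak{3}}$-direction), one obtains
\[ (\dot f_{x+h}.-\dot f_x.Q_\eta\Gamma_{x\,x+h}^*)\mathsf{z}^{\beta+\delta_{\mathfrak{3}}}=\lambda\,(\dot f_{x+h}.-\dot f_x.Q_{\eta-(|\delta_{\mathfrak{3}}|-\alpha)}\Gamma_{x\,x+h}^*)\mathsf{z}^{\beta}, \]
and, writing $\eta_\gamma:=\eta-(|\gamma|-\alpha)$, an identity for $\mathsf{z}^{\beta+\delta_{\mathbf{n}}}$ obtained from that of \eqref{wr42} by dotting one factor at a time: it has leading part $(\dot f_{x+h}.\mathsf{z}^\beta)(f_{x+h}.-f_x.Q_{\eta_\beta}\Gamma_{x\,x+h}^*)\mathsf{z}_{\mathbf{n}}+(f_{x+h}.\mathsf{z}^\beta)(\dot f_{x+h}.-\dot f_x.Q_{\eta_\beta}\Gamma_{x\,x+h}^*)\mathsf{z}_{\mathbf{n}}$ and, summed over $|\gamma|<\eta_\beta$, the terms $(\dot f_{x+h}.-\dot f_x.Q_{\eta_\gamma}\Gamma_{x\,x+h}^*)\mathsf{z}^\beta\,(\Gamma_{x\,x+h}^*)_\gamma^{\delta_{\mathbf{n}}}\,f_x.\mathsf{z}^\gamma$ and $(f_{x+h}.-f_x.Q_{\eta_\gamma}\Gamma_{x\,x+h}^*)\mathsf{z}^\beta\,(\Gamma_{x\,x+h}^*)_\gamma^{\delta_{\mathbf{n}}}\,\dot f_x.\mathsf{z}^\gamma$.

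Then I would take suprema over $x,h$ and pass to inequalities for $[\,\cdot\,]_{\eta,\cdot}$: \eqref{mb03} handles the $(\Gamma_{x\,x+h}^*)_\gamma^{\delta_{\mathbf{n}}}$; the preliminary step reduces $\|\dot f.\mathsf{z}^\beta\|,\|\dot f.\mathsf{z}^\gamma\|$ to $\|\dot f.\mathsf{z}_{\mathbf{m}}\|$; \eqref{wr08} with \eqref{wr02} gives $\|f.\mathsf{z}^\gamma\|\lesssim|\lambda|^{\gamma(\mathfrak{3})}$; and Lemma~\ref{l:pcI} (combined with $\sup_{\tilde\kappa\le\kappa}\|f\|_{\tilde\kappa}\lesssim 1$, which follows from Lemma~\ref{l:intI} and \eqref{wr55bis}) gives $[f]_{\eta_\beta,\delta_{\mathbf{n}}}\lesssim 1$ and $[f]_{\eta_\gamma,\beta}\lesssim|\lambda|^{\beta(\mathfrak{3})}$. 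This yields the analogues of \eqref{wr45} and \eqref{wr46}, namely $[\dot f]_{\eta,\beta+\delta_{\mathfrak{3}}}\le|\lambda|\max_{\tilde\eta\le\eta}[\dot f]_{\tilde\eta,\beta}$ and $[\dot f]_{\eta,\beta+\delta_{\mathbf{n}}}\lesssim\max_{\tilde\eta\le\eta}[\dot f]_{\tilde\eta,\beta}+|\lambda|^{\beta(\mathfrak{3})}\bigl(\max_{\tilde\eta\le\eta}[\dot f]_{\tilde\eta,\mathrm{pol}}+\max_{|\mathbf{m}|<\eta}\|\dot f.\mathsf{z}_{\mathbf{m}}\|\bigr)$. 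One then concludes \eqref{wr57} by the same induction on the plain length $\beta(\mathfrak{3})+\sum_{\mathbf{n}}\beta(\mathbf{n})$ as in Lemmas~\ref{l:pcI} and \ref{l:pcII}: $\beta=0$ is trivial since $\dot f_x.\mathsf{1}=0$; the step $\beta\mapsto\beta+\delta_{\mathfrak{3}}$ is immediate (using $|\lambda|\le M$); and the step $\beta\mapsto\beta+\delta_{\mathbf{n}}$ follows by inserting the induction hypothesis and using $|\lambda|^{\beta(\mathfrak{3})+\gamma(\mathfrak{3})}\lesssim_M|\lambda|^{\beta(\mathfrak{3})}$ together with the additivity of $|\cdot|-\alpha$.

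The only genuinely new point — and the one to get right — is the homogeneity bookkeeping in the two cross terms: one must check, via \eqref{wr47} and the additivity of $|\cdot|-\alpha$, that after substituting the induction hypothesis into the summand $[\dot f]_{\eta_\gamma,\beta}\,\|f.\mathsf{z}^\gamma\|$ (and likewise into $[f]_{\eta_\gamma,\beta}\,\|\dot f.\mathsf{z}^\gamma\|$) the surviving orders stay of the admissible form $\eta-(|\gamma''|-\alpha)$ and the surviving Gubinelli derivatives satisfy $|\mathbf{m}|<\eta$, so that the recursion closes onto the clean right-hand side of \eqref{wr57}. The reason that right-hand side is affine in $\max_{|\mathbf{m}|<\eta}\|\dot f.\mathsf{z}_{\mathbf{m}}\|$, rather than carrying a product as in \eqref{wr40}, is that each monomial produced by the Leibniz rule \eqref{t8l} carries exactly one dotted factor, all others being $f$-factors absorbed by \eqref{wr55bis}. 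I do not anticipate any serious difficulty beyond this, since all analytic ingredients are already available.
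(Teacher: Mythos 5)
Your proposal is correct and follows essentially the same route as the paper: the same two Leibniz-rule recursion identities (the paper's \eqref{wr48}--\eqref{wr49}), the same reduction of all $f$-only factors via \eqref{wr08}, Lemma~\ref{l:pcI} and the interpolation bounds of Lemma~\ref{l:intI}, the same intermediate inequalities (the paper's \eqref{wr53} and \eqref{wr54}, the latter obtained by invoking \eqref{wr17} exactly as you do), and the same induction on the plain length of $\beta$. Your observation that $\dot f.\mathsf{z}_{\mathfrak{3}}=0$ kills the cross term in the $\delta_{\mathfrak{3}}$-direction, and your homogeneity bookkeeping via \eqref{wr47} and the additivity of $|\cdot|-\alpha$, match the paper's argument.
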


Again, with these ingredients in hand, it is now straightforward to establish Lemma~\ref{continuity_3}.
\begin{proof}[Proof of Lemma~\ref{continuity_3}]
The desired result follows directly by inserting \eqref{wr03} and \eqref{wr04} into the right-hand side of \eqref{wr57} (where we take $\eta = \kappa$).
\end{proof}

We turn to the proof of Lemma~\ref{l:intIII}.
\begin{proof}[Proof of Lemma~\ref{l:intIII}]
As in the proof of Lemma \ref{l:intI}
we have 
\begin{align}\label{wr14}
\max_{|{\bf n}|<\kappa}\|\dot f.\mathsf{z}_{\bf n}\|r^{|{\bf n}|}
\lesssim[\dot f]_{\kappa,{\rm pol}}r^\kappa
+\|\dot f.\mathsf{z}_{\bf 0}\|+\max_{\stackrel{0\le|\beta|<\kappa}{\beta ( \mathfrak{3} )\not=0}}
\|\dot f.\mathsf{z}^\beta\|r^{|\beta|}.
\end{align}
We note that the assumption \eqref{wr05} and the inequality \eqref{wr08} together imply that $\|f.\mathsf{z}^\beta\| \lesssim 1$. Therefore, according to \eqref{t49} we have that
\begin{align}\label{wr17}
\|\dot f.\mathsf{z}^\beta\|\lesssim|\lambda|^{\beta ( \mathfrak{3} )}
\sum_{\bf n}\beta({\bf n})\|\dot f.\mathsf{z}_{\bf n}\|.
\end{align}
By the same argument as in Lemma \ref{l:intI} we obtain
\begin{align}\label{wr15}
\max_{\stackrel{0\le|\beta|<\kappa}{\beta ( \mathfrak{3} )\not=0}}\|\dot f.\mathsf{z}^\beta\|r^{|\beta|}
\lesssim|\lambda|
\big(\max_{|{\bf n}|<\kappa}\|\dot f.\mathsf{z}_{\bf n}\|r^{|{\bf n}|}\big)
\max\{r^{\kappa},r^{\delta}\}.
\end{align}
The combination of \eqref{wr14} and \eqref{wr15} yields
\begin{align*}
\max_{|{\bf n}|<\kappa}\|\dot f.\mathsf{z}_{\bf n}\|r^{|{\bf n}|}
\lesssim[\dot f]_{\kappa,{\rm pol}}r^\kappa+\|\dot f.\mathsf{z}_{\bf 0}\|
\quad\mbox{if}\quad
|\lambda|\max\{r^{\kappa},r^{\delta}\}\ll 1.
\end{align*}
Since as in the proof of Lemma \ref{l:intI}, the proviso is equivalent
to $|\lambda|r^\kappa\ll 1$, choosing $r$ to be a small multiple
of 
	\begin{align*}
		\Big(\min\Big\{\frac{\|\dot f.\mathsf{z}_{\bf 0}\|}{[\dot f]_{\kappa,{\rm pol}}},\frac{1}{|\lambda|}\Big\}\Big)^\frac{1}{\kappa}
	\end{align*}
yields (\ref{wr03})
in the equivalent form of
$\|\dot f.\mathsf{z}_{\bf n}\|$
$\lesssim[\dot f]_{\kappa,{\rm pol}}^{|{\bf n}|/\kappa}$
$\|\dot f.\mathsf{z}_{\bf 0}\|^{1-|{\bf n}|/\kappa}$
$+|\lambda|^{|{\bf n}|/\kappa}$ $\|\dot f.\mathsf{z}_{\bf 0}\|$.

\medskip

We now turn to (\ref{wr04}). As for (\ref{wr30}) we have for arbitrary $r$
\begin{align}\label{wr33}
[\dot f]_{\tilde{\kappa},{\rm pol}}
&\lesssim r^{\kappa-\tilde{\kappa}}[\dot f]_{\kappa,{\rm pol}}
+\max_{\beta\;\mbox{\tiny populated}:0\le|\beta|<\kappa}
r^{|\beta|-\tilde{\kappa}}\|\dot f.\mathsf{z}^\beta\|.
\end{align}
Inserting (\ref{wr03}) into (\ref{wr17}) we obtain for the last term
\begin{align}\label{wr31}
\|\dot f.\mathsf{z}^\beta\|\lesssim |\lambda|^{\beta ( \mathfrak{3} )}\max_{{\bf n}:\beta({\bf n})\not=0}
([\dot f]_{\kappa,{\rm pol}}+|\lambda|\|\dot f.\mathsf{z}_{\bf 0}\|)^\frac{|{\bf n}|}{\kappa}
\|\dot f.\mathsf{z}_{\bf 0}\|^{1-\frac{|{\bf n}|}{\kappa}}.
\end{align}
Provided $\beta$ is populated with $0\le|\beta|<\kappa$, we claim that this implies the more compact
\begin{align}\label{wr32}
\|\dot f.\mathsf{z}^\beta\|&\lesssim
([\dot f]_{\kappa,{\rm pol}}+|\lambda|\|\dot f.\mathsf{z}_{\bf 0}\|)^\frac{|\beta|}{\kappa}
\|\dot f.\mathsf{z}_{\bf 0}\|^{1-\frac{|\beta|}{\kappa}}\nonumber\\
&\sim [\dot f]_{\kappa,{\rm pol}}^\frac{|\beta|}{\kappa}
\|\dot f.\mathsf{z}_{\bf 0}\|^{1-\frac{|\beta|}{\kappa}}
+|\lambda|^\frac{|\beta|}{\kappa}\|\dot f.\mathsf{z}_{\bf 0}\|.
\end{align}
Indeed, in case of $\beta ( \mathfrak{3} )=0$, and for ${\bf n}$ with $\beta({\bf n})\not=0$ we
must have $\beta=\delta_{\bf n}$ since $\beta$ is populated, so that (\ref{wr31})
is identical to (\ref{wr32}).
In case of $\beta ( \mathfrak{3} )\not=0$ we have $\beta ( \mathfrak{3} )\ge|\beta|/\kappa$ so that
by assumption (\ref{wr05}) $|\lambda|^{\beta ( \mathfrak{3} )}$ $\le|\lambda|^{|\beta|/\kappa}$
$\le 1$. Since for ${\bf n}$ with $\beta({\bf n})\not=0$ we have $|{\bf n}|$ $\le|\beta|$,
we may pass from (\ref{wr31}) to (\ref{wr32}) by Young's inequality.
Inserting (\ref{wr32}) into (\ref{wr33}) we obtain
\begin{align*}
[\dot f]_{\tilde{\kappa},{\rm pol}}
&\lesssim r^{\kappa-\tilde{\kappa}}([\dot f]_{\kappa,{\rm pol}}
+|\lambda|\|\dot f.\mathsf{z}_{\bf 0}\|)\nonumber\\
&\quad+\max_{|\beta|<\kappa}r^{|\beta|-\tilde{\kappa}}
([\dot f]_{\kappa,{\rm pol}}+|\lambda|\|\dot f.\mathsf{z}_{\bf 0}\|)^\frac{|\beta|}{\kappa}
\|\dot f.\mathsf{z}_{\bf 0}\|^{1-\frac{|\beta|}{\kappa}}.
\end{align*}
The choice of $r=(\frac{\|\dot f.\mathsf{z}_{\bf 0}\|}
{[\dot f]_{\kappa,{\rm pol}}+|\lambda|\|\dot f.\mathsf{z}_{\bf 0}\|})^\frac{1}{\kappa}$ 
yields (\ref{wr04}).
\end{proof}

We turn to
the proof of 
Lemma~\ref{l:pcIII}.

\begin{proof}[Proof of Lemma~\ref{l:pcIII}]
We start by establishing the infinitesimal analogue of the formulas (\ref{wr41}) and (\ref{wr42}).
Namely, by replacing the use of the multiplicativity of $f$ by the Leibniz rule on $\dot{f}$ one readily obtains
\begin{align}
\lefteqn{(\dot f_{x+h}.-\dot f_x.Q_\eta\Gamma_{x\,x+h}^*)
\mathsf{z}^{\beta+\delta_{\mathfrak{3}}}}\nonumber\\
&= \lambda \, (\dot f_{x+h}.-\dot f_x.Q_{\eta-(|\delta_{\mathfrak{3}}|-\alpha)}\Gamma_{x\,x+h}^*)\mathsf{z}^{\beta} \label{wr48}
\end{align}
and
\begin{align}\label{wr49}
\lefteqn{(\dot f_{x+h}.-\dot f_x.Q_\eta\Gamma_{x\,x+h}^*)
\mathsf{z}^{\beta+\delta_{\bf n}}}\nonumber\\
&=(\dot f_{x+h}.\mathsf{z}^{\beta})
(f_{x+h}.-f_x.Q_{\eta-(|\beta|-\alpha)}\Gamma_{x\,x+h}^*)\mathsf{z}_{\bf n}\nonumber\\
&\quad+(f_{x+h}.\mathsf{z}^{\beta})
(\dot f_{x+h}.-\dot f_x.Q_{\eta-(|\beta|-\alpha)}\Gamma_{x\,x+h}^*)\mathsf{z}_{\bf n}\nonumber\\
&\quad+\sum_{|\gamma|<\eta-(|\beta|-\alpha)} 
(\dot f_{x+h}.-\dot f_x.Q_{\eta-(|\gamma|-\alpha)}\Gamma_{x\,x+h}^*)\mathsf{z}^\beta \, (\Gamma_{x\,x+h}^*)_\gamma^{\delta_{\bf n}}f_x.\mathsf{z}^\gamma\nonumber\\
&\quad+\sum_{|\gamma|<\eta-(|\beta|-\alpha)}
(f_{x+h}.-f_x.Q_{\eta-(|\gamma|-\alpha)}\Gamma_{x\,x+h}^*)\mathsf{z}^\beta \, (\Gamma_{x\,x+h}^*)_\gamma^{\delta_{\bf n}}\dot f_x.\mathsf{z}^\gamma .
\end{align}

As in the proof of Lemma \ref{l:pcI}, the identities (\ref{wr48})
and \eqref{wr49}
yield the inequalities
\begin{align}\label{wr53}
[\dot f]_{\eta,\beta+\delta_{\mathfrak{3}}}\le|\lambda|[\dot f]_{\eta-(|\delta_{\mathfrak{3}}|-\alpha),\beta}
\le|\lambda|\max_{\tilde\eta\le\eta}[\dot f]_{\tilde\eta,\beta}
\end{align}
and
\begin{align*}
\lefteqn{[\dot f]_{\eta,\beta+\delta_{\bf n}}
\le\|\dot f.\mathsf{z}^{\beta}\|[f]_{\eta-(|\beta|-\alpha),\delta_{\bf n}}
+\|f.\mathsf{z}^{\beta}\|[\dot f]_{\eta-(|\beta|-\alpha),\delta_{\bf n}}}\nonumber\\
&\quad+\|\Gamma^*\|_\eta \sum_{|\gamma|<\eta-(|\beta|-\alpha)}
\big(
[\dot f]_{\eta-(|\gamma|-\alpha),\beta}\|f.\mathsf{z}^\gamma\|
+[f]_{\eta-(|\gamma|-\alpha),\beta}\|\dot f.\mathsf{z}^\gamma\|\big).
\end{align*}
By inequality (\ref{wr08}) and our assumption (\ref{wr55bis})
the last inequality yields the estimate
\begin{align*}
[\dot f]_{\eta,\beta+\delta_{\bf n}}
& \lesssim\|\dot f.\mathsf{z}^{\beta}\|
+|\lambda|^{\beta ( \mathfrak{3} )}[\dot f]_{\eta-(|\beta|-\alpha),\delta_{\bf n}}\nonumber\\
&\quad+\max_{|\gamma|<\eta-(|\beta|-\alpha)}
\big([\dot f]_{\eta-(|\gamma|-\alpha),\beta}
+|\lambda|^{\beta(\mathfrak{3})}\|\dot f.\mathsf{z}^\gamma\|\big),
\end{align*}
where we
also 
used 
Lemma~\ref{l:pcI}
and the fact
that 
$\sup_{\tilde{\kappa} \leq \kappa} \| f \|_{\tilde{\kappa}} \lesssim \|f\|_{\kappa} \lesssim 1$
as readily follows from the interpolation inequalities of Lemma~\ref{l:intI}.
Appealing to inequality (\ref{wr17})  
this estimate reduces to
\begin{align}\label{wr54}
[\dot f]_{\eta,\beta+\delta_{\bf n}}
&\lesssim|\lambda|^{\beta ( \mathfrak{3} )}
\big(\max_{\tilde\eta\le\eta}[\dot f]_{\tilde\eta,{\rm pol}}+\max_{|{\bf m}|<\eta}\|\dot f.\mathsf{z}_{\bf m}\|\big) +\max_{\tilde\eta\le\eta}
[\dot f]_{\tilde\eta,\beta} .
\end{align}
As in the proof of Lemma \ref{l:pcI}, 
(\ref{wr57}) follows from (\ref{wr53}) and (\ref{wr54}) by induction
in the plain length of $\beta$. 
\end{proof}

\subsection{Proof of A Priori Estimates for the Linearisation}

\begin{proof}[Proof of Theorem~\ref{a priori lin}]
We follow again the same strategy of proof as in Theorems~\ref{a priori} and \ref{uniqueness}, 
remaining brief when the steps are exact adaptations of the corresponding steps therein.
We assume to be given $f$, $\dot{f}$, $\zeta$, $\kappa$
as in the statement of the theorem along with $|\lambda| \leq \lambda_0$, where $\lambda_0$ is free to be tuned inside the proof.
Throughout this proof, we assume for convenience and without loss of generality that the torus is of unit size i.e.\ that $\ell=1$.

\medskip
\PfStart{ap3} % NB:Creates a new counter.
\PfStep{ap3}{ap3:1} 
\textsc{Algebraic Continuity.}
We recall the output of Lemma~\ref{continuity_3}: for all populated and non-pp $\beta$ with $|\beta|< \kappa$,
	\begin{align*}
		[\dot{f}]_{\kappa, \beta}
		& \lesssim |\lambda| \, ( [\dot{f}]_{\kappa, \mathrm{pol}} + \| \dot{f} . \mathsf{z}_{\mathbf{0}} \| )
		\lesssim |\lambda | \, \| \dot{f} \|_{\kappa} .
	\end{align*}

\PfStep{ap3}{ap3:2} 
\textsc{Reconstruction.}
We then note that, 
as for \eqref{ap01}, 
one has
	\begin{align*}
		\dot{R}_{x+h}^- - \dot{R}_x^-
		& = \sum_{0 < |\beta | < \kappa} ( \dot{f}_x . - \dot{f}_{x+ h} . Q_{\kappa} \Gamma_{x+h \, x} ) \mathsf{z}^{\beta} \, \Pi_{x \beta}^- ,
	\end{align*}
the sum being over non-pp $\beta$ by the population conditions on $\Pi^-$.
In turn, we learn 
that uniformly over 
$\mu \leq \ell = 1$
and $x \in \mathbb{R}^{1 + d}$,
	\begin{align*}
		| (\dot{R}_{x+h}^- - \dot{R}_x^-)_{\mu} ( x ) |
		& \lesssim |\lambda | \, \| \dot{f} \|_{\kappa} \, \mu^{\kappa^- - 2} ( \mu + | h | )^{\kappa - \kappa^-} ,
	\end{align*}
where we used the definition of $\kappa^-$.
Since $\dot{R}_{x \mu} ( x ) \to 0$ as $\mu \downarrow 0$  and $\kappa > 2$, the Reconstruction Theorem of Lemma~\ref{Recon} yields that
	\begin{align} \label{apl01}
		| \dot{R}_{x \mu}^- ( x ) |
		& \lesssim |\lambda | \, \| \dot{f} \|_{\kappa} \, \mu^{\kappa - 2} 
	\end{align}
	uniformly over $x \in \mathbb{R}^{1+d}$, $\mu \le 1$ and $\|\psi\|_p \le 1$.

\PfStep{ap3}{ap3:3} 
\textsc{Integration.}
Here, we would like to apply the Schauder estimate of Lemma~\ref{Schauder}, where we recall that $\dot{R}$ and $\dot{R}^-$ are related by the PDE
	\begin{align*}
		L \dot{R}_x 
		& = \dot{R}_x^- + \zeta + P_x ,
	\end{align*}
where $P_x$ is some polynomial of degree $< \kappa$.
In fact, recalling the notation
$T_x^{\kappa-2} \zeta ( y ) \coloneqq \sum_{| \mathbf{n} |< \kappa-2} \frac{1}{\mathbf{n}!} \partial^{\mathbf{n}} \zeta ( x ) y^{\mathbf{n}}$
for the (parabolic) Taylor germ of $\zeta$, 
it will be convenient 
to rename $P$ and
rewrite the above PDE as
	\begin{align}\label{apl02}
		L \dot{R}_x 
		& = \dot{R}_x^- + ( \zeta - T_x^{\kappa - 2} \zeta ) + P_x ,
	\end{align}
where $P_x$ is still some polynomial of degree $< \kappa$.
One advantage of defining $P$ by \eqref{apl02} is that arguing as in \eqref{po1} and the discussion thereafter reveals that it is again of the form $P_x = \sum_{\kappa - 2 < |\mathbf{n} | < \kappa} a_{x\mathbf{n}} ( \cdot - x )^{\mathbf{n}}$.
Furthermore, 
arguing as for \eqref{apn10} 
reveals that
	\begin{align}\label{apn10l}
		\sup_{x \in \mathbb{R}^{1 + d}} | a_{x\mathbf{n}}|
		& \lesssim |\lambda | \ \| \dot{f} \|_{\kappa} .
	\end{align}
We now claim that
with $\delta>0$ as defined in \eqref{is03} we have
	\begin{align} \label{apn11l}
		|(L \dot{R}_x)_\mu (x) | 
		& \lesssim \Big( 
			\big( \rho^{- \delta} + \rho^{p} | \lambda | \big) \, \| \dot{f} \|_{\kappa} 
			+ [\zeta]_{\kappa - 2}
			\Big) \, \mu^{\kappa - 2} ,
	\end{align}
where the implicit constant is uniform over $x \in \mathbb{R}^{1 + d}$, $\mu < \infty$, $\| \psi \|_{p} \leq 1$ and $\rho \geq 1$.
Again, we distinguish the regimes $\mu \lessgtr \rho$.
When $\mu \geq \rho$ we use the expression 
	\begin{align*}
		\dot{R}_{x \mu} ( x )
		& = [( \dot{f}_{\cdot} - \dot{f}_x ).\mathsf{z}_\mathbf{0}] * \psi_\mu ( x ) - \sum_{\kappa^- \leq | \beta | < \kappa} \dot{f}_x . \mathsf{z}^{\beta} \, \Pi_{x \beta \mu} ( x ).
 \end{align*}
which
can be proven in the same way as \eqref{apn12}, recall Remark~\ref{r:gDP},
and
implies
	\begin{align*}
		|(L \dot{R}_x)_\mu (x) | 
		& \lesssim \|\dot{f}\|_{\kappa} \, \rho^{- \delta} \, \mu^{\kappa - 2} ,
	\end{align*}
as desired.
On the other hand, when $\mu \leq \rho$ we write 
	\begin{align*}
		(L \dot{R}_x)_\mu (x)
		& = \dot{R}_x^{-} * ( \psi_{\rho} )_{\frac{\mu}{\rho}} ( x ) 
			+ (\zeta - T_x^{\kappa - 2} \zeta)_{\mu} ( x )
			+ P_{x \mu} ( x ) .
	\end{align*}
Observe that \eqref{apn10l} implies in this regime
	\begin{align*}
		| P_{x \mu} ( x ) | 
		& \lesssim \rho^p |\lambda | ( \| f; \tilde{f} \|_{\kappa} + \| \Pi - \tilde{\Pi} \|_{\kappa} ) \, \mu^{\kappa - 2} .
	\end{align*}
Furthermore, by the definition of the $(\kappa-2)$-H\"older seminorms
	\begin{align*}
		| (\zeta - T_x^{\kappa - 2} \zeta)_{\mu} ( x ) |
		& \lesssim [ \zeta ]_{\kappa - 2} \, \mu^{\kappa - 2} ,
	\end{align*}
where we implicitly used the fact that $p$ can be chosen so large that $p > \kappa - 2 + D$.
Finally, since \eqref{apl01} implies
	\begin{align*}
		| \dot{R}_x^{-} * ( \psi_{\rho} )_{\frac{\mu}{\rho}} |
		& \lesssim \rho^p | \lambda |
			\| \dot{f} \|_{\kappa} \, \mu^{\kappa- 2} ,
	\end{align*}
the desired 
\eqref{apn11d} follows.

\medskip
Applying our Schauder estimate of Lemma~\ref{Schauder} 
with \eqref{apn11d} as input,
we conclude that there exists a $p^\prime$ such that for any $\rho \ge 1$,
 \begin{align*} 
 	| \dot{R}_{x \mu} ( x ) | 
		& \lesssim
		\Big( 
			\big( \rho^{- \delta} + \rho^{p} | \lambda | \big) \, \| \dot{f} \|_{\kappa} 
			+ [\zeta]_{\kappa - 2}
			\Big) \, \mu^{\kappa} 
    \end{align*}
uniformly over $x \in \mathbb{R}^{1 + d}$, $\mu < \infty$, $\| \psi \|_{p^{\prime}} \leq 1$ and $\rho \geq 1$.

\PfStep{ap3}{ap3:4} 
\textsc{Three-Point Argument.}
This step also proceeds exactly as the corresponding step in the proof of Theorem~\ref{a priori}.
Namely, the output of the previous step, Lemma~\ref{3_point} and Lemma~\ref{continuity_3} together imply that
	\begin{align} \label{apl03}
		[\dot{f}]_{\kappa, \mathrm{pol}}
		& \lesssim ( \rho^{-\delta} + \rho^{p} \, | \lambda | ) \| \dot{f} \|_{\kappa} + [ \zeta ]_{\kappa-2} .
	\end{align}

\PfStep{ap3}{ap3:5} 
\textsc{Incorporating Boundary Data.}
We claim that, 
in part due to the periodicity of $\dot{f}$, 
one has
	\begin{align} \label{apl04}
		\| \dot{f} \|_{\kappa}
		& \lesssim [\dot{f}]_{\kappa, \mathrm{pol}} .
	\end{align}
We note that \eqref{apl03} and \eqref{apl04} are enough to conclude the announced  $\| \dot{f} \|_{\kappa} \lesssim [\zeta]_{\kappa - 2}$, 
by absorption after choosing $\rho$ large enough and $|\lambda|$ small enough in that order.
Thus, it only remains to prove \eqref{apl04}.
For that purpose, recall that by definition of $\| \dot{f} \|_{\kappa}$
	\begin{align*}
		\| \dot{f} \|_{\kappa}
		& \lesssim \sum_{|\beta|<\kappa, \beta\neq 0} ( \| \dot{f} . \mathsf{z}^{\beta} \| + [\dot{f}]_{\kappa, \beta} ).
	\end{align*}
In that sum, by the Algebraic Continuity Lemma~\ref{continuity_3} we may bound $[\dot{f}]_{\kappa, \beta} \lesssim [\dot{f}]_{\kappa, \mathrm{pol}} + |\lambda| \| \dot{f} . \mathsf{z}_{\mathbf{0}} \|$.
Furthermore, by the a priori estimates for $f$, Theorem~\ref{a priori}, 
in combination with the algebraic form \eqref{t49} of $\dot{f}$, 
we may also bound $\| \dot{f} . \mathsf{z}^{\beta} \| \lesssim \sum_{| \mathbf{n} | < \kappa} \| \dot{f} . \mathsf{z}_{\mathbf{n}} \|$, 
where the condition $| \mathbf{n} | < \kappa$ is due to
the fact that $\beta ( \mathbf{n} ) = 0$ for $|\mathbf{n}|\geq |\beta|$.
Thus, by the interpolation estimate 
of Lemma~\ref{l:intIII}, 
we deduce
	\begin{align*}
		\| \dot{f} \|_{\kappa}
		& \lesssim [ \dot{f} ]_{\kappa; \mathrm{pol}} + \| \dot{f} . \mathsf{z}_{\mathbf{0}} \| .
	\end{align*}
We finally incorporate the periodicity of $\dot{f}$.
By \eqref{i2}, the map $x \mapsto \dot{f}_x.\mathsf{z}_{\mathbf{0}}$ is periodic of vanishing space-time average.
Hence
	\begin{align*}
		\| \dot{f} . \mathsf{z}_{\mathbf{0}} \|
		& \lesssim [ \dot{f}. \mathsf{z}_{\mathbf{0}}]_{\kappa^-} 
		= [ \dot{f} ]_{\kappa^-, \mathrm{pol}}.
	\end{align*}
The desired result then follows by applying the interpolation inequality \eqref{wr04}, Young's inequality and then buckling in the same way as in the proofs of Theorem~\ref{a priori} and Theorem~\ref{uniqueness}.
\end{proof}

\subsection{Proof of Existence by Continuity Method}

This subsection is dedicated to the proof of the existence Theorem~\ref{existence}.
\begin{proof}[Proof of Theorem~\ref{existence} (Existence)]
\PfStart{c2} % NB:Creates a new counter named c2.
\PfStep{c2}{c2:1} % NB: c2 = counter to increment; c2:1 chosen label for step.
Setting.
We follow the 
strategy outlined in Subsection~\ref{ss:exist_strat}.
Namely, in view of the Consistency Lemma~\ref{l:c}, 
it suffices to prove
that for a $\lambda_0$ (to be adjusted) depending only on $M, \alpha, d, \kappa$ and $\ell$, 
the set $\Lambda$ is non-empty, open, and closed, where we recall here the relevant notations for the convenience of the reader:
	\begin{align}
		\Lambda 
		\coloneqq \lbrace \lambda \in [- \lambda_0, \lambda_0] , \text{there is $u \in C_{\mathrm{per},0}^{\kappa}$ satisfying } \eqref{eq:P1} \text{ and } \eqref{eq:P2} \rbrace , \nonumber
	\end{align}
where the properties $P_1$ and $P_2$ are defined by
	\begin{align}
		\tag{$P_1 ( \lambda , u)$}
		& L u = P ( \lambda u^3 + h_{\lambda} u + \xi ) , \text{ and $\| f . \mathsf{z}_{\mathbf{0}} \| \leq 1$ (recall \eqref{t48})} . \\
		\tag{$P_2 ( \lambda, u )$}
		& \text{The linearised operator} \\
		& \quad 
		\begin{array}[t]{lrcl}
		\dot{\Phi} ( \lambda, u ) : & C_{\mathrm{per}, 0}^{\kappa} & \longrightarrow & C_{\mathrm{per}, 0}^{\kappa - 2} \\
    & \dot{u} & \longmapsto & L \dot{u} - P \big( ( 3 \lambda u^2 + h_{\lambda} ) \dot{u} \big) ,
  		\end{array} \nonumber \\
  		&\text{is continuously invertible.} \nonumber
	\end{align}
Here, up to choosing $\lambda_0$ small enough, by the a priori estimate \eqref{t58} of Theorem~\ref{a priori}, we may (and will throughout this proof) assume that $M \geq 2$.
The remainder of this proof is organised as follows:
in Step~\ref{c2:2}, we make precise our choice of $\lambda_0$; 
in Step~\ref{c2:3}, we prove that $\Lambda$ is not empty;
in Step~\ref{c2:4}, we prove that $\Lambda$ is open;
Step~\ref{c2:5a} is a preparatory step for Step~\ref{c2:5}, where we prove that $\Lambda$ is closed.

\PfStep{c2}{c2:2}
We now make precise our choice of $\lambda_0 > 0$.
We choose $\lambda_0$ small enough in function of $M, \alpha, d, \kappa$ and $\ell$ in such a way that:
	\begin{enumerate}
		\item The a priori estimates Theorems~\ref{a priori} and \ref{a priori lin} hold w.~r.~t.~ 
		$\lambda_0$.
		
		\item The implicit multiplicative constant
		in the output \eqref{t58} of Theorem~\ref{a priori} is at most $\lambda_0^{-1}$.
	\end{enumerate}
By these theorems, such a $\lambda_0$ exists.
The reason behind this choice is that it ensures that if $f$ is a solution of the robust formulation from
Definition~\ref{rf03} then
	\begin{align}\label{t59}
		\text{$| \lambda | \leq \lambda_0$ and $\| f . \mathsf{z}_{\mathbf{0}} \| \leq 2$, imply that $\| f . \mathsf{z}_{\mathbf{0}} \| \leq 1$.}
	\end{align}
Indeed, since $\| f . \mathsf{z}_{\mathbf{0}} \| \leq 2 \leq M$,
Theorem~\ref{a priori} applies 
and yields
$\| f . \mathsf{z}_{\mathbf{0}} \| \leq \| f \|_{\kappa} \leq \lambda_0^{-1} | \lambda | \leq 1$.
The property \eqref{t59} will ensure that the second condition in \eqref{eq:P1} is not problematic in the `openness’ argument below.

\PfStep{c2}{c2:3}
In this step, we prove that $0 \in \Lambda$.
When $\lambda = 0$, we may take $u = \Pi_{\beta = 0}$, i.e.\ $f \equiv 0$, which indeed satisfies \eqref{eq:P1}.
Furthermore, the linearised operator is simply the heat operator
$\dot{\Phi} ( \lambda = 0, u ) = L \colon C_{\mathrm{per}, 0}^{\kappa} \to  C_{\mathrm{per}, 0}^{\kappa-2}$.
It is continuously invertible by classical Schauder theory,
which we will further use throughout this proof in the form $[u]_{\eta} \lesssim_{\eta, d} [L u]_{\eta - 2}$ for $u \in C_{\mathrm{per}, 0}^{\eta}$ and $\eta > 2$, $\eta \notin \mathbb{N}$, recall e.g.\ \cite[Theorem~1]{Sim97}.
In turn, \eqref{eq:P2} is also satisfied, 
as desired.

\PfStep{c2}{c2:4} \textsc{Openness.}
In this step, we prove that $\Lambda$ is open.
To that effect, we fix throughout this step some $\lambda_* \in \Lambda$,
and we shall prove that some neighbourhood of $\lambda_*$ is still contained into $\Lambda$.
More precisely, 
we let $u_* \in C_{\mathrm{per}, 0}^{\kappa}$ 
satisfy the properties $(P_1 (\lambda_*, u_*))$ and $(P_2 (\lambda_*, u_*))$, 
and
we shall prove that in a neighbourhood of $\lambda_*$ the properties \ref{eq:P1} and \ref{eq:P2} are still satisfied, in that order.
For the former property, we argue by the Implicit Function Theorem.
For the latter property, we appeal to a (standard) perturbation argument.
Both rely on `classical’ arguments in the well-known H\"older topologies.

\medskip
\textit{Openness of \eqref{eq:P1}.}
In this paragraph we prove that \eqref{eq:P1} remains satisfied on a neighbourhood of $\lambda_*$.
We consider the map 
	\begin{align*}
	\begin{array}[t]{lrcl}
		\Phi : & \mathbb{R} \times C_{\mathrm{per}, 0}^{\kappa} & \longrightarrow & C_{\mathrm{per}, 0}^{\kappa - 2} \\
    & (\lambda, u ) & \longmapsto & L u - P ( \lambda u^3 + h_{\lambda} u + \xi ) .
  		\end{array}
	\end{align*}
By assumption, $\Phi ( \lambda_*, u_*) = 0$.
Let us assume for the moment the following properties, which we prove below.
	\begin{align}
		& \text{$\Phi$ is continuously Fr\'echet differentiable,} \label{t60} \\
		& \text{$\partial_2 \Phi (\lambda_*, u_*) \colon C_{\mathrm{per}, 0}^{\kappa} \to C_{\mathrm{per}, 0}^{\kappa - 2}$ is continuously invertible.} \label{t61}
	\end{align}
Then, we may apply the Implicit Function Theorem
(see e.g.\ \cite[Theorem~5.9]{Lan99}):
there exists a neighbourhood $U$ of $\lambda_*$ along with a Fr\'echet differentiable function $\Psi \colon U \to C_{\mathrm{per}, 0}^{\kappa}$ such that $\Psi ( \lambda_* ) = u_*$ and $\Phi ( \lambda, \Psi ( \lambda ) ) = 0$ for all $\lambda \in U$.
For those $\lambda \in U$ we will denote $u \coloneqq u_\lambda \coloneqq \Psi ( \lambda )$, so that the first item in \eqref{eq:P1} is satisfied for all $\lambda \in U$.
We now argue that also the second item in \eqref{eq:P1} holds (up to possibly further restricting the neighbourhood $U$).
Indeed, denote by $f_{\lambda}$ the modelled distribution provided by the consistency Lemma~\ref{l:c} for $u_{\lambda}$.
We know
from the output of the Implicit Function Theorem
that $U \ni \lambda \mapsto u_{\lambda} = \Psi ( \lambda ) \in C_{\mathrm{per}, 0}^{\kappa}$ is continuous.
Appealing to the explicit expression \eqref{t10} of $(f_{\lambda})_x.\mathsf{z}_{\mathbf{0}}$ and the fact that each $\Pi_\beta$ appearing therein is periodic and $\kappa$-H\"older by Definition~\ref{d:smooth},
one deduces that
also the map 
$\lambda \mapsto \| f_\lambda . \mathsf{z}_{\mathbf{0}} \|$ 
is continuous.
Since for $\lambda = \lambda_*$,
one has
$\| f_{\lambda_*} . \mathsf{z}_{\mathbf{0}} \| \leq 1$ by assumption,  
it follows that 
on a neighbourhood of $\lambda_*$ one has 
$\| f_\lambda . \mathsf{z}_{\mathbf{0}} \| \leq 2$ and in fact $\| f_\lambda . \mathsf{z}_{\mathbf{0}} \| \leq 1$ by \eqref{t59}.

\medskip
\textit{Proof of \eqref{t60} and \eqref{t61}.}
We now establish the properties \eqref{t60} and \eqref{t61} which we used above.
We start with the former:
by composition, 
and since $\lambda \mapsto h_{\lambda}$ is smooth (it is polynomial by \eqref{t22}), 
\eqref{t60} follows from the continuous Fr\'echet differentiability of the maps $u \mapsto L u$ and $u \mapsto P ( u^3 )$
from $C_{\mathrm{per}, 0}^{\kappa}$ to $C_{\mathrm{per}, 0}^{\kappa-2}$. 
But both are elementary to check.
Turning to \eqref{t61}, we note that by direct computation,
$\partial_2 \Phi ( \lambda_*, u_* ) = \dot{\Phi} ( \lambda_*, u_* )$, 
so that \eqref{t61} is nothing else but a reformulation of our
assumption $P_2 (\lambda_*, u_* )$.

\medskip
\textit{Openness of \eqref{eq:P2}.}
In this paragraph we prove that \eqref{eq:P2} remains satisfied on a neighbourhood of $\lambda_*$, where $U \ni \lambda \mapsto u \in C_{\mathrm{per}, 0}^{\kappa}$ is the continuous map obtained above.
Let $\lambda \in U$
and let $\zeta \in C_{\mathrm{per}, 0}^{\kappa - 2}$.
The desired \eqref{eq:P2} will follow once we prove that the equation
	\begin{align}
		L \dot{u} - P \big( ( 3 \lambda u_{\lambda}^2 + h_{\lambda } ) \dot{u} \big) = \zeta , \nonumber
	\end{align}
has a unique solution $\dot{u} \in C_{\mathrm{per}, 0}^{\kappa}$; and that $[ \dot{u} ]_{\kappa}$ is bounded by a constant multiple of $[ \zeta ]_{\kappa - 2}$.
To that effect, observe that $\dot{u}$ solves this equation if and only if it is a fixed point of
	\begin{align*}
	\begin{array}[t]{lrcl}
		F : 
		& C_{\mathrm{per}, 0}^{\kappa} 
			& \longrightarrow 
			& C_{\mathrm{per}, 0}^{\kappa} \\
 		 & \dot{u}
   		 	& \longmapsto 
   		 	& \dot{\Phi} ( \lambda_*, u_* )^{-1} \big( P ( v_{\lambda} \dot{u} ) + \zeta \big) .
   	\end{array}
	\end{align*}
where we have denoted
	\begin{align}
		v_{\lambda} \coloneqq 3 ( \lambda u_{\lambda}^2 - \lambda_* u_*^2 ) + ( h_{\lambda} - h_{\lambda_*} ) . \nonumber
	\end{align}	 
The map $U \ni \lambda \mapsto v_{\lambda} \in C_{\mathrm{per}, 0}^{\kappa}$ is continuous and vanishes at $\lambda = \lambda_*$. 
Thus, the Lipschitz constant of $F$, which is proportional to $\| \dot{\Phi} ( \lambda_*, u_* )^{-1} \| \, [ v_{\lambda} ]_{\kappa - 2}$ (where we have denoted by $\| \cdot \|$ the operator norm from $C_{\mathrm{per}, 0}^{\kappa - 2}$ to $C_{\mathrm{per}, 0}^{\kappa}$),
vanishes as $\lambda \to 0$ by the embedding 
$[ \cdot ]_{\kappa - 2} \lesssim_{\kappa, \ell, d} [\cdot]_{\kappa}$.
Thus, $F$ is a contraction
for all $\lambda$ in a neighbourhood of $\lambda_*$.
Up to choosing that neigbourhood small enough, we may assume that the rate of contraction therein is $\leq 1/2$.
By Banach’s fixed-point theorem, 
$F$ then has a unique fixed-point $\dot{u} \eqqcolon \Phi ( \lambda, u_{\lambda} )^{-1} \zeta$, 
whence the desired invertibility, 
and furthermore
$[\dot{u}]_{\kappa} \leq 2 [ F ( 0 ) ]_{\kappa}
= 2 [ \dot{\Phi} ( \lambda_*, u_* )^{-1} \zeta ]_{\kappa} 
\leq 2 \| \dot{\Phi} ( \lambda_*, u_* )^{-1} \| \, [ \zeta ]_{\kappa - 2}$, 
whence the desired continuity.
This concludes Step~\ref{c2:4}, i.e.\ we have proven that $\Lambda$ is open.

\PfStep{c2}{c2:5a}
Before turning to the proof that $\Lambda$ is closed, we record two useful a priori estimates 
which follow from Theorems~\ref{a priori} and \ref{a priori lin}.
Namely, we claim that 
there is a constant $C<\infty$ depending possibly on $\Pi, \Pi^-, \Gamma, M, \alpha, d, \kappa$ and $\ell$, but not on $u$, $\dot{u}$, nor on $\zeta \in C_{\mathrm{per}, 0}^{\kappa - 2}$, such that for all 
$| \lambda | \leq \lambda_0$
and all $u \in C_{\mathrm{per}, 0}^{\kappa}$ 
satisfying \eqref{eq:P1},
then 
	\begin{align} 
		& [ u ]_{\kappa} \leq C ,\label{t62} \\
		& \text{If } \dot{u} \in C_{\mathrm{per}, 0}^{\kappa} \text{ and } L \dot{u} - P \big( ( 3 \lambda u^2 +h_{\lambda} ) \dot{u} ) = \zeta , 
		\text{ then } [ \dot{u} ]_{\kappa} \leq C [ \zeta ]_{\kappa - 2} . \label{t63}
	\end{align}

We note that \eqref{t62} amounts to an a priori estimate of \eqref{t30} in the $\kappa$-H\"older norm, while \eqref{t63} amounts to the a priori boundedness of $\dot{\Phi}^{-1} ( \lambda, u )$.
We also note that the smallest constant in \eqref{t62} and \eqref{t63} depends on a stronger topology on $\xi$ than is stable in the singular SPDE setting. The key point is that due to our use of the stronger a priori estimate of Theorem~\ref{a priori}, these bounds to do hold for $|\lambda| \le \lambda_0$ with $\lambda_0$ not dependent on the norm of the noise in a strong topology. 
In the course of this step, we use the letter $C$ to designate generic constants as above whose value might change from line to line.

\medskip

We turn to the proof of \eqref{t62}.
We first prove that the version of \eqref{t62} with $\kappa^-$ in place of $\kappa$ holds.
Let $f$ be the modelled distribution associated to $u$ by the consistency lemma, Lemma~\ref{l:c}.
We start by monitoring the $\kappa^-$-H\"older norm of
the function $x \mapsto f_x . \mathsf{z}_{\mathbf{0}}$.
Thanks to our choice of $\lambda_0$, $f$ satisfies the assumptions of Theorem~\ref{a priori}, yielding the a priori estimate $\| f \|_{\kappa} \leq 1$.
Using
\eqref{mb05} 
to express the increments of $f . \mathsf{z}_{\mathbf{0}}$ as
	\begin{align}
		f_y . \mathsf{z}_{\mathbf{0}} - f_x . \mathsf{z}_{\mathbf{0}} 
		 = \big( f_y . - f_x . Q_{\kappa} \Gamma_{x y}^* \big) \mathsf{z}_{\mathbf{0}} 
		 + f_x . Q_{(0, \kappa)} \Gamma_{x y}^* \mathsf{z}_{\mathbf{0}} , \label{t64}
	\end{align}
and appealing to 
the model bounds 
\eqref{mb03} 
on $\Gamma^*$
and the definition \eqref{i7} of $\kappa^-$, 
we deduce that $[f .\mathsf{z}_{\mathbf{0}}]_{\kappa^-} \leq C$.
Recall the explicit expression \eqref{t10} of $f.\mathsf{z}_{\mathbf{0}}$, and note that only $\beta < 0$ contribute therein,
so that the corresponding $\Pi_{\beta}$ are periodic and $\kappa$-H\"older
(thus also $\kappa^-$-H\"older) 
by Definition~\ref{d:smooth}.
We learn that in turn, $[u]_{\kappa^{-}} \leq [f .\mathsf{z}_{\mathbf{0}}]_{\kappa^-} + \sum_{| \beta | < 0} |\lambda|^{\beta ( \mathfrak{3} )} [ \Pi_{0 \beta} ]_{\kappa^-} \leq C$.
We conclude by a bootstrapping argument.
Recalling that $u$ satisfies the PDE 
$L u = P ( \lambda u^3 + h_{\lambda} u + \xi )$, we deduce by examining the r.h.s.\ that $[L u]_{\kappa^-} \leq C$. In turn, by classical Schauder theory we see that $[u]_{\kappa^- + 2} \leq C$.
The desired \eqref{t62} follows by iterating that procedure until one reaches the regularity exponent $\kappa$.

\medskip

We turn to the proof of \eqref{t63}, proceeding similarly to \eqref{t62}.
Let $\dot{f}$ be the modelled distribution 
corresponding to $\dot{u}$ provided by the consistency lemma for the linearisation, Lemma~\ref{l:c2}.
Observe that the assumptions of the a priori estimates Theorem~\ref{a priori lin} for the linearisation are satisfied, 
yielding $\| \dot{f} \|_{\kappa} \leq C [\zeta]_{\kappa - 2}$.
Recall the explicit expression \eqref{t51} of $\dot{f}.\mathsf{z}_{\mathbf{n}}$
and observe that for $\mathbf{n} = \mathbf{0}$
it reduces to
$\dot{u} = \dot{f}.\mathsf{z}_{\mathbf{0}}$.
In turn, 
rewriting the increments of $\dot{f}.\mathsf{z}_{\mathbf{0}}$ as in \eqref{t64}, 
appealing to the model bounds 
\eqref{mb03} 
on $\Gamma^*$
and the definition \eqref{i7} of $\kappa^-$,
we deduce
that 
$[ \dot{u} ]_{\kappa^-} \leq C [\zeta]_{\kappa - 2}$.
Finally, the desired \eqref{t63} is obtained by the same bootstrapping argument as described in the proof of \eqref{t62},
this time using the PDE 
$L \dot{u} = P \big( ( 3 \lambda u^2 +h_{\lambda} ) \dot{u} ) + \zeta$.

\PfStep{c2}{c2:5} \textsc{Closedness.}
In this step, we prove that $\Lambda$ is closed.
To that effect, let
$(\lambda_n)_{n \in \mathbb{N}} \subset \Lambda$ be a sequence of elements of $\Lambda$ such that $\lambda_n \to_{n \to \infty} \lambda \in \mathbb{R}$.
We want to prove that $\lambda \in \Lambda$.
We establish the two properties \eqref{eq:P1} and \eqref{eq:P2} separately, using as key ingredient the
qualitative a priori estimates
from Step~\ref{c2:5a} above.

\medskip
\textit{Closedness of \eqref{eq:P1}.}
Here we prove that \eqref{eq:P1} holds for some $u$.
By assumption that $\lambda_n \in \Lambda$, there is a sequence $(u_n)_{n} \subset C_{\mathrm{per}, 0}^{\kappa}$
of functions with 
$L u_n = P ( \lambda_n u_n^3 + h_{\lambda_n} u_n + \xi )$, 
along with coresponding modelled distributions $f_n$ given by the consistency Lemma~\ref{l:c},
such that $\| f_n . \mathsf{z}_{\mathbf{0}} \| \leq 1$.
We apply \eqref{t62}
to $u_n$, yielding $\sup_n [u_n]_{\kappa} < \infty$.
Then, the sequence $(u_n)_n$
of periodic functions with vanishing space-time average
is uniformly bounded and uniformly equicontinuous.
We apply the Arzel\`a--Ascoli theorem: up to a subsequence, $(u_n)_n$ converges
uniformly to some function $u\in C_{\mathrm{per}, 0}^{\kappa}$.
Since $\kappa > 2$, by interpolation the convergence also holds in $C^2$.
Letting $n \to \infty$ in the sequence of equations satisfied by $u_n$, and because $\lambda \mapsto h_{\lambda}$ is continuous 
(it is polynomial by \eqref{t22}),
we learn that 
$L u = P ( \lambda u^3 + h_{\lambda} u + \xi )$.
Finally, by the explicit expression \eqref{t10} of $(f_n)_x.\mathsf{z}_0$,
and the fact that each $\Pi_{\beta}$ appearing therein is periodic and $\kappa$-H\"older (thus bounded) by Definition~\ref{d:smooth}, 
it follows that $\| (f - f_n).\mathsf{z}_0 \| \to_{n\to \infty} 0$, so that $\| f . \mathsf{z}_{\mathbf{0}} \| \leq 1$, and \eqref{eq:P1} is satisfied,
as desired.

\medskip
\textit{Closedness of \eqref{eq:P2}.}
Here we prove that \eqref{eq:P2} holds.
Let $\zeta \in C_{\mathrm{per}, 0}^{\kappa - 2}$.
For $n \in \mathbb{N}$, let $\dot{u}_n \coloneqq \dot{\Phi} ( \lambda_n, u_n )^{-1} \zeta$, so that \eqref{t63} yields
$\sup_{n} [ \dot{u}_n ]_{\kappa}\leq C [ \zeta ]_{\kappa - 2}$.
Thus, the sequence $(\dot{u}_n)_n$ of periodic functions with vanishing space-time average is uniformly bounded and uniformly equicontinuous.
We apply the Arzel\`a--Ascoli theorem:
up to a subsequence, $(\dot{u}_n)_n$ converges
uniformly
to some function $\dot{u} \in C_{\mathrm{per}, 0}^{\kappa}$.
Letting $n \to \infty$ in the sequence of equations satisfied by $\dot{u}_n$, we learn that $\dot{u}$ satisfies the condition in \eqref{t63}.
Observe that there can be at most one such $\dot{u}$, as follows e.g.\ from taking the difference of two possible solutions and applying \eqref{t63} to $\zeta = 0$.
In conclusion, since the choice of $\zeta$ in this paragraph was arbitrary, 
we have proven that $\dot{\Phi} ( \lambda, u )$ is invertible.
By the output of \eqref{t63}, its inverse is continuous.
Thus, \eqref{eq:P2} is satisfied, as desired.
\end{proof}

\appendix

\section{Change of Kernels}

In this section of the appendix, we provide a tool to upgrade bounds for a distribution $g$ phrased in terms of the semigroup $\Psi_t$ 
defined in \eqref{t92}
to bounds for $g(\psi^\mu)$ for an arbitrary Schwartz kernel $\psi$ that depend on only finitely many of the seminorms of $\psi$.
These estimates are used to upgrade the model assumption \eqref{mb01} that is expressed in terms of the semigroup.
We remark that the main idea of the proof of this result (namely the representation formula \eqref{rep}) is contained in \cite[Section 3.4]{BOT} (see also \cite[Lemma A.3]{otto2018parabolic}) with the main addition here being to provide a precise and general statement which also accommodates germs.
We recall that the notations $F_t = F * \Psi_t$ and $F_{\mu} = F * \psi_{\mu}$
are compared in the discussion below \eqref{t92}.
\begin{lemma}\label{kernel_swap}
    Fix $L < \infty$, $\eta\in \mathbb{R}$.
    Suppose that $F$ is a distribution such that 
    \begin{align*}
        \sup_{t \leq L^4} \sup_{x \in \mathbb{R}^{1+d}} (\sqrt[4]{t})^{- \eta} w(\sqrt[4]{t} + |x|)^{-1} |F_t(x)| \le 1.
    \end{align*}
    where $w: (0, \infty) \to (0, \infty)$ is an increasing function satisfying the doubling condition for some constant $C_w$
    $$w(2\mu) \le C_w w(\mu) \qquad \text{ for all }\mu \in \mathbb{R}. $$
    Then there exists a $p^{\prime} \in \mathbb{N}$ depending only on $\eta, C_w$ and $d$ such that
    \begin{align*}
        \sup_{\mu \leq L} \sup_{x \in \mathbb{R}^{1+d}} 
        \sup_{\|\psi\|_{p^{\prime} \leq 1}} 
        \mu^{- \eta} w(\mu + |x|)^{-1} |F_{\mu} ( x )| \lesssim 1  ,
    \end{align*}
    where the implicit constant depends only on $\eta, C_w$ and $d$.
\end{lemma}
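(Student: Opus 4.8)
The plan is to represent an arbitrary parabolic mollification $\psi_\mu$ as a superposition of the semigroup kernels $\Psi_t$ from \eqref{t92} at scales $t\le\mu^4$, convolved with fixed Schwartz ``weight'' functions whose seminorms are controlled by finitely many seminorms of $\psi$, and then to feed in the hypothesis term by term. The representation is obtained by iterated Duhamel for the semigroup $(\Psi_t)_{t>0}$ generated by $LL^*$. Using \eqref{t89b} we have $-\partial_\tau\Psi_\tau=LL^*\Psi_\tau$, hence $\Psi_{\tau_1}=\Psi_s+\int_{\tau_1}^s LL^*\Psi_{\tau_2}\,d\tau_2$ for $0<\tau_1<s$, and letting $\tau_1\downarrow 0$ gives $\delta=\Psi_s+\int_0^s LL^*\Psi_\tau\,d\tau$ as tempered distributions (the integral converges near $\tau=0$ since $\langle LL^*\Psi_\tau,\varphi\rangle\to((LL^*)^*\varphi)(0)$). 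Inserting this identity for each $\Psi_{\tau_j}$ appearing in the remainder, $N-1$ times, and collapsing the resulting simplex integrals yields, for every $N\ge1$ and $s>0$,
\begin{align*}
\delta=\sum_{k=0}^{N-1}\frac{s^k}{k!}(LL^*)^k\Psi_s+\frac{1}{(N-1)!}\int_0^s\tau^{N-1}(LL^*)^N\Psi_\tau\,d\tau .
\end{align*}
We will use this with $s=\mu^4$; the finite sum carries the ``bulk'' contribution at scale $\mu$, the remainder integral the finer scales $\tau<\mu^4$.

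Convolving both sides with $F$ and $\psi_\mu$ writes $F_\mu=F*\psi_\mu$ as a sum of terms $\frac{\mu^{4k}}{k!}F*\psi_\mu*(LL^*)^k\Psi_{\mu^4}$ together with $\frac{1}{(N-1)!}\int_0^{\mu^4}\tau^{N-1}F*\psi_\mu*(LL^*)^N\Psi_\tau\,d\tau$. In each term I would commute the constant–coefficient operator $(LL^*)^k$ off $\Psi$ and onto $\psi_\mu$, using $(LL^*)^k(g*h)=[(LL^*)^kg]*h$; since $LL^*=-\partial_0^2+\Delta^2$ has parabolic order $4$, this produces $(LL^*)^k\psi_\mu=\mu^{-4k}[(LL^*)^k\psi]_\mu$, a parabolic-scale-$\mu$ Schwartz function whose Schwartz seminorms are $\lesssim\mu^{-4k}\|\psi\|_{p'}$ for $p'$ large in terms of $k$, and it leaves $F*\Psi_t=F_t$, to which the hypothesis applies directly (note $t=\mu^4\le L^4$ and $t=\tau\le\mu^4\le L^4$ since $\mu\le L$). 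Thus the $k$-th sum term becomes $\tfrac1{k!}F_{\mu^4}*[(LL^*)^k\psi]_\mu$ and the integrand becomes $\tfrac{\tau^{N-1}}{(N-1)!}\mu^{-4N}F_\tau*[(LL^*)^N\psi]_\mu$.

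For a generic parabolic-scale-$\mu$ Schwartz weight $g_\mu$ with $\mu\le L$ and $t\le\mu^4$,
\begin{align*}
|F_t*g_\mu(x)|\le\int|F_t(x-y)|\,|g_\mu(y)|\,dy\le(\sqrt[4]{t})^{\eta}\int w(\sqrt[4]{t}+|x-y|)\,|g_\mu(y)|\,dy .
\end{align*}
Since $\sqrt[4]{t}\le\mu$ and $w$ is increasing, $w(\sqrt[4]{t}+|x-y|)\le w(\mu+|x|+|y|)$, and iterating the doubling inequality $c:=\lceil\log_2 C_w\rceil$ times gives $w(\mu+|x|+|y|)\lesssim w(\mu+|x|)\,(1+|y|/\mu)^c$ uniformly in $\mu$. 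Using exact homogeneity of the parabolic norm, $\int(1+|y|/\mu)^c|g_\mu(y)|\,dy=\int(1+|z|)^c|g_1(z)|\,dz$, which is bounded by a fixed Schwartz seminorm of $g_1$ (needing about $c+D+1$ weight/derivative factors, $D=d+2$). Applying this with $t=\mu^4$ bounds every sum term by $\lesssim\mu^\eta w(\mu+|x|)\|\psi\|_{p'}$, and with $t=\tau$ bounds the integrand by $\lesssim\tau^{N-1}\mu^{-4N}(\sqrt[4]{\tau})^{\eta}w(\mu+|x|)\|\psi\|_{p'}$. Choosing $N>-\eta/4$ makes $\int_0^{\mu^4}\tau^{N-1+\eta/4}\,d\tau=(N+\tfrac{\eta}{4})^{-1}\mu^{4N+\eta}$ finite, so the remainder term is also $\lesssim\mu^\eta w(\mu+|x|)\|\psi\|_{p'}$; summing gives $|F_\mu(x)|\lesssim\mu^{\eta}w(\mu+|x|)\|\psi\|_{p'}$ with $p'$ and the implicit constant depending only on $N$ (hence $\eta$), $c$ (hence $C_w$) and $d$, which is the assertion.

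The main obstacle is the interplay between the convergence of the Duhamel remainder as $\tau\downarrow0$ — which forces the number of iterations $N$, and therefore the number of required Schwartz seminorms, to grow when $\eta$ is very negative — and the careful tracking of the parabolic scalings so that the powers of $\mu$ coming from the operators $(LL^*)^k$ cancel exactly against those produced by the Duhamel weights $\tau^{N-1}$, leaving precisely $\mu^\eta$. The honest book-keeping that turns this into a finite bound for $p'$ in terms of $\eta,C_w,d$ is the technical heart; the doubling-based relocation of the weight from $|x-y|$ to $|x|$ must be done with $\mu$-uniform constants, but is otherwise routine, as are the distributional manipulations (smoothing by $\Psi_\tau$ makes all convolutions genuine Schwartz functions).
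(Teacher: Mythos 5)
Your proposal is correct and follows essentially the same route as the paper: your iterated Duhamel identity is exactly the representation formula \eqref{rep} (which the paper imports from \cite[Section~3.4]{BOT}), and the subsequent power counting in $\mu$, the choice of the truncation order in terms of $\eta$, and the use of monotonicity plus the doubling condition to control the weight are the same ingredients. The only cosmetic differences are that you derive \eqref{rep} from scratch and absorb the weight growth into a factor $(1+|y|/\mu)^{\lceil\log_2 C_w\rceil}$ via iterated doubling and parabolic scaling, whereas the paper splits the convolution integral into three regions and sums dyadically in the far field; both yield constants and a seminorm order $p'$ depending only on $\eta$, $C_w$ and $d$.
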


\begin{proof}
    We recall from \cite[Section 3.4]{BOT} that we can write
    \begin{align}\label{rep}
        F_{\mu} ( x ) 
        & = \sum_{j = 0}^k \frac{1}{j!} \int dy \, \langle F , \Psi_{\mu^4}(\cdot - y) \rangle \, ( (L^* L)^j \psi)_\mu(y-x) \nonumber
         \\ & \quad + \frac{1}{k!} \int_0^1 ds \int dy  \,  s^k \langle F , \Psi_{s \mu^4}(\cdot - y) \rangle \, ((L^* L)^{k+1} \psi)_\mu(y - x) 
    \end{align}
    where we are free to choose the value of $k \ge 0$.
    We then deduce that
    \begin{align*}
        |F_{\mu} ( x ) | 
        & \le 
        \sum_{j = 0}^k \frac{1}{j!} \mu^{\eta} \int dy \, w(\mu + |y|) |( (L^* L)^j \psi)_\mu(y-x)|  
        \\ & \quad + \frac{1}{k!} \int_0^1 ds (\sqrt[4]{s} \mu)^{\eta} s^k \int dy \, w(\mu + |y|) |( (L^* L)^{k+1} \psi)_\mu(y-x)|.
    \end{align*}

    Choosing $k$ such that $k + \eta/4 > -1$, the $s$-integral can be evaluated as $C_\eta \mu^\eta$. 
    It then only remains to bound integrals of the form 
    \begin{align*}
        \int dy \, w(\mu + |y|) \, |( (L^* L)^j \psi)_\mu(y-x)| .
    \end{align*}
    for $j \le k+1$.
    To do this we note that
    for any $N \in \mathbb{N}$ to be adjusted later,
    \begin{align*}
        \int dy \, w(\mu + |y|) |( (L^* L)^j \psi)_\mu(y-x)| 
        &\le \| \psi \|_{N} \int dy \, \mu^{-D} \frac{w(\mu + |y|)}{(1+ \mu^{-1} |y-x|)^N}
        \\ & \le \| \psi \|_{N} \int dy \, \mu^{N-D} \frac{w(\mu + |y| + |x|)}{(\mu + |y|)^N} .
    \end{align*}
    We consider the regions $|y| \le \mu$, $\mu < |y| \le \mu + |x|$ and $\mu + |x| < |y|$ separately. In the first region, the integral is bounded by
    \begin{align*}
        \int_{|y| \le \mu} dy \mu^{-D} w(2(\mu + |x|)) \lesssim_d C_w w(\mu + |x|).
    \end{align*}
    In the second region, the integral is bounded by
    \begin{align*}
        \int_{\mu < |y| \le \mu + |x|} dy \mu^{N-D} \frac{w(2(\mu + |x|))}{|y|^N} \lesssim_d C_w w(\mu + |x|)
    \end{align*}
    so long as $N > D$. 
    The final region will take some more care. Here the integral is bounded by
    \begin{align*}
        \int_{|y| > \mu + |x|} dy \mu^{N-D} \frac{w(2|y|)}{|y|^N} \le C_w \mu^{N-D} \int_{|y| > \mu + |x|} dy \frac{w(y)}{|y|^N}
    \end{align*}
    so that it remains to see that the doubling condition implies the convergence of this integral for $N$ large enough with a bound of order $w(\mu + |x|)$.
    We make use of the fact that $w$ is increasing to write
    \begin{align}\label{fi01}
        \int_{|y| > \mu + |x|} dy \frac{w(y)}{|y|^N} \le \sum_{n = 0}^\infty \int_{\mathcal{A}_n}dy  \frac{w(2^{n+1} (\mu + |x|))}{|y|^N} 
    \end{align}
    where $\mathcal{A}_n \coloneqq \{y: 2^n (\mu + |x|) < |y| \le 2^{n+1}(\mu + |x|)\}$.
    We then repeatedly apply the doubling condition to bound \eqref{fi01} by
    \begin{align*}
        & \sum_{n = 0}^\infty C_w^{n+1} w(\mu + |x|) \int_{\mathcal{A}_n}dy |y|^{-N} \\
        & \lesssim_d  w(\mu + |x|) (\mu + |x|)^{D - N} \sum_{n = 0}^\infty C_w^{n+1} 2^{n (D-N)}. 
    \end{align*}
    For $N$ large enough (depending on $d$ and $C_w$), the sum converges so that we obtain a bound by
    $\mu^{D-N} w(\mu + |x|)$
    which completes the proof.
\end{proof}

\section{Some Considerations on the Construction of the Periodic Model} \label{s:model}

We now wish to briefly sketch some of the changes that would be required in order to adapt the construction of \cite{BOT} to the periodic case.
In this appendix, 
we follow the notations of \cite{BOT}.
In particular, given a distribution $F$ we denote by $F_t$ the convolution with the semi-group, 
and by $F_r$ the convolution with an arbitrary Schwartz function at scale $r$.
We again freely use the change of kernel result given in Lemma~\ref{kernel_swap}, so that estimates involving these arbitrary Schwartz functions are
in fact uniform over suitable balls of test-functions.

\medskip
{\bf Spectral gap assumption.}
To start with, the choice of Cameron--Martin space in the assumption of spectral gap should respect the periodic boundary conditions.
More precisely, defining $\dot{H}^{-s}$ to be the space of periodic homogeneous Sobolev distributions with vanishing space-time average,
with norm given in Fourier space by 
	\begin{align*}
		\| u \|_{\dot{H}^{-s}}^2 
		\coloneqq \sum\limits_{k \in \frac{2 \pi}{\ell^2} \mathbb{Z} \times ( \frac{2 \pi}{\ell} \mathbb{Z})^d \setminus \lbrace 0 \rbrace} | \hat{u} ( k ) |^2 \, | k |^{-2 s} ,
	\end{align*}
the assumption 
\cite[Section~2.11,~Eq.~(184)]
{BOT} is replaced by the corresponding 
	\begin{align} \label{t100}
		\mathbb{E} ( F - \mathbb{E} F )^2 
		\leq \mathbb{E} \Big\| \frac{\partial F}{\partial \xi} \Big\|_{\dot{H}^{-s}}^2 , 
		\qquad \text{for cylindrical $F$.}
	\end{align}

\medskip
{\bf Base case and Besov norms.}		
When attempting to adapt the inductive loop of estimates in \cite{BOT},
a first difficulty is found in the base case, see \cite[Section~2.12]{BOT} where, due to periodicity of the integrand, the bound \cite[Eq.~(191)]{BOT} fails to hold if one insists on the domain of integration being $\mathbb{R}^{1 + d}$.
Rather, it has to be replaced by 
	\begin{align*}
		 \Big( \int_Q d x \, \mathbb{E}^{\frac{2}{q}} | (\delta \Pi - \mathrm{d} \Gamma_x^* \Pi_x)_{0 r} ( x ) |^q \Big)^{\frac{1}{2}}
		& \lesssim r^{2 + s} ,
	\end{align*}
where $Q \subset \mathbb{R}^{1 + d}$ denotes any (measurable) region of diameter $\leq \ell$.
More generally, for $\beta \neq 0$ \cite[Eq~(192)]{BOT} 
is replaced by 
	\begin{align*}
		 \Big( \int_Q d x \, \mathbb{E}^{\frac{2}{q}} | (\delta \Pi - \mathrm{d} \Gamma_x^* \Pi_x)_{\beta r} ( x ) |^q \Big)^{\frac{1}{2}}
		& \lesssim r^{\alpha + \frac{D}{2}} ( r + R )^{| \beta | - \alpha} ,
	\end{align*}
where again $Q \subset \mathbb{R}^{1 + d}$ is any region of diameter $\leq \ell$, and $R \coloneqq \sup_{z \in Q} | z - x |$ is the distance from $x$ to $Q$.
Similarly, the Besov-type norms appearing in the remaining estimates of the inductive loop, 
\cite[Eqs.~(194), (195\&210), (209\&215), (216)]{BOT}
should be adapted in the appropriate way by replacing the region $B_R$ of integration therein by $Q$ as above.

\medskip
{\bf Tensor periodicity.}
A further difference with \cite{BOT} is
that, 
while all estimates involving a scale parameter $r$ therein are uniform over $r < \infty$, 
in the periodic case some estimates are uniform only on the range $r \leq \ell$.
This is due to the loss of global scaling invariance which arises from working on the torus, and concerns the bounds on the quantities $\Pi^-$ and its Malliavin derivative $\delta \Pi^-$.
Namely, the bound \cite[Eq.~(141)]{BOT} has to be replaced by
	\begin{align} \label{i3}
		\mathbb{E}^{\frac{1}{p}} | \Pi_{x \beta r}^- ( x ) |^p
		\lesssim r^{|\beta|-2} 
		\qquad \text{uniformly over $r \leq \ell$,}
	\end{align}
and analogously for the bounds \cite[Eqs.~(194), (213), (216)]{BOT}.
As it turns out, it is still possible to achieve some form of uniformity over all scales, by exploiting the `tensor-periodic' structure of $\Pi, \Pi^-$ (and their respective Malliavin derivatives).
This structure was already introduced in \cite{LO22} in a quasilinear context, 
and  
takes the form
	\begin{align*}
		\Pi_{x \beta}^{-} (\cdot)
		& = \sum_{|\mathbf{m}| \leq |\beta| \vee 0} ( \cdot - x )^{\mathbf{m}} \, \Pi_{x \beta}^{(\mathbf{m})-} (\cdot) ,
	\end{align*}
where each $\Pi_{x \beta}^{(\mathbf{m})-}$ is a periodic distribution.
The same decomposition holds for $\Pi_{x \beta}^{}$, where the polynomial degree $|\beta| \vee 0$ is readily checked to be stable under the inductive construction of $\Pi, \Pi^-$.
This structure may be understood as a generalisation of periodicity 
(where the polynomials arise from inductively solving the hierarchy of equations on $\Pi$) and as such comes with a notion of space-time average, which is the polynomial defined by
	\begin{align*}
		\Big( \fint \Pi_{x \beta}^{-} \Big) ( \cdot )
		& \coloneqq \sum_{|\mathbf{m}| \leq |\beta| \vee 0} ( \cdot - x )^{\mathbf{m}} \, \fint \Pi_{x \beta}^{(\mathbf{m})-} .
	\end{align*}

We may now state a lemma which allows us to describe the large-scale behaviour of distributions possessing this `tensor-periodic' structure,
and which generalises the more classical property that large-scale averages of a periodic distribution converge to its average.
\begin{lemma}[From Small To Large Scales] \label{l:lspp}
Let $x \in \mathbb{R}^{1 + d}$, $m_0 \in \mathbb{N}_0$, $\alpha \in \mathbb{R}$.
Let $u$ be a distribution of the form $u (\cdot) = \sum_{|\mathbf{m}| \leq m_0} ( \cdot - x )^{\mathbf{m}} \, u^{(\mathbf{m})} (\cdot)$ where each $u^{(\mathbf{m})}$ is periodic.
For any $p \in \mathbb{N}$ there exists $p^{\prime} \in \mathbb{N}$,
depending only on $m_0, \alpha, d$ and $p$,
such that 
	\begin{align*}
	\sup_{\| \psi \|_{p^{\prime}} \leq 1} \sup_{r \geq \ell} \frac{| \big( u - \fint u \big)_r ( x) |}{r^{\alpha}} 
	& \lesssim
		\sup_{\| \psi \|_p \leq 1} \sup_{r \leq \ell} \frac{| u_r ( x) |}{r^{\alpha}} ,
	\end{align*}
	where the implicit constant depends only on $m_0, \alpha, d$ and $p$.
\end{lemma}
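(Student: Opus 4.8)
The plan is to reduce the claim to the statement that for a periodic distribution $v$ one has $|(v-\fint v)_r(x)| \lesssim (\ell/r)^N \sup_{s\leq \ell}|v_s(x)|$ for any $N$, which is a classical fact, and then to handle the polynomial prefactors by a combination of Leibniz-type manipulations and the observation that the operator $u \mapsto u - \fint u$ annihilates the average on the level of each $u^{(\mathbf{m})}$. First I would write $u = \sum_{|\mathbf{m}|\leq m_0} (\cdot - x)^{\mathbf{m}} u^{(\mathbf{m})}$ and correspondingly $\fint u = \sum_{|\mathbf{m}|\leq m_0}(\cdot - x)^{\mathbf{m}} \fint u^{(\mathbf{m})}$, so that $u - \fint u = \sum_{|\mathbf{m}|\leq m_0} (\cdot - x)^{\mathbf{m}} (u^{(\mathbf{m})} - \fint u^{(\mathbf{m})})$. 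Convolving against $\psi_r(x-\cdot)$ and expanding the monomial $(y-x)^{\mathbf{m}}$ inside the integral, each term becomes $r^{|\mathbf{m}|}$ times a convolution of $v^{(\mathbf{m})} := u^{(\mathbf{m})} - \fint u^{(\mathbf{m})}$ against a rescaled Schwartz kernel $\tilde\psi$ (namely $\tilde\psi(z) = z^{\mathbf{m}}\psi(z)$, whose seminorms are controlled by those of $\psi$). Thus it suffices to bound $r^{|\mathbf{m}|} |v^{(\mathbf{m})}_r(x)|$ for $r\geq \ell$ in terms of small-scale quantities.

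Second I would establish the key single-scale estimate: if $v$ is periodic with vanishing space-time average, then for any $N\in\mathbb{N}$,
\begin{align*}
    \sup_{\|\psi\|_{p'}\leq 1}|v_r(x)| \lesssim_{N,d} \Big(\frac{\ell}{r}\Big)^{N} \sup_{\|\psi\|_{p}\leq 1}\sup_{s\leq \ell}|v_s(x)|, \qquad r\geq \ell,
\end{align*}
with $p'$ depending on $N, p, d$. This follows by writing $v_r = (v_\ell) * \phi_{?}$ is not quite right because convolution of mollifiers at scales $\ell$ and $r$ does not reproduce scale $r$; instead I would use the semigroup representation of Lemma~\ref{kernel_swap} style, or more directly: since $\widehat{v}(0)=0$ and $v$ lives on the lattice dual $\frac{2\pi}{\ell^2}\mathbb{Z}\times(\frac{2\pi}{\ell}\mathbb{Z})^d$, every nonzero frequency $k$ satisfies $|k|\gtrsim \ell^{-1}$; writing $v_r(x) = \sum_{k\neq 0}\hat v(k) e^{ik\cdot x}\hat\psi(r^{(2)}k)$ where $r^{(2)}k$ denotes the parabolic rescaling, and using the rapid decay of $\hat\psi$ together with $|r^{(2)}k|\gtrsim r/\ell$, we gain a factor $(\ell/r)^N$ at the cost of $N$-many derivatives landing on $\psi$, i.e.\ finitely many Schwartz seminorms. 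The small-scale data enters by recovering $\hat v(k)$ (equivalently $\sum_{|k|\leq C/\ell}|\hat v(k)|$) from $\sup_{s\leq\ell}|v_s(x)|$ after testing against finitely many kernels; here a standard finite-dimensional linear-algebra / equivalence-of-norms argument on the span of the low-frequency exponentials does the job, exactly in the spirit of the norm-equivalence arguments used repeatedly in the main text (e.g.\ around \eqref{wr22}).

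Third, I would combine the two steps. Applying the single-scale estimate to each $v^{(\mathbf{m})}$ with $N$ chosen so that $N > m_0 + |\alpha|$ (say $N = m_0 + \lceil |\alpha|\rceil + 1$), one gets for $r\geq \ell$
\begin{align*}
    r^{|\mathbf{m}|}|v^{(\mathbf{m})}_r(x)| \lesssim r^{|\mathbf{m}|}\Big(\frac{\ell}{r}\Big)^{N}\sup_{\|\psi\|_p\leq 1}\sup_{s\leq\ell}|v^{(\mathbf{m})}_s(x)| \lesssim r^{\alpha}\,\ell^{-\alpha}\sup_{\|\psi\|_p\leq 1}\sup_{s\leq\ell}|v^{(\mathbf{m})}_s(x)|,
\end{align*}
using $r^{|\mathbf{m}|-N}\leq \ell^{|\mathbf{m}|-N}\leq \ell^{\alpha - m_0 - 1}r^{\alpha}\cdot r^{m_0+1-N-\alpha}\cdot \ell^{\dots}$ — more cleanly, since $|\mathbf{m}| - N < \alpha$ one has $r^{|\mathbf{m}|-N}\lesssim_\ell r^{\alpha}$ for $r\geq \ell$. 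Finally one must pass back from $\sup_{s\leq\ell}|v^{(\mathbf{m})}_s(x)|$ to $\sup_{s\leq\ell}|u_s(x)|$: this is done by the \emph{inverse} of the decomposition, i.e.\ reading off each $u^{(\mathbf{m})}$ from $u$ by the same monomial-test-function trick at scales $s\leq\ell$ (testing $u$ against $z^{\mathbf{n}}\psi(z)$ isolates linear combinations of the $u^{(\mathbf{m})}$, which are inverted by a triangular finite matrix), and then noting $\sup_s|\fint u^{(\mathbf{m})}| = |\fint u^{(\mathbf{m})}| \lesssim \sup_{s\leq \ell}|u^{(\mathbf{m})}_s(x)|$ since the average of a periodic distribution is controlled by any of its mollifications at a fixed scale. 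Summing over the finitely many $\mathbf{m}$ with $|\mathbf{m}|\leq m_0$ and adjusting $p'$ to absorb all the finitely many seminorm losses yields the claim.

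\textbf{Main obstacle.} The delicate point is the quantitative low-frequency recovery: extracting the finitely many Fourier coefficients $\hat v(k)$, $0<|k|\lesssim \ell^{-1}$, from the family $\{v_s(x): s\leq\ell, \|\psi\|_p\leq 1\}$ in a way that is uniform in $x$ and uses only finitely many seminorms of the test kernels. One has to check that a suitable finite collection of test functions (at a fixed scale comparable to $\ell$, say $s=\ell$) detects all low frequencies — i.e.\ that the relevant finite Gram-type matrix is invertible — and that the resulting constants depend only on $\ell, d$ and the spectral parameters, not on $v$ or $x$. This is morally a compactness/nondegeneracy statement about trigonometric polynomials and is the analogue, at the level of frequencies rather than Taylor coefficients, of the polynomial norm-equivalence arguments already used throughout Section~\ref{s:proof_a_priori}; once it is in place the rest is bookkeeping with Schwartz seminorms and the doubling-free estimate above.
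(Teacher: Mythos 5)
Your overall architecture — reduce to the mean-zero periodic components $v^{(\mathbf{m})}=u^{(\mathbf{m})}-\fint u^{(\mathbf{m})}$, gain an arbitrary power $(\ell/r)^{N}$ from the spectral gap $|k|\gtrsim\ell^{-1}$ of the dual lattice, and pay only finitely many Schwartz seminorms — is sound, but your step 3 reduces the lemma to a statement that is actually false. You claim one can pass from $\sup_{\|\psi\|_p\le1}\sup_{s\le\ell}|v^{(\mathbf{m})}_s(x)|$ back to $\sup_{s\le\ell}|u_s(x)|$ by testing $u$ against monomially weighted kernels centered at $x$ and inverting a ``triangular finite matrix''. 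Take $m_0=1$, $u^{(\mathbf{0})}=0$ and $u^{(\mathbf{m})}=\sum_{h}\delta_{x+h}$ (the periodization of the Dirac mass at $x$, sum over the parabolic lattice) for one $\mathbf{m}$ with $|\mathbf{m}|=1$: then $u=(\cdot-x)^{\mathbf{m}}u^{(\mathbf{m})}$ has $\sup_{\|\psi\|_{p}\le1}\sup_{s\le\ell}|u_s(x)|<\infty$ for $p$ large (the monomial annihilates the singularity sitting exactly at the base point, and the other periods are Schwartz-suppressed at small scales), whereas $\sup_{s\le\ell}|u^{(\mathbf{m})}_s(x)|\gtrsim\sup_{s\le\ell}s^{-D}=\infty$ for every seminorm index. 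So no choice of $p''$ can make your intermediate inequality true, and no inversion that only uses kernels centered at $x$ can work: each $u^{(\mathbf{m})}$ gets paired with a different test function, so there is no finite linear system, and indeed the decomposition $u=\sum(\cdot-x)^{\mathbf{m}}u^{(\mathbf{m})}$ is not even unique unless one uses the periodicity of the components — which your recovery step never invokes. The repair is to never ask for small-scale control of the components: all your step 2 really needs are the Fourier coefficients $\hat u^{(\mathbf{m})}(k)$, i.e.\ pairings against $\chi e^{-ik\cdot}$ with $\chi$ a scale-$\ell$ partition-of-unity cutoff. These are accessible from $u$ via lattice translates at scale $\sim\ell$: by periodicity of the $u^{(\mathbf{m})}$, the map $h\mapsto\langle u,\phi(\cdot-h)\rangle$ on the lattice is a polynomial of parabolic degree $\le m_0$ in $h$ whose coefficients are the pairings $\langle u^{(\mathbf{m})},(\cdot-x)^{\mathbf{a}}\phi\rangle$, and finitely many lattice points $|h|\lesssim_{m_0}\ell$ (affordable in seminorms at scale $\ell$) determine them by the same equivalence-of-norms argument you cite around \eqref{wr22}; the top-degree coefficients give $\langle u^{(\mathbf{m})},\phi\rangle$ and one recurses downward.

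A second, smaller gap sits in your ``main obstacle'' paragraph: recovering only the finitely many low modes $0<|k|\lesssim\ell^{-1}$ by a Gram-matrix argument is not enough, because for $r$ comparable to $\ell$ the factor $\hat\psi(r\cdot k)$ only provides Schwartz decay in $\ell|k|$, so the high-frequency coefficients must also be controlled — with a constant growing at most polynomially in $\ell|k|$, uniformly in $k$. This again comes from the scale-$\ell$ tests against $\chi e^{-ik\cdot}$, whose $p$-th seminorm costs a factor $(1+\ell|k|)^{p+O(1)}$, after which the lattice sum converges once the decay order of $\hat\psi$ (hence $p'$) is taken large enough relative to $p$, $m_0$, $\alpha$ and $D$. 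With these two corrections your exponent bookkeeping ($N>m_0-\alpha$ to absorb the factor $r^{|\mathbf{m}|}$) does close the argument; as written, however, the proof does not go through.
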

Arguing e.g.\ by duality in the probabilistic $L^p$ spaces as in \cite[Section~3.5]{BOT}, the same statement holds when $u$ is random and the absolute value is replaced by $\mathbb{E}^{1/p} |\cdot |^p$, for any $p \in (1, \infty)$.
Thus, next to \eqref{i3} one also adds the estimate
	\begin{align*}
		\mathbb{E}^{\frac{1}{p}} \Big| \Big(\Pi_{x \beta}^- - \fint \Pi_{x \beta}^{-} \Big)_r ( x ) \Big|^p 
		\lesssim r^{|\beta|-2} 
		\qquad \text{uniformly over $r \geq \ell$,}
	\end{align*}
along with the analogous bounds for \cite[Eqs.~(194), (213), (216)]{BOT}.

\medskip 
{\bf Non-uniqueness of counterterms.}
Arguing as in \cite[Section~3.3]{BOT}, and making use of Lemma~\ref{l:lspp}, one obtains for the expectation of $\Pi^-$ that for all $\beta$ with $| \beta | < 2$,
	\begin{align*}
		t \Big| \frac{d}{dt} \mathbb{E} \Pi_{\beta t}^- ( 0 ) \Big| \lesssim (\sqrt[4]{t})^{| \beta | - 2} , 
		\qquad
		\text{uniformly over $t < \infty$.}
	\end{align*}
Thus, just as in \cite{BOT}, there is a unique choice of counter-term $c$ which guarantees the following estimate at all scales:
	\begin{align*}
		\big| \mathbb{E} \Pi_{\beta t}^- ( 0 ) \big| 
		\lesssim (\sqrt[4]{t})^{| \beta | - 2} , 
		\qquad
		\text{uniformly over $t < \infty$,}
	\end{align*}
and this choice corresponds to postulating $\mathbb{E} \Pi_{\beta t}^- ( 0 ) \to_{t \to \infty} 0$, 
or equivalently $\fint \mathbb{E} \Pi_{\beta}^{(\mathbf{0})-} = 0$ in view of Lemma~\ref{l:lspp}.
However, since the relevant model estimate \eqref{i3} in the periodic setting necessitates uniformity only over $r \leq \ell$, 
it follows that any bounded shift to this counter-term also produces a model which satisfies the desired stochastic estimates.
Thus, this construction does not give rise to a unique model, but rather to a finite-dimensional family of renormalised models.

\medskip
{\bf Kolmogorov argument.}
Here we very briefly sketch how to pass from annealed model estimates to the quenched estimates taken as input in Definition~\ref{d:model}.
This is reminiscent of the Kolmogorov continuity theorem in stochastic analysis, we refer to \cite{Hai14,HS23,Tem24} for similar arguments in the context of model estimates in regularity structures.
More precisely, in this paragraph we take as input the (annealed) estimates over $r \leq \ell$
and $x, y \in \mathbb{R}^{1 + d}$,
	\begin{align} \label{i6}
		\mathbb{E}^{\frac{1}{p}} | \Pi_{x \beta r} ( x ) |^p \lesssim r^{| \beta |} ,
		\qquad \mathbb{E}^{\frac{1}{p}} | (\Gamma^*_{x y})_{\beta}^{\gamma} |^p \lesssim | y - x |^{| \beta | - | \gamma |} ,
	\end{align}
and derive the (quenched) estimate
	\begin{align} \label{i5}
		 | \Pi_{x \beta r} ( x ) | \lesssim r^{| \beta |_-} ,
		 \qquad 
		 \text{over $r \leq \ell$, $x \in \mathbb{R}^{1 + d}$, a.s.}
	\end{align}
where by $| \beta |_{-}$ we mean the homogeneity of $\beta$, recall \eqref{t45}, but where $\alpha$ is replaced by $\alpha - \epsilon$ for some fixed arbitrarily small $\epsilon>0$ where the implicit constant in inequalities such as \eqref{i5} now depends on $\epsilon$.
One then obtains the version of \eqref{i5} over all $r < \infty$ by appealing to Lemma~\ref{l:lspp}.
Indeed, when $| \beta | \geq 0$ the desired scaling behaviour on large scales follows from the fact that $\fint \Pi_{x \beta}$ is a polynomial of degree $| \beta |$, 
while when $| \beta | < 0$ our construction implies that $\Pi_{x \beta}$ is actually a periodic function of vanishing space-time average $\fint \Pi_{x \beta} = 0$.
By the same arguments, one obtains the desired pathwise estimate for $\Pi^-$, 
where we note that Lemma~\ref{l:lspp} does \emph{not} imply a control of order $r^{| \beta | - 2}$ over scales $r \geq \ell$, since the polynomial $\fint \Pi_{x \beta}^-$ is of degree $| \beta | \vee 0$ which is strictly larger than the desired $| \beta | - 2$.
Hence, the model estimate on $\Pi$ is uniform over all scales while that on $\Pi^-$ is only uniform over $\mu < \ell$,
thus justifying the model norm \eqref{mb01}.
Finally, the quenched estimates for $\Gamma$ follow from quenched estimates for $\Pi$ by applying the algebraic argument and the three-point argument of \cite[Section~2.19]{BOT} in a pathwise manner.

\medskip
We now turn to \eqref{i5}.
We note on the one hand that \eqref{i6} in combination with
the reexpansion property \eqref{mb04} implies when $| y - x | \le r$ that
	\begin{align*}
		\mathbb{E}^{\frac{1}{p}} |  \Pi_{y \beta r} ( x ) - \Pi_{x \beta r} ( x ) |^p
		\lesssim | y - x |^{\eta}  r^{| \beta | - \eta} , 
	\end{align*}
for any $\eta \in (0, \kappa^-]$ where $\kappa^- \in (0, 1)$ is as in \eqref{i7}.
Furthermore Taylor's theorem and \eqref{i6} yield
	\begin{align*}
		\mathbb{E}^{\frac{1}{p}} |  \Pi_{y \beta r} ( x ) - \Pi_{y \beta r} ( y ) |^p
		\lesssim | y - x | r ^{| \beta | -1} , 
	\end{align*}
	
	so that 
	\begin{align*}
		\mathbb{E}^{\frac{1}{p}} |  \Pi_{y \beta r} ( y ) - \Pi_{x \beta r} ( x ) |^p
		\lesssim | y - x |^{\eta} r^{| \beta | - \eta } .
	\end{align*}
Meanwhile, when $|y-x| \ge r$, we have the straightforward consequence of \eqref{i6}
	\begin{align*}
		\mathbb{E}^{\frac{1}{p}} |  \Pi_{y \beta r} ( y ) - \Pi_{x \beta r} ( x ) |^p \lesssim r^{| \beta |}.
	\end{align*}
Taken together the above estimates yield for $ r \lesssim \ell$
	\begin{align*}
		\mathbb{E}^{\frac{1}{p}} | \Pi_{y \beta r} ( y ) - \Pi_{x \beta r} ( x ) |^p
		\lesssim | y - x |^{\eta} \, r^{| \beta | - \eta} . 
	\end{align*}
Turning to the Kolmogorov argument, 
we recall the Sobolev–Slobodeckij inequality, valid for 
any $p \geq 1$, any continuous periodic function $f$, and any exponent $\eta > 2d/p$:
	\begin{align*}
		\sup_{x \in \mathbb{R}^{1 + d}} | f ( x ) |^p
		& \lesssim_{\ell, \eta, d, p} | f ( 0 ) |^p
			+ \iint_{B_{\ell} \times B_{\ell}} dy \, d y^{\prime} \, \frac{| f ( y ) - f (y^{\prime} ) |^p}{| y - y^{\prime} |^{\eta	p}} .
	\end{align*}
Choosing $f \colon x \mapsto \Pi_{x \beta r} ( x )$, 
it follows that
	\begin{align*}
		\mathbb{E}^{\frac{1}{p}} \sup\limits_{x \in \mathbb{R}^{1 + d}} | \Pi_{x \beta r} ( x ) |^p
		& \lesssim r^{| \beta | - \eta} ,	
		\qquad \text{over $r \leq \ell$,}	
	\end{align*}
provided $p$ is sufficiently large so that $\eta - 2 d / p > 0$.
In order to tackle the supremum over $r$, we recall the following weighted version of Sobolev's inequality, valid for all $g \in C^1 ( (0, \ell] )$, $p \in (1, \infty)$, $a \in \mathbb{R}$:
	\begin{align*}
		\sup_{r \leq \ell} \big| r^{- (a - \frac{1}{p})} g ( r ) \big|^p
		\lesssim_{\ell, a, p}
		\int_{0}^\ell d s \, \big( | s^{-a} g ( s ) |^p + | s^{-a+1} g^{\prime} ( s ) |^p \big) ,
	\end{align*}
which we apply to $g \colon r \mapsto \Pi_{x \beta r} ( x )$ and $a \coloneqq | \beta |- \eta$
to the effect of
	\begin{align*}
		\mathbb{E}^{\frac{1}{p}} \sup_{ x \in \mathbb{R}^{1+d}} \sup_{r \leq \ell} \, \big| r^{- (| \beta | - \eta - \frac{1}{p})} \Pi_{x \beta r} ( x ) \big|^p
		< \infty .
	\end{align*}
This implies the desired \eqref{i5} provided one takes $\eta$ sufficiently small and $p$ sufficiently large.

\subsection*{Acknowledgements}
FO acknowledges the hospitality of
the Bernoulli Center Program
``New developments and challenges in Stochastic Partial Differential Equations'',
where he presented parts of this material in the summer of 2024.

\bibliographystyle{alpha}
\bibliography{bib.bib}
\end{document}